\DeclareFontFamily{U}{mathx}{\hyphenchar\font45}
\DeclareFontShape{U}{mathx}{m}{n}{
      <5> <6> <7> <8> <9> <10>
      <10.95> <12> <14.4> <17.28> <20.74> <24.88>
      mathx10
      }{}
\DeclareSymbolFont{mathx}{U}{mathx}{m}{n}
\DeclareMathAccent{\widecheck}{0}{mathx}{"71}
\DeclareMathAccent{\wideparen}{0}{mathx}{"75}
\theoremstyle{plain}
\newtheorem{theorem}[equation]{Theorem}
\newtheorem{corollary}[equation]{Corollary}
\newtheorem{lemma}[equation]{Lemma}
\newtheorem{conjecture}[equation]{Conjecture}
\newtheorem{proposition}[equation]{Proposition}
\newtheorem{quesprop}[equation]{\,``Proposition''\,}
\newtheoremstyle{indenteddefinition}{\topsep}{\topsep}{\addtolength{\leftskip}{2.0em}}{-0em}{\bfseries}{.}{
}{} %{-2.5em}{}{}{}{}
\theoremstyle{indenteddefinition}
\newtheorem{definition}[equation]{Definition}
\newtheorem{example}[equation]{Example}
\newtheorem{algorithm}[equation]{Algorithm}
\newcommand{\eqdef}{=_{\text{\textnormal{def}}}}
\DeclareMathOperator\AV{AV}
\DeclareMathOperator\Ann{Ann}
\DeclareMathOperator\Lie{Lie}
\DeclareMathOperator\LL{L}
\DeclareMathOperator\dashLL{\!-L}
\DeclareMathOperator\point{point}
\DeclareMathOperator\unip{unip}
\DeclareMathOperator\red{red}
\DeclareMathOperator\rank{rank}
\DeclareMathOperator\alg{alg}
\DeclareMathOperator\orbalg{orbalg}
\DeclareMathOperator\abs{abs}
\DeclareMathOperator\Ad{Ad}
\DeclareMathOperator\ad{ad}
\DeclareMathOperator\Aut{Aut}
\DeclareMathOperator\Ind{Ind}
\DeclareMathOperator\Span{span}
\DeclareMathOperator\Cone{Cone}
\DeclareMathOperator\tr{tr}
\DeclareMathOperator\triv{triv}
\DeclareMathOperator\sgn{sgn}
\DeclareMathOperator\gr{gr}
\DeclareMathOperator\RE{Re}
\DeclareMathOperator\IM{Im}
\DeclareMathOperator\RRE{RE}
\DeclareMathOperator\opp{opp}
\DeclareMathOperator\Poly{Poly}
\DeclareMathOperator\Rep{Rep}
\DeclareMathOperator\supp{supp}
\DeclareMathOperator\Coh{Coh}
\DeclareMathOperator\QCoh{QCoh}
\DeclareMathOperator\Mod{\!--Mod}
\DeclareMathOperator\Modfg{\!--Mod^{fg}}
\DeclareMathOperator\res{res}
\DeclareMathOperator\weak{weak}
\DeclareMathOperator\WF{WF}
\newcommand{\xdownarrow}[1]{%
  {\left\downarrow\vbox to #1{}\right.\kern-\nulldelimiterspace}
} % \xdownarrow{1cm}
\newcommand{\xuparrow}[1]{%
  {\left\uparrow\vbox to #1{}\right.\kern-\nulldelimiterspace}
} % \xuparrow{1cm}
\newcommand{\K}{\mathbf K}
\newcommand{\R}{\mathbf R}
\begin{document}

%% \begin{frontmatter}

%%  "Title of the Paper"
 \title{\vskip -2.4cm Associated varieties for real reductive groups} % \protect\thanksref{T1}}
%% \thankstext{T1}{}

%% \begin{aug}
%% \author{\fnms{Jeffrey} \snm{Adams}\eaf[label=e1]{jda@math.umd.edu}}% \ead[label=e2,url]{atlas.umd.edu}}
%% \thanks{The author is supported in part by NSF grant DMS-1317523.}
%% \address{Department of Mathematics\\ University of Maryland\\ \printead{e1}} %\\\printead{e2}}
%%\and
%% \author{\fnms{David A.} \snm{Vogan, Jr.}\ead[label=e3]{dav@math.mit.edu}}
%% \address{Department of Mathematics\\ MIT \\ \printead{e3}}
%% \end{aug}

\author{Jeffrey Adams \\Department of
  Mathematics \\ University of Maryland
\and David A. Vogan, Jr.\\Department of Mathematics\\ MIT,
  Cambridge, MA 02139}
%%  History:
%\received{\sday{3} \smonth{1} \syear{2018}}
\date{\today}
\maketitle
\begin{center}
  \large \emph{In fond memory of our teacher and friend, Bert
  Kostant.}
  \vskip .9cm 
\end{center}

\begin{abstract}
We give an algorithm to compute the associated variety of a
Harish-Chandra module for a real reductive group $G({\mathbb R})$. The
algorithm is implemented in the {\tt atlas} software package.
\end{abstract}

% \begin{keyword}[class=AMS]
% \kwd[Primary ]{22E45}
% \kwd{17B45}
% \kwd[; secondary ]{17B45}
% \end{keyword}

%%  Upper case for every keyword
% \begin{keyword}
% \kwd{Reductive Group}
% \kwd{Representation}
% \kwd{Orbit Method}
% \end{keyword}
\tableofcontents

%% \end{frontmatter}

%%  The body
\section{Introduction} \label{sec:intro}
\setcounter{equation}{0}
\setcounter{equation}{0}

A great guiding principle of infinite-dimensional
representation theory is the {\em method of coadjoint orbits}
\begin{subequations}\label{se:orbmethod}
of Alexandre Kirillov and Bertram Kostant. It says that there should be a
close relationship
\begin{equation}\label{eq:orbcorr}\begin{aligned}
 i{\mathfrak g}_{\mathbb R}^*/G_{\mathbb R}\quad &=\ 
 \text{\parbox{.42\textwidth}{orbits of a real
   Lie group on the imaginary dual of its Lie algebra}}\\
  %    {\mathbf O} \xuparrow{.4cm}\
      \xdownarrow{.4cm} \Pi\quad&\\
\left(\widehat{G_{\mathbb
    R}}\right)_{\text{\textnormal{unitary}}}
&=\  \text{irreducible unitary representations}.
  \end{aligned}
\end{equation}
The phrase ``coadjoint orbit'' means an orbit of a Lie group on
the vector space dual of its Lie algebra. Here
$${\mathcal O}_{i{\mathbb R}} \mapsto \Pi({\mathcal O}_{i{\mathbb R}})$$
is informal notation for
the desired construction attaching a unitary representation to a
coadjoint orbit. This map $\Pi$ is not intended to be precisely defined or even
definable: in the cases where such a correspondence is known, the
domain of $\Pi$ consists of just {\em certain} coadjoint orbits
(satisfying integrality requirements), and endowed with some
additional structure (something like local systems). We introduce the
name $\Pi$ just to talk about the problem.

We will be concerned here with the case of real reductive groups.
For the remainder of this introduction, we therefore assume
\begin{equation}\label{eq:realred}\begin{aligned}
    G &=\  \text{\parbox{.52\textwidth}{complex connected reductive
      algebraic group defined over ${\mathbb R}$,}}\\[.4ex]
    G({\mathbb R}) &= \ \text{group of real points of $G$.}
  \end{aligned}
\end{equation}

The status of the orbit method for real reductive groups is discussed in
some detail for example in
\cite{Vorb}. There it is explained that
\begin{equation}\label{eq:pireduction}
  \text{\parbox{.85\textwidth}{the construction of a
      map $\Pi$ (from orbits to representations) reduces
      to the case of nilpotent coadjoint orbits.}}
\end{equation}
(The phrase {\em nilpotent coadjoint orbit} is defined in
\eqref{se:nilcone} below.)

This nilpotent case remains open in general. We write
\begin{equation}\label{eq:nilcone}
  {\mathcal N}^*_{i{\mathbb R}} = \text{nilpotent elements in $i
    {\mathfrak g}({\mathbb R})^*$.}
\end{equation}
(A precise definition appears in Section \ref{sec:nilp}.) We write
informally
\begin{equation}\label{eq:unipreps}
  \widehat{G({\mathbb R})}_{\text{\textnormal{unip}}} =\ 
  \text{\parbox{.45\textwidth}{representations
      corresponding to nilpotent coadjoint orbits,}}
\end{equation}
the {\em unipotent representations}; this is not a definition, because
the Kirillov-Kostant orbit correspondence $\Pi$ has not been defined.

Harish-Chandra found that the study of irreducible unitary
representations could proceed more smoothly inside the larger set
\begin{equation}\label{eq:admrep}
  \widehat{G({\mathbb R})} \supset \widehat{G({\mathbb
    R})}_{\text{\textnormal{unitary}}}
\end{equation}
of irreducible {\em quasisimple} representations. These are the
irreducible objects of the category introduced in \eqref{eq:GRrep}
below. (These are irreducible topological representations on nice
topological vector spaces. ``Quasisimple'' means that the center of
the enveloping algebra is required to act by scalars, as Schur's lemma
(not available in this topological setting) suggests that it
should.)

The present paper is concerned with how to tell whether a proposed
map $\Pi$ is reasonable. The idea comes from \cite{HoweWF}, \cite{BV}, and
\cite{Vunip}. To each coadjoint orbit we can attach an
{\em asymptotic cone}, a closed $G_{\mathbb R}$-invariant cone
(Definition \ref{def:asympcone})
\begin{equation}\label{eq:asympcone}
{\mathcal O}_{i{\mathbb R}} \in  i{\mathfrak g}({\mathbb R})^*/G({\mathbb
  R}) \longrightarrow \Cone_{\mathbb R}({\mathcal O}_{i{\mathbb
  R}}) \subset {\mathcal N}^*_{i{\mathbb R}}.
\end{equation}
An easy but important property is that the asymptotic cone of a nilpotent
orbit is just its closure:
\begin{equation}\label{eq:asympnilp}
\Cone_{\mathbb R}({\mathcal O}_{i{\mathbb
  R}}) = \overline{{\mathcal O}_{i{\mathbb R}}}, \qquad {\mathcal O}_{i{\mathbb R}} \in
{\mathcal N}^*_{i{\mathbb R}}/G({\mathbb R}).
\end{equation}

In a parallel way, to each irreducible quasisimple representation,
Howe in \cite{HoweWF} (see also \cite{BV}) attached a {\em wavefront set}, a
closed cone
\begin{equation}\label{eq:WFpiA}
\pi \in \widehat{G({\mathbb R})} \longrightarrow \WF_{\mathbb R}(\pi)
\subset {\mathcal N}^*_{i{\mathbb R}}/G({\mathbb R}).
\end{equation}
Here is an outline of Howe's definition. If $D$ is a generalized function on a
manifold $M$, and $m$ is a point of $M$, then the {\em wavefront set
  of $D$ at $M$} is
\begin{equation}\label{eq:wavefrontset}
  0 \in \WF_m(D) \subset iT^*_m M,
\end{equation}
a nonzero closed cone in the cotangent space at $m$. The size of
$\WF_m(D)$ measures the singularity of $D$ near $m$: the wavefront set of
a smooth function is just the point zero, and the wavefront set of the
Dirac delta function is the full cotangent space at $m$. The factor of
$i$ is helpful because the definition of $\WF_m$ involves the Fourier
transform of $D$ ``near $m$;'' and this Fourier transform is most
naturally a function on $iT^*_m M$.

Harish-Chandra attached to the irreducible quasisimple representation
$\pi$ a {\em distribution character} $\Theta_\pi$, which is a
generalized function on $G({\mathbb R})$. If $\pi$ is
finite-dimensional, then $\Theta_\pi$ is a smooth function (whose
value at $g \in G({\mathbb R})$ is $\tr\pi(g)$), so $\WF_e(\Theta_\pi) =
\{0\}$. If $\pi$ is infinite-dimensional, then $\Theta_\pi$ must be
singular at the identity, since its ``value'' would be the dimension
of $\pi$; so in this case $\WF_e(\Theta_\pi)$ is a nonzero cone.

In general Howe's definition amounts to
\begin{equation}\label{eq:WFpiB}
  \WF_{\mathbb R}(\pi) \eqdef \WF_e(\Theta_\pi) \subset i{\mathfrak
    g}_{\mathbb R}^*
\end{equation}
Howe proves (\cite{HoweWF}*{Proposition 2.4}) that
\begin{equation}\label{eq:WFpiC}
  \WF_{\mathbb R}(\pi) \subset {\mathcal N}^*_{i\mathbb R},
\end{equation}
a closed finite union of nilpotent coadjoint orbits for $G({\mathbb R})$.

One of the desiderata of
the orbit method is that the asymptotic cone and wavefront set
constructions should be compatible with the proposed map $\Pi$ of
\eqref{eq:orbcorr}: if ${\mathcal O}_{i{\mathbb R}}$ is a
coadjoint orbit, then
\begin{equation}\label{eq:orbrep}
  \WF_{\mathbb R}(\Pi({\mathcal O}_{i{\mathbb R}})) \buildrel?\over=
  \Cone_{\mathbb R}({\mathcal O}_{i{\mathbb R}}).
\end{equation}
When ${\mathcal O}_{i{\mathbb R}}$ is {\em nilpotent},
\eqref{eq:asympnilp} shows that this desideratum simplifies to
\begin{equation}\label{eq:nilporbrep}
  \WF_{\mathbb R}(\Pi({\mathcal O}_{i{\mathbb R}})) \buildrel?\over=
  \overline{{\mathcal O}_{i{\mathbb R}}} \qquad \text{(${\mathcal
      O}_{i{\mathbb R}}$ nilpotent).}
\end{equation}

Our motivation (not achieved) is the construction of a
Kirillov-Kostant orbit-to-representation
correspondence $\Pi$ as in \eqref{eq:orbcorr}. According to
\eqref{eq:pireduction}, it is enough to construct $\Pi({\mathcal
  O}_{i{\mathbb R}})$
for each {\em nilpotent} orbit ${\mathcal O}_{i{\mathbb R}}$. A common
method to do this has been to construct a candidate representation
$\pi$, and then to test whether the requirement \eqref{eq:nilporbrep} is
satisfied. That is,
\begin{equation}\label{eq:unipcand}
    \text{\parbox{.48\textwidth}{if ${\mathcal O}_{i{\mathbb R}}$ is
        nilpotent, candidates for $\Pi({\mathcal O}_{i{\mathbb
          R}})$ must satisfy $\WF_{\mathbb R}(\pi) = \overline{{\mathcal
          O}_{i{\mathbb R}}}$.}}
\end{equation}
In order to use this idea to guide the construction of $\Pi$, we
therefore need to know how to
\begin{equation}\label{eq:WFprob}
  \text{\parbox{.6\textwidth}{compute the wavefront set of
    any quasisimple irreducible representation.}}
\end{equation}
That is the problem solved in this paper.
\end{subequations} %{se:orbmethod}

Everything so far has been phrased in terms of {\em real} nilpotent
coadjoint orbits, but all of the ideas that we will use come from
{\em complex} algebraic geometry. In Sections \ref{sec:nilp} and
\ref{sec:assvar} we will recall results of Kostant-Sekiguchi
\cite{Sek} and Schmid-Vilonen \cite{SV} allowing a reformulation of
\eqref{eq:WFprob} in complex-algebraic terms.

\begin{subequations}\label{se:introR}
We do not know even how properly to formulate our main results except
in this complex-algebraic language, so a proper summary of them will
appear only in Section \ref{sec:cplxalg}. For the moment we will
continue as if it were possible to make a
real-groups formulation of the solution to \eqref{eq:WFprob}. The
reader can take this as an outline of an interesting problem: to make
precise sense of the statements in the rest of the introduction.

We continue with the assumption \eqref{eq:realred}
that $G({\mathbb R})$ is a real reductive algebraic group. Write
\begin{equation}\label{eq:GRrep}
  {\mathcal F}_{\text{\textnormal{mod}}}(G({\mathbb R})) =
  \text{\parbox{.56\textwidth}{${\mathfrak Z}({\mathfrak g})$-finite
      finite length smooth Fr\'echet representations of moderate growth}}
\end{equation}
(see \cite{WallachII}*{Chapter 11.6}). Casselman and Wallach proved
that this is a nice category; the irreducible objects are precisely
the irreducible quasisimple representations $\widehat{G({\mathbb R})}$,
so the Grothendieck group of the category is
\begin{equation}
\K_0({\mathcal F}_{\text{\textnormal{mod}}}(G({\mathbb R}))) = {\mathbb
  Z}\cdot \widehat{G({\mathbb R})},
\end{equation}
a free abelian group with basis the irreducible quasisimple
representations.

About notation: the maximal compact subgroup of a reductive group is
more or less universally denoted $K$, and we are unwilling to change
that notation. This paper makes extensive use of $\K$-theory, beginning
with the Grothendieck group $\K_0$. To try to reduce the confusion with
the compact group $K$, we will write $\K$ to refer to $\K$-theory.

We do not know a good notion of equivariant $\K$-theory for
real algebraic groups. But such a notion ought to exist; and there
ought to be an ``associated graded'' map
\begin{equation}\label{eq:grR}
  \gr_{\mathbb R}\colon \K_0({\mathcal F}_{\text{\textnormal{mod}}}(G({\mathbb
    R}))) \buildrel? \over\rightarrow \K^{G({\mathbb R})}({\mathcal
    N}^*_{i{\mathbb R}}).
\end{equation}
Each element of $\K^{G({\mathbb R})}({\mathcal N}^*_{i{\mathbb R}})$
should have a well-defined ``support,'' which should be a closed
$G({\mathbb R})$-invariant subset of ${\mathcal N}^*_{i{\mathbb
    R}}$. In the case of a quasisimple representation $\pi$ of finite
length, this support should be the wavefront set of
\eqref{eq:WFpiA}:
\begin{equation}\label{eq:suppgrR}
\supp_{\mathbb R}\gr_{\mathbb R}([\pi]) \buildrel ?\over = \WF_{\mathbb R}(\pi).
\end{equation}
The question mark is included because the left side is for the moment
undefined; in the algebraic geometry translation of Definition
\ref{def:assvar}, this equality will become meaningful and true. The
problem \eqref{eq:WFprob} becomes
\begin{equation}\label{eq:WFprobR}
  \text{compute explicitly the map $\supp_{\mathbb R}\circ\gr_{\mathbb R}$.}
\end{equation}
Evidently this can be done in two stages: to compute explicitly the
map $\gr_{\mathbb R}$, and then to compute explicitly the map
$\supp_{\mathbb R}$.

Here is the first step. Just as in the case of highest weight
representations, each irreducible quasisimple representation $\pi$ is
described by the Langlands classification as the unique irreducible
quotient of a ``standard representation.''
Standard representations have very concrete parameters
\begin{equation}\label{eq:param}
  \Gamma = (\Lambda,\nu), \qquad \Gamma \in {\mathcal P}_{\LL}(G({\mathbb R}))
\end{equation}
which we will explain in Section \ref{sec:realalg} (see in particular
\eqref{e:realparams}). For the moment,
the main points are that
\begin{equation}\label{eq:discparam}
  \Lambda \in {\mathcal P}_{\text{\textnormal{disc}}}(G({\mathbb R}))
\end{equation}
runs over a countable {\em discrete} set, and
\begin{equation}\label{eq:contparam}
  \nu \in {\mathfrak a}^*(\Lambda)
  \end{equation}
runs over a complex vector space associated to the discrete parameter
$\Lambda$. Attached to each parameter $\Gamma$ we have
\begin{equation}\label{eq:stdirr}
  I(\Gamma) \twoheadrightarrow J(\Gamma),
\end{equation}
a standard representation and its unique irreducible quotient.
\end{subequations} %se:introR

\begin{quesprop}\label{prop:grR} Suppose we are in the setting of
  \eqref{se:introR}.
  \begin{enumerate}
  \item The irreducible modules
    $$\{J(\Gamma)\mid \Gamma \in {\mathcal P}_{\LL}(G({\mathbb R}))\}$$
    are a ${\mathbb Z}$ basis of the
      Grothendieck group $\K_0({\mathcal F}_{\text{\textnormal{mod}}}
      (G({\mathbb R})))$.
    \item The standard modules
      $$\{I(\Gamma)\mid \Gamma \in {\mathcal
      P}_{\LL}(G({\mathbb R}))\}$$
      are a ${\mathbb Z}$ basis of $\K_0({\mathcal
        F}_{\text{\textnormal{mod}}} (G({\mathbb R})))$.
      \item The change of basis matrix
        $$J(\Gamma) = \sum_\Xi M(\Xi,\Gamma)I(\Xi)$$
        is computed by Kazhdan-Lusztig theory (\cite{LV}).
      \item The image
        $$\gr_{\mathbb R}(I(\Lambda,\nu)) \in \K^{G({\mathbb R})}({\mathcal
        N}^*_{i{\mathbb R}})$$
        is independent of the continuous parameter $\nu\in {\mathfrak
          a}(\Lambda)^*$.
      \item The classes
        $$\{\gr_{\mathbb R}(I(\Lambda,0)) \mid \Lambda \in {\mathcal
        P}_{\text{\textnormal{disc}}}(G({\mathbb R}))\}$$
        are a ${\mathbb Z}$-basis of the equivariant $\K$-theory
        $\K^{G({\mathbb R})}({\mathcal N}^*_{i{\mathbb R}})$.
\end{enumerate}\end{quesprop}

The quotation marks are around the proposition for two reasons. First,
we do not have a definition of $G({\mathbb R})$-equivariant
$\K$-theory; we will actually prove algebraic geometry analogues of
(4) and (5) (Corollary \ref{cor:KKrealbasis} and Theorem
\ref{thm:normalize}). Second, the description of the Langlands
classification above is slightly imprecise; the corrected statement is
just as concrete and precise, but slightly more complicated.

One way to think about (4) is that $\K$-theory is a topological notion,
which ought to be invariant under homotopy. Varying the continuous
parameter in a standard representation is a continuous deformation of
the representations, and so does not change the class in $\K$-theory.

This proposition is a complete computation of
$\gr_{\mathbb R}$: it provides ${\mathbb Z}$ bases for the range and
domain, and says that the map is given by identifying certain
continuous families of basis vectors. Furthermore it explains how to
write each irreducible module in the specified basis.

We turn next to the explicit computation of $\supp_{\mathbb R}$. Again
the key point is a change of basis: this time from the
representation-theoretic basis of equivariant $K$-theory given by
Proposition \ref{prop:grR}(5) (or rather Corollary
\ref{cor:KKrealbasis}) to one related to the geometry of ${\mathcal
  N}^*_{i{\mathbb R}}$.

Suppose that $H({\mathbb R})$ is any real algebraic subgroup of
$G({\mathbb R})$. Assuming that there is a reasonable notion of
equivariant $K$-theory for real algebraic groups, it ought to be true
that
\begin{equation}
  \K^{G({\mathbb R})}(G({\mathbb R})/H({\mathbb R})) \buildrel ? \over
  \simeq \K^{H({\mathbb R})}(\point),
\end{equation}
(As usual the question mark is a reminder that we do not know how to
define this $\K$-theory.) The right side in turn should be a free
abelian group with natural 
basis indexed by the irreducible representations of a maximal compact
subgroup $H_K({\mathbb R})$. Combining these facts with the notion of
support in equivariant $\K$-theory, we get

\begin{quesprop}\label{prop:suppR}
Suppose $Y$ is a closed $G({\mathbb R})$-invariant subset of
${\mathcal N}^*_{i{\mathbb R}}$ (a union of orbit closures). Write
$$\{Y_1,\cdots,Y_r\}, \quad Y_j \simeq G({\mathbb R})/H_j({\mathbb
  R})$$
for the open orbits in $Y$, and
$$\partial Y = Y - \bigcup_j Y_j$$
for their closed complement. Write finally
$$\K_{Y}^{G({\mathbb R})}({\mathcal N}^*_{i{\mathbb R}})$$
for the subspace of classes supported on $Y$.
\begin{enumerate}
\item There is a natural short exact sequence
$$0 \rightarrow \K_{\partial Y}^{G({\mathbb R})}({\mathcal
    N}^*_{i{\mathbb R}}) \rightarrow \K_{Y}^{G({\mathbb R})}({\mathcal
    N}^*_{i{\mathbb R}}) \rightarrow \sum_j \K^{G({\mathbb R})}(Y_j)
  \rightarrow 0.$$
\item There are natural isomorphisms
  $$\K^{G({\mathbb R})}(Y_j) \simeq \K^{H_j({\mathbb R})}(\point) \simeq
  {\mathbb Z}\cdot \widehat{H_{j,K}({\mathbb R})},$$
  a free abelian group with basis indexed by irreducible
  representations of a maximal compact subgroup $H_{j,K}({\mathbb
    R}) \subset H_j({\mathbb R})$.
\item The equivariant $\K$-theory space $\K^{G({\mathbb R})}({\mathcal
  N}^*_{i{\mathbb R}})$ has a ${\mathbb Z}$-basis
  $$\{e({\mathcal O}_{i{\mathbb R}},\tau)\}$$
  indexed by pairs $({\mathcal O}_{i{\mathbb R}},\tau)$, with
  $${\mathcal O}_{i{\mathbb R}} \simeq G({\mathbb R})/H({\mathbb R})
  \subset {\mathcal N}^*_{i{\mathbb R}}$$
  a nilpotent coadjoint orbit, and $\tau\in \widehat{H_K({\mathbb
      R})}$ an irreducible representation of a maximal compact
  subgroup of $H({\mathbb R})$.
  \item The basis vector $e({\mathcal O}_{i{\mathbb R}},\tau)$ is
    supported on $\overline {\mathcal O}_{i{\mathbb R}}$, and has a
    well-defined image in
  $$\K^Y_{G({\mathbb R})}/\K^{\partial Y}_{G({\mathbb
      R})};$$
  that is, it is unique up to a combination of basis vectors
  $e({\mathcal O}'_{i{\mathbb R}},\tau')$, with
  $${\mathcal O}'_{i{\mathbb R}} \subset \partial\overline{{\mathcal O}_{i\mathbb
      R}}.$$
\item Each subspace
  $$\Span\left(\{e({\mathcal O}'_{i{\mathbb R}},\tau')\mid {\mathcal
  O}'_{i{\mathbb R}} \subset \overline{{\mathcal O}_{i\mathbb
    R}} \}\right)$$
  has an explicitly computable spanning set $S({\mathcal O}_{i{\mathbb
    R}})$ expressed in the basis of Proposition \ref{prop:grR}(5).
\item Suppose that $\sigma \in \K^{G({\mathbb R})}({\mathcal
  N}^*_{i{\mathbb R}})$ is a class in equivariant $\K$-theory; write
  $$\sigma = \sum_{{\mathcal O}_{i{\mathbb R}},\tau} m_{{\mathcal
    O}_{i{\mathbb R}},\tau} e({\mathcal O}_{i{\mathbb R}},\tau).$$
  Then
$$\supp_{\mathbb R}(\sigma) = \bigcup_{\substack{{\mathcal O}_{i{\mathbb
        R}} \subset {\mathcal N}^*_{i{\mathbb R}},\\
    \text{some $m_{{\mathcal O}_{i{\mathbb R}},\tau}\ne 0$}}} \overline{{\mathcal
      O}_{i{\mathbb R}}}.$$
  \item The open orbits ${\mathcal O}_{i{\mathbb R},j}$ in
    $\supp_{\mathbb R}(\sigma)$ are the minimal
    ones so that
    $$\sigma \in \sum_j S({\mathcal O}_{i{\mathbb R},j}).$$
  \end{enumerate}
\end{quesprop}

The quotation marks are around this proposition again because we do
not know a good definition of $G({\mathbb R})$-equivariant $\K$-theory,
much less whether it has these nice properties; we will actually
prove versions in algebraic geometry (Theorem \ref{thm:finorb} and
Corollary \ref{cor:KRres}). The ``explicitly computable'' assertion is
explained in Algorithm \ref{alg:acharReal}.

Part (7) of the proposition provides a (linear algebra) computation of
the support from the expression of $\sigma$ in the basis for
$K$-theory of Proposition \ref{prop:grR}(5) ((or rather Corollary
\ref{cor:KKbasis}): we must decide whether a
vector of integers (a Kazhdan-Lusztig character formula, computed
using deep results about perverse sheaves) is in the span of other
vectors of integers (the spanning sets $S({\mathcal O}_{i{\mathbb
    R}})$, computed by much more elementary geometry in part (5)).

In case $G({\mathbb R})$ is a complex group regarded as a real group,
these two propositions (and therefore the algorithm for
\eqref{eq:WFprob}) are closely related to a conjecture of Lusztig,
proved by Bezrukavnikov in \cite{Bezr},
establishing a bijection between
some objects on nilpotent orbits (related to equivariant $\K$-theory)
and dominant weights. These ideas of Lusztig and Bezrukavnikov, and
especially Achar's work in \cite{acharTH}, guided all of our work.
This may be clearest in Section \ref{sec:cplxalgweights}, which
explains (still in the complex case) Achar's ideas for computing the spanning
set of Proposition \ref{prop:suppR}(5).

Section \ref{sec:compassvar} will explain how to
solve \eqref{eq:WFprob} for a complex reductive algebraic group.

Section \ref{sec:realalg} explains the general formalism for
extending matters to real groups. Section \ref{sec:stdK} has some
information about the geometry of cohomological induction, needed to
relate the geometry of nilpotent orbits to the Langlands
classification. This is used in Section \ref{sec:realalgweights} to
complete the proof of Proposition \ref{prop:suppR} for real groups.

\begin{subequations}\label{se:asympcycle}
The wavefront set of \eqref{eq:WFpiA} has a refinement, the
{\em wavefront cycle}:

\begin{equation}\label{eq:asympcycle}
{\mathcal W}{\mathcal F}_{\mathbb R}(\pi) =
\sum_{\substack{\text{${\mathcal O}_{i{\mathbb R}}$ open}\\ \text{ in
      $\WF_{\mathbb R}(\pi)$}}} \mu_{{\mathcal O}_{i{\mathbb R}}}(\pi)
{\mathcal O}_{i{\mathbb R}}.
\end{equation}
Here the coefficient $\mu_{{\mathcal O}_{i{\mathbb R}}}(\pi)$ is a genuine
virtual representation of a maximal compact subgroup of the isotropy
group $G({\mathbb R})_y$ of a point $y\in {\mathcal O}_{i{\mathbb
  R}}$. In the formalism explained in Proposition \ref{prop:suppR},
this means that there should be a natural definition
\begin{equation}\label{eq:asympcycleB}
\mu_{{\mathcal O}_{i{\mathbb R}}}(\pi) \in \K^{G({\mathbb
    R})}({\mathcal O}_{i{\mathbb R}});
\end{equation}
but we will actually use an algebraic geometry definition (Definition
\ref{def:assvar}). In the
setting of Proposition \ref{prop:suppR}(6), the coefficient
$\mu_{{\mathcal O}_{i{\mathbb R}}}(\pi)$ is
\begin{equation}
  \mu_{{\mathcal O}_{i{\mathbb R}}}(\pi) = \sum m_{{\mathcal
      O}_{i{\mathbb R}},\tau}(\pi)\tau;
\end{equation}
so {\em the wavefront cycle can be computed from knowledge of the
  basis vectors $e({\mathcal O}_{i{\mathbb R}},\tau)$}. But what we
actually know how to compute, as explained in Proposition
\ref{prop:suppR}(5), is not these individual basis vectors but rather
the span of all those attached to a single ${\mathcal O}_{i{\mathbb
  R}}$. For this reason we cannot compute the full wavefront
cycle.

There is a weaker invariant, the {\em weak wavefront cycle}:

\begin{equation}\label{eq:weakasympcycle}
{\mathcal W}{\mathcal F}_{\weak,{\mathbb R}}(\pi) =
\sum_{\substack{\text{${\mathcal O}_{i{\mathbb R}}$ open}\\ \text{
    in $\WF_{\mathbb R}(\pi)$}}} m_{{\mathcal O}_{i{\mathbb R}}}(\pi) {\mathcal
  O}_{i{\mathbb R}}, \qquad m_{{\mathcal O}_{i{\mathbb R}}}(\pi) =
\dim\mu_{{\mathcal O}_{i{\mathbb R}}}(\pi).
\end{equation}
Here the coefficient $m_{{\mathcal O}_{i{\mathbb R}}}(\pi)$ is just a positive
integer instead of a compact group representation. The algorithm
computing the spanning set $S({\mathcal O}_{i{\mathbb R}})$
computes the multiplicity $m_s$ for each of its spanning vectors $s$;
so the algorithm of Proposition \ref{prop:suppR} actually computes the
weak wavefront cycle as well as the wavefront set for any finite
length representation $\pi$.
\end{subequations}% {se:asympcycle}

\section{Kostant-Sekiguchi correspondence} \label{sec:nilp}
\setcounter{equation}{0}
\begin{subequations}\label{se:realgrps}
We work in the setting \eqref{eq:realred}, with $G({\mathbb R})$ the
group of real points of a complex connected reductive algebraic group
$G$. Write
\begin{equation}\label{eq:realform}
  \sigma_{\mathbb R}\colon G \rightarrow G, \qquad G^{\sigma_{\mathbb
      R}} = G({\mathbb R})
\end{equation}
for the Galois action. As usual we fix also a {\em compact} real form
$\sigma_0$ of $G$, so that
\begin{equation}\label{eq:theta}
  \sigma_{\mathbb R}\sigma_0 = \sigma_0\sigma_{\mathbb R}
  =_{\text{\textnormal{def}}} \theta\colon G \rightarrow G
\end{equation}
is an (algebraic) involutive automorphism of $G$, the {\em Cartan
  involution}. The group
\begin{equation}\label{eq:K}
  K  =_{\text{\textnormal{def}}} G^\theta, \qquad K({\mathbb R}) =
  K\cap G({\mathbb R})
\end{equation}
is a (possibly disconnected) complex reductive algebraic group, and
$K({\mathbb R})$ is a compact real form. What Cartan showed is that
$K({\mathbb R})$ is a maximal compact subgroup of $G({\mathbb R})$.
We write
\begin{equation}
  {\mathfrak g}({\mathbb R}) = \Lie(G({\mathbb R})), \qquad {\mathfrak
    g} = {\mathfrak g}({\mathbb R}) \otimes_{\mathbb R}{\mathbb C}
  \simeq \Lie(G),
\end{equation}
and use parallel notation for other algebraic groups. The very
familiar decomposition
\begin{equation}
  {\mathfrak g} =  {\mathfrak g}({\mathbb R}) + i {\mathfrak
    g}({\mathbb R})
\end{equation}
is the $+1$ and $-1$ eigenspaces of $\sigma_{\mathbb R}$. The analogue
for $\theta$ is the {\em Cartan decomposition}
\begin{equation}\label{eq:Cartan}
  {\mathfrak g} = {\mathfrak k} + {\mathfrak s}, \qquad {\mathfrak s}
  = {\mathfrak g}^{-\theta}.
  \end{equation}

In this setting, we can define
\begin{equation}\label{eq:gKmod}
  {\mathcal M}_f({\mathfrak g},K) = \text{finite length $({\mathfrak
        g},K)$-modules}
    \end{equation}
    (\cite{Vgreen}). An {\em invariant Hermitian form} on a $({\mathfrak
  g},K)$-module $X$ is a Hermitian bilinear form $\langle,\rangle$ on
$X$ satisfying
\begin{equation}\label{eq:gKherm} \begin{aligned}
\langle k\cdot x,y \rangle &= \langle x,\sigma_{\mathbb
  R}(k^{-1})y\rangle \qquad (x,y \in X, k\in K)\\
\langle Z\cdot x,y \rangle &= \langle x,\sigma_{\mathbb
  R}(-Z)y\rangle \qquad (x,y \in X, Z\in {\mathfrak g}).
\end{aligned}
\end{equation}

Harish-Chandra showed that many questions about functional analysis
and representations of $G({\mathbb R})$ on Hilbert spaces could be
reduced to algebraic questions about $({\mathfrak g},K)$-modules. Here
are some of his main results, and a related result of Casselman and
Wallach. We will use Theorem \ref{thm:algrep}(2) to identify the
objects of our ultimate interest (irreducible unitary representations)
with something easier (irreducible $({\mathfrak g},K)$-modules). 
\end{subequations}%{se:realgrps}

\begin{theorem}\label{thm:algrep}(Harish-Chandra and Casselman-Wallach;
  see \cite{HCI}*{Theorems 2, 3, 6, 8, and 9} and \cite{WallachII}*{11.6.8})
Suppose we are in the setting \eqref{eq:GRrep} and \eqref{se:realgrps}.
    \begin{enumerate}
    \item The functor
      $$V\mapsto V_{K({\mathbb R})} \eqdef \{v\in V\mid
      \dim\Span\langle k\cdot v \mid k\in K({\mathbb R})\rangle
      <\infty\}$$
      is an equivalence of categories from ${\mathcal
        F}_{\text{\textnormal{mod}}}(G({\mathbb R}))$ to ${\mathcal
        M}_f({\mathfrak g},K)$. (Here we implicitly extend the
        differentiated action of ${\mathfrak g}({\mathbb R})$ on $V$
        to the complexification ${\mathfrak g}$, and the locally
        finite representation of the compact group $K({\mathbb R})$ to
        an algebraic representation of its complexification $K$.) In
        particular, the set $\widehat{G({\mathbb R})}$ of irreducible
        quasisimple smooth Fr\'echet representations of moderate
        growth is naturally identified with the set of irreducible
        $({\mathfrak g},K)$-modules.
        \item If $(\pi,{\mathcal H})$ is a unitary representation of $G$
          of finite length, then
          $${\mathcal H}^\infty = \{v\in {\mathcal H} \mid
          G\rightarrow {\mathcal H}, \ g\mapsto \pi(g)v\ \text{is smooth} \}$$
          is a ${\mathfrak Z}({\mathfrak g})$-finite finite length
          smooth Fr\'echet representation of moderate growth. This
          functor defines an inclusion
          $$\widehat{G({\mathbb R})}_{\text{\textnormal{unitary}}}
          \subset \widehat{G({\mathbb R})}.$$
        \item The image of the functor
          $${\mathcal H} \mapsto {\mathcal H}^\infty_{K({\mathbb
            R})}$$
          (from finite length unitary representations to  ${\mathcal
        M}_f({\mathfrak g},K)$) consists precisely of those $({\mathfrak
              g},K)$-modules $X$ admitting a positive definite
            invariant Hermitian form.
    \end{enumerate}
\end{theorem}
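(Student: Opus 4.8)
The plan is to assemble three classical bodies of work rather than to prove anything genuinely new: Harish-Chandra's theory of admissible $\mathfrak Z(\mathfrak g)$-finite representations, the Casselman--Wallach globalization theorem, and Harish-Chandra's criterion for unitarizability of admissible modules. The only genuinely hands-on point is to check that the conditions \eqref{eq:gKherm} are precisely the infinitesimal form of unitarity for a representation of $G({\mathbb R})$; everything else is a matter of citing and organizing.

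For (1), the first observation is that any object $V$ of $\mathcal F_{\text{\textnormal{mod}}}(G({\mathbb R}))$ is admissible, by Harish-Chandra's theorem that a $\mathfrak Z(\mathfrak g)$-finite, finite length representation has finite $K({\mathbb R})$-multiplicities (\cite{HCI}); hence $V\mapsto V_{K({\mathbb R})}$ indeed lands in $\mathcal M_f(\mathfrak g,K)$, and it is exact. That it is an equivalence is exactly the Casselman--Wallach theorem (\cite{WallachII}*{11.6.8}): each finite length $(\mathfrak g,K)$-module $X$ has a canonical smooth moderate-growth globalization $X^\infty$, functorial in $X$, with $(X^\infty)_{K({\mathbb R})}=X$, and every $(\mathfrak g,K)$-morphism extends uniquely to the globalizations (uniqueness from density of $K({\mathbb R})$-finite vectors together with automatic continuity of $G({\mathbb R})$-maps). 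So $X\mapsto X^\infty$ is a quasi-inverse, and since an equivalence of categories carries irreducibles to irreducibles, $\widehat{G({\mathbb R})}$ is identified with the irreducible $(\mathfrak g,K)$-modules.

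For (2), by G{\aa}rding the smooth vectors $\mathcal H^\infty$ are dense in $\mathcal H$, and with the seminorms $v\mapsto \|\pi(u)v\|$ ($u$ in the enveloping algebra) they form a Fr\'echet space carrying a smooth $G({\mathbb R})$-action; moderate growth is automatic since $\|\pi(g)\|_{\mathcal H}=1$. Finite length and $\mathfrak Z(\mathfrak g)$-finiteness pass from $\mathcal H$ to $\mathcal H^\infty$ because passage to smooth vectors is exact on finite length unitary representations and kills no constituent (density again), so $\mathcal H^\infty\in\mathcal F_{\text{\textnormal{mod}}}(G({\mathbb R}))$. For the inclusion of duals, an irreducible unitary $\mathcal H$ has irreducible smooth globalization (its closed invariant subspaces come from those of $\mathcal H$), and the assignment is injective on equivalence classes because an irreducible $(\mathfrak g,K)$-module carries at most one invariant Hermitian inner product up to positive scalar (Schur), so $\mathcal H$ is recovered as the Hilbert completion of $(\mathcal H^\infty)_{K({\mathbb R})}$.

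For (3), the forward inclusion is a short computation: restrict the Hilbert form of a finite length unitary $\mathcal H$ to $X=(\mathcal H^\infty)_{K({\mathbb R})}$; it is positive definite by construction, and differentiating $\langle\pi(\exp tZ)v,w\rangle=\langle v,\pi(\exp(-tZ))w\rangle$ at $t=0$ for $Z\in\mathfrak g({\mathbb R})$, together with $\langle\pi(k)v,w\rangle=\langle v,\pi(k^{-1})w\rangle$ for $k\in K({\mathbb R})$, yields \eqref{eq:gKherm} after extending conjugate-linearly via $\sigma_{\mathbb R}$ in $Z$ and to $K$ in $k$. Conversely, given a finite length $(\mathfrak g,K)$-module $X$ with positive definite invariant Hermitian form, let $\mathcal H$ be its Hilbert completion; one must integrate the $(\mathfrak g,K)$-action to a unitary representation of $G({\mathbb R})$ on $\mathcal H$ with $(\mathcal H^\infty)_{K({\mathbb R})}=X$. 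This is Harish-Chandra's unitarizability theorem, and it is the main obstacle: the route is to embed $X$ via Casselman's subrepresentation theorem as a $(\mathfrak g,K)$-submodule of a (non-unitary) minimal principal series, and to bound the matrix coefficients $g\mapsto\langle\pi(g)v,w\rangle$ ($v,w\in X$) using the explicit asymptotics of principal series matrix coefficients; this shows each $\pi(g)$ is bounded for the given form, hence extends to a unitary operator on $\mathcal H$, with strong continuity and smoothness then standard, and $(\mathcal H^\infty)_{K({\mathbb R})}=X$ follows from Casselman--Wallach uniqueness applied to $X^\infty\subset\mathcal H^\infty$. Everything else is bookkeeping on top of the Casselman--Wallach equivalence, but here one genuinely needs the analytic growth estimates (equivalently, control of the operators $\pi(\exp Z)$), which is where the cited results of Harish-Chandra do the work.
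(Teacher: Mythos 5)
The paper does not prove Theorem~\ref{thm:algrep}: it states it as a known result and points to Harish-Chandra's foundational papers and to Wallach for the Casselman--Wallach theorem. Your proposal correctly identifies and organizes exactly the classical inputs behind those citations (admissibility and unitarizability from Harish-Chandra, G{\aa}rding smooth vectors, the Casselman--Wallach globalization as the quasi-inverse), and the verification that \eqref{eq:gKherm} is the infinitesimal form of unitarity is correct, including the $\sigma_{\mathbb R}$-twist on the complexified $\mathfrak g$ and $K$; so in substance this is the same ``defer to the literature'' approach the paper takes, fleshed out.

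Two small remarks worth recording. First, your route to Harish-Chandra's unitarizability criterion in (3) --- Casselman subrepresentation theorem plus principal-series asymptotics, then boundedness of $\pi(g)$ --- is a modern reconstruction, not Harish-Chandra's original argument in \cite{HCI}, which works directly with analytic vectors and essential self-adjointness of $d\pi(Z)$ for $Z\in i\mathfrak g(\mathbb R)$ and predates Casselman's theorem by two decades; you correctly flag this as ``the main obstacle,'' but the sketch elides the step from ``matrix coefficients are bounded'' to ``the skew-symmetric operators integrate to a strongly continuous unitary group,'' which is where the genuine analytic content (essential self-adjointness / Nelson's analytic vector theorem) lives. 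Second, for a unitary $\mathcal H$, the $K(\mathbb R)$-finite vectors are automatically analytic, hence smooth, so $\mathcal H^\infty_{K(\mathbb R)}=\mathcal H_{K(\mathbb R)}$ and the composite in (3) is just passage to $K(\mathbb R)$-finite vectors; this is implicit in what you wrote but worth stating.
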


\begin{subequations}\label{se:nilcone}
We now describe the geometry that we will use to make geometric
invariants of finite-length representations.

The {\em complex nilpotent cone} consists of elements of ${\mathfrak g}^*$
whose orbits are weak (complex) cones (Definition \ref{def:algcone} below):
\begin{equation}\label{e:cplxnilcone}
{\mathcal N}^* = \{\xi\in {\mathfrak g}^* \mid {\mathbb C}^\times
\cdot \xi \subset G\cdot \xi\}.
\end{equation}
The {\em imaginary nilpotent cone} consists of elements of $i{\mathfrak
  g}({\mathbb R})^*$
whose orbits are (positive real) cones:
\begin{equation}\label{e:realnilcone}\begin{aligned}
    {\mathcal N}^*_{i{\mathbb R}} &= {\mathcal N}^* \cap i{\mathfrak
      g}({\mathbb R})^* \\
    &= \{\text{$-1$ eigenspace of $\sigma_{\mathbb R}$ on ${\mathcal N}^*$}\}\\
    &= \{i\xi\in i{\mathfrak g}({\mathbb R})^*
\mid {\mathbb R}_+^\times
\cdot i\xi \subset G({\mathbb R})\cdot i\xi\}.\end{aligned}
\end{equation}

It is classical that $G$ acts on ${\mathcal N}^*$ with finitely many
orbits; consequently $G({\mathbb R})$ acts on ${\mathcal
  N}^*_{i{\mathbb R}}$ with finitely many orbits.

The {\em $K$-nilpotent cone} is
\begin{equation}\label{e:Knilcone}
\begin{aligned}
{\mathcal N}^*_\theta &= {\mathcal N}^* \cap \left({\mathfrak g}/{\mathfrak
    k}\right)^* \\
&= \{\text{$-1$ eigenspace of $\theta$ on ${\mathcal N}^*$}\}\\
&= \{\xi\in {\mathfrak s}^* \mid {\mathbb C}^\times
\cdot \xi \subset K\cdot \xi\}.
\end{aligned}
\end{equation}
Kostant and Rallis proved in \cite{KR} that $K$ acts on ${\mathcal
  N}^*_\theta$ with finitely many orbits.

\end{subequations} % se:nilcone

We wish now to describe the Kostant-Sekiguchi relationship between
${\mathcal N}^*_{i{\mathbb R}}$ and ${\mathcal N}^*_\theta$. It is a gap
in our understanding that there is no really satisfactory description
of this relationship in terms of orbits on ${\mathfrak g}^*$; rather
we need to use an identification of ${\mathfrak g}^*$ with ${\mathfrak
  g}$.
\begin{subequations}\label{se:gg*}
Our reductive algebraic group $G$ may always be realized as a group of
matrices, in a way respecting the real form and the Cartan involution:
\begin{equation}\label{e:GGL}
  G\subset GL(n,{\mathbb C}),\quad \sigma_{\mathbb R}(g) = \overline
  g, \quad \theta(g) = {}^tg^{-1}
  \quad (g\in G).
\end{equation}
This provides first of all inclusions
\begin{equation}\label{eq:gln}\begin{aligned}
    {\mathfrak g} &\subset {\mathfrak g}{\mathfrak l}(n,{\mathbb C}),\\
    {\mathfrak k}({\mathbb R}) &\subset \text{real skew-symmetric
      matrices}\\
    {\mathfrak s} &\subset \text{complex symmetric matrices}
  \end{aligned}
\end{equation}
(and others of a similar nature)
and then an invariant bilinear form on ${\mathfrak g}$,
\begin{equation}\label{eq:form}
  \langle X,Y\rangle = \tr(XY)
\end{equation}
taking positive real values on ${\mathfrak k}({\mathbb R})$ and
negative real values on ${\mathfrak s}({\mathbb R})$, and making these
spaces orthogonal. It follows that the (complex-valued) form
$\langle,\rangle$ is {\em nondegenerate} on ${\mathfrak g}$ and makes
$\theta$ orthogonal (so that the Cartan decomposition
\eqref{eq:Cartan} is orthogonal).

We use the nondegenerate form $\langle,\rangle$ to identify
\begin{equation}\label{eq:adad*}
  {\mathfrak g} \simeq {\mathfrak g}^*, \quad X \mapsto \xi_X, \quad
  \xi_X(Y) = \langle X,Y\rangle,
\end{equation}
and so also to define a nondegenerate form (still written
$\langle,\rangle$) on ${\mathfrak g}^*$.

We define {\em adjoint nilpotent cones} by
\begin{equation}\label{e:Kadnilcone}\begin{aligned}
    {\mathcal N} &= \{\text{nilpotent\ } X\in {\mathfrak g}\} \\
    {\mathcal N}_{i{\mathbb R}} &= \{\text{nilpotent\ } X\in i{\mathfrak
      g}({\mathbb R})\} \\
  {\mathcal N}_\theta &= \{\text{nilpotent\ } X\in {\mathfrak s}\}\\
\end{aligned}\end{equation}
In each line the term ``nilpotent'' can be interpreted equivalently as
``nilpotent in ${\mathfrak g}{\mathfrak l}(n,{\mathbb C})$ (see
\eqref{eq:gln})'' or as ``$X$ belongs to $[{\mathfrak g},{\mathfrak
    g}]$ and $\ad(X)$ is nilpotent.''

The identification \eqref{eq:adad*} provides equivariant
identifications ${\mathcal N}^* \simeq {\mathcal N}$, ${\mathcal
  N}^*_{i{\mathbb R}} \simeq {\mathcal N}_{i{\mathbb R}}$, and so on. The
choice of form is unique up to a positive scalar on each simple factor
of ${\mathfrak g}$, so the identification of nilpotent adjoint and
coadjoint orbits that it provides is independent of choices.
\end{subequations}

Before discussing nilpotent orbits, we record a familiar but critical
fact about the form $\langle,\rangle$.

\begin{proposition} \label{prop:formpos}
In the setting of \eqref{se:gg*}, suppose that $H\subset G$ is a
complex maximal torus, so that ${\mathfrak h}\subset
{\mathfrak g}$ is a Cartan subalgebra. Write $X^*(H)$ for the lattice
of weights (algebraic characters) of $H$, so that
$$X^*(H) \subset {\mathfrak h}^*, \qquad {\mathfrak h}^* =
X^*(H)\otimes_{\mathbb Z} {\mathbb C}.$$
Then the bilinear form $\langle,\rangle$ has nondegenerate restriction
to ${\mathfrak h}$ and ${\mathfrak h}^*$. It is real-valued and
positive on $X^*(H)$, and therefore positive definite on the
``canonical real form''
$${\mathfrak h}^*_{\RRE} =_{\text{\textnormal{def}}}
X^*(H)\otimes_{\mathbb Z} {\mathbb R}$$
(see \cite{ALTV}*{Definition 5.5}).
\end{proposition}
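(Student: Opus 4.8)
The plan is to exploit the explicit matrix model of \eqref{e:GGL}, where $G \subset GL(n,\mathbb C)$ with $\sigma_{\mathbb R}(g) = \overline g$ and $\theta(g) = {}^tg^{-1}$, and the form is $\langle X, Y\rangle = \tr(XY)$. First I would observe that the Cartan involution $\theta$ preserves $\mathfrak h$ (after possibly conjugating $H$ to a $\theta$-stable maximal torus, which is harmless since the statement is about an abstract maximal torus and all are conjugate; alternatively one argues directly on any $H$ using the fact that $\langle,\rangle$ is already shown to be nondegenerate on all of $\mathfrak g$ and that weights take real values). The key structural input is that on a $\theta$-stable Cartan subalgebra $\mathfrak h$, the real form $\mathfrak h(\mathbb R)$ decomposes as $\mathfrak t(\mathbb R) \oplus \mathfrak a(\mathbb R)$ with $\mathfrak t(\mathbb R) = \mathfrak h(\mathbb R)\cap\mathfrak k(\mathbb R)$ (compact part, $\theta = +1$) and $\mathfrak a(\mathbb R) = \mathfrak h(\mathbb R)\cap\mathfrak s(\mathbb R)$ (split part, $\theta = -1$). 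On $\mathfrak t(\mathbb R)$ the matrices are real skew-symmetric by \eqref{eq:gln}, hence have purely imaginary eigenvalues, while on $\mathfrak a(\mathbb R)$ the matrices are real symmetric, hence have real eigenvalues.

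Next I would identify the lattice $X^*(H)$ concretely: a weight $\lambda \in X^*(H)$ takes, on $\mathfrak h$, values that are $\mathbb Z$-linear combinations of the eigenvalue-coordinates coming from the embedding into $\mathfrak{gl}(n,\mathbb C)$; more precisely, the restriction of the trace form to $\mathfrak h^*$ can be computed by decomposing the standard representation $\mathbb C^n$ of $G$ into $H$-weight spaces with multiplicities, giving $\langle \lambda, \mu\rangle = \sum_{\chi} m_\chi \, \chi(\lambda)\chi(\mu)$ where the sum is over weights $\chi$ of $\mathbb C^n$ and $m_\chi$ their multiplicities — but this needs care about which pairing is meant. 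The cleaner route: the form on $\mathfrak h^*$ dual to $\langle,\rangle|_{\mathfrak h}$ is, for $\lambda, \mu \in X^*(H)$, given by $\langle \lambda, \mu\rangle = \lambda(t_\mu)$ where $t_\mu \in \mathfrak h$ is the element representing $\mu$ under \eqref{eq:adad*}. On the canonical real form $\mathfrak h^*_{\RRE} = X^*(H)\otimes_{\mathbb Z}\mathbb R$, I claim the dual vectors $t_\lambda$ for $\lambda \in X^*(H)_{\mathbb R}$ lie in $i\mathfrak t(\mathbb R)\oplus \mathfrak a(\mathbb R)$ — the real span of the coweight-type directions — and on that subspace the trace form is positive definite: on $i\mathfrak t(\mathbb R)$ it is positive because $\mathfrak t(\mathbb R)$ consists of skew-symmetric matrices (so $\tr(X^2) \le 0$ there, hence $\tr((iX)^2)\ge 0$), and on $\mathfrak a(\mathbb R)$ it is positive because those are symmetric matrices (so $\tr(X^2)\ge 0$), with the two pieces orthogonal and the form nondegenerate forcing strict positivity.

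The cleanest self-contained argument, which I would actually write, goes through $X_*(H)$: characters are real and positive on the cocharacter lattice in the following sense. For $\lambda \in X^*(H)$, consider the element $H_\lambda \in \mathfrak h$ defined via differentiating; one shows $H_\lambda \in i\mathfrak t(\mathbb R) \oplus \mathfrak a(\mathbb R)$ using that $\sigma_0$ (the compact form) is $-$transpose-conjugate and that algebraic characters are $\sigma_0$-real. Then $\langle \lambda,\lambda\rangle = \lambda(H_\lambda)$ is computed as a sum of squares of integers (the pairings of $\lambda$ with the standard-representation weights), manifestly positive and rational. Positivity on all of $\mathfrak h^*_{\RRE}$ follows by extending $\mathbb Z$-bilinearity to $\mathbb R$-bilinearity and noting a positive-definite $\mathbb Z$-valued form on a lattice extends to a positive-definite form on the real span.

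I expect the main obstacle to be bookkeeping about the two different real forms in play — the split/compact pieces governing the signature of $\tr(XY)$ on $\mathfrak g(\mathbb R)$ (where it is indefinite) versus the ``canonical real form'' $\mathfrak h^*_{\RRE} = X^*(H)\otimes\mathbb R$ (where it is definite), which sit inside $\mathfrak h^*$ rotated relative to one another by the factor of $i$ on the compact directions. Getting the sign conventions consistent between \eqref{eq:gln} (skew-symmetric vs.\ symmetric), \eqref{eq:adad*} (the identification $\mathfrak g\simeq\mathfrak g^*$), and the normalization of weights is the only real subtlety; the underlying linear algebra is routine once a $\theta$-stable torus is chosen and the standard-representation weight decomposition is written down. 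I would also need to remark briefly that reduction to a $\theta$-stable $H$ is legitimate, since the form and its positivity on $X^*(H)$ are preserved under the $G$-conjugation that makes $H$ $\theta$-stable.
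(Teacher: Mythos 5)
The paper states this proposition without proof, calling it a ``familiar but critical fact''; your argument is essentially the standard one, and its conclusion is correct. A few small points are worth tightening.

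First, conjugating $H$ to be merely $\theta$-stable is not enough: for the decomposition $\mathfrak h(\mathbb R) = \mathfrak t(\mathbb R) \oplus \mathfrak a(\mathbb R)$ you need $H$ to be both $\theta$-stable \emph{and} defined over $\mathbb R$ (equivalently, $\sigma_0$-stable and defined over $\mathbb R$); $\theta$-stability alone does not imply $\sigma_{\mathbb R}$-stability. That such tori exist is standard, but the wording should be precise. Second, the appeal to ``nondegenerate forcing strict positivity'' is a detour: you do not yet know nondegeneracy of the restriction to $i\mathfrak t(\mathbb R) \oplus \mathfrak a(\mathbb R)$, and the direct computation already gives strictness, since $\tr(X^2) = -\tr(X\,{}^tX) < 0$ for nonzero real skew-symmetric $X$ and $\tr(X^2) = \tr(X\,{}^tX) > 0$ for nonzero real symmetric $X$; orthogonality of the two pieces (from $\tr(XY) = -\tr(XY)$ when one is skew and the other symmetric) then gives positive-definiteness outright. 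Third, the claim in your ``self-contained'' variant that $\langle\lambda,\lambda\rangle$ is a sum of squares of \emph{integers} is too strong: for $SL(2)$ with the trace form the fundamental weight satisfies $\langle\omega,\omega\rangle = 1/2$. What is true, and suffices, is that it is a sum of squares of \emph{real} numbers, namely $\sum_j d\chi_j(H_\lambda)^2$ where $\chi_j$ run over the weights of the faithful representation $\mathbb C^n$ and $H_\lambda \in i\mathfrak t(\mathbb R)\oplus\mathfrak a(\mathbb R)$. Finally, you can streamline by taking $H$ to be $\sigma_0$-stable (the complexification of a maximal torus $T_0$ of the compact form $G^{\sigma_0}$), so $\mathfrak h = \mathfrak t_0 \oplus i\mathfrak t_0$ with $\mathfrak t_0$ consisting of anti-Hermitian matrices; then $\tr(X\bar X^t) = \sum|X_{ij}|^2$ makes the form negative definite on $\mathfrak t_0$, hence positive definite on $i\mathfrak t_0$, and every algebraic character has real differential there because it is unitary on $T_0$. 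This avoids introducing the split part $\mathfrak a$ at all.
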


\begin{theorem}[Jacobson-Morozov]\label{thm:JM}
In the setting of \eqref{se:gg*}, suppose that $\xi \in {\mathcal
  N}^*$ is a nilpotent linear functional. Define
$$E\in {\mathcal N} \subset {\mathfrak g}$$
by the requirement $\xi_E = \xi$ ({\em cf.} \eqref{eq:adad*}).
  \begin{enumerate}
    \item We can find elements $D$ and $F$ in ${\mathfrak g}$ so that
      $$[D,E] = 2E, \quad [D,F] = -2F, \quad [E,F] = D.$$
      These elements specify an algebraic map
      $$\phi = \phi_{D,E,F}\colon SL(2,\mathbb C) \rightarrow G,$$
      $$d\phi\begin{pmatrix} 1 & 0 \\ 0 & -1\end{pmatrix} = D, \quad
      d\phi\begin{pmatrix} 0 & 1 \\ 0 & 0\end{pmatrix} = E, \quad
      d\phi\begin{pmatrix} 0 & 0 \\ 1 & 0\end{pmatrix} = F.$$
      \item The element $F$ is uniquely determined up to the adjoint
        action of
        $$G^E \eqdef \{g\in G \mid \Ad(g)(E) = E\};$$
        and the elements $E$ and $F$ determine $D$ and $\phi$.
      \item The Lie algebra grading
        $${\mathfrak g}_r =_{\text{\textnormal{def}}} \{X\in
        {\mathfrak g} \mid [D,X] = rX\}$$
        is by integers. Consequently
        $${\mathfrak q} =_{\text{\textnormal{def}}} \sum_{r\ge 0}
        {\mathfrak g}_r$$
        is a parabolic subalgebra of ${\mathfrak g}$, with Levi
        decomposition
        $${\mathfrak l} = {\mathfrak g}_0 = {\mathfrak g}^D, \qquad
        {\mathfrak u} = \sum_{r>0}{\mathfrak g}_r.$$
        We write
        $$Q=LU, \quad L = G^D$$
        for the corresponding parabolic subgroup.
        \item The centralizer $G^E$ (defined in (2)) is contained in
          $Q$. Hence ${\mathfrak q}$ depends only on $E$ (and
          not on the choice of $F$ and $D$ used to define it).
          \item The Levi decomposition $Q=LU$ of $Q$ restricts to a
            Levi decomposition
            $$G^E = L^E U^E = G^{\phi(SL(2))} U^E.$$
            Here the first factor is reductive (but possibly
            disconnected), and the second is connected, unipotent, and
            normal.
          \item The orbit
            $$L\cdot E \simeq L/L^E \subset {\mathfrak g}_2$$
            is open and dense. Furthermore
            $$Q\cdot E \simeq Q/G^E = (L\cdot E) + \sum_{r>2} {\mathfrak g}_r.$$
\end{enumerate}
\end{theorem}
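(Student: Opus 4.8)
\emph{Proof plan.} The plan is to treat the six assertions in order, since each rests on the structure produced by the earlier ones; I expect only the canonicity of the parabolic in (4) to require real work. \emph{Parts (1)--(2)} are the classical Jacobson--Morozov and Kostant arguments, whose only essential non-formal input is that $E\in\im(\ad E)$: in the matrix model \eqref{e:GGL}, any $Z\in\mathfrak g^E$ commutes with the nilpotent matrix $E$, so $EZ$ is nilpotent and $\langle E,Z\rangle=\tr(EZ)=0$; since the invariant form \eqref{eq:form} is nondegenerate and invariant, $(\im\ad E)^{\perp}=\ker\ad E=\mathfrak g^E$, whence $E\in\im\ad E$. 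From there one produces a neutral element $D$ (automatically semisimple once a triple exists) and the element $F$ by the standard argument, and $\phi$ by integrating the resulting homomorphism $\mathfrak{sl}(2,\mathbb C)\to\mathfrak g$, using that $SL(2,\mathbb C)$ is simply connected. For (2) I would use that two neutral elements for $E$ differ by an element of $\mathfrak g^E$, and that the set of all neutral elements for $E$ is a single orbit of the connected unipotent group $\exp(\mathfrak g^E\cap\sum_{r>0}\mathfrak g_r)\subset G^E$; this reduces the comparison of two triples with the same $E$ to the case $D=D'$, where $F-F'\in\mathfrak g^E\cap\mathfrak g_{-2}=0$ because $\ker\ad E$ has only nonnegative $\ad D$-weights. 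The last clause of (2) then follows from $D=[E,F]$ and the prescription of $d\phi$ on the standard generators.

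\emph{Part (3).} Through $(D,E,F)$ the space $\mathfrak g$ is a finite-dimensional $\mathfrak{sl}(2,\mathbb C)$-module in which $\ad D$ acts as the standard semisimple generator; its eigenvalues are therefore integers, so $\mathfrak g=\bigoplus_{r\in\mathbb Z}\mathfrak g_r$. As $[\mathfrak g_a,\mathfrak g_b]\subset\mathfrak g_{a+b}$, the subalgebra $\mathfrak q=\sum_{r\ge0}\mathfrak g_r$ has nilpotent ideal $\mathfrak u=\sum_{r>0}\mathfrak g_r$ and $\mathfrak g_0=\mathfrak g^D$. I would identify $\mathfrak q$ with the parabolic $P(\lambda)=\{g:\lim_{t\to0}\lambda(t)g\lambda(t)^{-1}\text{ exists in }G\}$ attached to the cocharacter $\lambda(t)=\phi\left(\begin{smallmatrix}t&0\\0&t^{-1}\end{smallmatrix}\right)$; this standard fact gives simultaneously that $\mathfrak q$ is parabolic, that $L=G^D=Z_G(\lambda)$ is its connected reductive Levi factor, and that $\mathfrak u=\Lie U$ with $U=R_u(Q)$.

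\emph{Parts (4)--(6).} By $\mathfrak{sl}(2)$-theory $\mathfrak g^E=\ker\ad E$ consists of highest-weight vectors, so it has only nonnegative $\ad D$-weights: $\mathfrak g^E\subset\mathfrak q$, hence $(G^E)^{\circ}\subset Q$. For (4) I would then show $G^E\subseteq Q$ outright: given $g\in G^E$, $(\Ad(g)D,E,\Ad(g)F)$ is again an $\mathfrak{sl}(2)$-triple, so $\Ad(g)D$ is a neutral element for $E$ and is $U^E$-conjugate to $D$ by the orbit statement used in (2), where $U^E:=\exp(\mathfrak g^E\cap\mathfrak u)\subset U$; writing $\Ad(u)\Ad(g)D=D$ with $u\in U^E$ gives $ug\in G^D=L\subset Q$, so $g\in Q$. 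Given (4), (5) falls out of the Levi decomposition $Q=LU$: for $g=\ell u\in G^E$ ($\ell\in L$, $u\in U$), comparing degree-$2$ components of $E=\Ad(\ell)\Ad(u)E$ yields $\Ad(\ell)E=E$, so $\ell\in G^E\cap L$ and $u\in G^E\cap U=U^E$; moreover $G^E\cap L=G^{\phi(SL(2))}$ (an element of $G^E\cap G^D$ also fixes the unique element of $\mathfrak g_{-2}$ bracketing with $E$ to give $D$, i.e.\ $F$), this being the centralizer of a reductive subgroup hence reductive, and $U^E$ is connected unipotent and normal in $G^E$ because $\mathfrak g^E\cap\mathfrak u$ is the nilradical of $\mathfrak g^E$ (the quotient $\mathfrak g^E/(\mathfrak g^E\cap\mathfrak u)\cong\Lie G^{\phi(SL(2))}$ being reductive). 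Finally for (6): the tangent space to $L\cdot E$ at $E$ is $\ad E(\mathfrak g_0)$, which equals $\mathfrak g_2$ since on each irreducible $\mathfrak{sl}(2)$-summand of $\mathfrak g$ the map $\ad E$ carries the weight-$0$ space onto the weight-$2$ space; so $L\cdot E$ is a full-dimensional, irreducible, locally closed subset of the irreducible variety $\mathfrak g_2$, hence open and dense. Similarly $\ad E$ maps $\mathfrak u=\sum_{r\ge1}\mathfrak g_r$ onto $\sum_{r>2}\mathfrak g_r$, so the (Zariski-closed) unipotent orbit $U\cdot E$ equals $E+\sum_{r>2}\mathfrak g_r$; since $L$ preserves the grading, $Q\cdot E=L\cdot(U\cdot E)=(L\cdot E)+\sum_{r>2}\mathfrak g_r$, isomorphic to $Q/G^E$ by (4).

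\emph{Main obstacle.} Everything above is $\mathfrak{sl}(2)$-representation theory together with standard facts about the cocharacter $\lambda$, with one exception: the canonicity of $\mathfrak q$ in (4), which I have reduced to the claim that the neutral elements completing $E$ to a triple form a single orbit of the unipotent group $\exp(\mathfrak g^E\cap\mathfrak u)$ --- equivalently, Kostant's theorem that all $\mathfrak{sl}(2)$-triples with a prescribed nilpositive element $E$ are $G^E$-conjugate. Establishing this cleanly (the usual route being a graded successive-approximation argument, or a vanishing of $H^1$ for the unipotent group $U^E$) is where the genuine work lies; alternatively one may cite it from the standard references on nilpotent orbits, or deduce it from Kempf's theory of the optimal destabilizing parabolic.
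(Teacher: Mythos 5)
The paper offers no proof of Theorem \ref{thm:JM}: after the statement it simply says ``a convenient reference for the proof is \cite{CM}*{Theorem 3.3.1},'' and the companion facts you need (uniqueness of the triple, centralizer structure, openness of the $L$-orbit) all appear in the same chapter of Collingwood--McGovern. So there is no paper proof to compare yours against; what you have written is a correct self-contained reconstruction of the standard Kostant--Collingwood--McGovern line of argument. Your reduction of everything to $\mathfrak{sl}_2$-weight bookkeeping plus the single non-formal input $E\in\im(\ad E)$ is exactly right, and you have correctly located the real technical content in Kostant's theorem that the neutral elements for a fixed $E$ form a single $U^E$-orbit (CM Theorem 3.4.10); citing that, as the paper does, is perfectly in keeping with the authors' intent.

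Two small remarks for tightness. The step where you conclude $Z:=D'-D\in\mathfrak g^E\cap\mathfrak u$ deserves the one-line justification that $\ker\ad E\cap\im\ad E$ is precisely the span of highest-weight vectors of \emph{strictly positive} weight; this is what seeds the graded successive-approximation and is otherwise easy to state sloppily. And in part (5), normality of $U^E$ in the possibly disconnected group $G^E$ is cleaner to get from the group-level fact that $G^E\subset Q$ normalizes the characteristic subgroup $U=R_u(Q)$, hence preserves $\mathfrak u$ and therefore $\mathfrak g^E\cap\mathfrak u$; the Lie-algebra nilradical observation alone only handles the identity component.
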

A convenient reference for the proof is \cite{CM}*{Theorem 3.3.1}.

It is standard to call the elements of the ``$SL(2)$ triple''
$(E,H,F)$, but we prefer to reserve the letter $H$ for algebraic
groups and particularly for maximal tori. The letter $D$ may be taken
to stand for ``diagonal,'' or just to be the predecessor of $E$ and
$F$.

\begin{corollary}[Mal'cev \cite{Mal}]\label{cor:JM}
Suppose $E$ and $E'$ are nilpotent elements of ${\mathfrak g}$; choose
Lie triples $(E,D,F)$ and $(E',D',F')$ as in Theorem \ref{thm:JM}. Then $D$
is conjugate to $D'$ if and only if $E$ is conjugate to $E'$.
\end{corollary}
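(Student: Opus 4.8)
The plan is to deduce both implications from the uniqueness statements in Theorem~\ref{thm:JM}. One direction is immediate: if $E$ and $E'$ are conjugate, say $\Ad(g)E = E'$, then $(\Ad(g)E,\Ad(g)D,\Ad(g)F)$ is a Lie triple through $E'$, so by Theorem~\ref{thm:JM}(2) (applied to $E'$) there is $h\in G^{E'}$ with $\Ad(h)\Ad(g)F = F'$; then $\Ad(hg)$ carries the whole triple $(E,D,F)$ to $(E',D',F')$, and in particular $\Ad(hg)D = D'$. (Here we use that once $E'$ and $F'$ are fixed, $D'$ is determined, by Theorem~\ref{thm:JM}(2).)

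For the converse, suppose $\Ad(g)D = D'$ for some $g\in G$. Replacing $(E,D,F)$ by $(\Ad(g)E,\Ad(g)D,\Ad(g)F)$ — which is again a Lie triple satisfying the $SL(2)$ relations, now with neutral element $D'$ — we reduce to the case $D = D'$; it then suffices to show that any two nilpositive elements $E, E'$ completing the \emph{same} semisimple element $D$ to an $SL(2)$-triple are conjugate under $G^D$. Both $E$ and $E'$ lie in ${\mathfrak g}_2 = \{X : [D,X] = 2X\}$, and by Theorem~\ref{thm:JM}(6) the orbit $L\cdot E$ is open and dense in ${\mathfrak g}_2$, where $L = G^D$; likewise $L\cdot E'$ is open and dense in ${\mathfrak g}_2$. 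Two nonempty open dense subsets of the irreducible variety ${\mathfrak g}_2$ must intersect, so $L\cdot E \cap L\cdot E' \neq \emptyset$, whence $L\cdot E = L\cdot E'$ and $E$ is conjugate to $E'$ (by an element of $L = G^D \subset G$). This completes the converse.

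The main obstacle is making sure the "reduction to a common $D$" step is legitimate: one must check that conjugating an $SL(2)$-triple by a fixed $g\in G$ produces another triple satisfying exactly the bracket relations of Theorem~\ref{thm:JM}(1), so that part~(6) applies to it with $L$ replaced by $G^{\Ad(g)D}= \Ad(g)G^D\Ad(g)^{-1}$; this is routine since $\Ad(g)$ is a Lie algebra automorphism, but it is the point where the argument could go wrong if one is careless about which centralizer is acting. The only ingredient beyond Theorem~\ref{thm:JM} is the elementary fact that two nonempty open dense subsets of an irreducible variety meet — and that ${\mathfrak g}_2$, being a linear space, is irreducible.
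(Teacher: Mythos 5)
Your proof is correct and follows essentially the same route as the paper: the forward direction is exactly the paper's invocation of Theorem~\ref{thm:JM}(2), and the converse is the paper's argument that, after reducing to $D=D'$, the orbits $L\cdot E$ and $L\cdot E'$ are both open and dense in ${\mathfrak g}_2$ and hence coincide. The extra care you take about conjugating the triple and identifying the centralizer is sound but is the same reduction the paper performs implicitly with ``we may as well assume that they are equal.''
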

\begin{proof} The assertion ``if'' follows from Theorem
  \ref{thm:JM}(2). So assume that $D$ and $D'$ are conjugate; we may
  as well assume that they are equal. Then the parabolic subalgebras
  ${\mathfrak q}$ and ${\mathfrak q}'$ are equal, along with their
  gradings. By Theorem \ref{thm:JM}(6), the two orbits $L\cdot E$
  and $L\cdot E'$ are both open and Zariski dense in ${\mathfrak
    g}_2$, so they must coincide. That is, $E'$ is conjugate to $E$ by
  $L$.\end{proof}

It was our intention to credit the preceding corollary to Jacobson and
Morozov, of whose work in the 1940s this seemed to be an immediate
corollary. A referee has suggested that more careful attribution is
appropriate. Kostant offers a proof in \cite{TDS}*{Corollary 4.2},
and he attributes the statement to Mal'cev.

\begin{theorem}[Kostant-Rallis \cite{KR}, Kostant-Sekiguchi
    \cite{Sek}]\label{thm:KR}
Use the notation of \eqref{se:realgrps}, \eqref{se:nilcone}, and
Theorem \ref{thm:JM}.
  \begin{enumerate}
\item Assume that $i\xi_{\mathbb R}\in{\mathcal N}^*_{i{\mathbb R}}$ is a real
  nilpotent element, or equivalently that the element $iE_{\mathbb R}$ belongs to
  ${\mathcal N}_{i{\mathbb R}}$. Then the element $iF_{\mathbb R}$ may also
  be chosen to belong to ${\mathcal N}_{i{\mathbb R}}$ (so that
  automatically $D_{\mathbb R} \in {\mathfrak g}({\mathbb R})$). Such
  a choice is unique up to conjugation by $G({\mathbb R})^E$. Equivalently,
  the map $\phi$ may be chosen to be defined over ${\mathbb R}$:
 $$\phi_{\mathbb R}\colon SL(2,{\mathbb R}) \rightarrow G({\mathbb
    R}).$$
    $$d\phi_{\mathbb R}\begin{pmatrix} 1 & 0 \\ 0 & -1\end{pmatrix} =
    D_{\mathbb R}, \quad d\phi_{\mathbb R}\begin{pmatrix} 0 & 1 \\ 0 &
      0\end{pmatrix} = E_{\mathbb R}, \quad d\phi_{\mathbb
        R}\begin{pmatrix} 0 & 0 \\ 1 & 0\end{pmatrix} =  F_{\mathbb R}.$$
\item With choices as in (1), the Jacobson-Morozov parabolic $Q=LU$
    of Theorem \ref{thm:JM}(2) is defined over ${\mathbb R}$. The
    Levi decomposition
    $$Q({\mathbb R}) = L({\mathbb R})U({\mathbb R})$$
    restricts to a Levi decomposition
    $$G^{iE_{\mathbb R}} = L^{iE_{\mathbb R}} U^{iE_{\mathbb R}}.$$
    The first factor is $G({\mathbb R})^{\phi_{\mathbb R}}$,
    a (possibly disconnected) real reductive algebraic group, and the
    second factor is connected, unipotent, and normal.
 \item The orbit
    $$L({\mathbb R})\cdot iE_{\mathbb R} \simeq L({\mathbb
      R})/L({\mathbb R})^{iE_{\mathbb R}} \subset i{\mathfrak
      g}_2({\mathbb R})$$
    is open but not necessarily dense. The open orbits of $L({\mathbb
      R})$ on this vector space are in one-to-one correspondence with
    the $G({\mathbb R})$ orbits on ${\mathcal N}_{i{\mathbb R}}$ having
   associated semisimple element (Corollary \ref{cor:JM}) conjugate to
   $D_{\mathbb R}$. Furthermore
    $$Q({\mathbb R})\cdot iE_{\mathbb R} \simeq Q({\mathbb
      R})/Q({\mathbb R})^{iE_{\mathbb R}} = \left(L({\mathbb R})\cdot
    iE_{\mathbb R}\right) + \sum_{r>2} i{\mathfrak g}({\mathbb R})_r.$$
  \item After replacing $(iE_{\mathbb R},\phi_{\mathbb R})$ by a
    conjugate $(iE_{{\mathbb R},\theta},\phi_{{\mathbb R},\theta})$ under
    $G({\mathbb R})$, we may assume that the map $\phi$ also respects
    the Cartan involution:
    $$\phi_{{\mathbb R},\theta}({}^t g^{-1}) = \theta\left(\phi_{{\mathbb
        R},\theta}(g)\right), \qquad iF_{{\mathbb R},\theta} =
    -\theta(iE_{{\mathbb R},\theta})$$
    \item Assume that $\xi_{\theta}\in{\mathcal N}^*_{\theta}$ is a
      $K$-nilpotent element, or equivalently that the element $E_{\theta}$
      belongs to ${\mathcal N}_{\theta} \subset {\mathfrak s}$ (the $-1$
      eigenspace of $\theta$). Then the element
      $F_{\theta}$ may also be chosen in ${\mathfrak s}$, and
      in this case $D_\theta$ belongs to ${\mathfrak k}$. Such choices
  are unique up to conjugation by $K^{E_\theta}$. They define
  an algebraic map {\small
      $$\phi_\theta \colon SL(2,\mathbb C) \rightarrow G,$$
      $$d\phi_\theta\begin{pmatrix} 0 & i \\ -i & 0\end{pmatrix} =
  D_\theta, \quad
  {\frac{1}{2}}\cdot d\phi_\theta\begin{pmatrix} 1 & -i \\ -i &
  -1\end{pmatrix} = E_\theta, \quad {\frac{1}{2}}\cdot
  d\phi_\theta\begin{pmatrix} 1 & i \\ i & -1\end{pmatrix} = F_\theta$$}
which respects $\theta$:
$$\phi_{\theta}({}^tg^{-1}) = \theta\left(\phi_\theta(g)\right).$$
  \item With choices as in (5), the Jacobson-Morozov parabolic
    $Q_\theta=L_\theta U_\theta$ (defined as in Theorem
    \ref{thm:JM}(2) using $D_\theta$) is $\theta$-stable. The
    Levi decomposition
    $$Q_\theta\cap K = (L_\theta\cap K)(U_\theta\cap K)$$
    restricts to a Levi decomposition
    $$K^{E_\theta} = (L_\theta\cap K)^{E_\theta}(U_\theta\cap K)^{E_\theta}.$$
    The first factor is $K^{\phi_\theta}$,
    a (possibly disconnected) complex reductive algebraic group, and
    the second factor is connected, unipotent, and normal.
 \item The orbit
    $$(L_\theta\cap K)\cdot E_\theta \simeq
    (L_\theta\cap K)/(L_\theta\cap K)^{E_\theta} \subset {\mathfrak s}_2$$
    is open and (Zariski) dense. Furthermore
    $$(Q_\theta\cap K)\cdot E_\theta \simeq (Q_\theta\cap
    K)/(Q_\theta\cap K)^{E_\theta} = \left((L_\theta\cap K)\cdot
    E_\theta\right) + \sum_{r>2} {\mathfrak s}_r.$$
  \item After replacing $(E_\theta,\phi_\theta)$ by a conjugate
    $(E_{\theta,{\mathbb R}},\phi_{\theta,{\mathbb R}})$ under
    $K$, we may assume that the map $\phi_\theta$ also respects
    the real form:
    $$\phi_{\theta,{\mathbb R}}(\overline g) = \sigma_{\mathbb
      R}(\phi_{\theta,{\mathbb R}}(g)), \qquad F_{\theta,{\mathbb R}} =
    \sigma_{\mathbb R}(E_{\theta,{\mathbb R}}).$$
  \end{enumerate}
\end{theorem}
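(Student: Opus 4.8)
The plan is to treat the whole theorem as a refinement of the complex Jacobson--Morozov theorem (Theorem \ref{thm:JM}) in which one keeps track of the extra symmetry supplied by $\sigma_{\mathbb R}$ and by $\theta$. The uniform mechanism is a repackaging of the uniqueness clauses of Theorem \ref{thm:JM}: for a fixed nilpotent $E\in{\mathfrak g}$, let ${\mathcal F}(E)$ be the variety of $F\in{\mathfrak g}$ completing $E$ to a Lie triple $(E,D,F)$ (the neutral element $D=[E,F]$ is then forced). Writing a conjugating element of $G^E$ as an element of $U^E$ times an element of $G^{\phi(SL(2))}$ --- the latter centralising the whole triple --- Theorem \ref{thm:JM}(2),(5) show that $U^E$ acts transitively on ${\mathcal F}(E)$, while an element of $U^E$ fixing a point of ${\mathcal F}(E)$ lies in $U^E\cap G^{\phi(SL(2))}=\{1\}$. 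Thus \emph{${\mathcal F}(E)$ is a torsor under the connected unipotent group $U^E$}, and every ``existence and uniqueness of completions'' statement below becomes a statement about rational (or fixed) points of this torsor.

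For parts (1)--(3): when $iE_{\mathbb R}\in{\mathcal N}_{i{\mathbb R}}$, the element $E_{\mathbb R}$ is $\sigma_{\mathbb R}$-fixed, so ${\mathcal F}(E_{\mathbb R})$, $G^{E_{\mathbb R}}$ and its unipotent radical $U^{E_{\mathbb R}}$ are defined over ${\mathbb R}$; a torsor under a connected unipotent ${\mathbb R}$-group has a rational point because $H^1({\mathbb R},U^{E_{\mathbb R}})=0$ (dévissage along the lower central series reduces this to $H^1({\mathbb R},{\mathbb G}_a)=0$), and that rational point is the required $iF_{\mathbb R}$, with $D_{\mathbb R}=[E_{\mathbb R},F_{\mathbb R}]\in{\mathfrak g}({\mathbb R})$; uniqueness up to $G({\mathbb R})^E$ is the fact that ${\mathcal F}(E_{\mathbb R})({\mathbb R})$ is a single orbit under $U^{E_{\mathbb R}}({\mathbb R})=U^{iE_{\mathbb R}}$. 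Part (2) is then formal: $D_{\mathbb R}$ real makes the ${\mathbb Z}$-grading and $Q=LU$ defined over ${\mathbb R}$, and $\sigma_{\mathbb R}$-fixed points of the $\sigma_{\mathbb R}$-equivariant decomposition $G^E=G^{\phi(SL(2))}\ltimes U^E$ give $G^{iE_{\mathbb R}}=L^{iE_{\mathbb R}}U^{iE_{\mathbb R}}$ with reductive part $G({\mathbb R})^{\phi_{\mathbb R}}$. For part (3), openness of $L({\mathbb R})\cdot iE_{\mathbb R}$ in $i{\mathfrak g}_2({\mathbb R})$ is the real-points version of Theorem \ref{thm:JM}(6) (the orbit map is submersive since $[{\mathfrak l}({\mathbb R}),E_{\mathbb R}]={\mathfrak g}_2({\mathbb R})$), and finiteness together with the claimed bijection follows by identifying the finitely many $L({\mathbb R})$-orbits on the ${\mathbb R}$-points of the dense complex orbit $L\cdot iE_{\mathbb R}$ with the $G({\mathbb R})$-orbits in ${\mathcal N}_{i{\mathbb R}}$ whose associated semisimple element (Corollary \ref{cor:JM}) is conjugate to $D_{\mathbb R}$; two such are $L({\mathbb R})$-conjugate iff the nilpotents are $G({\mathbb R})$-conjugate, which reduces --- via the uniqueness of $D_{\mathbb R}$ up to $G({\mathbb R})^E$ just proved --- to the fact that any two associated semisimple elements of a fixed real nilpotent are $G({\mathbb R})$-conjugate.

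Parts (5)--(7) are the symmetric-pair analogue of Kostant--Rallis \cite{KR}. For $E_\theta\in{\mathcal N}_\theta\subset{\mathfrak s}$ one has $\theta(E_\theta)=-E_\theta$, so $G^{E_\theta}=G^{-E_\theta}$ is $\theta$-stable, hence so are its unipotent radical $U^{E_\theta}$ and (by the equivariance built into $\phi_\theta$) the reductive part $G^{\phi_\theta}$. The map $F\mapsto-\theta(F)$ is a $\theta$-equivariant involution of the torsor ${\mathcal F}(E_\theta)$ with fixed locus $\{F\in{\mathfrak s}\}$; its fixed points form a torsor under $(U^{E_\theta})^\theta=U^{E_\theta}\cap K$, which is nonempty because $H^1({\mathbb Z}/2,U^{E_\theta}({\mathbb C}))=0$ (the dévissage now uses that $({\mathbb C},+)$ is uniquely divisible, hence cohomologically trivial for finite groups). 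This yields $F_\theta\in{\mathfrak s}$, unique up to $U^{E_\theta}\cap K\subset K^{E_\theta}$, with $D_\theta=[E_\theta,F_\theta]\in[{\mathfrak s},{\mathfrak s}]\subset{\mathfrak k}$; the homomorphism $\phi_\theta$ with the stated twisted equivariance is obtained by precomposing the homomorphism attached to $(E_\theta,D_\theta,F_\theta)$ with conjugation by the fixed element of $SL(2,{\mathbb C})$ (the Cayley transform) carrying the three displayed matrices in (5) to the standard triple of Theorem \ref{thm:JM}(1) --- a direct matrix check. Parts (6)--(7) then run as in (2)--(3) with $\theta$ replacing $\sigma_{\mathbb R}$: $D_\theta\in{\mathfrak k}$ makes the grading and $Q_\theta=L_\theta U_\theta$ $\theta$-stable, $\theta$-fixed points of $G^{E_\theta}=G^{\phi_\theta}\ltimes U^{E_\theta}$ give $K^{E_\theta}=K^{\phi_\theta}(U_\theta\cap K)^{E_\theta}$, and density of $(L_\theta\cap K)\cdot E_\theta$ in ${\mathfrak s}_2$ is the Kostant--Rallis assertion that the generic nilpotent of ${\mathfrak s}_2$ has an open $(L_\theta\cap K)$-orbit (a dimension count inside the symmetric pair $(L_\theta,L_\theta\cap K)$).

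Parts (4) and (8) are the Kostant--Sekiguchi refinements making the two symmetries simultaneously compatible, and I expect these to be the main obstacle. For (4): from a real triple $(E_{\mathbb R},D_{\mathbb R},F_{\mathbb R})$, first conjugate by $G({\mathbb R})$ so that $D_{\mathbb R}$ lies in a maximal abelian subspace ${\mathfrak a}\subset{\mathfrak s}({\mathbb R})$ --- possible since $\ad(D_{\mathbb R})$ has integer, hence real, eigenvalues, so $D_{\mathbb R}$ is hyperbolic. Then $\theta(D_{\mathbb R})=-D_{\mathbb R}$ forces $\theta(E_{\mathbb R})\in{\mathfrak g}_{-2}({\mathbb R})$, so $-\theta(E_{\mathbb R})$ is a candidate for $F$, and the substance of the theorem is that the real torsor ${\mathcal F}(E_{\mathbb R})$ contains this ``normal'' completion; this is not formal --- it is the classical compactness/convexity argument of Kostant--Sekiguchi built on the positive-definite form $\langle X,\theta Y\rangle$ on ${\mathfrak g}({\mathbb R})$, and it also yields uniqueness up to $K({\mathbb R})$. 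Part (8) is the mirror image: starting from a $\theta$-triple from (5)--(7), conjugate by $K$ so that $D_\theta\in i{\mathfrak k}({\mathbb R})$ (again $D_\theta$ is hyperbolic, and a hyperbolic element of the complex reductive group $K$ with compact real form $K({\mathbb R})$ is $K$-conjugate into $i{\mathfrak k}({\mathbb R})$); then $\sigma_{\mathbb R}(D_\theta)=-D_\theta$ gives $\sigma_{\mathbb R}(E_\theta)\in{\mathfrak g}_{-2}\cap{\mathfrak s}$, and the same compactness argument --- now for the compact form $\sigma_0=\sigma_{\mathbb R}\theta$ --- furnishes a $K$-conjugate with $F_\theta=\sigma_{\mathbb R}(E_\theta)$, i.e.\ with $\phi_{\theta,{\mathbb R}}$ intertwining complex conjugation on $SL(2,{\mathbb C})$ with $\sigma_{\mathbb R}$. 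Unlike (1) and (5), parts (4) and (8) are not consequences of unipotent cohomology vanishing or of Theorem \ref{thm:JM} alone; they rest on the genuinely analytic input (compactness of $K({\mathbb R})$, i.e.\ positivity of $\langle\,\cdot\,,\theta\,\cdot\,\rangle$) underlying the Kostant--Sekiguchi correspondence. For complete details see \cite{KR}, \cite{Sek}, and \cite{CM}*{\S9.4}.
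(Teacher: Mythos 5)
The paper itself offers no proof of Theorem~\ref{thm:KR}; it is stated with attributions to \cite{KR} and \cite{Sek} and is then used as a black box. So there is no ``paper's own argument'' to compare against, and the right question is whether your blind reconstruction is sound.

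For parts (1)--(3) and (5)--(7) it is, and the torsor packaging is a clean way to organize the statements. The observation that $\mathcal F(E)$ is a $U^E$-torsor follows correctly from Theorem~\ref{thm:JM}(2),(5): the reductive factor $G^{\phi(SL(2))}$ is exactly the stabilizer of a point, so $\mathcal F(E)\simeq G^E/G^{\phi}\simeq U^E$. Vanishing of $H^1(\mathbb R,U^{E_{\mathbb R}})$ and of $H^1(\langle\theta\rangle,U^{E_\theta}(\mathbb C))$ (both by d\'evissage to the additive group, which is cohomologically trivial) then produces the rational/fixed point and its uniqueness up to the relevant unipotent group. One point worth making explicit, which you elide: the self-map $F\mapsto-\theta(F)$ of $\mathcal F(E_\theta)$ is well defined because applying $\theta$ to a triple $(E,D,F)$ with $E\in\mathfrak s$ gives $(-E,\theta D,\theta F)$, and then the sign flip $(E',D',F')\mapsto(-E',D',-F')$ preserves the triple relations, so that $(E,\theta D,-\theta F)$ is again a triple with the same nilpositive element. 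Parts (2),(3),(6),(7) then follow, as you say, from $\sigma_{\mathbb R}$- or $\theta$-equivariance of the Jacobson--Morozov constructions; the openness and density in (7) is really forced by $[\mathfrak l_\theta\cap\mathfrak k,E_\theta]=\mathfrak s_2$ (splitting $[\mathfrak l_\theta,E_\theta]=\mathfrak g_2$ into its $\pm\theta$ pieces), so the ``dimension count'' phrasing is a little indirect but not wrong. Your bijection argument in (3) via uniqueness of $D_{\mathbb R}$ up to $G(\mathbb R)^E$ is terse but correct.

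You are also right that (4) and (8) do not reduce to unipotent $H^1$-vanishing: the obstruction is to finding a completion compatible with \emph{two} involutions simultaneously, and the relevant fixed-point statement uses the positivity of $\langle X,\theta Y\rangle$ on $\mathfrak g(\mathbb R)$ (compactness of $K(\mathbb R)$), exactly the Kostant--Sekiguchi input. Since you cite \cite{KR}, \cite{Sek}, and \cite{CM}*{\S9.4} for those two parts, your treatment matches what the paper does (defer to the literature), and your identification of where the formal argument stops is accurate.
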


\begin{corollary}
Suppose that $iE_{\mathbb R}$ and $iE'_{\mathbb R}$ are nilpotent
  elements of $i{\mathfrak g}({\mathbb R})$; choose
Lie triples $(iE_{\mathbb R},D_{\mathbb R},iF_{\mathbb R})$ and
$(iE_{\mathbb R}',D_{\mathbb R}',iF_{\mathbb R}')$ as in Proposition
\ref{thm:KR}.
\begin{enumerate}
\item The  semisimple Lie algebra elements
  $$iE_{\mathbb R} - iF_{\mathbb R}\quad \text{and}\quad iE_{\mathbb R}' -
  iF_{\mathbb R}'$$
 are conjugate by $G({\mathbb R})$ if and only if
  $iE_{\mathbb R}$ is conjugate to $iE_{\mathbb R}'$ by $G({\mathbb R})$
\item Suppose $E_{\theta}$ and $E'_{\theta}$ are nilpotent
  elements of ${\mathfrak s}$; choose
Lie triples $(E_{\theta},D_{\theta},F_{\theta})$ and
$(E_{\theta}',D_{\theta}',F_{\theta}')$ as in Proposition
\ref{thm:KR}. Then $D_{\theta}$ is conjugate to $D_{\theta}'$
by $K$ if and only if $E_{\theta}$ is conjugate to $E_{\theta}'$ by
$K$.
\end{enumerate}
\end{corollary}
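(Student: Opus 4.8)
The plan is to obtain part~(2) as a direct transcription of the proof of Corollary~\ref{cor:JM} (Mal'cev), with the pair $(G,{\mathfrak g})$ replaced by $(K,{\mathfrak s})$ and Theorem~\ref{thm:JM} replaced by Theorem~\ref{thm:KR}(5)--(7), and then to deduce part~(1) from part~(2) by transporting everything through the Kostant--Sekiguchi correspondence. For part~(2), the ``if'' direction is immediate: if $\Ad(k)E_\theta = E'_\theta$ for $k\in K$, then by the uniqueness of $F_\theta$ up to $K^{E_\theta}$ in Theorem~\ref{thm:KR}(5) we may further arrange $\Ad(k)F_\theta = F'_\theta$, whence $\Ad(k)D_\theta = \Ad(k)[E_\theta,F_\theta] = D'_\theta$. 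For ``only if'' conjugate so that $D_\theta = D'_\theta$; then the $\theta$-stable Jacobson--Morozov parabolic $Q_\theta = L_\theta U_\theta$ and its grading ${\mathfrak s} = \bigoplus_r{\mathfrak s}_r$ depend only on $D_\theta$ (Theorem~\ref{thm:KR}(6)), hence agree for the two triples, and by Theorem~\ref{thm:KR}(7) the orbits $(L_\theta\cap K)\cdot E_\theta$ and $(L_\theta\cap K)\cdot E'_\theta$ are both Zariski dense in ${\mathfrak s}_2$, so they coincide and $E_\theta$ is $K$-conjugate to $E'_\theta$.

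For part~(1), the ``if'' direction is again routine: conjugating $iE_{\mathbb R}$ to $iE'_{\mathbb R}$ by $g\in G({\mathbb R})$ turns the first real triple into a real Jacobson--Morozov triple for $iE'_{\mathbb R}$, which by Theorem~\ref{thm:KR}(1) is $G({\mathbb R})^{iE'_{\mathbb R}}$-conjugate to the second, so a single element of $G({\mathbb R})$ carries $iE_{\mathbb R}-iF_{\mathbb R}$ to $iE'_{\mathbb R}-iF'_{\mathbb R}$. For ``only if'', use Theorem~\ref{thm:KR}(4) to replace $iE_{\mathbb R},iE'_{\mathbb R}$ by $G({\mathbb R})$-conjugates for which $\phi,\phi'$ respect both the real form and $\theta$; this changes $iE_{\mathbb R}-iF_{\mathbb R}$ and $iE'_{\mathbb R}-iF'_{\mathbb R}$ only by $G({\mathbb R})$-conjugacy, so the hypothesis is preserved. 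Inside the abstract ${\mathfrak s}{\mathfrak l}(2,{\mathbb C})$ the element that $\phi$ maps to $iE_{\mathbb R}$ is the standard nilpotent $e$, while the element it maps to the semisimple generator $D_\theta$ of the $\theta$-triple of Theorem~\ref{thm:KR}(5) is a fixed nonzero multiple of $e-f$ (namely $i(e-f)$); hence the Kostant--Sekiguchi partner $E_\theta\in{\mathcal N}_\theta$ of $iE_{\mathbb R}$ has $\theta$-triple semisimple element $D_\theta = i(iE_{\mathbb R}-iF_{\mathbb R})$, and likewise $D'_\theta = i(iE'_{\mathbb R}-iF'_{\mathbb R})$. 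Since the Kostant--Sekiguchi correspondence (\cite{Sek}) is a bijection from $G({\mathbb R})$-orbits on ${\mathcal N}_{i{\mathbb R}}$ to $K$-orbits on ${\mathcal N}_\theta$ carrying $G({\mathbb R})\cdot iE_{\mathbb R}$ to $K\cdot E_\theta$, and since part~(2) gives ``$E_\theta$ is $K$-conjugate to $E'_\theta$'' $\iff$ ``$D_\theta$ is $K$-conjugate to $D'_\theta$'', everything reduces to the implication
\[
\text{$D_\theta,D'_\theta\in{\mathfrak k}\cap i{\mathfrak g}({\mathbb R})$ are conjugate by $G({\mathbb R})$}\ \Longrightarrow\ \text{they are conjugate by $K$.}
\]

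This last implication is the one step that is not formal, and I expect it to be the main obstacle. The argument I would give: ${\mathfrak k}\cap i{\mathfrak g}({\mathbb R}) = i{\mathfrak k}({\mathbb R})$ with ${\mathfrak k}({\mathbb R}) = \Lie K({\mathbb R})$ compact, so conjugating by $K({\mathbb R}) = K\cap G({\mathbb R})$, which lies in both $K$ and $G({\mathbb R})$ and hence preserves both conjugacy relations, we may assume $D_\theta,D'_\theta\in i{\mathfrak t}({\mathbb R})$ for a fixed maximal torus $T\subset K$ with ${\mathfrak t}({\mathbb R})$ maximal in ${\mathfrak k}({\mathbb R})$. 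Then ${\mathfrak h}({\mathbb R}) := Z_{{\mathfrak g}({\mathbb R})}({\mathfrak t}({\mathbb R}))$ is a $\theta$-stable \emph{maximally compact} Cartan subalgebra of ${\mathfrak g}({\mathbb R})$, with ${\mathfrak h}({\mathbb R}) = {\mathfrak t}({\mathbb R})\oplus{\mathfrak a}({\mathbb R})$, ${\mathfrak a}({\mathbb R})\subset{\mathfrak s}({\mathbb R})$, and $D_\theta,D'_\theta\in i{\mathfrak t}({\mathbb R})\subset{\mathfrak h}$. A $G({\mathbb R})$-conjugacy between two elements of a Cartan subalgebra can be taken inside the real Weyl group $W(G({\mathbb R}),H({\mathbb R}))$ (reduce to the reductive group $Z_{G({\mathbb R})}(D'_\theta)$ and use conjugacy of its Cartan subgroups); any such element is induced by an automorphism of ${\mathfrak g}({\mathbb R})$, hence preserves the decomposition of ${\mathfrak h}({\mathbb R})$ into elliptic and hyperbolic parts, so it preserves ${\mathfrak t}({\mathbb R})$ and acts there through $W(K,T)$ --- for the maximally compact Cartan, $W(G({\mathbb R}),H({\mathbb R}))|_{{\mathfrak t}({\mathbb R})}$ is generated by reflections in the compact imaginary roots, which are exactly the roots of $({\mathfrak k},{\mathfrak t})$, together with products $s_\alpha s_{\theta\alpha}$ for complex roots $\alpha$, each again lying in $W(K,T)$ upon restriction to ${\mathfrak t}({\mathbb R})$. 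This yields $D_\theta$ and $D'_\theta$ conjugate by $K$ and finishes part~(1). (Disconnectedness of $K$ or $G({\mathbb R})$ contributes a finite group that acts compatibly on both sides and does not affect the conclusion; the same comparison is what one would need to run the argument instead directly in the setting of Theorem~\ref{thm:KR}(1)--(3), by showing the open $L({\mathbb R})$-orbits on $i{\mathfrak g}_2({\mathbb R})$ are separated by the $G({\mathbb R})$-conjugacy class of $iE_{\mathbb R}-iF_{\mathbb R}$.)
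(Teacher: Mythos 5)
Your part (2) is exactly the intended argument (the paper gives no proof of this corollary; part (2) is the verbatim transcription of the Mal'cev argument of Corollary \ref{cor:JM} using Theorem \ref{thm:KR}(5)--(7), and for part (1) the authors explicitly say it ``is not quite immediate from the proposition'' and decline to prove or use it). Your reduction of part (1) to the single claim that two elements of ${\mathfrak k}\cap i{\mathfrak g}({\mathbb R})=i{\mathfrak k}({\mathbb R})$ which are $G({\mathbb R})$-conjugate must be $K$-conjugate is sound, as are both ``if'' directions. (The factor of $i$ in ``$D_\theta=i(iE_{\mathbb R}-iF_{\mathbb R})$'' is a slip --- Theorem \ref{thm:KR}(1) and (5) applied to a common $\phi$ give $D_\theta=iE_{\mathbb R}-iF_{\mathbb R}$ on the nose --- but only a fixed constant is needed, so this is harmless.)

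The gap is in your justification of that key claim. The parenthetical ``reduce to the reductive group $Z_{G({\mathbb R})}(D'_\theta)$ and use conjugacy of its Cartan subgroups'' invokes a false principle: Cartan subgroups of a real reductive group fall into several conjugacy classes, so knowing that ${\mathfrak h}({\mathbb R})$ and $\Ad(g){\mathfrak h}({\mathbb R})$ are both Cartan subalgebras of ${\mathfrak z}_{{\mathfrak g}({\mathbb R})}(D'_\theta)$ does not by itself produce an element of the centralizer moving one to the other (two Cartans of a Levi can be conjugate in $G({\mathbb R})$ without being conjugate in the Levi). To repair your route you must observe that both Cartan subalgebras are \emph{fundamental} (maximally compact) in the centralizer --- for instance because neither admits a real root, a condition preserved by $\Ad(g)$ --- and fundamental Cartan subalgebras do form a single conjugacy class; the rest of your Weyl-group analysis then goes through. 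A shorter repair avoids Weyl groups entirely: write $g=k\exp Z$ with $k\in K({\mathbb R})$ and $Z\in{\mathfrak s}\cap{\mathfrak g}({\mathbb R})$, so that $\Ad(\exp Z)D_\theta=\Ad(k^{-1})D'_\theta$; applying $\theta$ and using $\theta D_\theta=D_\theta$, $\theta D'_\theta=D'_\theta$, $\theta(\exp Z)=\exp(-Z)$ yields $\Ad(\exp 2Z)D_\theta=D_\theta$. Since $Z_G(D_\theta)$ is a $\theta$-stable Levi defined over ${\mathbb R}$, compatibility of its Cartan decomposition with that of $G({\mathbb R})$ forces $[Z,D_\theta]=0$, whence $D'_\theta=\Ad(k)D_\theta$ with $k\in K({\mathbb R})\subset K$. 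With either repair your proof of part (1) is complete.
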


The first assertion is not quite immediate from the proposition, and
we will not use it; we include it only to show that there {\em is} a
way of parametrizing real nilpotent classes by real semisimple
classes.

\begin{corollary}\label{cor:KS}
In the setting of \eqref{se:realgrps} and
\eqref{se:nilcone}, there are bijections among the following sets:
\begin{enumerate}
\item $G({\mathbb R})$ orbits on ${\mathcal N}^*_{i{\mathbb R}}$;
\item $G({\mathbb R})$ orbits on ${\mathcal N}_{i{\mathbb R}}$;
\item $G({\mathbb R})$ orbits of group homomorphisms
  $$\phi_{\mathbb R}\colon SL(2,{\mathbb R}) \rightarrow G({\mathbb
  R});$$
\item $K({\mathbb R})$ orbits of group homomorphisms
  $$\phi_{{\mathbb R},\theta}\colon SL(2,{\mathbb R}) \rightarrow G({\mathbb
  R})$$
sending inverse transpose to the Cartan involution $\theta$;
\item $K({\mathbb R})$ orbits of group homomorphisms
  $$\phi\colon SL(2) \rightarrow G$$
  sending inverse transpose to the Cartan involution $\theta$, and
  sending complex conjugation to $\sigma_{\mathbb R}$;
\item $K$ orbits of group homomorphisms
  $$\phi\colon SL(2) \rightarrow G$$
sending inverse transpose to the Cartan involution $\theta$;
\item $K$ orbits on ${\mathcal N}_\theta$; and
\item $K$ orbits on ${\mathcal N}^*_\theta$.
\end{enumerate}
The correspondences (1)$\leftrightarrow$(2) and
(7)$\leftrightarrow$(8) are given by \eqref{se:gg*};
(2)$\leftrightarrow$(3) by Theorem \ref{thm:KR}(2); and so on.

All the maximal compact subgroups of the isotropy groups for the orbits
above are naturally isomorphic, with isomorphisms defined up to inner
automorphisms, to $K({\mathbb R})^{\phi_{{\mathbb R},\theta}}$.

The bijection (1)$\leftrightarrow$(8) preserves the closure relations
between orbits. Corresponding orbits ${\mathcal O}_{\mathbb R}$ and
${\mathcal O}_\theta$ are $K({\mathbb R})$-equivariantly diffeomorphic.
\end{corollary}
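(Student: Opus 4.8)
The plan is to build the chain of bijections step by step out of Theorem~\ref{thm:KR}, and then treat the isotropy, closure, and diffeomorphism assertions separately. The two outermost bijections cost nothing: by \eqref{se:gg*} the identification ${\mathfrak g}\simeq{\mathfrak g}^*$ is $G$-equivariant and carries ${\mathcal N}^*_{i{\mathbb R}}$ onto ${\mathcal N}_{i{\mathbb R}}$, and is $K$-equivariant and carries ${\mathcal N}^*_\theta$ onto ${\mathcal N}_\theta$, giving (1)$\leftrightarrow$(2) and (7)$\leftrightarrow$(8). The two ``vertical'' Jacobson--Morozov steps are read off Theorem~\ref{thm:KR}: for (2)$\leftrightarrow$(3), Theorem~\ref{thm:KR}(1) attaches to $iE_{\mathbb R}\in{\mathcal N}_{i{\mathbb R}}$ a homomorphism $\phi_{\mathbb R}\colon SL(2,{\mathbb R})\to G({\mathbb R})$ with $d\phi_{\mathbb R}\left(\begin{smallmatrix}0&1\\0&0\end{smallmatrix}\right)=E_{\mathbb R}$, unique up to conjugation by $G({\mathbb R})^{E_{\mathbb R}}$, and reading off that matrix entry inverts it; (6)$\leftrightarrow$(7) is identical via Theorem~\ref{thm:KR}(5), with $E_\theta=\tfrac12\,d\phi_\theta\left(\begin{smallmatrix}1&-i\\-i&-1\end{smallmatrix}\right)$ unique up to $K^{E_\theta}$-conjugacy. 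Each step commutes with the relevant group actions and so descends to orbit sets.

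Next I would handle the three ``horizontal'' steps, which move between $G({\mathbb R})$, $K$, $K({\mathbb R})$ and between the two real structures. Step (4)$\leftrightarrow$(5) is pure algebra: an ${\mathbb R}$-morphism $\phi_{{\mathbb R},\theta}\colon SL(2,{\mathbb R})\to G({\mathbb R})$ respecting inverse-transpose extends uniquely, by base change to ${\mathbb C}$, to $\phi\colon SL(2)\to G$, and --- because $SL(2,{\mathbb R})$ is Zariski dense in $SL(2)$ and $\phi$ is defined over ${\mathbb R}$ with image in $G^{\sigma_{\mathbb R}}$ --- the extension carries complex conjugation to $\sigma_{\mathbb R}$ and still respects $\theta$; restriction inverts this, $K({\mathbb R})$-equivariantly. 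For (3)$\leftrightarrow$(4): Theorem~\ref{thm:KR}(4) gives a $\theta$-respecting representative in each $G({\mathbb R})$-conjugacy class, and if $\phi$ and $g\phi g^{-1}$ ($g\in G({\mathbb R})$) both respect $\theta$, then with the Cartan decomposition $g=k\exp(Y)$, $k\in K({\mathbb R})$, $Y\in{\mathfrak s}({\mathbb R})$, one checks that $\theta(g)^{-1}g=\exp(2Y)$ centralizes $\phi(SL(2))$; since $G({\mathbb R})^{\phi}=G({\mathbb R})^{\phi_{{\mathbb R},\theta}}$ is $\theta$-stable and reductive (Theorem~\ref{thm:KR}(2)), its global Cartan decomposition forces $\exp(Y)\in G({\mathbb R})^{\phi}$, so $g\phi g^{-1}=k\phi k^{-1}$ with $k\in K({\mathbb R})$. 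Thus $G({\mathbb R})$-orbits of $\phi_{\mathbb R}$ correspond to $K({\mathbb R})$-orbits of $\phi_{{\mathbb R},\theta}$. Step (5)$\leftrightarrow$(6) is the same argument inside the complex reductive group $K$: surjectivity of ``forget the $\sigma_{\mathbb R}$-condition, pass to $K$-orbits'' is Theorem~\ref{thm:KR}(8), and injectivity uses the polar decomposition $K=K({\mathbb R})\exp(i{\mathfrak k}({\mathbb R}))$ together with $\sigma_{\mathbb R}$-stability and reductivity of $K^{\phi_{\theta,{\mathbb R}}}$ (Theorem~\ref{thm:KR}(6)). Composing the chain gives the eight-fold bijection, and following the maps gives the stated descriptions of the correspondences.

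For the isotropy claim: Theorem~\ref{thm:KR}(2) writes the isotropy group of an orbit in (2) as a semidirect product $G({\mathbb R})^{\phi_{\mathbb R}}\ltimes U^{iE_{\mathbb R}}$ with $U^{iE_{\mathbb R}}$ real unipotent, hence contractible; so its maximal compact subgroup coincides with one of $G({\mathbb R})^{\phi_{\mathbb R}}$, and taking $\phi$ to respect $\theta$ this $\theta$-stable reductive group has maximal compact $K({\mathbb R})^{\phi_{{\mathbb R},\theta}}$. The mirror argument via Theorem~\ref{thm:KR}(6),(8) gives $K({\mathbb R})^{\phi_{\theta,{\mathbb R}}}$ --- the compact real form of the complex reductive $K^{\phi_{\theta,{\mathbb R}}}$ --- as a maximal compact subgroup of $K^{E_\theta}$. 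Under the correspondence a single $\phi\colon SL(2)\to G$ respecting both $\theta$ and $\sigma_{\mathbb R}$ serves for both ends, with $\phi|_{SL(2,{\mathbb R})}=\phi_{{\mathbb R},\theta}$ and $\phi=\phi_{\theta,{\mathbb R}}$, and by Zariski density of $SL(2,{\mathbb R})$ in $SL(2)$ the two centralizers in $K({\mathbb R})$ agree; this is the common group $K({\mathbb R})^{\phi_{{\mathbb R},\theta}}$. The residual freedom in the choice of $\phi$ inside its conjugacy class (Theorem~\ref{thm:KR}(1),(4),(5),(8)) is conjugation by the isotropy group, which acts on its own centralizer by inner automorphisms --- hence ``up to inner automorphisms.''

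The last two assertions --- that (1)$\leftrightarrow$(8) preserves closure order, and that corresponding orbits ${\mathcal O}_{\mathbb R}$ and ${\mathcal O}_\theta$ are $K({\mathbb R})$-equivariantly diffeomorphic --- are the geometric heart of the statement and are not formal consequences of Jacobson--Morozov. Here I would invoke the Kostant--Sekiguchi--Vergne theorem: realized through a Cayley transform, the correspondence carries with it a $K({\mathbb R})$-equivariant diffeomorphism ${\mathcal O}_{\mathbb R}\simeq{\mathcal O}_\theta$ (\cite{Sek}; see also \cite{SV}), which extends to a stratification-preserving homeomorphism of orbit closures inducing exactly the bijection (1)$\leftrightarrow$(8) on strata, so closure order is preserved; the closure statement alone can also be quoted directly from Barbasch--Sepanski. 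This is the step I expect to be the main obstacle: everything before it is bookkeeping with Theorem~\ref{thm:KR} plus the standard polar-decomposition trick, whereas the diffeomorphism and closure-order statements require genuine analytic input from the geometry of the Sekiguchi correspondence.
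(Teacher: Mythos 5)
Your proof is correct and tracks the paper's (quite terse) treatment: the paper itself supplies no argument beyond pointing at Theorem~\ref{thm:KR} for the chain of bijections and citing \cite{BS} and \cite{Vergne} for the last two assertions, while you fill in exactly the details the paper omits --- the Cartan decomposition $g=k\exp(Y)$ with $g^{-1}\theta(g)=\exp(-2Y)$ landing in the reductive centralizer to get $K(\mathbb{R})$-conjugacy from $G(\mathbb{R})$-conjugacy of $\theta$-respecting triples (and the mirror polar-decomposition argument inside $K$), plus the observation that the unipotent part $U^{iE_{\mathbb{R}}}$ of the isotropy group is simply connected and so contributes nothing to a maximal compact subgroup. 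This is the standard and intended route, and the arguments are sound.

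One attribution should be corrected, though it does not affect the mathematics: the $K(\mathbb{R})$-equivariant diffeomorphism ${\mathcal O}_{\mathbb{R}}\simeq{\mathcal O}_\theta$ is Vergne's theorem \cite{Vergne}, not Sekiguchi's or Schmid--Vilonen's; \cite{Sek} establishes the bijection of orbit sets and \cite{SV} uses related deformation ideas for characteristic cycles, but neither proves the diffeomorphism you quote. Relatedly, your suggestion that the orbit-level diffeomorphisms ``extend to a stratification-preserving homeomorphism of orbit closures'' is an overclaim not established by those references and is unnecessary --- as you note, Barbasch--Sepanski \cite{BS} prove the closure-order preservation directly, and that is the citation the paper uses. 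With those two fixes to the final paragraph, the proposal matches the paper's intent.
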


The assertions in the last paragraph are due to Barbasch-Sepanski
\cite{BS} and Vergne \cite{Vergne} respectively.

The bijection may also be characterized by either of the following
equivalent conditions:
\begin{equation}\begin{aligned}
\frac{1}{2}\left(-iE_{\mathbb R} - iF_{\mathbb R} +D_{\mathbb
  R}\right) &\text{\ is conjugate by $K$ to\ } E_\theta\\
iE_{\mathbb R} - iF_{\mathbb R} &\text{\ is conjugate by $K$ to\ } D_\theta.
\end{aligned}\end{equation}
This formulation makes clear what is slightly hidden in the formulas
of Theorem \ref{thm:KR}(5): that the bijection {\em does not
  depend on a chosen square root of -1}.
Changing the choice replaces $iE_{\mathbb R}$ by $-iE_{\mathbb R}$,
and therefore twists the $SL(2,{\mathbb R})$ homomorphism
$\phi_{\mathbb R}$ by inverse
transpose. At the same time $E_\theta$ and $F_\theta$ are
interchanged, which has the effect of twisting $\phi_\theta$ by
inverse transpose.

\begin{definition}\label{def:KS}
If ${\mathcal O}$ is a $G$-orbit on ${\mathcal N}^*$, then a
$G({\mathbb R})$ orbit
$${\mathcal O}_{\mathbb R}\subset {\mathcal N}^*_{i{\mathbb R}} \cap
{\mathcal O}$$
is called a {\em real form} of ${\mathcal O}$. We will call a $K$ orbit
$${\mathcal O}_\theta\subset {\mathcal N}^*_\theta \cap {\mathcal O}$$
a {\em $\theta$ form} of ${\mathcal O}$. The Kostant-Sekiguchi theorem
says that there is a natural bijection between real forms and
$\theta$ forms.
\end{definition}

\begin{definition}\label{def:geom}
A {\em (global) geometric parameter} for $(G,K)$ is a nilpotent $K$-orbit
$Y \subset {\mathcal N}^*_\theta$, together with an
irreducible $K$-equivariant vector bundle
$${\mathcal E} \rightarrow Y.$$
Equivalently, a {\em (local) geometric parameter} is a $K$-conjugacy
class of pairs
$$(\xi,(\tau,E)),$$
with $\xi\in {\mathcal N}^*_\theta$ a nilpotent element,
and $(\tau,E)$ an irreducible (algebraic) representation of the isotropy group
$K^\xi$. This bijection between local and global parameters identifies
$(\xi,(\tau,E))$ with the pair
$$Y = K\cdot \xi \simeq K/K^\xi, \qquad {\mathcal E} \simeq
K\times_{K^\xi} E.$$
We write ${\mathcal P}_{g}(G,K)$ for the collection of
geometric parameters.
\end{definition}

\section{Asymptotic cones}\label{sec:cones}
\setcounter{equation}{0}
This section is a digression, intended as another kind of motivation
for the orbit method. The (very elementary)
ideas play no role in the proofs of our main theorems. They appear
only in the desideratum \eqref{eq:orbrep} for deciding which
representations might reasonably be attached to which coadjoint
orbits. In order to provide some mathematical excuse for the material, we
will include a single serious conjecture (Conjecture
\ref{conj:autrep}) about automorphic forms.

\begin{subequations}\label{se:realasymp}
Suppose
\begin{equation}\label{eq:coord}
  V \simeq {\mathbb R}^n
\end{equation}
is a finite-dimensional real vector space. A {\em ray} in $V$ is by
definition a subset
\begin{equation}
  R(v) = {\mathbb R}_{\ge 0}\cdot v \subset V \qquad (0\ne v\in V).
\end{equation}
We write
\begin{equation}
  {\mathcal R}(V) = \{\text{rays in $V$} \} \simeq S^{n-1};
\end{equation}
an isomorphism with the $(n-1)$-sphere is induced by an isomorphism
\eqref{eq:coord}. The resulting smooth manifold structure on
${\mathcal R}(V)$ is of course independent of the isomorphism. There
is a natural fiber bundle
\begin{equation}\label{eq:tautR}
{\mathcal B}(V) = \{(v,r)\mid r\in {\mathcal R}(V),\ v\in r\}
\buildrel{\pi}\over{\longrightarrow} {\mathcal R}(V),
\end{equation}
the {\em tautological ray bundle} over ${\mathcal R}(V)$. Projection
on the first factor defines a proper map
\begin{equation}
  {\mathcal B}(V)\buildrel{\mu}\over{\longrightarrow} V, \quad (v,r)
  \mapsto v;
\end{equation}
the map $\mu$ is an isomorphism over the preimage of $V\backslash
\{0\}$ (consisting of the open rays in the bundle), and $\mu^{-1}(0) =
     {\mathcal R}(V)$ (the compact sphere).
\end{subequations} % se:realasymp

\begin{definition}\label{def:cone} In the setting of
  \eqref{se:realasymp}, a {\em cone} $C \subset V$ is any subset
  closed under scalar multiplication by ${\mathbb R}_{\ge 0}$. A {\em
    weak cone} is any subset closed under scalar multiplication by
  ${\mathbb R}_{>0}$.
\end{definition}

\begin{definition} \label{def:asympcone} In the setting of
  \eqref{se:realasymp}, suppose
  $S\subset V$ is an arbitrary subset. The {\em asymptotic cone of S}
  is
  $$\Cone_{\mathbb R}(S) = \{v \in V \mid \exists \epsilon_i \rightarrow +0, s_i\in
  S, \lim_{i\rightarrow \infty} \epsilon_is_i = v\}.$$
  Here $\{\epsilon_i\}$ is a sequence of positive real numbers going to
  $0$, and $s_i$ is any sequence of elements of $S$.
\end{definition}

Here are some elementary properties of the asymptotic cone.
\begin{enumerate}\label{facts:asympcone}
\item The set $\Cone_{\mathbb R}(S)$ is a closed cone.
\item The set $\Cone_{\mathbb R}(S)$ is nonempty if and only if $S$ is nonempty.
\item The set $\Cone_{\mathbb R}(S)$ is contained in $\{0\} \subset V$
  if and only if $S$ is bounded.
\item If $C$ is a weak cone (Definition \ref{def:cone}), then the
  asymptotic cone is the closure of $C$:
  $$\Cone_{\mathbb R}(C) = \overline C.$$
\end{enumerate}
The last assertion includes \eqref{eq:asympnilp} from the
introduction.

Here are the same ideas in the setting of algebraic geometry.
\begin{subequations}\label{se:algasymp}
Suppose
\begin{equation}\label{eq:algcoord}
  V \simeq {\mathbb C}^n
\end{equation}
is a finite-dimensional complex vector space. We write
\begin{equation}
  {\mathbb P}(V) = \{\text{(complex) lines through the origin in $V$} \}.
\end{equation}
There is a natural line bundle
\begin{equation}\label{eq:tautalg}
{\mathcal O}(-1)(V) = \{(v,\ell)\mid \ell \in {\mathbb P}(V), v\in \ell\}
\buildrel{\pi}\over{\longrightarrow} {\mathbb P}(V),
\end{equation}
the {\em tautological line bundle} over ${\mathbb P}(V)$. Projection
on the first factor defines a proper map
\begin{equation}
  {\mathcal O}(-1)(V)\buildrel{\mu}\over{\longrightarrow} V, \quad (v,\ell)
  \mapsto v;
\end{equation}
the map $\mu$ is an isomorphism over the preimage of $V\backslash
\{0\}$ (consisting of the bundle minus the zero section), and $\mu^{-1}(0) =
     {\mathbb P}(V)$.
\end{subequations} % se:algasymp

\begin{definition}\label{def:algcone} In the setting of
  \eqref{se:algasymp}, a {\em cone} $C \subset V$ is any subset
  closed under scalar multiplication by ${\mathbb C}$. A {\em
    weak cone} is any subset closed under scalar multiplication by
  ${\mathbb C}^\times$.
\end{definition}

\begin{definition} \label{def:algasymp} In the setting of
  \eqref{se:algasymp}, suppose
  $S\subset V$ is an arbitrary subset. Define
  $$I(S) = \{p\in \Poly(V) \mid p|_S = 0\},$$
  the ideal of polynomial functions vanishing on $S$. This ideal is
  filtered by the degree filtration on polynomial functions, so we can
  define
  $$\gr I(S) \subset \Poly(V)$$
  a graded ideal. This is the ideal generated by the highest degree
  term of each nonzero polynomial vanishing on $S$. The {\em algebraic
    asymptotic cone of S} is
  $$\Cone_{\text{\textnormal{alg}}}(S) = \{v \in V\mid q(v) = 0
  \text{\ all\ } q\in \gr I(S)\},$$
  a Zariski-closed cone in $V$; or, equivalently, a closed subvariety
  of ${\mathbb P}(V)$.
\end{definition}

This definition looks formally like the definition of the
tangent cone to $S$ at $\{0\}$ (see for example \cite{Harris}*{Lecture
  20}). In that definition one considers the graded ideal generated by
{\em lowest} degree terms in the ideal of $S$.

Here are some elementary properties of the algebraic asymptotic cone.
\begin{enumerate}\label{facts:algcone}
\item The set $\Cone_{\text{\textnormal{alg}}}(S)$ is a closed cone,
  of dimension equal to $\dim \overline S$ (the Krull dimension of the
  Zariski closure of $S$).
\item The set $\Cone_{\text{\textnormal{alg}}}(S)$ is nonempty if and
  only if $S$ is nonempty.
\item The set $\Cone_{\text{\textnormal{alg}}}(S)$ is contained in
  $\{0\} \subset V$ if and only 
  if $S$ is finite.
\item If $C$ is a weak cone (Definition \ref{def:cone}), then the
  asymptotic cone is the Zariski closure of $C$:
  $$\Cone_{\text{\textnormal{alg}}}(C) = \overline C.$$
  \item If $S$ is a constructible algebraic set (finite union of
    Zariski closed intersect Zariski open) then
$$\Cone_{\text{\textnormal{alg}}}(S) = \{v \in V \mid \exists
    \epsilon_i \rightarrow 0, s_i\in S, \lim_{i\rightarrow \infty}
    \epsilon_is_i = v\}.$$
Here $\{\epsilon_i\}$ is a sequence of nonzero complex numbers going
to zero, and $\{s_i\}$ is any sequence of elements of $S$.
\end{enumerate}

Finally, we note that the definition given for asymptotic cones over
${\mathbb R}$ extends to any local field. It is not quite clear what
dilations ought to be allowed ``in general''; we make a choice that
behaves well for the coadjoint orbits we are interested in.

\begin{subequations}\label{se:localasymp}
Suppose $K$ is a local field of characteristic not $2$, and
\begin{equation}\label{eq:localcoord}
  V \simeq {\mathbb K}^n
\end{equation}
is a finite-dimensional $K$-vector space. A {\em ray} in $V$ is by
definition a subset
\begin{equation}
  R(v) = (K^\times)^2\cdot v \subset V \qquad (0\ne v\in V).
\end{equation}
We write
\begin{equation}
  {\mathcal R}(V) = \{\text{rays in $V$} \} \rightarrow {\mathbb P}(V);
\end{equation}
the map is $\#(K^\times/(K^\times)^2)$ to one. It follows that there
is a natural compact $K$-manifold topology on ${\mathcal R}(V)$, of
dimension equal to $n-1$. There
is a natural fiber bundle
\begin{equation}\label{eq:tautlocal}
{\mathcal B}(V) = \{(v,r)\mid r\in {\mathcal R}(V), v\in r\}
\buildrel{\pi}\over{\longrightarrow} {\mathcal R}(V), \quad (v,r)
\mapsto r
\end{equation}
the {\em tautological ray bundle} over ${\mathcal R}(V)$. Projection
on the first factor defines a proper map
\begin{equation}
  {\mathcal B}(V)\buildrel{\mu}\over{\longrightarrow} V, \quad (v,r)
  \mapsto v;
\end{equation}
the map $\mu$ is an isomorphism over the preimage of $V\backslash
\{0\}$ (consisting of the open rays in the bundle), and $\mu^{-1}(0) =
     {\mathcal R}(V)$ (the compact space of all rays in $V$).
\end{subequations} % se:localasymp

\begin{definition}\label{def:localcone} In the setting of
  \eqref{se:localasymp}, a {\em cone} $C \subset V$ is any subset
  closed under scalar multiplication by $K^2$. A {\em
    weak cone} is any subset closed under scalar multiplication by
  $(K^{\times})^2$.
\end{definition}

\begin{definition} \label{def:localasympcone} In the setting of
  \eqref{se:localasymp}, suppose
  $S\subset V$ is an arbitrary subset. The {\em asymptotic cone of S}
  is
  $$\Cone_{K}(S) = \{v \in V \mid \exists \epsilon_i \rightarrow 0, s_i\in
  S, \lim_{i\rightarrow \infty} \epsilon_is_i = v\}.$$
  Here $\{\epsilon_i\}$ is a sequence in $(K^\times)^2$ going to
  $0$, and $s_i$ is any sequence of elements of $S$.
\end{definition}

Here are some elementary properties of the asymptotic cone.
\begin{enumerate}\label{facts:localcone}
\item The set $\Cone_{K}(S)$ is a closed cone.
\item The set $\Cone_{K}(S)$ is nonempty if and only if $S$ is nonempty.
\item The set $\Cone_{K}(S)$ is contained in $\{0\} \subset V$
  if and only if $S$ is bounded.
\item If $C$ is a weak cone (Definition \ref{def:localcone}), then the
  asymptotic cone is the closure of $C$:
  $$\Cone_{K}(C) = \overline C.$$
\end{enumerate}

\begin{subequations}\label{seq:padic}
The reformulation in \cite{BV} of Howe's definition from \cite{HoweWF}
of the wavefront set of a
representation (see \eqref{eq:WFpiA}) extends to groups over other local
fields, by means of the germ expansion of characters. If
$G$ is a reductive algebraic group defined over a $p$-adic field $K$ of
characteristic zero, then we write
\begin{equation}\label{eq:localnilcone}\begin{aligned}
    {\mathcal N}^*_K &= \text{nilpotent elements in\ }{\mathfrak g}(K)^*\\
      &= \{\xi\in {\mathfrak g}(K)^* \mid t^2\xi \in \Ad^*(G(K))(\xi)
      \ (t\in K^\times)\}\\
\end{aligned}
\end{equation}
for the nilpotent cone in the dual of the Lie algebra. (That a nilpotent
linear functional $\xi$ is conjugate to $t^2\xi$ can be proved by
transferring the statement to ${\mathfrak g}(K)$ as in
\eqref{eq:adad*}, and using the Jacobson-Morozov theorem to reduce to
the case of $SL(2)$.) Dilation therefore defines an action of the
finite group $K^\times/(K^\times)^2$ on $G(K)$ orbits in ${\mathcal
  N}^*_K$.

The Fourier transform is an isomorphism
\begin{equation}\label{eq:localFT}\begin{aligned}
 C_c^\infty({\mathfrak g}(K)) &\rightarrow C_c^\infty({\mathfrak g}(K)^*)\\
\widehat{f}(\xi) &= \int_{X \in {\mathfrak g}(K)} f(X) \psi(\xi(X)) dX.
  \end{aligned}
\end{equation}
Here $dX$ is a choice of Lebesgue measure on ${\mathfrak g}(K)$:
scaling the measure scales the values of the Fourier
transform (and so does not change its support). Furthermore $\psi$ is
a nontrivial additive unitary character of $K$,
which is therefore unique up to dilation by $t\in K^\times$. Dilating
$\psi$ dilates the Fourier transform as a function on ${\mathfrak
  g}(K)^*$, and therefore dilates the support.

Taking the transpose of the Fourier transform defines an isomorphism of
distributions
\begin{equation}\label{eq:localdualFT}\begin{aligned}
 C^{-\infty}({\mathfrak g}(K))^* &\rightarrow C^{-\infty}({\mathfrak g}(K))\\
\widecheck{D}(f) &= D(\widehat f).
  \end{aligned}
\end{equation}
Each nilpotent coadjoint orbit ${\mathcal O} \subset {\mathcal N}^*_K$
carries a natural $G(K)$-invariant measure $d\mu({\mathcal O})$. This
distribution has a Fourier transform
\begin{equation}\label{eq:OFT}
  \widecheck{\mathcal O} \in C^{-\infty}({\mathfrak g}(K)).
\end{equation}
This map from orbits to generalized functions will {\em change}
according to the $K^\times/(K^\times)^2$ action on orbits when the
character $\psi$ of $K$ changes.

Harish-Chandra proved in \cite{HCp} that for every smooth admissible
irreducible representation $\pi$ of $G(K)$, the distribution character
$\Theta_\pi$ has a unique {\em germ expansion}
\begin{equation}\label{eq:pexp}
  \Theta_\pi(\exp(X)) = \sum_{{\mathcal O} \in {\mathcal N}^*_K/G(K)}
  c_{\mathcal O}\widecheck {\mathcal O},
\end{equation}
valid for sufficiently small $X\in {\mathfrak g}(K)$. We may therefore define
\begin{equation}\label{eq:localWF}
  \WF(\pi) = \bigcup_{c_{\mathcal O} \ne 0}
     \overline {\mathcal O} \subset {\mathcal N}^*_K,
\end{equation}
a $G(K)$-invariant closed cone. (Our understanding is that there may
be a way to make sense of \eqref{eq:localWF} also for $K$ local of positive
characteristic, but that it is not completely established.)

 \end{subequations}%{se:padic}
 \begin{conjecture} {(global coherence of WF sets)}\label{conj:autrep}
   Suppose that $k$ is a number field, and that $G$ is a reductive
   algebraic group defined over $k$. Suppose that
   $$\pi = \otimes_v \pi_v$$
is an automorphic representation. This means (among other things) that
$\{v\}$ is the set of places of $k$, so that each corresponding
completion
$$k_v \supset k$$
is a local field, $G(k_v)$ is a reductive group over a local field as
above, and $\pi_v$ is a smooth irreducible admissible representation
of $G(k_v)$. Accordingly for each place we get a closed
$G(k_v)$-invariant cone
$$\WF(\pi_v) \subset {\mathcal N}^*_{k_v} \subset {\mathfrak
  g}(k_v)^*.$$
Suppose that all the local characters $\psi_v$ of $k_v$ (used in
defining the Fourier transforms behind the wavefront sets) are chosen
in such a way that
$$\prod_v \psi_v(x) = 1, \qquad (x\in k).$$
The conjecture is that there is a coadjoint orbit (depending on $\pi$)
$${\mathcal O} = G(k)\cdot \xi \subset {\mathfrak g}(k)^*$$
with the property that
$$\WF(\pi_v) = \Cone_{k_v}({\mathcal O})$$
for every place $v$.
 \end{conjecture}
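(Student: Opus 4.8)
\emph{Proof proposal.} This is an open conjecture, so what follows is a plan of attack rather than a proof. The first task is to produce the global orbit $\mathcal{O}$. The natural source is the \emph{arithmetic wavefront set} of $\pi$: attach to $\pi$ the collection of nilpotent orbits $G(k)\cdot\xi'\subset\mathfrak{g}(k)^*$ along whose associated unipotent radicals some automorphic form in the space of $\pi$ has a nonzero (possibly degenerate) Whittaker--Fourier coefficient, and let $\mathcal{O}$ be a maximal member. (If one instead wants the non-nilpotent candidate suggested by the orbit method --- a semisimple-type parameter --- one uses that, as in the examples of Section \ref{sec:cones}, the asymptotic cone only sees a nilpotent orbit, consistent with $\WF(\pi_v)\subset\mathcal{N}^*_{k_v}$ in \eqref{eq:localWF}.) That this maximal orbit is unique, so that $\mathcal{O}$ is even well defined, and that it is the same geometric orbit at every place, is itself a substantial conjecture (in the circle of ideas of Ginzburg and Jiang); the cleanest way to see it ought to be through a global Arthur parameter $\psi$, with $\mathcal{O}$ the Barbasch--Vogan dual of the nilpotent orbit attached to the $SL_2$ in $\psi$ --- but global $A$-parameters are unconditionally available only in special cases (classical groups, and so on).

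Granting a candidate $\mathcal{O}=G(k)\cdot\xi$, the heart of the matter is the local--global comparison $\WF(\pi_v)=\Cone_{k_v}(\mathcal{O})$ at each place $v$, which I would split into two inclusions. For $\WF(\pi_v)\supseteq\Cone_{k_v}(\mathcal{O})$: nonvanishing of the global Whittaker--Fourier coefficient attached to $\mathcal{O}$ forces the corresponding local degenerate Whittaker (twisted Jacquet) functional on $\pi_v$ to be nonzero, which places the orbit $G(k_v)\cdot\xi$ in the support of the Harish-Chandra--Howe germ expansion \eqref{eq:pexp} when $v$ is non-archimedean (\cite{HCp}, \cite{BV}) and in Howe's wavefront set when $v$ is archimedean --- there one passes through the Kostant--Sekiguchi (\cite{Sek}) and Schmid--Vilonen (\cite{SV}) dictionary recalled in Sections \ref{sec:nilp} and \ref{sec:assvar} to read $\WF_{\mathbb R}(\pi_v)$ off a complex-algebraic associated variety. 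It remains to identify $\Cone_{k_v}(\mathcal{O})$ with the union of $k_v$-rational nilpotent orbit closures so obtained: a Jacobson--Morozov $SL_2$ for $\xi$ is defined over $k$ (standard in characteristic zero), so $G(k)\cdot\xi$ is already stable under dilation by $(k^\times)^2$ and is Zariski-dense in the geometric orbit closure; combining this with strong approximation (say when $G$ is simply connected) and the elementary fact that a nilpotent $G(k_v)$-orbit is a weak cone under $(k_v^\times)^2$, hence equal to its own closure as an asymptotic cone (Definition \ref{def:localasympcone} and the remarks after it, using \eqref{eq:localnilcone}), yields $\Cone_{k_v}(\mathcal{O})=\overline{G(k_v)\cdot\xi}$ as desired.

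The reverse inclusion $\WF(\pi_v)\subseteq\Cone_{k_v}(\mathcal{O})$ is, I expect, the main obstacle: it asks for an \emph{upper} bound on the local wavefront set imposed by the global structure of $\pi$, and no general mechanism for such bounds is known --- the available tools are the theta correspondence, automorphic descent, and (conjecturally) Arthur's classification together with the expected inequality ``$\WF$ of a local $A$-packet member $\le$ Barbasch--Vogan dual of its parameter,'' none of which applies to an arbitrary reductive $G$ over an arbitrary number field. A sensible first case is $G=GL_n$: there $\WF(\pi_v)$ is a single orbit closure (the shape of $\pi_v$), every automorphic representation is an isobaric sum of unitary cuspidals, and the candidate $\mathcal{O}$ is the partition read off that decomposition --- a genuinely global datum --- so the two inclusions above become tractable; even then, the matching at ramified and archimedean places, and the \emph{uniformity} of the argument across infinitely many places whose definitions of $\WF$ look quite different (germ expansions at the finite places, associated varieties at the infinite ones), is exactly the point that the complex-algebraic framework of this paper is designed to bring under control.
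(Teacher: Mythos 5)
This is stated as a conjecture in the paper, not a theorem, and the paper offers no proof; so there is nothing to compare your argument against. That said, your plan of attack founders on a point the paper makes explicitly in the two paragraphs immediately following the conjecture: the global orbit ${\mathcal O}=G(k)\cdot\xi$ \emph{cannot} in general be taken nilpotent. If $G$ is anisotropic over $k$ (no nontrivial $k$-split torus), then ${\mathcal N}^*_k=\{0\}$, so a nilpotent $\mathcal O$ would force $\Cone_{k_v}({\mathcal O})=\{0\}$ and hence every $\pi_v$ finite-dimensional --- which does not happen for nonabelian $G$. Your main construction of the candidate ${\mathcal O}$ via the ``arithmetic wavefront set'' (maximal orbits supporting degenerate Whittaker--Fourier coefficients) produces a nilpotent orbit, so it cannot be the right global datum for general $G$. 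You acknowledge the semisimple alternative in a parenthesis, but then your argument for $\WF(\pi_v)\supseteq\Cone_{k_v}({\mathcal O})$ immediately reverts to the nilpotent picture: the Jacobson--Morozov $SL_2$, the ``weak cone under $(k_v^\times)^2$,'' and the identity $\Cone_{k_v}({\mathcal O})=\overline{G(k_v)\cdot\xi}$ all presuppose $\xi$ nilpotent.

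There is a second, related gap. Even once one accepts a non-nilpotent $\mathcal O$, the local asymptotic cone $\Cone_{k_v}({\mathcal O})$ is not the closure of a single $k_v$-rational nilpotent orbit meeting the generic geometric stratum; the paper notes that $\Cone_{k_v}({\mathcal O})$ need not even meet the open orbit $Z^0$ of $Z = \Cone_{\alg}(G(\overline k)\cdot\xi)$ (though conjecturally it does for all but finitely many $v$). So your proposed identification of the right-hand side with $\overline{G(k_v)\cdot\xi}$ via weak approximation is too strong. What makes the conjecture genuinely hard --- and what the paper's discussion flags but cannot resolve --- is precisely the absence of a global mechanism (a ``global germ expansion'' near the identity) that would tie the semisimple global orbit to the disparate local nilpotent cones at once. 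Your observations about Arthur parameters, Barbasch--Vogan duality, and the $GL_n$ test case are reasonable things to say about the state of the art, but they do not address the two obstructions above.
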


 It is easy to see (by considering characteristic polynomials, for
example) that
$$\Cone_{k_v}({\mathcal O}) \subset {\mathcal N}^*_{k_v}.$$
For similar reasons,
$$\Cone_{\text{\textnormal{alg}}}(G(\overline k)\cdot \xi) =
Z \subset {\mathfrak g}(\overline k)^*,$$
the Zariski closure of a single nilpotent coadjoint orbit $Z^0$ over the
algebraic closure $\overline k$. It follows that
$$\Cone_{k_v}({\mathcal O}) \subset Z(k_v),$$
the $k_v$-points of $Z$. But this local cone need not meet the open
orbit $Z^0$. (We believe it should be possible to show that
$\Cone_{k_v}({\mathcal O})$ {\em does} meet $Z^0$ for
all but finitely many $v$.)

 The local cones control expansions of the local characters near the
identity. It would be nice if there were some global analogue of these
local expansions, involving orbits like $G(k)\cdot \xi$. But we can
offer no suggestion about how to formulate any such thing.

Since all the local cones are nilpotent, it is natural to ask whether
the global orbit $G(k)\cdot\xi$ in the conjecture can be taken to be
nilpotent. This is {\em not} possible. If $G$ is anisotropic over $k$
(admitting no nontrivial $k$-split torus) then ${\mathcal N}^*_k =
\{0\}$; so the conjecture would require that all the
local factors $\pi_v$ of any automorphic representation would have to
be finite-dimensional. This does not happen (for nonabelian $G$).

\section{Equivariant $\K$-theory}\label{sec:Kthy}
\setcounter{equation}{0}
In this section we recall the algebraic geometry version of
equivariant $\K$-theory, which for us will replace the $G({\mathbb
  R})$-equivariant theory discussed in the introduction (for which we
lack even many definitions, and certainly lack proofs of good
properties).

\begin{subequations}\label{se:repring}

Suppose $H$ is a complex algebraic group. Write $H^{\unip}$ for the
unipotent radical of $H$, and
\begin{equation}
  H^{\red} \subset H, \qquad H^{\red} \simeq H/H^{\unip}
\end{equation}
for a choice of Levi subgroup. We are interested in the category
\begin{equation}
\Rep(H) = \text{finite-dimensional algebraic representations of $H$}.
\end{equation}
The Grothendieck group of this category is called the {\em
  representation ring of $H$}, and written
\begin{equation}
\R(H) \eqdef \K\Rep(H).
\end{equation}
We use the bold $\R$ as a reminder that this is a Grothendieck group,
one of the $\K$-theory tools that we use constantly.

If we write
\begin{equation}\begin{aligned}
    \widehat H &= \text{equivalence classes of irreducible}\\
    &\qquad \text{algebraic representations of $H$}\\
    &= \widehat{H^{\red}},\\
\end{aligned}
\end{equation}
then elementary representation theory says that
\begin{equation}\label{e:RHirr}
\R(H) = \K\Rep(H) \simeq \sum_{\rho \in \widehat H}
{\mathbb Z}\rho \simeq \R(H^{\red}).
\end{equation}

The ring structure on $\R(H)$ arises from tensor product of
representations. Another way to say it is using the {\em character} of
a representation $(\tau,E)$ of $H$:
\begin{equation*}
\Theta_E(h) = \tr\tau(h) \qquad (h\in H).
\end{equation*}
Clearly the character $\Theta_E$ is an algebraic class function on
$H$. If $(\sigma,S)$ and $(\kappa,Q)$ are representations forming a
short exact sequence
\begin{equation*}
0 \rightarrow S \rightarrow E \rightarrow Q \rightarrow 0,
\end{equation*}
then $\Theta_E = \Theta_S +\Theta_Q$. Consequently $\Theta$ descends
to a ${\mathbb Z}$-linear map
\begin{equation}\label{e:character}
\Theta\colon \R(H) \hookrightarrow \text{algebraic class functions on $H$.}
\end{equation}
The reason for the injectivity is the standard fact that characters of
inequivalent irreducible representations are linearly independent
functions on $H$. The trace of the tensor product of two linear maps
is the product of the traces, so $\Theta_{E\otimes F} =
\Theta_E\Theta_F$. Therefore the map of \eqref{e:character} is also a
ring homomorphism.  Because irreducible representations are trivial on
the unipotent radical $H^{\unip}$ of $H$, we get finally
\begin{equation*}
\Theta\colon \R(H) \hookrightarrow \text{algebraic class functions on }
  H/H^{\unip} \simeq H^{\red}.
\end{equation*}
After complexification, this map turns out to be an algebra isomorphism
\begin{equation}\label{e:characterss}
\Theta\colon \R(H)\otimes_{\mathbb Z} {\mathbb C}
\buildrel{\sim}\over\longrightarrow  \text{algebraic class functions on
  $H^{\red}$.}
\end{equation}

\end{subequations} %{se:repring}

\begin{definition}\label{def:eqvtcoh} Suppose $X$ is a complex
  algebraic variety and suppose that $H$ is a complex algebraic group
  acting on $X$. We are interested in the abelian category
$$\QCoh^H(X)$$
of $H$-equivariant quasicoherent sheaves on $X$, and particularly in
the full subcategory
$$\Coh^H(X)$$
of coherent sheaves. We need this theory for cases in
which $X$ is {\em not} an affine variety. In that setting the precise
definitions (found in \cite{Tho}*{1.2}) are a little difficult to make
sense of.  We will be able
to work largely with the special case when $X$ is affine; so we recall
here just the more elementary description of these equivariant sheaves
in the affine case.

Suppose therefore that $X$ is a complex {\em affine} algebraic
variety, with structure sheaf ${\mathcal O}_X$ and global ring of
functions ${\mathcal O}_X(X)$. Suppose that $H$ is an affine algebraic
group acting on $X$. This means precisely that the algebra ${\mathcal
  O}_X(X)$ is equipped with an algebraic action
$$\Ad\colon H \rightarrow \Aut({\mathcal O}_X(X))$$
by algebra automorphisms. A quasicoherent sheaf ${\mathcal M} \in \QCoh(X)$ is
completely determined by the ${\mathcal O}_X(X)$-module
$$M = {\mathcal M}(X)$$
of global sections: any ${\mathcal O}_X(X)$-module $M$
  determines a quasicoherent sheaf
$${\mathcal M} = {\mathcal O}_X \otimes_{{\mathcal O}_X(X)} M$$
of ${\mathcal O}_X$-modules. This makes an equivalence of categories
$$\QCoh(X) = {\mathcal O}_X\Mod \simeq {\mathcal O}_X(X)\Mod.$$
The sheaf ${\mathcal M}$ is coherent if and only if $M$ is finitely
generated:
$$\Coh(X) \simeq {\mathcal O}_X(X)\Modfg.$$

Write $({\mathcal O}_X(X),H)\Mod$ for the category of ${\mathcal
  O}_X(X)$-modules $M$ equipped with
an algebraic action of $H$. This means an algebraic representation
$\mu$ of $H$ on $M$, such that the module action map
$${\mathcal O}_X(X)\otimes_{\mathbb C} M \rightarrow M$$
intertwines $\Ad\otimes \mu$ with $\mu$. We have now identified
$$\begin{aligned}
\QCoh^H(X) &\simeq ({\mathcal O}_X(X),H)\Mod\\
\Coh^H(X) &\simeq ({\mathcal O}_X(X),H)\Modfg, \end{aligned}$$
always for $X$ affine.

The {\em equivariant $\K$-theory of $X$} is the Grothendieck group
$\K^H(X)$ of $\Coh^H(X)$. (Thomason and many others prefer to write
this $\K$-theory as ${\mathbf G}^H(X)$, reserving $\K$ for the theory
with vector bundles replacing coherent sheaves. Our notation follows the text
\cite{CG}.)
\end{definition}

\begin{subequations}\label{se:eqvtK}
If ${\mathcal M}$ is an $H$-equivariant coherent sheaf on $X$, we
write
\begin{equation}
  [{\mathcal M}] \in \K^H(X)
\end{equation}
for its class in $\K$-theory. Similarly, if $X$ is affine and $M$ is a
finitely generated $H$-equivariant ${\mathcal O}_X(X)$-module, we
write
\begin{equation}
  [M] \in \K^H(X).
\end{equation}

We record now some general facts about equivariant $\K$-theory of which we will
make constant use.

Suppose $X$ is a single point. Then a coherent sheaf on $X$ is the
same thing as a finite-dimensional vector space $E$; an
$H$-equivariant structure is an algebraic representation $\rho$ of $H$
on $E$. That is, there is an equivalence of categories
\begin{equation}\label{e:Cohpt}
\Coh^H(\point) \simeq \Rep(H)
\end{equation}
(see \eqref{se:repring}). Consequently
\begin{equation}\label{e:Kpt}
\K^H(\point)\simeq \K\Rep(H) \simeq \sum_{\rho \in \widehat H}
{\mathbb Z}\rho = \R(H),
\end{equation}
the representation ring of $H$.  More generally, if $(\tau,V)$ is a
finite-dimensional algebraic representation of $H$, then there is a
natural inclusion
\begin{equation}\label{e:Cohvec}
\Rep(H) \hookrightarrow (\Poly(V),H)\Modfg \simeq \Coh^H(V), \quad E
\mapsto E\otimes \Poly(V)
\end{equation}
(with $\Poly(V) = {\mathcal O}_V(V)$ the algebra of polynomial functions
on $V$). In contrast to \eqref{e:Cohpt}, this inclusion is very far
from being an equivalence of categories (unless $V=\{0\}$).
Nevertheless, just as in \eqref{e:Kpt}, it induces an isomorphism in
$K$-theory
\begin{equation}\label{e:Kvec}
  \R(H) = \K\Rep(H) \simeq \K^H(V);
\end{equation}
this is \cite{Tho}*{Theorem 4.1}.
(Roughly speaking, the image of
the map \eqref{e:Cohvec} consists of the {\em projective}
$(\Poly(V),H)$-modules; the isomorphism in $\K$-theory arises from the
existence of projective resolutions.)

Suppose now that $H\subset G$ is an inclusion of affine algebraic
groups. Then there is an equivalence of categories
\begin{equation}\label{eq:Cohhomog}
\Coh^G(G/H) \simeq \Coh^H(\point) \simeq \Rep(H);
\end{equation}
so the equivariant $\K$-theory is
\begin{equation}\label{eq:Khomog}
\K^G(G/H) \simeq \K^H(\point) \simeq \K\Rep(H) = \R(H).
\end{equation}
More generally, if $H$ acts on the variety $Y$, then we can form a
fiber product
\begin{equation*}
X = G\times_H Y
\end{equation*}
and calculate
\begin{equation}\label{e:Kfiber}\begin{aligned}
\Coh^G(G\times_H Y) &\simeq \Coh^H(Y),\\
\K^G(G \times_H Y) &\simeq \K^H(Y).
\end{aligned}\end{equation}

If $Y\subset X$ is a closed $H$-invariant subvariety, then there is a
right exact sequence
\begin{equation}\label{e:Kright}
\K^H(Y) \rightarrow \K^H(X) \rightarrow \K^H(X-Y) \rightarrow 0.
\end{equation}
This is established without the $H$ for example in
\cite{Har}*{Exercise II.6.10(c)}, and the argument there carries over to
the equivariant case.  This sequence is the end of a long
exact sequence in higher equivariant $\K$-theory (\cite{Tho}*{Theorem
  2.7}). The next term on the left is $\K_1^H(X-Y)$.
\end{subequations} %{se:eqvtK}

Here are some of the consequences we want.
\begin{proposition} Suppose $H$ acts on $X$ with an open orbit
  $U\simeq H/H^u$, and $Y=X-U$ is the (closed) complement of this
  orbit. Then there is a right exact sequence
$$\K^H(Y) \rightarrow \K^H(X) \twoheadrightarrow \K^H(H/H^u) \simeq
\K^{H^u}(\point) \simeq \R(H^u).$$
Consequently the quotient
$$\K^H(X)/\IM \K^H(Y)$$
has a basis naturally indexed by the set $\widehat {H^u}$ of irreducible
representations of $H^u$.

More generally, if $U$ is the disjoint union of finitely many open
orbits $U_j \simeq H/H^{u^j}$, then $\K^H(X)/\IM \K^H(Y)$ has a basis
naturally indexed by the disjoint union of the sets $\widehat {H^{u_j}}$.
\end{proposition}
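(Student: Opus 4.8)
The plan is to read off both assertions from the right exact sequence \eqref{e:Kright} together with the computation \eqref{eq:Khomog} of the equivariant $\K$-theory of a homogeneous space. First I would observe that, since $U$ is open and $H$-stable, its complement $Y = X - U$ is a closed $H$-invariant subvariety, so \eqref{e:Kright} applies and produces a right exact sequence $\K^H(Y) \to \K^H(X) \to \K^H(U) \to 0$. Everything then reduces to identifying the last term $\K^H(U)$.

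Next I would apply \eqref{eq:Khomog}, with the roles of $G$ and $H$ there played by $H$ and the isotropy subgroup $H^u$: from $U \simeq H/H^u$ as an $H$-variety one gets canonical isomorphisms $\K^H(U) \simeq \K^{H^u}(\point) \simeq \R(H^u)$, and by \eqref{e:RHirr} (equivalently \eqref{e:Kpt}) the representation ring $\R(H^u)$ is free abelian with distinguished $\mathbb Z$-basis the set $\widehat{H^u}$ of equivalence classes of irreducible algebraic representations of $H^u$ (equivalently of its reductive quotient $(H^u)^{\red}$; replacing the base point of $U$ by another conjugates $H^u$, so $\widehat{H^u}$ is canonical up to inner automorphism). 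Substituting $\K^H(U) = \K^H(H/H^u) \simeq \K^{H^u}(\point) \simeq \R(H^u)$ into the sequence above yields the asserted right exact sequence, and right exactness identifies the cokernel $\K^H(X)/\IM\K^H(Y)$ canonically with $\R(H^u)$, hence exhibits the basis $\widehat{H^u}$.

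For the more general statement I would argue identically: a finite union $U = \bigsqcup_j U_j$ of open orbits is again open, so $Y = X - U$ is again closed and \eqref{e:Kright} applies verbatim. An $H$-equivariant coherent sheaf on the disjoint union $U$ is a tuple of $H$-equivariant coherent sheaves on the (individually $H$-stable) pieces $U_j$, so $\Coh^H(U) \simeq \prod_j \Coh^H(U_j)$ and therefore $\K^H(U) \simeq \bigoplus_j \K^H(U_j) \simeq \bigoplus_j \R(H^{u_j})$, with natural basis the disjoint union $\bigsqcup_j \widehat{H^{u_j}}$; right exactness again identifies $\K^H(X)/\IM\K^H(Y)$ with this group.

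The one point deserving care --- and the reason the conclusion is stated as a quotient rather than as a short exact sequence --- is that \eqref{e:Kright} is only right exact: the map $\K^H(Y) \to \K^H(X)$ need not be injective, so one can genuinely control only the cokernel, which is exactly what is claimed. No splitting, injectivity, or higher $\K$-theory is required. The only other thing to verify is the routine observation that $X$, the closed subvariety $Y$, and the orbits $U_j$ all lie within the setting in which \eqref{e:Kright} and \eqref{eq:Khomog} were recorded; since orbits of an algebraic group are smooth and quasi-projective and $Y$ is closed in $X$, this is automatic.
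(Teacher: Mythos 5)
Your argument is correct and is exactly the intended one: the paper gives no separate proof of this proposition, presenting it as an immediate consequence of the right exact sequence \eqref{e:Kright} and the identification \eqref{eq:Khomog} of $\K^H(H/H^u)$ with $\R(H^u)$, which is precisely how you proceed. Your remarks on the disjoint-union case and on why only the cokernel is controlled match the paper's framing.
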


\begin{theorem}\label{thm:finorb} Suppose $H$ acts on the affine
  variety $X$ with finitely many orbits
  $$X_i = H\cdot x_i \simeq H/H_i.$$
  \begin{enumerate}
    \item The (Zariski) closure $\overline{X_i}$ is the union of $X_i$ and
      finitely many additional orbits $X_j$ with $\dim X_j < \dim X_i$.
\item For each irreducible $(\tau,V_\tau)$ of $H_i$,
  there is an $H$-equivariant coherent sheaf $\widetilde{{\mathcal
      V}_\tau}$ on $\overline{X_i}$ with the property that
  $$\widetilde{{\mathcal V}_\tau}|_{X_i} = {\mathcal V}_\tau
  =_{\text{\textnormal{def}}} G\times_{H_i} V_\tau.$$
The sheaf $\widetilde{{\mathcal V}_\tau}$ may be regarded as a
(coherent equivariant) sheaf on $X$.
\item The equivariant $\K$-theory $\K^H(X)$ is a free ${\mathbb
  Z}$-module with basis consisting of the various
  $[\widetilde{{\mathcal V}_\tau}]$, for $X_i \subset X$ and
  $\tau$ an irreducible representation of $H_i$. Each such basis
  vector is uniquely defined modulo the span of the
  $[\widetilde{{\mathcal V}_{\tau'}}]$ supported on orbits in the boundary
  of $X_i$.
\item If $Y\subset X$ is an $H$-invariant closed
  subvariety, then \eqref{e:Kright} is a short exact sequence
$$0 \rightarrow \K^H(Y) \rightarrow \K^H(X) \rightarrow \K^H(X-Y)
  \rightarrow 0.$$
  \end{enumerate}
\end{theorem}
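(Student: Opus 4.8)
The plan is to induct on the number of orbits, using the right-exact localization sequence \eqref{e:Kright} together with the computation \eqref{e:Kvec} of equivariant $\K$-theory of a vector space as the two basic inputs. First I would prove (1): since $X$ has finitely many $H$-orbits, the closure $\overline{X_i}$ is a closed $H$-invariant set, hence a finite union of orbits; that it contains $X_i$ as its unique open (dense) orbit and that all other orbits appearing have strictly smaller dimension is standard orbit geometry (an orbit is open in its closure, and the boundary is a proper closed invariant subset, so a union of lower-dimensional orbits). Next, for (2), I would choose a point $x_i\in X_i$, let $H_i=H^{x_i}$, and given an irreducible $(\tau,V_\tau)\in\widehat{H_i}$ form the honest $H$-equivariant vector bundle ${\mathcal V}_\tau = H\times_{H_i}V_\tau$ on $X_i\simeq H/H_i$, i.e.\ the locally free sheaf on $X_i$ corresponding to $\tau$ under \eqref{eq:Cohhomog}. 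To extend it to a coherent equivariant sheaf on the affine variety $\overline{X_i}$, I would push forward along the (affine, since $X_i$ is open in the affine $\overline{X_i}$ and... actually quasi-affine) inclusion $j\colon X_i\hookrightarrow\overline{X_i}$ and take a coherent $H$-equivariant subsheaf of $j_*{\mathcal V}_\tau$ which restricts to ${\mathcal V}_\tau$ on $X_i$; such a subsheaf exists because any quasicoherent equivariant sheaf on a Noetherian scheme is the union of its coherent equivariant subsheaves, and the restriction maps are surjective onto the coherent subsheaves of the (coherent) ${\mathcal V}_\tau$. Finally, regarding $\widetilde{{\mathcal V}_\tau}$ as a sheaf on all of $X$ via extension by zero (pushforward along the closed immersion $\overline{X_i}\hookrightarrow X$) gives the required class in $\K^H(X)$.

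For (3) I would argue by induction on the number of orbits. Pick an orbit $X_i$ that is open in $X$ (it exists since $X$ is Noetherian with finitely many orbits; actually pick a \emph{closed} orbit to make the boundary handling cleanest — or rather, dualize). Concretely: let $U=\coprod_j X_j$ be the union of the open orbits and $Y=X\setminus U$, a closed invariant subvariety with strictly fewer orbits. The localization sequence \eqref{e:Kright} gives
$$\K^H(Y)\longrightarrow \K^H(X)\longrightarrow \K^H(U)=\bigoplus_j\K^H(X_j)\longrightarrow 0,$$
and $\K^H(X_j)\simeq \K^{H_j}(\point)\simeq\R(H_j)$ by \eqref{eq:Khomog}, which is free with basis $\widehat{H_j}$. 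By construction in (2), the classes $[\widetilde{{\mathcal V}_\tau}]$ for $X_j$ open map to exactly this basis in the quotient. By the inductive hypothesis applied to $Y$ (which has finitely many $H$-orbits, namely the non-open orbits of $X$, and whose orbit closures agree with those in $X$), $\K^H(Y)$ is free with basis the $[\widetilde{{\mathcal V}_{\tau'}}]$ over the orbits in $Y$. Then (4), the injectivity of $\K^H(Y)\to\K^H(X)$ — equivalently the vanishing of the connecting map $\K_1^H(U)\to\K^H(Y)$ — is what upgrades the right-exact sequence to a short exact sequence, and simultaneously it is what lets us conclude: a right-split surjection $\K^H(X)\twoheadrightarrow\bigoplus_j\R(H_j)$ with free kernel $\K^H(Y)$ (freely generated by the boundary classes) shows $\K^H(X)$ is free on the union of the two bases, proving (3); and the "uniquely defined modulo the span of boundary classes" statement is just the indeterminacy in the choice of coherent extension in (2), which changes $[\widetilde{{\mathcal V}_\tau}]$ by something supported on $\overline{X_i}\setminus X_i\subset Y$.

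The main obstacle is establishing (4), the exactness on the left of \eqref{e:Kright} in this situation — equivalently, showing the higher $\K$-theory boundary map $\K_1^H(X\setminus Y)\to\K_0^H(Y)$ is zero. The cleanest route I would take: reduce, by the inductive structure and the Thomason long exact sequence (\cite{Tho}*{Theorem 2.7}), to the statement that $\K_1^H$ of each open orbit $X_j\simeq H/H_j$ surjects onto the relevant piece — and here I would use that $\K_1^{H_j}(\point)=\K_1\Rep(H_j)$ is computable (it is $\R(H_j)\otimes\K_1(\mathbb C)=\R(H_j)\otimes\mathbb C^\times$ for $H_j$ reductive, and in general governed by $H_j^{\red}$) together with the fact that everything is defined over $\mathbb C$, to see that the image of $\K_1^H(X)$ already hits all of $\K_1^H(U)$; alternatively, and perhaps more robustly, I would exploit that each $\widetilde{{\mathcal V}_\tau}$ we built is an \emph{actual extension} of a sheaf to $\overline{X_i}$ — so the classes we produced genuinely lift, which forces the sequence to split and hence to be short exact. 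In fact the slickest formulation is: having produced a $\mathbb Z$-linear section of $\K^H(X)\twoheadrightarrow\K^H(U)$ by (2), the image of $\K^H(Y)$ is a complement, so $\K^H(X)\cong\K^H(Y)\oplus\bigoplus_j\R(H_j)$ and the map $\K^H(Y)\to\K^H(X)$ is (split) injective — this simultaneously gives (3) and (4) without needing to analyze $\K_1$ at all. I would present the argument in this last form, making the existence of the extensions in (2) carry all the weight.
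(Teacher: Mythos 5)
Parts (1), (2), and the inductive framework for (3) match the paper's approach: pick the union $U$ of open orbits, set $Y=X\setminus U$, use the localization sequence \eqref{e:Kright} and induction on the number of orbits, and use \eqref{eq:Khomog} to identify $\K^H$ of each open orbit with a representation ring. Your construction of extensions in (2) via coherent equivariant subsheaves of $j_*{\mathcal V}_\tau$ is essentially the Hartshorne argument the paper cites, so far so good.

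The gap is in your treatment of (4), which is the crux. Your ``slickest formulation'' is circular. Having a section of the surjection $\K^H(X)\twoheadrightarrow\K^H(U)$ gives
\[
\K^H(X)\ \cong\ \ker\bigl(\K^H(X)\to\K^H(U)\bigr)\ \oplus\ \K^H(U),
\]
but that kernel is the \emph{image} of $\K^H(Y)\to\K^H(X)$, which is a priori only a \emph{quotient} of $\K^H(Y)$ — namely $\K^H(Y)$ modulo the image of the connecting map $\K_1^H(U)\to\K^H(Y)$. Identifying the kernel with $\K^H(Y)$ itself, as you do when you write $\K^H(X)\cong\K^H(Y)\oplus\bigoplus_j\R(H_j)$, is exactly equivalent to the assertion that the connecting map vanishes — which is statement (4). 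So the section argument cannot ``give (3) and (4) simultaneously without analyzing $\K_1$''; it silently assumes (4). Your other proposed route (that $\K_1^H(X)\to\K_1^H(U)$ is surjective) would suffice by the long exact sequence, but restriction to an open subset is not obviously surjective on $\K_1$ and you offer no justification.

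The paper's actual proof of (4) does analyze $\K_1$, but only in a very soft way: by \eqref{eq:Cohhomog}, $\K_1^H(U)$ is the Quillen $\K_1$ of a representation category, which works out to a direct sum of copies of $\mathbb{C}^\times$ (one per irreducible of the isotropy group). Since $\mathbb{C}^\times$ is a divisible abelian group and $\K^H(Y)$ is free abelian by the inductive hypothesis, any homomorphism $\K_1^H(U)\to\K^H(Y)$ is forced to be zero. This single observation (divisible $\to$ torsion-free is zero) is what carries the weight; you should replace your section argument with it.
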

\begin{proof}
\begin{subequations}\label{se:Kproof}
Part (1) is a standard statement about algebraic varieties. Part (2)
is established without the $H$ for example in \cite{Har}*{Exercise
  2.5.15}, and the argument there carries over to the equivariant
case.

For (3), we proceed by induction on the number of $H$-orbits on 
$X$. If the number is zero, then $X$ is empty, and $K^H(X) = 0$ is
indeed free. So suppose that the number of $H$-orbits is positive, and
that (3) is already known in the case of a smaller number of
orbits. Pick an orbit
\begin{equation}
  U=H\cdot x_{i_0} \simeq H/H_{i_0} \subset X
\end{equation}
of maximal dimension. Then necessarily $U$ is open (this follows from
(1)),  so $Y = X-U$ is closed. According to \eqref{e:Kright}, there is
an exact sequence
\begin{equation}\label{eq:exact}
  \K_1^H(U) \rightarrow \K^H(Y) \rightarrow \K^H(X)
  \rightarrow \K^H(U) \rightarrow 0.
\end{equation}
By inductive hypothesis, $\K^H(Y)$ is a free ${\mathbb Z}$-module with
basis the various $[\widetilde{{\mathcal V}_\tau'}]$ living on orbits other than
$U$. By \eqref{eq:Khomog}, $\K^H(U)$ is a free ${\mathbb Z}$-module
with basis the images of the various $[\widetilde{{\mathcal V}_\tau}]$
attached to $U$. To finish the proof of (3), we need only prove that
the map from $\K_1^H(U)$ is zero.

According to \eqref{eq:Cohhomog}, the first term of our exact sequence
is the Quillen $\K_1$ of $H/H_{i_0}$; that is, Quillen $\K_1$ of a
category of representations (of $H_{i_0}$). This category is (up to
long exact sequences) a direct sum of
categories of finite-dimensional complex vector spaces. By one of the
fundamental facts about algebraic $\K$-theory, it follows that
$\K_1^H(U) = \K_1^H(H/H_{i_0})$ is a direct sum of copies of ${\mathbb
  C}^\times$, one for each irreducible representation of $H_{i_0}$.

Any group homomorphism from the divisible group ${\mathbb C}^\times$
to ${\mathbb Z}$ must be zero; so the connecting homomorphism
$\K_1^H(U)\rightarrow \K^H(Y)$ is zero. This proves (4).
\end{subequations} %se:Kproof
\end{proof}

We thank Gon\c calo Tabuada for explaining to us this
proof of (4).

\section{Associated varieties for $({\mathfrak
    g},K)$-modules}\label{sec:assvar}
\setcounter{equation}{0}
\begin{subequations}\label{se:introgK}
With the structure of ${\mathcal N}^*_\theta$ from Section
\ref{sec:nilp}, and the generalities about equivariant $\K$-theory from
Section \ref{sec:Kthy}, we can now
introduce the $\K$-theory functor we will actually consider (in place
of the one from \eqref{se:introR} that we do not know how to define).
As a replacement for the moderate growth representations of
\eqref{eq:GRrep}, we will use
\begin{equation}\label{e:MfgK}
{\mathcal M}_f({\mathfrak g},K) = \text{category of finite length
  $({\mathfrak g},K)$-modules}.
\end{equation}
This category has a nice Grothendieck group $\K({\mathfrak g},K)$,
which is a free ${\mathbb Z}$-module with basis the equivalence
classes of irreducible modules. A connection with the incomplete ideas
in the introduction is provided by the Casselman-Wallach
theorem of \cite{WallachII}*{11.6.8}: passage to $K({\mathbb
  R})$-finite vectors is an equivalence of categories
\begin{equation}\label{eq:GRgK}
  {\mathcal F}_{\text{\textnormal{mod}}}(G({\mathbb R}))
  \buildrel{\sim}\over{\longrightarrow} {\mathcal M}_f({\mathfrak g},K).
\end{equation}

We write
\begin{equation}
[X]_{({\mathfrak g},K)} = \text{class of $X\in \K({\mathfrak g},K)$}.
\end{equation}
Any such $X$ admits a good filtration (far from unique), so that we
can construct
$$\gr X \in \Coh^K({\mathcal N}^*_\theta).$$
(The reason that the $S({\mathfrak g})$ module $\gr X$ is supported on
${\mathcal N}^*_\theta$ is explained in \cite{Vunip}*{Corollary 5.13}.)
The class $[\gr X] \in \K^K({\mathcal N}^*_\theta)$ is independent of
the choice of good filtration (this standard fact is proven in
\cite{Vunip}*{Proposition 2.2}) so we may write it simply as
$[X]_\theta$. In this way we get a well-defined homomorphism
\begin{equation}\label{eq:grgK}
\gr\colon \K({\mathfrak g},K) \rightarrow \K^K({\mathcal N}^*_\theta),
\qquad [X]_{({\mathfrak g},K)} \mapsto [\gr X] = [X]_\theta.
\end{equation}
This is our replacement for the map \eqref{eq:grR} that we do not know
how to define. Here are replacements for the undefined ideas in
\eqref{se:asympcycle}.
\end{subequations} %se:introgK

\begin{definition}\label{def:assvar}
    \addtocounter{equation}{-1}
  \begin{subequations}\label{se:defassvar}
Fix a {\em
  nonzero} $({\mathfrak g},K)$-module
\begin{equation}
  X \in {\mathcal M}_f({\mathfrak g},K).
\end{equation}
Then $\gr X$ is a {\em nonzero} $K$-equivariant coherent sheaf on ${\mathcal
  N}^*_\theta$, and as such has a well-defined nonempty {\em support}
\begin{equation}\label{eq:supp}
  \supp(\gr X) \subset {\mathcal N}^*_\theta
\end{equation}
which is a Zariski-closed union of $K$-orbits. This
support is also called the {\em associated variety} of $X$,
\begin{equation}\label{eq:AV}\begin{aligned}
    \AV(X) &\eqdef \supp(\gr(X)) \\
    &\eqdef(\text{variety of the ideal\ }
  \Ann(\gr(X)) \subset {\mathcal N}^*_\theta.
\end{aligned}\end{equation}
(More details about the commutative
algebra definition of support are recalled for example in
\cite{Vunip}*{(1.2)}.) Theorem \ref{thm:finorb} provides a formula
\begin{equation}
  \gr(X) = \sum_{Z = K\cdot E_Z\subset \AV(X)}\sum_{\tau_Z^j\in
    \widehat{K^{Z}}} m_{\tau_Z^j}(X)[\widetilde{{\mathcal
          V}_{\tau_Z^j}}]
\end{equation}
If we write
\begin{equation}
  \{Y_1, Y_2,\ldots,Y_r\}, \qquad Y_i \simeq K/K_i
\end{equation}
for the open $K$ orbits in $\AV(X)$,
then \cite{Vunip} shows that each virtual representation
\begin{equation}
  \mu_{Y_i}(X) = \sum_j m_{\tau_{Y_i}^j(X)} \tau_{Y_i}^j \in \R(K_i)
\end{equation}
is independent of the choices defining $\widetilde{{\mathcal
    V}_{\tau_Z^j}}$. We define the {\em associated cycle of $X$} to be
\begin{equation}\label{eq:AC}
  {\mathcal A}{\mathcal C}(X) = \sum_i \mu_{Y_i}(X) Y_i.
\end{equation}
The {\em weak associated cycle of $X$} is
\begin{equation}\label{eq:WAC}
  {\mathcal A}{\mathcal C}_{\weak}(X) = \sum_i \dim \mu_{Y_i}(X) Y_i.
\end{equation}
  \end{subequations}
  \end{definition}

The following deep result is part of the connection between
what we prove here (about algebraically defined associated varieties)
and the incomplete analytic picture outlined in the introduction (involving the
analytically defined wavefront set). This theorem is not used in our
results about associated varieties.

\begin{theorem}{Schmid-Vilonen \cite{SV}*{Theorem 1.4}}\label{thm:SV}
  Suppose that $(\pi,V)$ is a nonzero ${\mathfrak Z}({\mathfrak
    g})$-finite representation of $G({\mathbb R})$ of moderate growth
  (see \eqref{eq:GRrep}). Write $X=V_{K({\mathbb R})}$ for the
  underlying Harish-Chandra module (see \eqref{eq:GRgK}). Then
  $$\WF(\pi) \longleftrightarrow \supp(\gr X)$$
  by means of the Kostant-Sekigichi identification Corollary
  \ref{cor:KS} (of $G({\mathbb R})$ orbits on ${\mathcal
    N}^*_{\mathbb R}$ with $K$ orbits on ${\mathcal N}^*_\theta$).
\end{theorem}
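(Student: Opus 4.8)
This is the Schmid--Vilonen theorem (\cite{SV}); the plan is to transport both invariants to the flag variety $\mathcal{B}$ of $G$ (the variety of Borel subalgebras of $\mathfrak{g}$) and compare them there through the characteristic cycle of the geometric incarnation of $X$. First I would invoke Beilinson--Bernstein localization, in its $K$-equivariant form, to realize the finite length $(\mathfrak{g},K)$-module $X$ as a $K$-equivariant twisted $\mathcal{D}$-module $\mathcal{M}$ on $\mathcal{B}$; dually, via the Matsuki correspondence and Riemann--Hilbert, a globalization of the representation $\pi$ attached to $X$ is realized as the cohomology of a $G(\mathbb{R})$-equivariant constructible complex $\mathcal{F}$ on $\mathcal{B}$ with $\mathrm{CC}(\mathcal{F})$ ``equal to'' $\mathrm{Ch}(\mathcal{M})$. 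Both are supported on the union $\Lambda=\bigcup_i T^*_{Q_i}\mathcal{B}$ of conormal bundles to the finitely many $K$-orbits $Q_i$ on $\mathcal{B}$ (equivalently the $G(\mathbb{R})$-orbits, by Matsuki duality), with the same positive multiplicities $m_i$.

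Next I would handle the $\theta$-side. Using the moment (Springer) map $\mu\colon T^*\mathcal{B}\to\mathcal{N}\simeq\mathcal{N}^*$ and the standard identity (Borho--Brylinski, Ginzburg) that the associated variety is the moment-map image of the characteristic variety, one gets $\supp(\gr X)=\AV(X)=\mu\bigl(\supp\mathrm{Ch}(\mathcal{M})\bigr)=\bigcup_{m_i\neq 0}\mu(T^*_{Q_i}\mathcal{B})$. The $K$-equivariance forces $\mathrm{Ch}(\mathcal{M})\subset\Lambda$ and $\mu(\Lambda)=\mathcal{N}^*_\theta$; hence each $\mu(T^*_{Q_i}\mathcal{B})$ is the closure of one nilpotent $K$-orbit in $\mathcal{N}^*_\theta$, and $\AV(X)$ is a union of such closures.

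The hard part is the real side. Here I would use the Schmid--Vilonen integration formula, which expresses the Harish-Chandra character $\Theta_\pi$ near the identity as the pushforward under the real moment map $\mu_{\mathbb{R}}\colon T^*\mathcal{B}\to i\mathfrak{g}(\mathbb{R})^*$ of (the exponential of the tautological one-form paired against) the characteristic cycle $\mathrm{CC}(\mathcal{F})$. Computing $\WF_e$ of this expression, and showing that no cancellation occurs and the scaling is right, should give $\WF(\pi)=\bigcup_{m_i\neq 0}\overline{\mu_{\mathbb{R}}(T^*_{Q_i}\mathcal{B})}$, a union of closures of real nilpotent coadjoint orbits; the closures cost nothing by the asymptotic-cone remark \eqref{eq:asympnilp}, since the orbits are cones.

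Finally I would match the answers orbit by orbit: for each relevant $Q_i$, the $K$-orbit $\mu(T^*_{Q_i}\mathcal{B})\subset\mathcal{N}^*_\theta$ and the $G(\mathbb{R})$-orbit $\mu_{\mathbb{R}}(T^*_{Q_i}\mathcal{B})\subset\mathcal{N}^*_{i\mathbb{R}}$ should be a $\theta$-form and a real form of one complex nilpotent orbit, hence correspond under the Kostant--Sekiguchi bijection of Corollary \ref{cor:KS}. This is a purely geometric compatibility between the two moment maps and the Sekiguchi correspondence, reducible via Jacobson--Morozov (Theorem \ref{thm:JM}) and Cayley transforms to the rank-one case. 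I expect the main obstacle to be the real side: proving $\WF_e(\Theta_\pi)$ is \emph{exactly} $\mu_{\mathbb{R}}\bigl(\supp\mathrm{CC}(\mathcal{F})\bigr)$ --- exactness, not mere containment, and with the correct normalization --- is the serious microlocal input, essentially Schmid--Vilonen's characteristic-cycle/wavefront-cycle theorem itself; a secondary difficulty is verifying the moment-map/Sekiguchi compatibility in full generality rather than on examples.
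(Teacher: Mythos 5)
The paper does not prove this statement: it is quoted verbatim from Schmid--Vilonen \cite{SV}*{Theorem 1.4}, and the authors explicitly remark both that it is a ``deep result'' and that it is not used in their arguments about associated varieties. So there is no internal proof to compare against; what you have written has to be judged as a reconstruction of the argument in \cite{SV} itself.

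As such a reconstruction, your outline is architecturally faithful: localization plus the Matsuki correspondence to pass between the $K$-equivariant $\mathcal D$-module and the $G({\mathbb R})$-equivariant constructible sheaf with matching characteristic cycles; the Borho--Brylinski/Ginzburg identity $\AV(X)=\mu(\supp \mathrm{Ch}(\mathcal M))$ on the $\theta$-side; the Schmid--Vilonen character integral and its asymptotics at the identity on the real side; and the compatibility of the two moment-map images with the Sekiguchi bijection (which is essentially Vergne's result \cite{Vergne}, cited in the paper after Corollary \ref{cor:KS}). But you have correctly diagnosed that the two decisive steps --- (i) that the leading term of the character expansion is \emph{exactly} the pushforward of $\mathrm{CC}(\mathcal F)$ under the twisted real moment map, with no cancellation among the contributions of the open orbits in the support, and (ii) the orbit-by-orbit moment-map/Sekiguchi matching in general --- are precisely the main theorems of \cite{SV} (and of Schmid--Vilonen's earlier character-formula paper). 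As written, your argument defers its entire content to the result being proved, so it is a roadmap rather than a proof. One further point worth flagging: \cite{SV}*{Theorem 1.4} is an equality of \emph{cycles} (wavefront cycle equals associated cycle), and the statement about supports quoted here follows only because the multiplicities on the open orbits of $\supp(\gr X)$ are strictly positive; if you want the support statement from the cycle statement, you should say that explicitly rather than fold it into ``no cancellation occurs.''
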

At this point it should be possible for the reader to rewrite the
introduction, replacing the equivariant $\K$-theory for
$G({\mathbb R})$ acting on ${\mathcal N}^*_{\mathbb R}$ (which we do
not know how to define) by Thomason's algebraic equivariant $\K$-theory for $K$
acting on ${\mathcal N}^*_\theta$; and replacing wavefront sets and
cycles by associated varieties and associated cycles. We will not do
that explicitly.

\begin{subequations}\label{se:resK}
Restriction to $K$ is a fundamental tool for us, and we pause here to
introduce a bit of useful formalism about that. Define
\begin{equation}\label{eq:MfK}
  {\mathcal M}_f(K) = \text{category of admissible algebraic
    $K$-modules;}
\end{equation}
as usual {\em admissible} means that each irreducible representation
of $K$ appears with finite multiplicity. This abelian category has a
nice Grothendieck group
\begin{equation}\label{eq:KK}
  \K(K) = \prod_{(\rho,E_\rho)\in \widehat K} {\mathbb Z}\rho,
\end{equation}
the direct {\em product} of one copy of ${\mathbb Z}$ for each
irreducible representation of $K$. A $({\mathfrak g},K)$-module $X$ of
finite length is necessarily admissible:
\begin{equation}
  X|_K = \sum_{(\rho,E_\rho)\in \widehat K} m(\rho,X)E_\rho \qquad
  (m(\rho,X)\in {\mathbb N}).
\end{equation}
Restriction to $K$ is therefore an exact functor
\begin{equation}
  \res_K\colon {\mathcal M}_f({\mathfrak g},K) \rightarrow {\mathcal
    M}_f(K).
\end{equation}
The corresponding homomorphism of Grothendieck groups is
\begin{equation}\label{eq:resgK}
  \res_K\colon \K({\mathfrak g},K) \rightarrow \K(K), \qquad
      [X]_{({\mathfrak g},K)} \mapsto \prod_{(\rho,E_\rho)\in \widehat
        K} m(\rho,X)\rho.
\end{equation}
In exactly the same way, the fact that $K$ is reductive implies that
any irreducible representation of $K$ must appear with finite
multiplicity in global sections of an equivariant coherent sheaf on a
homogeneous space for $K$. If $K$ acts on an affine variety $Z$ with
finitely many orbits, restriction to $K$ is therefore an exact functor
\begin{equation}
  \res_K\colon \Coh^K(Z) \rightarrow {\mathcal
    M}_f(K).
\end{equation}
The corresponding homomorphism of Grothendieck groups is
\begin{equation}\label{eq:resKK}
  \res_K\colon \K^K(Z) \rightarrow \K(K).
\end{equation}
The maps \eqref{eq:resgK} and \eqref{eq:resKK} fit into a commutative
diagram
\begin{equation}\label{eq:resCD}
\begin{tikzcd}
  \K({\mathfrak g},K) \arrow[rr,"{[\gr]}"] \arrow[dr,"\res_K" description] &&
  \K^K({\mathcal N}^*_\theta) \arrow[dl,"\res_K" description] \\
  & \K(K) &\\
\end{tikzcd}
\end{equation}

  \end{subequations} %{se:resK}

\section{The case of complex reductive groups}\label{sec:cplx}
\setcounter{equation}{0}
\begin{subequations}\label{se:cplx}
The general case of our results requires a discussion of the (rather
complicated) Langlands classification of  representations of real
reductive groups. In order to explain our new ideas, we will therefore
first consider them in the (less complicated) setting of {\em complex}
reductive groups. Suppose therefore (still using the notation of
\eqref{se:realgrps}) that there is a complex connected
reductive algebraic group $G_1$, and that
\begin{equation}
  G({\mathbb R}) \simeq G_1.
\end{equation}
Fix a compact real form
\begin{equation}\label{eq:cptform}
\sigma_1\colon G_1\rightarrow G_1, \qquad G_1({\mathbb R},\sigma_1)
=_{\text{\textnormal{def}}} G_1^{\sigma_1} = K_1\subset G_1.
\end{equation}
Then we can arrange
\begin{equation}\label{eq:cplx}\begin{aligned}
    G &= G_1 \times G_1 \\
    \sigma_0(g,g')&= (\sigma_1(g),\sigma_1(g')), \qquad G^{\sigma_0} =
    K_1\times K_1\\
    \sigma_{\mathbb R}(g,g') &= (\sigma_1(g'),\sigma_1(g)),\\
    G({\mathbb R}) &= \{(g,\sigma_1(g)) \mid g\in G_1\} \simeq G_1\\
    \theta(g,g') &= (g',g)\\
    K &= (G_1)_\Delta = \{(g,g)\mid g\in G_1\}\\
    K({\mathbb R}) &= (K_1)_\Delta = \{(k,k) \mid k\in K_1\} \simeq K_1.\\
\end{aligned}\end{equation}
(The notation $K_1$ may be a bit confusing because in general we write $K$ for
the {\em complex} group which is the complexification of the maximal
compact $K({\mathbb R})\subset G({\mathbb R})$; but here
$K_1$ is a compact (real) group. We have not found a reasonable
change of notation to address this issue.)

We fix also a (compact) maximal torus
\begin{equation}\label{eq:T1}
  T_1 \subset K_1;
\end{equation}
then automatically its complexification
\begin{equation}
  H_1 = G_1^{T_1}
\end{equation}
is a (complex) maximal torus in $G_1$. We write
\begin{equation}\label{eq:CW}
  W_1 = N_{K_1}(T_1)/T_1 \simeq N_{G_1}(H_1)/H_1
\end{equation}
for the Weyl group. Complexifications of these things are
\begin{equation}\begin{aligned}
    H &= H_1\times H_1 = \text{complexification of $H_1$}\\
    W &= W_1 \times W_1 = \text{Weyl group of $H$ in $G$.}\\
  \end{aligned}
\end{equation}
We fix also a Borel subgroup $B_1\supset H_1$ of $G_1$. Because
$\sigma_1$ preserves $T_1$, and $K_1$ is compact, the Borel subgroup
$\sigma_1(B_1)$ is necessarily equal to $B_1^{\opp}$. Therefore the
complexification of  $B_1$ is
\begin{equation}\label{eq:Bqs}
  B_{qs} = B_1 \times \sigma_1(B_1) = B_1 \times B_1^{\opp},
\end{equation}
corresponding to the real Borel subgroup $B_1$ for the quasisplit
$G({\mathbb R}) = G_1$. (The subscript $qs$ stands for ``quasisplit.'')
We are also interested in the $\theta$-stable
Borel subgroup
\begin{equation}\label{eq:Bf}
  B_f = B_1\times B_1;
\end{equation}
now the subscript $f$ stands for ``fundamental.''
\end{subequations} % se:cplx

We turn next to a discussion of nilpotent orbits in the complex case.
\begin{subequations}\label{se:cplxnilp}
We write ${\mathcal N}^*_1$ for the nilpotent cone in ${\mathfrak
  g}_1^*$, and use other notation accordingly. Then
\begin{equation}\label{eq:cplxnilp} \begin{aligned}
    {\mathcal N}^* &= {\mathcal N}_1^* \times {\mathcal N}_1^* \\
    {\mathcal N}_{i{\mathbb R}}^* &= \{(E,-\sigma_1(E)) \mid E \in
    {\mathcal N}_1^*\} \simeq {\mathcal N}_1^* \\
    {\mathcal N}_\theta^* &= \{(E,-E)) \mid E \in
    {\mathcal N}_1^*\} \simeq {\mathcal N}_1^* \\
  \end{aligned}
\end{equation}
Immediately we get identifications of orbits
\begin{equation}\label{eq:cplxnilporb} \begin{aligned}
    {\mathcal N}^*/G &= {\mathcal N}_1^*/G_1 \times {\mathcal N}_1^*/G_1 \\
    {\mathcal N}_{i{\mathbb R}}^*/G({\mathbb R}) &\simeq {\mathcal N}_1^*/G_1 \\
    {\mathcal N}_\theta^*/K &\simeq {\mathcal N}_1^*/G_1 \\
  \end{aligned}
\end{equation}
and therefore ${\mathcal N}_{i{\mathbb R}}^*/G({\mathbb R}) \simeq
{\mathcal N}_\theta^*/K$. This last is the Kostant-Sekiguchi bijection
of Corollary \ref{cor:KS}.

Clearly the (antiholomorphic) automorphism $\sigma_1$ of $G_1$ acts on
the set ${\mathcal N}^*_1/G_1$ of nilpotent orbits. The
Jacobson-Morozov theorem implies that this action is trivial. Here is
why. The semisimple element $D$, whose class characterizes the orbit,
belongs to $[{\mathfrak g}_1,{\mathfrak g}_1]$ and has real
eigenvalues in the adjoint representation; so after conjugation we can
arrange
$$D \in i{\mathfrak t}_1,\qquad \sigma_1(D) = -D.$$
The Jacobson-Morozov $SL(2)$ shows that $D$ is conjugate to $-D$, so
we have shown that $\sigma_1$ preserves the conjugacy class of
$D$. Now apply Corollary \ref{cor:JM}.

It follows that $\sigma_{\mathbb R}$ acts on ${\mathcal N}_1^*/G_1
\times {\mathcal N}_1^*/G_1$ by interchanging the two factors. The
nilpotent $G$-orbits preserved by this action are exactly the diagonal
classes. What \eqref{eq:cplxnilp} shows is that each such diagonal
nilpotent class for $G$ has a unique real form (and therefore, by
Kostant-Sekiguchi, a unique $\theta$-form).
\end{subequations} %se:cplxnilp

In light of this identification of the $K=G_1$ action on ${\mathcal
  N}^*_\theta$ with the $G_1$ action on ${\mathcal N}^*_1$, we can
restate Definition \ref{def:geom} (in this complex case) as follows.

\begin{definition}\label{def:cplxgeom}
A {\em (global) geometric parameter} for a complex reductive algebraic
group $G_1$ is a nilpotent $G_1$-orbit
$Y \subset {\mathcal N}^*_1$, together with an
irreducible $G_1$-equivariant vector bundle
$${\mathcal E} \rightarrow Y.$$
Equivalently, a {\em (local) geometric parameter} is a $G_1$-conjugacy
class of pairs
$$(\xi,(\tau,E)),$$
with $\xi\in {\mathcal N}^*_1$ a nilpotent element,
and $(\tau,E)$ an irreducible (algebraic) representation of the isotropy group
$G_1^\xi$. This bijection between local and global parameters identifies
$(\xi,(\tau,E))$ with the pair
$$Y = G_1\cdot \xi \simeq G_1/G_1^\xi, \qquad {\mathcal E} \simeq
G_1\times_{G_1^\xi} E.$$
We write ${\mathcal P}_{g}(G_1)$ for the collection of
geometric parameters. Sometimes it will be convenient to write
${\mathcal E}(\tau)$ or ${\mathcal E}(\xi,\tau)$ to exhibit the
underlying local parameter.
\end{definition}

These geometric parameters are exactly what appears on the complicated
side of the Lusztig-Bezrukavnikov bijection \cite{Bezr} for
$G_1$. Lusztig's conjecture, and its proof by Bezrukavnikov, were
critical to the development of the ideas in this paper. But they are
not logically necessary to explain our results, so for brevity we are
going to omit them.

The introduction (after reformulation in terms of $K$ acting on
${\mathcal N}^*_\theta$)  outlined a connection between
the geometric parameters of Definitions \ref{def:geom} and
\ref{def:cplxgeom} and the associated varieties we seek to compute. We
conclude this section with an account of the Langlands classification,
which describes representations of $G_1$ in terms that we
will be able to relate to geometric parameters.

\begin{definition}\label{def:langcplx} In the setting of
  \eqref{se:cplx}, a {\em Langlands parameter} for
  $G_1$ (or for $(G,K) = (G_1\times G_1,(G_1)_\Delta)$) is a pair of
  linear functionals
$$(\lambda_L,\lambda_R) \in {\mathfrak h}^*_1\times {\mathfrak h}^*_1 \simeq
{\mathfrak h}^*,$$
subject to the requirement that the restriction of
$(\lambda_L,\lambda_R)$ to ${\mathfrak t}_1=({\mathfrak h}_1)_\Delta$ is
the differential of a  weight:
$$\lambda_L + \lambda_R = \gamma \in X^*(H_1).$$
Two Langlands parameters are said to be {\em equivalent} if they are
conjugate by $(W_1)_\Delta$:
$$(\lambda_L,\lambda_R) \sim (\lambda_L',\lambda_R') \iff
(\lambda_L,\lambda_R) = (w_1\cdot\lambda_L',w_1\cdot\lambda_R')$$
for some $w_1\in W_1$.
We write ${\mathcal P}_{\LL}(G,K)$ for the set of
equivalence classes of Langlands parameters. The {\em discrete part}
of the Langlands parameter is by definition
$$\gamma = \gamma(\lambda_L,\lambda_R) = \lambda_L+\lambda_R \in X^*(H_1).$$
Tbe {\em continuous part} of the parameter is
$$\nu = \nu(\lambda_L,\lambda_R) = \lambda_L-\lambda_R \in {\mathfrak h}_1^*.$$
We can recover the parameter from these two parts:
$$\lambda_L = (\gamma + \nu)/2, \qquad \lambda_R = (\gamma-\nu)/2.$$
Equivalence is easily written in terms of the discrete and continuous
parameters:
$$(\lambda_L,\lambda_R) \sim (\lambda_L',\lambda_R') \iff
(\gamma,\nu) = (w_1\cdot\gamma',w_1\cdot\nu') \quad (w_1\in W_1)$$
(with obvious notation).

The Langlands classification (due in this case to Zhelobenko) attaches
to each equivalence class of parameters a {\em standard representation}
$I(\lambda_L,\lambda_R)$ (more precisely, a
Harish-Chandra module for
$$({\mathfrak g},K) = ({\mathfrak g_1}\times{\mathfrak g}_1,(G_1)_\Delta))$$
with the following properties.
\begin{enumerate}
\item There is a unique irreducible quotient $J(\lambda_L,\lambda_R)$
  of $I(\lambda_L,\lambda_R)$.
\item Any irreducible Harish-Chandra module for $({\mathfrak g},K)$ is
  equivalent to some $J(\lambda_L,\lambda_R)$.
\item Two standard representations are isomorphic (equivalently, their
  Langlands quotients are isomorphic) if and only if their parameters
  are conjugate by $W_1$.
\item The infinitesimal character of the standard representation
  $I(\lambda_L,\lambda_R)$ is indexed by the $W=W_1\times W_1$ orbit
  of $(\lambda_L,\lambda_R)$.
\item The restriction of $I(\lambda_L,\lambda_R)$ to $K_1$ is the
  induced representation from $T_1$ to $K_1$ of $\gamma =\lambda_L +
  \lambda_R$:
  $$I(\lambda_L,\lambda_R) \simeq \Ind_{T_1}^{K_1}(\lambda_L + \lambda_R).$$
\item The restrictions to $K_1$ of two standard representations are
  isomorphic if and only if the weights $\lambda_L+\lambda_R$ and
  $\lambda_L' + \lambda_R'$ are conjugate by $W_1$.
\end{enumerate}
\end{definition}
The representation $I(\lambda_L,\lambda_R)$ is {\em tempered} (an analytic
condition due to Harish-Chandra, and central to Langlands' original
work) if and only if the continuous parameter is purely imaginary:
$$\nu = \lambda_L - \lambda_R \in iX^*(H_1)\otimes_{\mathbb Z}{\mathbb R}.$$
In this case $I(\lambda_L,\lambda_R) = J(\lambda_L,\lambda_R)$.

In the general (possibly nontempered) case, the real part of
$\nu$ controls the growth of matrix coefficients of
$J(\lambda_L,\lambda_R)$. Tempered representations have the smallest
possible growth, and larger values of $\RE\nu$ correspond to larger
growth rates.

A $K$-Langlands parameter is the same thing, but with a larger
equivalence relation. (In what follows, remember that $K$ is the
complexification of $K_1$: locally finite continuous representations
of the compact group $K_1$ are the same as algebraic representations
of the complex algebraic group $K$.)

\begin{definition}\label{def:Klangcplx}
A {\em $K$-Langlands parameter} for $(G,K)$
is any of the following equivalent things.
\begin{enumerate}
\item A Langlands parameter $(\lambda_L,\lambda_R)$. The equvalence
  relation is
$$(\lambda_L,\lambda_R) \sim_K (\lambda_L',\lambda_R') \iff
  I(\lambda_L',\lambda_R')|_K =  I(\lambda_L,\lambda_R)|_K.$$
The equivalence relation is throwing away the continuous parameter
$\nu(\lambda_L,\lambda_R)$. Another way to state this is
$$(\lambda_L,\lambda_R) \sim_K (\lambda_L',\lambda_R') \iff \lambda_L
+ \lambda_R \in W_1\cdot(\lambda_L'+ \lambda_R').$$
\item A tempered parameter $(\gamma/2,\gamma/2)$ (some
$\gamma\in X^*(H_1)$) having real infinitesimal character (see
  \cite{Vgreen}*{Definition 5.4.11}). The equivalence relation is
$$(\gamma/2,\gamma/2) \sim_K (\gamma'/2,\gamma'/2) \iff \gamma \in W_1\cdot\gamma'.$$
\item A $W_1$ orbit of weights $\gamma\in X^*(H_1)$.
\item A dominant weight $\gamma_0 \in X^*(H_1)^+$.
\end{enumerate}
The equivalence of the four conditions is standard and easy. The tempered
parameter of real infinitesimal character
$$((\lambda_L+\lambda_R)/2,(\lambda_L + \lambda_R)/2)$$
is a natural representative for the $K$-equivalence class.

If $\gamma\in X^*(T)$ is any weight (and
$\gamma_0 = w\gamma$ is its unique dominant conjugate), then the ``fixed
restriction to $K$'' in (1) is
$$ \Ind_{T_1}^{K_1}(\gamma) \simeq \Ind_{T_1}^{K_1}(\gamma_0).$$
Write ${\mathcal P}_{K\dashLL}(G,K)$ for
equivalence classes of $K$-Langlands parameters.
\end{definition}

The conjecture of Lusztig proved by Bezrukavnikov in \cite{Bezr} is a
{\em bijection} between the geometric parameters of Definition
\ref{def:cplxgeom} and the $K$-Langlands parameters of Definition
\ref{def:Klangcplx}. Bezrukavnikov proceeds by using these two sets to
index two bases of the same ${\mathbb Z}$-module $\K^K({\mathcal
  N}^*_\theta)$. He proves that his change of basis matrix is upper
triangular, and in this way establishes the bijection between the
index sets.  He does not offer a method to {\em calculate} his basis
indexed by geometric parameters.

Following Achar, we will use geometric parameters to index a different
basis of the same vector space, and we will calculate the change of
basis matrix. We are not able to prove that our basis is the same as
Bezrukavnikov's.

Here are some classical properties of $K$-Langlands parameters that we
will use in Section \ref{sec:cplxalg} to construct the basis indexed
by geometric parameters. The results are due to Zhelobenko; but his
results are spread over a number of papers and difficult to
reference. A convenient reference is \cite{DufloC}.

\begin{theorem}\label{thm:cplxresK} Suppose $(\lambda_L,\lambda_R)$
  represents a $K$-Langlands parameter for the complex group $G$
  (Definition \ref{def:Klangcplx}). Write
  $$\gamma=\lambda_L + \lambda_R \in
  X^*(H_1) \simeq X^*(T_1)$$
  for the corresponding weight of the compact torus \eqref{eq:T1}.
  \begin{enumerate}
    \item The restriction to $K_1$ of $I(\lambda_L,\lambda_R)$ contains
      the irreducible representation
      $$\mu(\lambda_L,\lambda_R) = \text{irreducible of extremal
        weight $\gamma$}$$
      with multiplicity one. The other irreducible representations of
      $K_1$ appearing have strictly larger extremal weights.
    \item The map
  $$\mu\colon {\mathcal P}_{K\dashLL}(G,K) \rightarrow \widehat{K_1},\qquad
      (\lambda_L,\lambda_R)\mapsto \mu(\lambda_L,\lambda_R)$$
      is a bijection.
     \item The classes
      $$[\res_K I(\lambda_L,\lambda_R)] \in \K(K) \qquad
      (\lambda_L,\lambda_R) \in {\mathcal P}_{K\dashLL}(G,K)$$
        (see \eqref{eq:resgK}) are linearly independent.
  \end{enumerate}
\end{theorem}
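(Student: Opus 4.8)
The plan is to prove the three statements in sequence, with (1) doing essentially all the work and (2), (3) following formally. For part (1), the key input is property (5) of the Langlands classification in Definition \ref{def:langcplx}: the restriction of $I(\lambda_L,\lambda_R)$ to $K_1$ is exactly $\Ind_{T_1}^{K_1}(\gamma)$ with $\gamma = \lambda_L+\lambda_R$. So the whole question becomes a statement about the $K_1$-types of an induced representation from the maximal torus, and by Frobenius reciprocity the multiplicity of an irreducible $V_\mu$ (of highest weight $\mu$) in $\Ind_{T_1}^{K_1}(\gamma)$ is the dimension of the $\gamma$-weight space of $V_\mu$, i.e. the Kostant weight multiplicity $\dim V_\mu(\gamma)$. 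First I would fix a choice of positive roots (hence of dominant chamber) compatible with $B_1$, and let $\gamma_0$ be the unique dominant $W_1$-conjugate of $\gamma$. Then I would invoke the standard fact from highest weight theory: $\dim V_\mu(\gamma)=\dim V_\mu(\gamma_0)\ne 0$ forces $\gamma_0 \preceq \mu$ in the dominant order (and $\mu$ dominant), and in the boundary case $\mu=\gamma_0$ the weight $\gamma_0$ is extremal, so $\dim V_{\gamma_0}(\gamma_0)=1$. This gives exactly the assertion of (1): the irreducible of extremal weight $\gamma$ (equivalently highest weight $\gamma_0$) occurs with multiplicity one, and every other $K_1$-type occurring has highest weight strictly larger than $\gamma_0$ in the dominant order — which is what ``strictly larger extremal weights'' means.

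For part (2), I would use the reformulation (3)/(4) of $K$-Langlands parameters in Definition \ref{def:Klangcplx}: a $K$-Langlands parameter is the same as a $W_1$-orbit of weights in $X^*(H_1)\simeq X^*(T_1)$, equivalently a dominant weight $\gamma_0\in X^*(T_1)^+$. On the other side, $\widehat{K_1}$ is parametrized by the theorem of the highest weight as the set of dominant weights $X^*(T_1)^+$ (using that $K_1$ is compact connected with maximal torus $T_1$; if $K_1$ is not simply connected this set is the dominant weights that are actual characters of $T_1$, which is exactly $X^*(T_1)^+$). Under these two identifications the map $\mu$ of part (1) sends the class of $\gamma$ to the irreducible of highest weight $\gamma_0$, i.e. it is literally the identity map $X^*(T_1)^+\to X^*(T_1)^+$. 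Hence it is a bijection. I would write this out carefully since the only content is matching up the two standard parametrizations.

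For part (3), linear independence of the classes $[\res_K I(\lambda_L,\lambda_R)]$ in $\K(K)$ follows from part (1) by a triangularity argument. Order the finitely-many-at-a-time relevant $K$-Langlands parameters by the dominant order on their associated dominant weights $\gamma_0$ (refined to a total order arbitrarily). Part (1) says that, expressed in the basis $\{[\,\rho\,]\}_{\rho\in\widehat{K_1}}$ of $\K(K)$, the class $[\res_K I(\lambda_L,\lambda_R)]$ has coefficient $1$ on the basis vector $\mu(\lambda_L,\lambda_R)$ and is otherwise supported on basis vectors strictly higher in the order. Since by part (2) the ``leading terms'' $\mu(\lambda_L,\lambda_R)$ are pairwise distinct, any nontrivial finite linear combination has a nonzero coefficient on the minimal leading term occurring, hence is nonzero in $\K(K)$; this is a standard unitriangularity argument, valid in the direct-product Grothendieck group $\K(K)=\prod_{\rho\in\widehat K}\mathbb Z\rho$ of \eqref{eq:KK} because each individual class has support contained in an ``upward-finite'' set of $K$-types. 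I expect part (1) — specifically, pinning down the exact meaning of ``strictly larger extremal weights'' and quoting the right weight-multiplicity/dominant-order fact — to be the main obstacle, but it is entirely classical; once (1) is stated precisely, (2) and (3) are formal. One subtlety worth a sentence in the writeup: when $K_1$ is disconnected or not simply connected the phrase ``irreducible of extremal weight $\gamma$'' needs the remark that $\gamma\in X^*(T_1)$ by the constraint $\lambda_L+\lambda_R=\gamma\in X^*(H_1)$ in Definition \ref{def:langcplx}, so such an irreducible genuinely exists; since the excerpt sets $W_1=N_{K_1}(T_1)/T_1$ with $K_1$ possibly only the identity component's worth of structure, I would just note that $\Ind_{T_1}^{K_1}$ and the highest-weight theory go through verbatim for compact connected $K_1$, which is the case at hand.
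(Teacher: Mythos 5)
The paper does not actually prove Theorem~\ref{thm:cplxresK}: it attributes the result to Zhelobenko and points to \cite{DufloC} as a convenient reference. So there is no ``paper's proof'' to compare against; you have supplied one, and it is a correct elementary derivation from the listed properties of Definition~\ref{def:langcplx}.

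Your reduction of part (1) to the principal-series restriction formula $I(\lambda_L,\lambda_R)|_{K_1} \simeq \Ind_{T_1}^{K_1}(\gamma)$ (Definition~\ref{def:langcplx}(5)) is exactly the right move; Frobenius reciprocity converts the multiplicity of $V_\mu$ into the weight multiplicity $\dim V_\mu(\gamma_0)$, and the standard fact that $\gamma_0$ is a weight of $V_\mu$ iff $\gamma_0\preceq\mu$ (with equality giving multiplicity one) gives everything asserted. One small remark worth adding in a final write-up: the phrase ``strictly larger extremal weights'' in the statement is most naturally read as strictly larger in the $\langle\,,\,\rangle$-norm (this is the comparison the paper uses elsewhere, e.g.\ in Lemma~\ref{lemma:branchnorm} and Theorem~\ref{thm:normalize}(2)); your conclusion is in terms of the dominant partial order, and since for dominant $\gamma_0\prec\mu$ one has $\|\mu\|^2 = \|\gamma_0\|^2 + 2\langle\gamma_0,\mu-\gamma_0\rangle + \|\mu-\gamma_0\|^2 > \|\gamma_0\|^2$, your version implies the norm version, so this is only a matter of saying one sentence. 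Parts (2) and (3) then follow exactly as you describe; the unitriangularity argument for (3) is fine once you observe, as you do, that a putative dependence relation involves only finitely many parameters, so a $\preceq$-minimal leading $K_1$-type exists and isolates one coefficient, and that this works inside the direct product $\K(K)=\prod_{\rho}\mathbb Z\rho$ because you are only ever inspecting a single coordinate.
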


\section{Representation basis for $\K$-theory: ${\mathbb C}$
  case}\label{sec:cplxalg}
\setcounter{equation}{0}

We continue in the
setting \eqref{se:cplx}, that is, assuming that $(G,K)$ arises from a
{\em complex} reductive group.

Applying Theorem \ref{thm:finorb} to $K$ acting on the
$K$-nilpotent cone gives

\begin{corollary}\label{cor:Kspangeom}
Write $\{Y_1,\ldots,Y_r\}$ for the orbits of $K$
on ${\mathcal N}^*_\theta$ (\eqref{e:Knilcone}). For each irreducible
$K$-equivariant vector bundle ${\mathcal E}$ on some $Y_i$, fix a
$K$-equivariant (virtual) coherent sheaf $\widetilde{\mathcal E}$
supported on the closure of $Y_i$, and restricting to ${\mathcal E}$
on $Y_i$.
Then the classes $[\widetilde{\mathcal E}]$ are a ${\mathbb Z}$-basis of
$\K^K({\mathcal N}^*_\theta)$. A little more precisely, the classes
supported on any $K$-invariant closed subset $Z \subset {\mathcal
  N}_\theta$ are a basis of $\K^K(Z)$.
\end{corollary}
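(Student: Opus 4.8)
The plan is to read off the statement from Theorem \ref{thm:finorb}, applied to the (possibly disconnected) complex reductive group $H = K$ acting on the affine variety $X = {\mathcal N}^*_\theta$. Two hypotheses need to be checked. First, ${\mathcal N}^*_\theta$ is affine: by \eqref{e:Knilcone} it is the Zariski-closed subset of the vector space $\left({\mathfrak g}/{\mathfrak k}\right)^* \simeq {\mathfrak s}^*$ cut out by the equations defining the nilpotent cone, hence an affine variety, with $K$ acting by the restriction of the algebraic coadjoint action. Second, $K$ must act with finitely many orbits --- but this is exactly the theorem of Kostant and Rallis \cite{KR} recalled just after \eqref{e:Knilcone}. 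So Theorem \ref{thm:finorb} applies.

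Now Theorem \ref{thm:finorb}(2) produces, for every orbit $Y_i$ and every irreducible representation $\tau$ of the isotropy group $K^{Y_i}$ --- equivalently, for every irreducible $K$-equivariant vector bundle ${\mathcal E} = {\mathcal V}_\tau$ on $Y_i$ --- an equivariant coherent sheaf $\widetilde{\mathcal E}$ on $\overline{Y_i}$, regarded as a coherent sheaf on ${\mathcal N}^*_\theta$ by extension by zero, and restricting to ${\mathcal E}$ on $Y_i$. This is the choice of $\widetilde{\mathcal E}$ made in the corollary. Theorem \ref{thm:finorb}(3) then says precisely that the classes $[\widetilde{\mathcal E}]$ form a ${\mathbb Z}$-basis of $\K^K({\mathcal N}^*_\theta)$, each basis vector being well-defined modulo the span of the $[\widetilde{{\mathcal E}'}]$ living on orbits in the boundary of its own orbit. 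That gives the first assertion.

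For the refinement, let $Z \subset {\mathcal N}^*_\theta$ be a $K$-invariant closed subvariety. Then $Z$ is again affine and is a union of finitely many of the orbits $Y_i$, so Theorem \ref{thm:finorb}(3) applies to $(K,Z)$ and shows that $\K^K(Z)$ has ${\mathbb Z}$-basis the classes $[\widetilde{\mathcal E}]$ with ${\mathcal E}$ supported on an orbit contained in $Z$. By Theorem \ref{thm:finorb}(4) applied to the closed subvariety $Z$, the map $\K^K(Z)\rightarrow \K^K({\mathcal N}^*_\theta)$ is injective, and its image is exactly the span of those basis vectors of $\K^K({\mathcal N}^*_\theta)$ that are supported on $Z$; this is the assertion that the classes supported on $Z$ form a basis of $\K^K(Z)$.

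The argument is formal once Theorem \ref{thm:finorb} is in hand; the closest thing to an obstacle is the compatibility of the two invocations, on ${\mathcal N}^*_\theta$ and on $Z$ --- one wants the sheaf $\widetilde{\mathcal E}$ attached to an orbit $Y_i \subset Z$ to be choosable the same whether the ambient variety is $Z$ or ${\mathcal N}^*_\theta$. This holds because the extension construction of \cite{Har}*{Exercise 2.5.15} underlying Theorem \ref{thm:finorb}(2) is local and compatible with passing to a closed subvariety, and in any case the residual choice is absorbed into the ``modulo boundary orbits'' indeterminacy already built into the statement.
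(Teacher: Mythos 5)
Your proof is correct and follows exactly the paper's route: the paper simply states the corollary as an immediate application of Theorem \ref{thm:finorb} to $K$ acting on the affine variety ${\mathcal N}^*_\theta$ (with finitely many orbits by Kostant--Rallis), and you have supplied the hypothesis-checking and the deduction of both the global basis statement and the refinement for closed invariant $Z$ from parts (2), (3), (4) of that theorem. Your closing remark on compatibility under pushforward along the closed immersion $Z\hookrightarrow {\mathcal N}^*_\theta$ is a reasonable point to flag, and you resolve it correctly by noting it is absorbed into the ``modulo boundary'' ambiguity already present in Theorem \ref{thm:finorb}(3).
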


In order to describe the algorithm underlying ``Proposition''
\ref{prop:suppR} (precisely, Theorem \ref{thm:cplxassvar} below)
we will need an entirely different kind of basis of $\K^K({\mathcal
  N}^*_\theta)$.

\begin{definition}\label{def:cplxbasis}
  \addtocounter{equation}{-1}
  \begin{subequations}\label{se:defcplxbasis}
Suppose $\gamma \in {\mathcal P}_{K\dashLL}(G,K)$ is a
  $K$-Langlands parameter for $(G,K) = (G_1\times G_1,(G_1)_\Delta)$
  (Definition \ref{def:Klangcplx}); equivalently, a dominant weight
for $G_1$).  We
attach to $\gamma$ a $K$-equivariant coherent sheaf on the
$K$-nilpotent cone ${\mathcal N}^*_\theta$, with well-defined image
$[\gamma]_\theta \in \K^K({\mathcal N}^*_\theta)$, characterized in any of the
following equivalent ways. Note first that the cotangent bundle of
$G_1/B_1$ is
\begin{equation}
T^*(G_1/B_1) = G_1 \times_{B_1} ({\mathfrak g}_1/{\mathfrak b}_1)^*
  \buildrel{\pi_1}\over{\longrightarrow} G_1/B_1.
\end{equation}
From the total space of the tangent bundle there is the {\em moment
  map}
\begin{equation}\begin{aligned}
T^*(G_1/B_1) \buildrel{\mu_1}\over{\longrightarrow} {\mathfrak g}_1^*,
&\qquad \mu_1(g,\xi) = \Ad^*(g)(\xi) \\
&(g\in G_1,\ \xi \in ({\mathfrak g_1}/{\mathfrak b}_1)^*).
\end{aligned}\end{equation}
Of course $\pi_1$ is affine, $\mu_1$ is proper, and both maps are
$G_1$-equivariant; the image of $\mu_1$ is the nilpotent cone
${\mathcal N}^*_1$.
\begin{enumerate}
\item Fix any Langlands parameter $(\lambda_L,\lambda_R)$ restricting
  to $\gamma$; that is, elements $\lambda_L$ and $\lambda_R$ of
  ${\mathfrak h}_1^*$ such that $\gamma = \lambda_L + \lambda_R$. Fix a
  good filtration on the standard module $I(\lambda_L,\lambda_R)$, and
  define
  \begin{equation}
    [\gamma]_\theta = [\gr I(\lambda_L,\lambda_R)] =
    [I(\lambda_L,\lambda_R)]_\theta
  \end{equation}
  (see \eqref{eq:grgK}).
\item Extend $\gamma$ to a one-dimensional (algebraic) character of
  $B_1=H_1N_1$, and let ${\mathcal L_1}$ be the corresponding
  $G_1$-equivariant (algebraic) line bundle on $G_1/B_1$. The pullback
  $\pi_1^*{\mathcal L_1}$ is a $G_1$-equivariant coherent sheaf on
  $T^*(G_1/B_1)$, so each higher direct image
  $R^k\mu_{1*}(\pi_1^*{\mathcal L}_1)$ is (since $\mu_1$ is proper) a
  $G_1$-equivariant coherent sheaf on ${\mathcal N}_1^*$. We define
  \begin{equation}
    [\gamma]_{1,\theta} = \sum_k (-1)^k [R^k\mu_{1*}(\pi_1^*{\mathcal L}_1)] \in
    \K^{G_1}({\mathcal N}_1^*).
  \end{equation}
  Under \eqref{eq:cplxnilp}, $[\gamma]_{1,\theta}$
  corresponds to a class $[\gamma]_\theta \in \K^K({\mathcal N}^*_\theta)$.
\item As an algebraic representation of $K$,
  \begin{equation}
    [\gamma]_\theta = \Ind_{T}^K \gamma = \sum_{(\rho,E_\rho)\in \widehat K} \dim
    E_\rho(\gamma)\rho;
  \end{equation}

here $E_\rho(\gamma)$ denotes the $\gamma$ weight space (with respect to the
maximal torus $T$) of the $K$-representation $E_\rho$.
\end{enumerate}
The equivalence of (1) and (2) is a consequence of Zuckerman's
cohomological induction construction of $I(\lambda_L,\lambda_R)$ using
the $\theta$-stable Borel subgroup $B_f$ of \eqref{eq:Bf}. That they
have the property in (3) is a standard fact about principal series
representations of complex groups. That property (3) characterizes
$[\gamma]_\theta$ is a consequence of Corollary \ref{cor:KKbasis} below.
  \end{subequations} %{se:defcplxbasis}
\end{definition}

\begin{theorem}\label{thm:cplxgeomexpand}
Suppose $(Y,{\mathcal E}) \in {\mathcal P}_{g}(G,K)$ is a
geometric parameter (Definition \ref{def:cplxgeom}). Then there is an
extension $\widetilde{{\mathcal E}}$ as in
Corollary \ref{cor:Kspangeom}, and a formula in $\K^K({\mathcal
  N}_\theta)$
$$[\widetilde{\mathcal E}] = \sum_{\gamma\in {\mathcal
    P}_{K\dashLL}(G,K)} m_{\widetilde{\mathcal E}}(\gamma)[\gamma]_\theta.$$
Here the sum is finite, and $m_{\widetilde{\mathcal
    E}}(\gamma) \in {\mathbb Z}$.

Suppose $\widetilde{\mathcal E}'$ is another extension of ${\mathcal
  E}$ to $\overline Y$. Then
$$[\widetilde{\mathcal E}'] - [\widetilde{\mathcal E}] =
\sum_{\substack{(Z,{\mathcal F}) \in {\mathcal P}_{g}(G,K)\\[.2ex]
    Z \subset \partial Y}} n_{\mathcal F} [{\widetilde {\mathcal F}}].$$
Only finitely many terms appear in the sum, and $n_{\mathcal
  F}\in {\mathbb Z}$.
\end{theorem}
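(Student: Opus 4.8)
The plan is to run an induction on the support, exactly mirroring the proof of Theorem \ref{thm:finorb}(3), but now using the two spanning/basis collections $\{[\widetilde{\mathcal E}]\}$ (from Corollary \ref{cor:Kspangeom}) and $\{[\gamma]_\theta\}$ (from Definition \ref{def:cplxbasis}) for the \emph{same} $\mathbb Z$-module $\K^K({\mathcal N}^*_\theta)$. The first task is to establish that $\{[\gamma]_\theta \mid \gamma \in {\mathcal P}_{K\dashLL}(G,K)\}$ is in fact a $\mathbb Z$-\emph{basis}; this is exactly the content advertised as Corollary \ref{cor:KKbasis}, and it follows from Theorem \ref{thm:cplxresK}: the classes $[\res_K I(\lambda_L,\lambda_R)]$ are linearly independent in $\K(K)$, the map $\mu$ is a bijection onto $\widehat{K_1}$, and by Corollary \ref{cor:Kspangeom} the representation-theoretic and geometric bases have the same (finite, in each degree) cardinality, so an upper-triangularity argument with respect to the extremal-weight order of Theorem \ref{thm:cplxresK}(1) promotes linear independence to being a basis. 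Granting this, every class in $\K^K({\mathcal N}^*_\theta)$—in particular every $[\widetilde{\mathcal E}]$—is a \emph{unique} finite $\mathbb Z$-linear combination $\sum_\gamma m_{\widetilde{\mathcal E}}(\gamma)[\gamma]_\theta$, which gives the first assertion immediately. So the real work is the second assertion, about the difference of two extensions.

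For the second assertion, the key structural input is Theorem \ref{thm:finorb}(4): since $K$ acts on ${\mathcal N}^*_\theta$ with finitely many orbits, for every $K$-invariant closed $Z$ the sequence $0 \to \K^K(Z) \to \K^K({\mathcal N}^*_\theta) \to \K^K({\mathcal N}^*_\theta - Z) \to 0$ is short exact, so $\K^K(Z)$ embeds as the subgroup of classes supported on $Z$, with a $\mathbb Z$-basis given by those $[\widetilde{\mathcal F}]$ with ${\mathcal F}$ supported on an orbit inside $Z$ (the ``more precise'' part of Corollary \ref{cor:Kspangeom}). Apply this with $Z = \partial Y = \overline Y \setminus Y$. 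Both $\widetilde{\mathcal E}$ and $\widetilde{\mathcal E}'$ restrict to ${\mathcal E}$ on the open orbit $Y$ of $\overline Y$; hence their difference $[\widetilde{\mathcal E}'] - [\widetilde{\mathcal E}]$ lies in the kernel of the restriction $\K^K(\overline Y) \to \K^K(Y)$, which by the right-exact sequence \eqref{e:Kright} (and the vanishing of the connecting map from $\K_1^H(Y)$, Theorem \ref{thm:finorb}(4)) is precisely the image of $\K^K(\partial Y)$. Therefore $[\widetilde{\mathcal E}'] - [\widetilde{\mathcal E}] \in \K^K(\partial Y) \subset \K^K({\mathcal N}^*_\theta)$, and expanding in the basis of Corollary \ref{cor:Kspangeom} restricted to $\partial Y$ gives
$$[\widetilde{\mathcal E}'] - [\widetilde{\mathcal E}] = \sum_{\substack{(Z,{\mathcal F}) \in {\mathcal P}_{g}(G,K)\\ Z \subset \partial Y}} n_{\mathcal F}\, [\widetilde{\mathcal F}], \qquad n_{\mathcal F} \in \mathbb Z,$$
with only finitely many nonzero terms because $\partial Y$ contains only finitely many orbits (Theorem \ref{thm:finorb}(1)) and $\K^K(\partial Y)$ is finitely generated in each relevant degree.

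I expect the main obstacle to be not the homological bookkeeping—which is a faithful replay of Theorem \ref{thm:finorb}—but rather the clean verification that $\{[\gamma]_\theta\}$ is genuinely a $\mathbb Z$-basis, i.e. nailing down Corollary \ref{cor:KKbasis}. The subtlety is matching cardinalities degree-by-degree: one must check that the number of $K$-Langlands parameters with a given ``size'' equals the number of geometric parameters $(Y,{\mathcal E})$ of the corresponding size, and that the change-of-basis matrix between $\{[\gamma]_\theta\}$ and $\{[\widetilde{\mathcal E}]\}$ is (block) triangular with $\pm1$ on the diagonal. The triangularity comes from Theorem \ref{thm:cplxresK}(1) (the restriction of $I(\lambda_L,\lambda_R)$ to $K_1$ contains the extremal-weight irreducible $\mu(\lambda_L,\lambda_R)$ with multiplicity one and everything else strictly larger), combined with the fact that $\res_K$ is injective on $\K^K({\mathcal N}^*_\theta)$ via diagram \eqref{eq:resCD}; one then transports the order on $\widehat{K_1}$ back through the bijection $\mu$ of Theorem \ref{thm:cplxresK}(2). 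Once that is in place, uniqueness of the expansion $\sum m_{\widetilde{\mathcal E}}(\gamma)[\gamma]_\theta$ is automatic and the rest of the argument is purely formal.
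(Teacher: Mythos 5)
Your treatment of the second assertion is correct and is essentially the paper's: two extensions of ${\mathcal E}$ agree on the open orbit $Y\subset\overline Y$, so their difference lies in the kernel of $\K^K(\overline Y)\to\K^K(Y)$, which by the exactness in Theorem \ref{thm:finorb}(4) is exactly the image of $\K^K(\partial Y)$, and one then expands in the geometric basis of Corollary \ref{cor:Kspangeom} supported on the finitely many boundary orbits.

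The first assertion is where your argument is circular. You propose to establish Corollary \ref{cor:KKbasis} first (that the $[\gamma]_\theta$ form a $\mathbb Z$-basis) and then read off the expansion of $[\widetilde{\mathcal E}]$ as a formal consequence. But the paper's logical order is the reverse: Corollary \ref{cor:KKbasis} is \emph{deduced from} Theorem \ref{thm:cplxgeomexpand} (which supplies the spanning property) together with Theorem \ref{thm:cplxresK}(3) (which supplies independence). The only nontrivial content of the theorem is precisely this spanning statement, and your sketch for it does not close. Linear independence of the classes $[\res_K I(\lambda_L,\lambda_R)]$ in $\K(K)$ plus a ``matching cardinalities'' heuristic cannot promote a linearly independent set to a basis of an infinite-rank free $\mathbb Z$-module (compare $2\mathbb Z\subset\mathbb Z$); the triangularity you would need is a triangularity, with units on the diagonal, of the change of basis between $\{[\widetilde{\mathcal E}]\}$ and $\{[\gamma]_\theta\}$, and controlling the lowest $K$-types of $\res_K[\widetilde{\mathcal E}]$ is exactly the hard geometric input (this is essentially Bezrukavnikov's theorem, which the authors explicitly decline to reprove). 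Your argument also tacitly uses injectivity of $\res_K$ on $\K^K({\mathcal N}^*_\theta)$, which is itself part of Corollary \ref{cor:KKbasis} and is proved there \emph{using} the theorem. What the paper actually does (Corollaries \ref{cor:JMbdle} and \ref{cor:JMres}) is construct the extension explicitly: take the Jacobson--Morozov resolution $\mu\colon{\mathcal R}_1=G_1\times_{Q_1}{\mathfrak g}_1[\ge 2]\to\overline{G_1\cdot X_1}$; use the right-exactness of \eqref{e:Kright} to lift the isotropy representation $\tau$ defining ${\mathcal E}$ to a virtual representation $\sigma$ of $Q_1$; set $[\widetilde{\mathcal E}]=[\mu_*{\mathcal S}(\sigma)]$; and compute this pushforward via the Leray spectral sequence and the Weyl character formula as an explicit finite integer combination of the classes $\Ind_{T_1}^{G_1}(\gamma)=[\gamma]_\theta$. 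That computation is the step missing from your proposal.
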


We will give a proof in Corollary \ref{cor:JMres}. 

\begin{corollary}\label{cor:KKbasis}
  In the setting of \eqref{se:cplx}, the classes
  $$\{[\gamma]_\theta \in K^K({\mathcal N}^*_\theta)\mid \gamma \in
  {\mathcal P}_{K\dashLL}(G,K)\}$$
  are a ${\mathbb Z}$-basis of $\K^K({\mathcal N}^*_\theta)$. The
  restriction to $K$ map
  $$\res_K\colon \K^K({\mathcal N}^*_\theta) \rightarrow \K(K)$$
 of \eqref{eq:resKK} is injective.
\end{corollary}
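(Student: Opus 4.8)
The plan is to deduce both assertions from Theorem \ref{thm:cplxgeomexpand} together with Corollary \ref{cor:Kspangeom} and Theorem \ref{thm:cplxresK}. First I would recall that Corollary \ref{cor:Kspangeom} already gives a ${\mathbb Z}$-basis of $\K^K({\mathcal N}^*_\theta)$ indexed by geometric parameters $(Y,{\mathcal E}) \in {\mathcal P}_g(G,K)$, namely the classes $[\widetilde{\mathcal E}]$. By Bezrukavnikov's bijection (or simply by a direct count using Definition \ref{def:Klangcplx} and the Lusztig-type parametrization), the index set ${\mathcal P}_g(G,K)$ is in bijection with ${\mathcal P}_{K\dashLL}(G,K)$; in particular the two candidate bases have the same cardinality in each graded/support piece. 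So it suffices to show that the classes $\{[\gamma]_\theta\}$ \emph{span} $\K^K({\mathcal N}^*_\theta)$ over ${\mathbb Z}$: a spanning set of a free ${\mathbb Z}$-module whose cardinality equals the rank must be a basis.

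For the spanning statement I would run an induction on the support, exactly parallel to the proof of Theorem \ref{thm:finorb}(3). Order the nilpotent orbits $Y_1,\dots,Y_r$ by dimension and induct on closed $K$-invariant unions $Z$ of orbit closures. Theorem \ref{thm:cplxgeomexpand} says that for each geometric parameter $(Y,{\mathcal E})$ there is an extension $\widetilde{\mathcal E}$ with
$$[\widetilde{\mathcal E}] = \sum_{\gamma} m_{\widetilde{\mathcal E}}(\gamma)\,[\gamma]_\theta,$$
a finite sum. Hence every basis vector of Corollary \ref{cor:Kspangeom} lies in the ${\mathbb Z}$-span of the $[\gamma]_\theta$, so the $[\gamma]_\theta$ span all of $\K^K({\mathcal N}^*_\theta)$. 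Combined with the cardinality match, this proves the first assertion. (One should check that Theorem \ref{thm:cplxgeomexpand}'s expansion is compatible with supports, i.e. that $[\widetilde{\mathcal E}]$ supported on $\overline Y$ is expressed using $[\gamma]_\theta$ with $\gamma$ ranging over the appropriate sub-collection; this refines "basis" to "basis adapted to the support filtration", matching the last sentence of Corollary \ref{cor:Kspangeom}, though it is not strictly needed for the bare basis statement.)

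For injectivity of $\res_K\colon \K^K({\mathcal N}^*_\theta)\to \K(K)$, I would use the basis just established and compute $\res_K[\gamma]_\theta$. By Definition \ref{def:cplxbasis}(3), as a $K$-representation $[\gamma]_\theta = \Ind_T^K\gamma$, so $\res_K[\gamma]_\theta = [\res_K I(\lambda_L,\lambda_R)]$ for any Langlands parameter $(\lambda_L,\lambda_R)$ with $\lambda_L+\lambda_R=\gamma$ (using Definition \ref{def:langcplx}(5)). Theorem \ref{thm:cplxresK}(3) asserts precisely that these classes $[\res_K I(\lambda_L,\lambda_R)]$, as $\gamma$ runs over ${\mathcal P}_{K\dashLL}(G,K)$, are linearly independent in $\K(K)$. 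Since $\res_K$ sends a ${\mathbb Z}$-basis of $\K^K({\mathcal N}^*_\theta)$ to a linearly independent set, $\res_K$ is injective.

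The main obstacle is really that everything rests on Theorem \ref{thm:cplxgeomexpand} (proved later via Corollary \ref{cor:JMres}) and on knowing the two index sets ${\mathcal P}_g(G,K)$ and ${\mathcal P}_{K\dashLL}(G,K)$ have matching cardinalities in each support-degree — the latter is where the combinatorial content of the Lusztig/Bezrukavnikov picture enters. Given those inputs, the argument here is the soft linear-algebra step: "a spanning set of the right size in a free module is a basis," plus quoting Theorem \ref{thm:cplxresK}(3) for the injectivity. I expect the only delicate bookkeeping to be the support-compatibility of the expansion in Theorem \ref{thm:cplxgeomexpand}, which makes the cardinality comparison legitimate degree by degree.
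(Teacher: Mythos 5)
Your spanning step is exactly the paper's: Corollary \ref{cor:Kspangeom} gives a basis $\{[\widetilde{\mathcal E}]\}$, and Theorem \ref{thm:cplxgeomexpand} writes each $[\widetilde{\mathcal E}]$ as a finite ${\mathbb Z}$-combination of the $[\gamma]_\theta$, so the latter span. But your route from ``spanning set'' to ``basis'' has a genuine gap. The principle ``a spanning set of a free ${\mathbb Z}$-module whose cardinality equals the rank is a basis'' is only valid for modules of \emph{finite} rank; here $\K^K({\mathcal N}^*_\theta)$ (and even each support piece $\K^K(\overline Y)/\K^K(\partial\overline Y)\simeq \R(K^\xi)$) has countably infinite rank, and for infinite rank the statement is false (in ${\mathbb Z}^{(\mathbb N)}$ the set $\{e_1,\,e_1+e_2,\,e_2,\,e_3,\dots\}$ spans and has the same cardinality as a basis but is dependent). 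Moreover the cardinality match you invoke rests on Bezrukavnikov's bijection between ${\mathcal P}_g(G,K)$ and ${\mathcal P}_{K\dashLL}(G,K)$, which the paper explicitly declines to use as a logical input (it even says it cannot identify its basis with Bezrukavnikov's).

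The fix is short and you already have the ingredient in hand at the end of your write-up: by Definition \ref{def:cplxbasis}, $\res_K[\gamma]_\theta=[\res_K I(\lambda_L,\lambda_R)]$, and Theorem \ref{thm:cplxresK}(3) says these images are linearly independent in $\K(K)$. Linear independence of the images under the linear map $\res_K$ forces linear independence of the $[\gamma]_\theta$ themselves; combined with spanning this yields the basis statement with no counting argument, and then injectivity of $\res_K$ is immediate since a basis is sent to an independent set. This is precisely the paper's (one-sentence) proof, so once you delete the cardinality detour your argument coincides with it.
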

\begin{proof}
That these classes span is a consequence of Corollary
\ref{cor:Kspangeom} and Theorem \ref{thm:cplxgeomexpand}. That
they are linearly independent is a consequence of Theorem
\ref{thm:cplxresK}(3); the argument proves injectivity of the
restriction at the same time.
\end{proof}

\section{Geometric basis for $\K$-theory: ${\mathbb C}$ case}
\label{sec:cplxalgweights}
\setcounter{equation}{0}

In this section we consider how to relate the representation-theoretic
basis of Corollary \ref{cor:KKbasis} to the geometry of $K$-orbits on
${\mathcal N}^*_\theta$.
We will proceed in the aesthetically undesirable way of
using the Jacobson-Morozov theorem (and so discussing not the
nilpotent elements in ${\mathfrak g}^*$ that we care about, but rather
the nilpotent elements in ${\mathfrak g}$).
\begin{subequations}\label{se:JM}
We begin therefore with an arbitrary nilpotent element $X_1\in
{\mathcal N}_1$ (see \eqref{e:cplxnilcone}). The Jacobson-Morozov theorem
finds elements $Y_1$ and $D_1$ in ${\mathfrak g}_1$ so that
\begin{equation}\label{e:tds}
[D_1,X_1] = 2X_1,\quad [D_1,Y_1] = -2Y_1,\quad [X_1,Y_1] = D_1.
\end{equation}
We use the eigenspaces of $\ad(D_1)$ to define a ${\mathbb Z}$-grading
of ${\mathfrak g}_1$
\begin{equation}\begin{aligned}
{\mathfrak g}_1[k] &\eqdef \{Z\in {\mathfrak g}_1 \mid [D_1,Z] = kZ\},
\quad {\mathfrak g}_1[{\scriptstyle\ge}\, k] \eqdef \sum_{j\ge k}
      {\mathfrak g}_1[j],\\
      {\mathfrak q}_1 &\eqdef {\mathfrak g}_1[{\scriptstyle\ge}\,
        0],\qquad\qquad \qquad{\mathfrak  u}_1 \eqdef {\mathfrak
        g}_1[{\scriptstyle\ge}\, 1].
\end{aligned}\end{equation}
Then ${\mathfrak q}_1$ is the Lie algebra of a parabolic subgroup
$Q_1=L_1U_1$ of $G_1$, with Levi factor $L_1=G_1^{D_1}$. After
conjugating $X$ by $G_1$, we may assume that
\begin{equation}
 D_1\in {\mathfrak t}_1, \qquad T_1\subset L_1.
\end{equation}
We will be concerned with the equivariant vector bundle
\begin{equation}
 {\mathcal R}_1 =_{\text{\textnormal{def}}} G_1\times_{Q_1} {\mathfrak g}_1[{\scriptstyle\ge}\,2]
  \buildrel{\pi}\over{\longrightarrow} G_1/Q_1.
\end{equation}
(The reason ${\mathcal R}_1$ is of interest is that Corollary
\ref{cor:JMres} below says that it is a $G_1$-equivariant resolution
of singularities of the nilpotent orbit closure $\overline{G_1\cdot
  X_1}$. The ${\mathcal R}$ is meant to stand for {\em resolution}.)
According to \eqref{e:Kfiber} and \eqref{e:Kvec},
\begin{equation}
  \K^{G_1}({\mathcal R}_1) \simeq
  \R(Q_1) \simeq \R(L_1);
\end{equation}
if $(\sigma,S)$ is an irreducible representation of $L_1 \simeq
Q_1/U_1$, we write
\begin{equation}
{\mathcal S}_1(\sigma) = G_1\times_{Q_1} S
\end{equation}
for the induced vector bundle on $G_1/Q_1$. The corresponding basis
element of the equivariant
$\K$-theory is represented by the equivariant vector bundle
\begin{equation}\label{e:KR1rep}
  {\mathcal S}(\sigma) = \pi^*({\mathcal S}_1(\sigma)) =
  G_1\times_{Q_1} \left({\mathfrak
    g}_1[{\scriptstyle \ge}\,2] \times S \right) \rightarrow {\mathcal
    R}_1.
\end{equation}

\end{subequations}

We are in the setting of Theorem \ref{thm:JM}. As
a consequence of that Theorem, we have

\begin{corollary}\label{cor:JMbdle}
  Suppose we are in the setting \eqref{se:JM}.
\begin{enumerate}
\item The natural map
$$\mu\colon {\mathcal R}_1 \rightarrow
{\mathcal N}_1,\quad (g,Z)\mapsto \Ad(g)Z$$
is a proper birational map onto $\overline{G_1\cdot X_1}$. We may
therefore {\em identify} $G_1\cdot X_1$ with its preimage $U$:
$$ G_1/G_1^{X_1} \simeq G_1\cdot X_1 \simeq U \subset {\mathcal R}_1.$$
Because $G_1\cdot X_1$ is open in $\overline{G_1\cdot X_1}$, $U = \mu^{-1}(G_1\cdot
X_1)$ is open in ${\mathcal R}_1$.
\item The classes
  $$\{[{\mathcal S}(\sigma)] \mid \sigma \in \widehat{L_1} =
  \widehat{Q_1}\}$$
  of \eqref{e:KR1rep} are a basis of the equivariant $\K$-theory
  $\K^{G_1}({\mathcal R}_1)$.
\item Since $\mu$ is proper, higher direct images of coherent sheaves
  are always coherent. Therefore
$$[\mu_*({\mathcal S})] =_{\text{\textnormal{def}}} \sum_i (-1)^i
[R^i\mu_*{\mathcal S}] \in \K^{G_1}({\mathcal N}_1)$$
is a well-defined virtual coherent sheaf. This defines a map in
equivariant $\K$-theory
$$\mu_*\colon \K^{G_1}({\mathcal R}_1) \rightarrow \K^{G_1}({\mathcal
  N}_1). $$
Restriction to the open set $U\simeq G_1\cdot X_1$ commutes with $\mu_*$.
\item Suppose $\sigma$ is a representation of $L_1$, inducing vector
  bundles ${\mathcal S}_1(\sigma)$ on $G_1/Q_1$ and ${\mathcal S}(\sigma)$ on
  ${\mathcal R}_1$ as in \eqref{e:tds}. As a representation of $G_1$,
  (that is, in the Grothendieck group $\K(G_1)$; see \eqref{eq:MfK})
{\small \begin{align*}
[\mu_*(&{\mathcal S}(\sigma))] = \Ind_{L_1}^{G_1}\bigg(\sigma\otimes\Big( \sum_j
  (-1)^j \textstyle{\bigwedge^j} {\mathfrak g}_1[1]^*\Big)\bigg)\\[1.5ex]
&= \Ind_{T_1}^{G_1} \bigg(\sum_\phi m_\sigma(\phi) \hskip -3ex
\sum_{\substack{A\subset \Delta({\mathfrak g}_1[1],{\mathfrak t}_1)
    \\ w\in W(L_1)}} \hskip -3ex %\\[.3ex]
(-1)^{|A| + \ell(w)}[\phi - 2\rho(A) + (\rho_{L_1} - w\rho_{L_1})]\bigg)\\[1.5ex]
&= \Ind_{T_1}^{G_1} \bigg(\sum_\phi m_\sigma(\phi)\hskip -3ex
\sum_{\substack{A\subset
  \Delta({\mathfrak g}_1[1],{\mathfrak t}_1)\\B \subset
  \Delta^+({\mathfrak l}_1,{\mathfrak t}_1) }} \hskip -3ex
 (-1)^{|A|+|B|} [\phi - 2\rho(A) + 2\rho(B)]\bigg).
\end{align*}}
Here the outer sum is over the highest weights $\phi$ of the
virtual representation $\sigma$ of $L_1$, and the integers
$m_\sigma(\phi)$ are their multiplicities. The notation $2\rho(X)$
stands for the sum of a set $X$ of roots.
\item If every weight $\phi-2\rho(A)+2\rho(B)$ in (4) is replaced by
  its unique dominant conjugate, we get a computable formula
  $$[\mu_*({\mathcal S}(\sigma))] = \sum_{\gamma\in {\mathcal P}_{K\dashLL}(G,K)}
    m_\sigma(\gamma) [\gamma]_\theta.$$
\end{enumerate}\end{corollary}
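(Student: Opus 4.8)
The plan is to reduce matters to the Jacobson--Morozov structure of Theorem~\ref{thm:JM} and the $\K$-theoretic generalities of Section~\ref{sec:Kthy}, and then to deduce part (5) from part (4) by a single Weyl-group-invariance observation together with the injectivity of restriction to $K$ from Corollary~\ref{cor:KKbasis}. Parts (1)--(3) are formal and I would dispatch them first. For (1): by Theorem~\ref{thm:JM}(6) the orbit $Q_1\cdot X_1$ is dense in $\mathfrak g_1[{\scriptstyle\ge}\,2]$, so $\mu$ is proper and birational onto $\overline{G_1\cdot X_1}$ and restricts to an isomorphism over the open orbit, which we identify with its preimage $U$. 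Part (2) combines \eqref{e:Kfiber} ($\K^{G_1}(\mathcal R_1)\simeq\K^{Q_1}(\mathfrak g_1[{\scriptstyle\ge}\,2])$) with \eqref{e:Kvec} ($\simeq\R(Q_1)\simeq\R(L_1)$). Part (3) is the coherence of higher direct images under the proper map $\mu$ together with flat base change over $U$. Part (4) is a Borel--Weil--Bott computation: $\mathcal R_1$ sits inside the cotangent bundle $T^*(G_1/Q_1)$ as a vector subbundle of corank $\dim\mathfrak g_1[1]$; resolving $\mathcal O_{\mathcal R_1}$ by the Koszul complex on the conormal bundle $\pi^*(\mathfrak g_1[1]^*)$ accounts for the alternating exterior-power factor $\sum_j(-1)^j\bigwedge^j\mathfrak g_1[1]^*$ (whose $\mathfrak t_1$-weights are the $-2\rho(A)$), and pushing down to $G_1/Q_1$ and invoking Bott's theorem accounts for the $W(L_1)$-alternating shifts $\rho_{L_1}-w\rho_{L_1}$ (equivalently the sum over $B\subset\Delta^+(\mathfrak l_1,\mathfrak t_1)$), rewriting $\Ind_{L_1}^{G_1}$ in terms of $\Ind_{T_1}^{G_1}$.

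Granting (1)--(4), for (5) I would argue as follows. Both sides lie in $\K^K(\mathcal N^*_\theta)$, so by Corollary~\ref{cor:KKbasis} it suffices to prove the identity after applying the injection $\res_K\colon\K^K(\mathcal N^*_\theta)\hookrightarrow\K(K)$. Under the identification $K=(G_1)_\Delta\simeq G_1$ of \eqref{eq:cplx} we have $\K(K)=\K(G_1)$, and under the identifications $\mathcal N^*_\theta\simeq\mathcal N^*_1\simeq\mathcal N_1$ of \eqref{eq:cplxnilp} and \eqref{se:gg*} this $\res_K$ is the restriction map $\K^{G_1}(\mathcal N_1)\to\K(G_1)$ appearing in (4). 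By Definition~\ref{def:cplxbasis}(3) --- which follows from Definition~\ref{def:cplxbasis}(1), the commuting triangle \eqref{eq:resCD}, and Definition~\ref{def:langcplx}(5) --- for every weight $\gamma$ one has
$$\res_K[\gamma]_\theta\;=\;\Ind_{T_1}^{G_1}\gamma\;\eqdef\;\sum_{(\rho,E_\rho)\in\widehat{G_1}}\bigl(\dim E_\rho(\gamma)\bigr)\,\rho\;\in\;\K(G_1).$$
The key point is that the $\mathbb Z$-linear map $\mathbb Z[X^*(T_1)]\to\K(G_1)$, $\gamma\mapsto\Ind_{T_1}^{G_1}\gamma$, is $W_1$-invariant, since weight multiplicities satisfy $\dim E_\rho(\gamma)=\dim E_\rho(w\gamma)$ for all $w\in W_1$. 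Let $\sum_\gamma m_\sigma(\gamma)\gamma\in\mathbb Z[X^*(T_1)]$ be the element obtained from $\sum_\phi m_\sigma(\phi)\sum_{A,B}(-1)^{|A|+|B|}[\phi-2\rho(A)+2\rho(B)]$ by replacing each weight by its unique dominant $W_1$-conjugate --- this is precisely the recipe defining the integers $m_\sigma(\gamma)$ in the statement. By $W_1$-invariance these two elements of $\mathbb Z[X^*(T_1)]$ have the same image under $\Ind_{T_1}^{G_1}$, namely
$$\sum_\gamma m_\sigma(\gamma)\,\Ind_{T_1}^{G_1}\gamma=\res_K\Bigl(\sum_\gamma m_\sigma(\gamma)[\gamma]_\theta\Bigr);$$
and by part (4) that same image is also $\res_K[\mu_*(\mathcal S(\sigma))]$. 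Thus $\res_K$ agrees on the two sides of the claimed identity, and injectivity of $\res_K$ yields $[\mu_*(\mathcal S(\sigma))]=\sum_\gamma m_\sigma(\gamma)[\gamma]_\theta$ in $\K^K(\mathcal N^*_\theta)$. Computability of the $m_\sigma(\gamma)$ is clear, since the highest weights $\phi$ of $\sigma$ with their multiplicities $m_\sigma(\phi)$, the root sets $\Delta(\mathfrak g_1[1],\mathfrak t_1)$ and $\Delta^+(\mathfrak l_1,\mathfrak t_1)$, and the dominant representative of a $W_1$-orbit of weights are all explicitly computable from the Jacobson--Morozov data.

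I expect the main obstacle to sit inside part (4): correctly identifying the conormal bundle whose exterior algebra enters, and the bookkeeping in Bott's theorem that separates the singular $\rho$-shifted weights (contributing $0$) from the regular ones (contributing $\pm$ an induced module). Granting (4), step (5) itself is short; its one genuine subtlety is that the ``replace each weight by its dominant conjugate'' recipe is a priori justified only after restriction to $K$, and lifting it back to an identity in $\K^K(\mathcal N^*_\theta)$ is exactly the role of the injectivity of $\res_K$ in Corollary~\ref{cor:KKbasis}, which in turn rests on the linear independence statement of Theorem~\ref{thm:cplxresK}(3).
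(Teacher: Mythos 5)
Your treatment of parts (1)--(4) is essentially the paper's: properness from projectivity of $G_1/Q_1$, birationality and the identification of $U$ from Theorem \ref{thm:JM}, the $\K$-theory identifications \eqref{e:Kvec} and \eqref{e:Kfiber} for (2), and for (4) the Koszul resolution of ${\mathcal R}_1$ inside $T^*(G_1/Q_1)$ producing the factor $\sum_j(-1)^j\bigwedge^j{\mathfrak g}_1[1]^*$, followed by the Weyl character formula for $L_1$ to pass from $\Ind_{L_1}^{G_1}$ to $\Ind_{T_1}^{G_1}$. (One small omission in (1): density of $Q_1\cdot X_1$ in ${\mathfrak g}_1[{\scriptstyle\ge}\,2]$ gives that the image is $\overline{G_1\cdot X_1}$ and that the orbit is open in it, but birationality also needs the generic fiber to be a single point, which is Theorem \ref{thm:JM}(4), i.e.\ $G_1^{X_1}\subset Q_1$; the paper cites this explicitly.)

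The gap is in your argument for (5). You invoke the injectivity of $\res_K\colon\K^K({\mathcal N}^*_\theta)\to\K(K)$ from Corollary \ref{cor:KKbasis} to lift the identity of part (4) from $\K(K)$ back to $\K^K({\mathcal N}^*_\theta)$. But within the paper's logical architecture this is circular: the injectivity of $\res_K$ on \emph{all} of $\K^K({\mathcal N}^*_\theta)$ is deduced in Corollary \ref{cor:KKbasis} from the fact that the $[\gamma]_\theta$ \emph{span}, which is Theorem \ref{thm:cplxgeomexpand}, which is proved via Corollary \ref{cor:JMres}, whose proof cites Corollary \ref{cor:JMbdle}(5) --- the very statement at issue. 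What is available non-circularly at this point is only the linear independence of the $\res_K[\gamma]_\theta$ (Theorem \ref{thm:cplxresK}(3)), hence injectivity of $\res_K$ restricted to the ${\mathbb Z}$-span of the $[\gamma]_\theta$; your argument would therefore go through only if one first knew that $[\mu_*({\mathcal S}(\sigma))]$ already lies in that span, which is essentially the content of (5) itself. To close the gap one should upgrade (4) to an identity in $\K^{G_1}({\mathcal N}_1)$ rather than in $\K(G_1)$: factor $\mu$ through the Springer resolution (Koszul-resolve ${\mathcal S}(\sigma)$ by pullbacks of line bundles from $T^*(G_1/B_1)$), so that $[\mu_*({\mathcal S}(\sigma))]$ is exhibited directly as a ${\mathbb Z}$-combination of the classes $\mu_{1*}\pi_1^*[{\mathcal L}(\gamma')]$ of Definition \ref{def:cplxbasis}(2) for various, generally non-dominant, weights $\gamma'$, and then prove the $W_1$-invariance $\mu_{1*}\pi_1^*[{\mathcal L}(\gamma')]=\mu_{1*}\pi_1^*[{\mathcal L}(w\gamma')]$ in $\K^{G_1}({\mathcal N}_1)$ (not merely after $\res_K$), which justifies replacing each weight by its dominant conjugate. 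The paper itself supplies no proof of (5), so your proposal at least isolates correctly where the difficulty lies; but as written the appeal to Corollary \ref{cor:KKbasis} cannot stand.
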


\begin{proof}
\begin{subequations}\label{se:acharcomplexproof}
For (1), that $\mu$ is proper is immediate from the fact that $G_1/Q_1$ is
projective. That $G_1\cdot X_1$ is open in the image follows from
Theorem \ref{thm:JM}(6). Since $\mu$ is proper and the domain is
irreducible, the image must be the closure of $G_1\cdot X_1$. The
fiber over $X_1$ is evidently $G^{X_1}/(G^{X_1}\cap Q_1)$, which is a
single point by Theorem \ref{thm:JM}(4). So the map is birational.

Part(2) is \eqref{e:Kvec} and \eqref{e:Kfiber}.

Part (3) is standard algebraic geometry.

For (4), the Leray spectral sequences for the maps $\mu$ and $\pi$ say that
\begin{small}\begin{equation}\label{e:Leray}\begin{aligned}
 \sum_i (-1)^i &H^0(\overline{G_1\cdot X_1},R^i\mu_*{\mathcal S}(\sigma)) =
 \sum_i (-1)^i H^i\left({\mathcal R}_1,{\mathcal S}(\sigma)\right)\\
  &=  \sum_i (-1)^i H^i\left(G_1/Q_1,G_1\times_{Q_1} (\sigma \otimes S^\bullet({\mathfrak
    g}_1[{\scriptstyle \ge 2}\,])^*)\right)\\
 &= \sum_{i,j} (-1)^{i+j} H^i\left(G_1/Q_1,G_1\times_{Q_1} \sigma \otimes
  {\textstyle\bigwedge^j} {\mathfrak g}_1[1]^* \otimes S^\bullet({\mathfrak
    g}_1[{\scriptstyle \ge 1}\,])^*\right)\\
  &= \sum_j (-1)^j \Ind_{L_1}^{G_1} \left( \sigma \otimes
  {\textstyle\bigwedge^j} {\mathfrak g}_1[1]^*\right).
  \end{aligned}
\end{equation}\end{small}
For the first equality, we use the fact that $\overline{G_1\cdot X_1}$
is affine, so that higher cohomology vanishes. For the second, we use
the fact that $\pi$ is affine, so the higher direct images vanish. For
the third, we use the deRham complex identity (valid in $\K {\mathcal
    M}_f(G_1)$) (see \eqref{eq:MfK}) for any $G_1$-representation $V$ such that
$S^\bullet(V)$ decomposes into irreducibles with finite multiplicities)
$$[{\mathbb C}] = \sum_j (-1)^j [S^\bullet(V) \otimes
  {\textstyle\bigwedge^j} V],$$
applied to $V= {\mathfrak g}_1[1]^*$. The fourth equality more or less
identifies functions (or sections of a vector bundle) on $T^*G_1/Q_1$
with functions (or sections of a vector bundle) on $G_1/L_1$. Now
\eqref{e:Leray} is the first equality in (4).

The second equality in (4) uses first of all the fact that
if $\sigma_0$ is an irreducible of $L_1$ of highest weight $\phi_0$, then
\begin{equation}\label{e:Weyl}
  \sigma_0 = \sum_{w\in W(L_1)} (-1)^{\ell(w)}
  \Ind_{T_1}^{L_1}(\phi_0+\rho_{L_1} - w\rho_{L_1}).
\end{equation}
(This is a version of the Weyl character formula.) Next, it uses the
fact that if $E$ is any representation of $L_1$ (in
this case an exterior power of ${\mathfrak g}_1[1]^*$), then
\begin{equation}
  \Ind_{T_1}^{L_1}(\psi)\otimes E = \sum_{\gamma\in \Delta(E)}
  \Ind_{T_1}^{L_1}(\psi+\gamma).
\end{equation}
Here the sum runs over the weights of $T_1$ on $E$.

The third equality in (4) follows for example from the Bott-Kostant
fact that (if ${\mathfrak n}_1(L_1)$ corresponds to a set of positive
roots for $T_1$ in ${\mathfrak l}_1$) the weights of $T_1$ on
$H^*({\mathfrak n}_1(L_1),{\mathbb C})$ are the various $\rho_{L_1} -
w\rho_{L_1}$, appearing in degree $\ell(w)$. The sum on the left side
runs over the weights of cohomology (indexed by $w\in W(L_1)$), and on
the right over the weights of the complex $\bigwedge^\bullet {\mathfrak
  n}_1(L_1)^*$ (indexed by subsets $B$ of positive roots for $L_1$).
\end{subequations} %{se:acharcomplexproof}
\end{proof}

\begin{corollary}\label{cor:JMres}
 We continue in the setting \eqref{se:JM}.
\begin{enumerate}
\item The restriction map in equivariant $\K$-theory (see \eqref{e:Kright})
  $$\R(L_1) \simeq \R(Q_1) \simeq \K^{G_1}({\mathcal R}_1) \rightarrow
  K^{G_1}(U) \simeq \R(G_1^{X_1}) = \R(Q_1^{X_1})$$
  sends a (virtual) representation $[\sigma]$ of $Q_1$ to
  $[\sigma|_{Q_1^{X_1}}]$.
\item Any virtual (algebraic) representation $\tau$ of
  $G_1^{X_1}=Q_1^{X_1}$ can be extended to a virtual (algebraic)
  representation $\sigma$ of $Q_1$. That is, the restriction map of
  representation rings
$$\R(L_1) \simeq \R(Q_1) \twoheadrightarrow \R(Q_1^{X_1}) \simeq
\R(L_1^{X_1})$$
is surjective.
\item Suppose $[\tau]$ is a virtual algebraic representation of $G_1^{X_1}$,
  corresponding to a virtual equivariant coherent sheaf ${\mathcal T}$
  on $G_1\cdot X_1$. Choose a virtual algebraic
  representation $[\sigma]$ of $Q_1$ extending $\tau$. Then the virtual
  coherent sheaf
  $$[\mu_*({\mathcal S}(\sigma))] \eqdef [\widetilde {\mathcal T}]$$
  is a virtual extension (Corollary \ref{cor:Kspangeom}) of
  $[{\mathcal T}]$. We have a formula
    $$[\widetilde{\mathcal T}] = \sum_{\gamma\in {\mathcal P}_{K\dashLL}(G,K)}
    m_{\widetilde{\mathcal T}}(\gamma) [\gamma]_\theta.$$
\end{enumerate}
{\em Computability} of the extension $\sigma$ in (3) is a problem in
finite-dimensional representation theory of reductive algebraic
groups, for which we do {\em not} offer a general solution.
\end{corollary}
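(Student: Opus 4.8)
The plan is to derive all of Corollary~\ref{cor:JMres} from part~(1), the right-exact localization sequence \eqref{e:Kright}, and the explicit computations already recorded in Corollary~\ref{cor:JMbdle}. Throughout I work in the setting \eqref{se:JM}.

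First I would prove~(1) by tracing the restriction map through canonical isomorphisms. By \eqref{e:Kfiber} followed by \eqref{e:Kvec} we have $\K^{G_1}({\mathcal R}_1) \simeq \K^{Q_1}({\mathfrak g}_1[\ge 2]) \simeq \R(Q_1)$, sending an irreducible $\sigma\in\widehat{Q_1}$ to the class of the bundle ${\mathcal S}(\sigma) = \pi^*({\mathcal S}_1(\sigma))$ of \eqref{e:KR1rep}. By Corollary~\ref{cor:JMbdle}(1) the open set $U = \mu^{-1}(G_1\cdot X_1)$ is identified $G_1$-equivariantly with $G_1\cdot X_1 \simeq G_1/G_1^{X_1}$. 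Since $X_1\in{\mathfrak g}_1[2]\subset{\mathfrak g}_1[\ge 2]$, the unique point of $\mu^{-1}(X_1)$ is $[e,X_1]\in G_1\times_{Q_1}{\mathfrak g}_1[\ge 2]$, whose $G_1$-stabilizer is $\{g\in Q_1 : \Ad(g)X_1 = X_1\} = Q_1^{X_1} = G_1^{X_1}$, the last equality by Theorem~\ref{thm:JM}(4). Restricting ${\mathcal S}(\sigma) = \pi^*({\mathcal S}_1(\sigma))$ to $U$ therefore produces the associated bundle $G_1\times_{G_1^{X_1}}(S|_{G_1^{X_1}})$, whose class in $\K^{G_1}(U)\simeq\K^{G_1}(G_1/G_1^{X_1})\simeq\R(G_1^{X_1})$ (see \eqref{eq:Khomog}) is $[\sigma|_{Q_1^{X_1}}]$; extending ${\mathbb Z}$-linearly gives~(1). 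The delicate point here is purely one of naturality --- that ``restrict a coherent sheaf to the open orbit'', read through \eqref{e:Kfiber}, \eqref{e:Kvec} and \eqref{eq:Khomog}, is literally restriction of representations along $Q_1^{X_1}\hookrightarrow Q_1$ --- and I expect this bookkeeping to be the main (though entirely formal) obstacle in the proof.

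Part~(2) is then immediate: applying the right-exact sequence \eqref{e:Kright} to the closed complement $Y = {\mathcal R}_1\setminus U$ shows that $\K^{G_1}({\mathcal R}_1)\to\K^{G_1}(U)$ is surjective, and by~(1) this map is the restriction $\R(Q_1)\to\R(Q_1^{X_1})$. Combining with $\R(Q_1)\simeq\R(L_1)$ and $\R(Q_1^{X_1}) = \R(G_1^{X_1})\simeq\R(L_1^{X_1})$ --- valid because irreducible representations are trivial on unipotent radicals (\eqref{e:RHirr}), using the Levi decompositions of $Q_1$ and, via Theorem~\ref{thm:JM}(5), of $G_1^{X_1} = L_1^{X_1}U_1^{X_1}$ --- we get the surjection $\R(L_1)\twoheadrightarrow\R(L_1^{X_1})$, i.e.\ every virtual representation of $G_1^{X_1}$ extends to one of $Q_1$. (One can also see this directly: $L_1$ acts on the affine space ${\mathfrak g}_1[2]$ with open orbit $L_1/L_1^{X_1}$ by Theorem~\ref{thm:JM}(6), so \eqref{e:Kright} and \eqref{e:Kvec} give $\R(L_1)\simeq\K^{L_1}({\mathfrak g}_1[2])\twoheadrightarrow\K^{L_1}(L_1/L_1^{X_1})\simeq\R(L_1^{X_1})$.) The only genuinely non-formal ingredient anywhere is the right-exactness of \eqref{e:Kright}, which we are importing from Thomason.

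For~(3), given $[\tau]\in\R(G_1^{X_1})$ with corresponding virtual equivariant sheaf ${\mathcal T}$ on $G_1\cdot X_1$ (via \eqref{eq:Cohhomog}), choose by~(2) a virtual $[\sigma]\in\R(Q_1)$ restricting to $[\tau]$ and set $[\widetilde{\mathcal T}] = [\mu_*({\mathcal S}(\sigma))]\in\K^{G_1}({\mathcal N}_1)\simeq\K^K({\mathcal N}^*_\theta)$; this is well defined since $\mu$ is proper (Corollary~\ref{cor:JMbdle}(3)), and its support lies in $\im(\mu) = \overline{G_1\cdot X_1}$. Because restriction to the open orbit commutes with $\mu_*$ (Corollary~\ref{cor:JMbdle}(3)) and $\mu$ restricts to an isomorphism $U\xrightarrow{\ \sim\ }G_1\cdot X_1$, part~(1) gives $[\widetilde{\mathcal T}]|_{G_1\cdot X_1} = [{\mathcal S}(\sigma)|_U] = [{\mathcal T}]$, so $[\widetilde{\mathcal T}]$ is a virtual extension of $[{\mathcal T}]$ in the sense of Corollary~\ref{cor:Kspangeom}. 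Finally the expansion $[\widetilde{\mathcal T}] = \sum_{\gamma}m_{\widetilde{\mathcal T}}(\gamma)[\gamma]_\theta$ holds simply because $\{[\gamma]_\theta\}$ is a ${\mathbb Z}$-basis of $\K^K({\mathcal N}^*_\theta)$ (Corollary~\ref{cor:KKbasis}), with coefficients $m_{\widetilde{\mathcal T}}(\gamma) = m_\sigma(\gamma)$ given explicitly by Corollary~\ref{cor:JMbdle}(5). The closing remark requires nothing further: part~(2) only yields the \emph{existence} of a lift $\sigma$, produced non-constructively from the localization sequence, and turning this into an effective recipe in the representation theory of the reductive group $L_1$ and its subgroup $L_1^{X_1}$ is exactly the point left unresolved.
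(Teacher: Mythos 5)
Your proof follows the paper's approach and the core arguments are sound, but there is one logical wrinkle worth flagging. In part (3) you justify the expansion $[\widetilde{\mathcal T}] = \sum_{\gamma} m_{\widetilde{\mathcal T}}(\gamma)[\gamma]_\theta$ by appeal to Corollary~\ref{cor:KKbasis}, asserting that $\{[\gamma]_\theta\}$ is a ${\mathbb Z}$-basis of $\K^K({\mathcal N}^*_\theta)$. But Corollary~\ref{cor:KKbasis} is proved \emph{from} Theorem~\ref{thm:cplxgeomexpand}, and the paper explicitly states that Theorem~\ref{thm:cplxgeomexpand} is to be proved via Corollary~\ref{cor:JMres} --- the very statement you are proving. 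Invoking the basis property at this stage is therefore circular. The saving grace is that you also cite Corollary~\ref{cor:JMbdle}(5), which derives the expansion purely by the Leray/Weyl-character computation (no basis claim required); that is the actual route the paper takes, and it alone suffices. You should simply delete the appeal to Corollary~\ref{cor:KKbasis}: at this point in the logical development one only knows that the $[\gamma]_\theta$ span (from Corollary~\ref{cor:Kspangeom} together with what is being proved here), and the explicit formula of Corollary~\ref{cor:JMbdle}(5) already produces the desired expansion without assuming linear independence.

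Otherwise the work is correct. Your detailed tracing of the restriction map in part (1) --- identifying the point $[e,X_1]$ in $\mu^{-1}(X_1)$, computing its stabilizer $Q_1^{X_1} = G_1^{X_1}$ via Theorem~\ref{thm:JM}(4), and reading off the restricted bundle --- fills in the paper's terse ``clear from the description of representatives'' with the right bookkeeping. Your alternative argument for part~(2) via $L_1$ acting on ${\mathfrak g}_1[2]$ with dense open orbit (Theorem~\ref{thm:JM}(6)) is a clean shortcut that avoids routing through ${\mathcal R}_1$ entirely; it is not in the paper, and it makes the surjectivity of $\R(L_1) \to \R(L_1^{X_1})$ particularly transparent since it lives entirely inside the Levi factor.
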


\begin{proof}
For (1), that the restriction in $\K$-theory corresponds to
restriction of representations is clear from the description of
representatives for the classes on ${\mathcal R}_1$ in \eqref{e:KR1rep}.

For (2), restriction to an open subset in equivariant $\K$-theory is
always surjective; this is the right exactness of \eqref{e:Kright}. So
(2) follows from (1).

Part (3) follows from the last assertion of Corollary
\ref{cor:JMbdle}(3), and Corollary \ref{cor:JMbdle}(5).
\end{proof}

The last formula in Corollary \ref{cor:JMres} relates the geometric
basis of Theorem \ref{thm:finorb} to the representation-theoretic
basis of Corollary \ref{cor:KKbasis}. The difficulty, as mentioned in the
Corollary above, is that computing this formula requires (for each
irreducible representation $\tau$ of $L_1^{X_1}$) a
{\em computable} virtual representation $\sigma$ of $L_1$ with
$$\sigma|_{L_1^X} = \tau.$$
Finding such a $\sigma$ does not seem to be an intractable
problem. For $GL(n)$, Achar addresses it in his thesis \cite{acharTH}. For
other classical $G_1$, a fairly typical example (arising for
the nilpotent element in $Sp(2n)$ corresponding to the partition
$2^n$ of $2n$) has
$$L_1 = GL(n,{\mathbb C}),\qquad L_1^{X_1} = O(n,{\mathbb C}).$$ But
we are not going to address this branching problem. Instead, we will
calculate not individual basis vectors $\widetilde{\mathcal
  E}^{\alg}(\tau)$, but rather a basis of the {\em span} of all these
vectors as $\tau$ varies over $\widehat{G_1^{X_1}}$ (always for a
fixed $X_1$).

For our application to calculating associated varieties, the price is
that we can calculate the components of the associated variety and
their multiplicities, but not the virtual representations of isotropy
groups giving rise to those multiplicities.

What we gain for this price is an algorithm, which we have implemented
in the {\tt atlas} software (see \cite{atlas}).

\begin{algorithm}[A geometric basis for equivariant
    $\K$-theory]\label{alg:acharOrbit}
  \addtocounter{equation}{-1}
  \begin{subequations}\label{se:acharOrbit}

  We begin in the setting \eqref{se:JM} with a nilpotent orbit
  \begin{equation}\label{e:acharNotationA}
    Y = G_1\cdot X_1 \subset {\mathcal N}_1 \simeq {\mathcal N}_1^*
    \simeq {\mathcal N}^*_\theta.
  \end{equation}
(Here we use the identifications of \eqref{eq:cplxnilporb} and
  \eqref{se:gg*}.) The goal is to produce a collection of explicit elements
  \begin{equation}\label{e:acharNotationB}
    {\mathcal E}^{\orbalg}_j(Y) = \sum_{\gamma\in {\mathcal
    P}_{K\dashLL}(G,K)} m_{{\mathcal E}^{\orbalg}_j(Y)}(\gamma)[\gamma]_\theta
      \in K^K(\overline Y) 
  \end{equation}
  which are a {\em basis} of $\K^K(\overline Y)/\K^K(\partial\overline
  Y)$. (The superscript ``$\orbalg$'' stands for ``orbital
  algorithm.'' The subscript $j$ is just an indexing parameter for the
  basis vectors we compute, running over
  $$\{0,1,\ldots M-1\}\qquad \text{or}\qquad {\mathbb N};$$
  it has no particular
  meaning. It replaces the parameter $\tau$ (corresponding to the
  coherent sheaf ${\mathcal E}$) in the basis of Theorem \ref{thm:finorb}.
 The algorithm proceeds by induction on $\dim Y$; so we assume that
 such a basis is available for every boundary orbit $Y' \subset
 \partial\overline Y$.

 Given an arbitrary (say irreducible) representation
 $\sigma$ of $L_1$, \ref{cor:JMbdle}(5) provides an expression
 \begin{equation}\label{e:acharSpan}
   [\mu_*{\mathcal S}(\sigma)] = \sum_{\gamma \in {\mathcal P}_{K\dashLL}(G,K)}
   m_{\sigma}(\gamma)[\gamma]_\theta.
 \end{equation}
 The ``sheaf'' $\mu_*{\mathcal S}(\sigma)$ (actually it is a formal
 alternating sum of higher direct image sheaves, but the higher terms
 are supported on the boundary) is a vector bundle over $Y$, of rank
 \begin{equation}\label{e:acharRankA}
\rank([\mu_*{\mathcal S}(\sigma)]|_{G_1\cdot X_1}) = \dim(\sigma).
 \end{equation}
This dimension (of an irreducible of $L_1$) is easy to compute.

 According to Corollary \ref{cor:JMres}, the classes $\{[\mu_*{\mathcal
   S}(\sigma)] \mid \sigma \in \widehat{L_1}\}$, after restriction to
 $\K^{G_1}(Y)$, are a spanning set. Furthermore the kernel of the
 restriction map has as basis the (already computed) set
   \begin{equation}\label{e:kerRes}
     \bigcup_{Z\subset {\partial Y}} \{{\mathcal E}_k^{\orbalg}(Z)\}.
    \end{equation}
   Now extracting a subset
   \begin{equation}
{\mathcal E}_j^{\orbalg}(Y) = \sum_{\sigma\in \widehat{L_1}}
n_j(\sigma) [\mu_*{\mathcal S}(\sigma)]
   \end{equation}
of the span of the $[\mu_*{\mathcal S}(\sigma)]$ restricting to a basis of
the image of the restriction is a linear algebra problem. Because the
rank (the virtual dimension of fibers over $G_1\cdot X_1$) is additive
in the Grothendieck group, we can compute each integer
 \begin{equation}\label{e:acharRankB}
\rank([{\mathcal E}_j^{\orbalg}]) = \sum_{\sigma\in \widehat{L_1}}
n_j(\sigma)\dim(\sigma).
 \end{equation}
   \end{subequations} %{se:acharOrbit}
\end{algorithm}

In this description we have swept under the rug the issue of doing
finite calculations. We will now address this. Recall from
\eqref{eq:form} the invariant bilinear form $\langle,\rangle$ on
${\mathfrak g}$, and from Proposition \ref{prop:formpos} the fact that
this form defines a positive definite form on any character lattice
$X^*(H)$, with $H\subset G$ a maximal torus. The same proof applies to
non-maximal tori; so we get a positive definite form on highest
weights for any reductive subgroup of $G$.

\begin{lemma}\label{lemma:branchnorm}  Suppose $E\subset F$ are
  algebraic subgroups of $G$ (not necessarily connected), and that
  $\tau$ and $\sigma$ are
  irreducible representations of $E$ and $F$ respectively. Define
  $$\| \tau\| = \text{length of a highest weight of $\tau$}$$
  and similarly for $\sigma$. If $\tau$ appears in $\sigma|_E$, then
  necessarily
  $$\|\tau\| \le \|\sigma\|.$$
\end{lemma}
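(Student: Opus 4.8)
The plan is to reduce the statement to a claim about a single pair of nested maximal tori, and then combine two standard facts: that the highest weight of an irreducible representation of a connected reductive group is its longest weight, and that restriction of characters from a torus to a subtorus is an orthogonal projection for the form $\langle,\rangle$.

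First I would remove the disconnectedness. An irreducible representation $\sigma$ of $F$ restricts to $F^0$ as a sum of irreducibles permuted transitively (Clifford theory) by $F/F^0$ acting through conjugation inside $G$; since $\langle,\rangle$ is $\Ad(G)$-invariant, and a highest weight is well defined up to the Weyl group (which acts by isometries of $\langle,\rangle$), all of these summands have highest weights of one common length, namely $\|\sigma\|$ — this is also what makes $\|\sigma\|$ well defined in the first place. Likewise for $\tau$. Fix an irreducible summand $\tau^0$ of $\tau|_{E^0}$; the hypothesis $\tau\hookrightarrow\sigma|_E$ gives $\tau^0\hookrightarrow\sigma|_{E^0}$, hence $\tau^0\hookrightarrow\sigma^0|_{E^0}$ for some irreducible summand $\sigma^0$ of $\sigma|_{F^0}$. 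Since $\|\tau^0\|=\|\tau\|$ and the highest weight of $\sigma^0$ has length $\|\sigma\|$, it suffices to treat the connected case $E^0\subseteq F^0$.

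Next I would set up compatible tori. Conjugating the chain $E^0\subseteq F^0$ by an element of $G$ changes neither side of the inequality (as $\langle,\rangle$ is $G$-invariant), so I may assume $G^{\sigma_0}$ meets $F^0$ and $E^0$ in maximal compact subgroups $K_E\subseteq K_F$; choose a maximal torus $T_c$ of $K_E$, contained in a maximal torus $\widetilde T_c$ of $K_F$, and let $S\subseteq T$ be their complexifications, maximal tori of $E^0\subseteq F^0$. With ${\mathfrak t}_c=\Lie(\widetilde T_c)\subseteq{\mathfrak g}^{\sigma_0}$, the form $\langle,\rangle$ is negative definite on ${\mathfrak t}_c$, so $i{\mathfrak t}_c\supseteq i\,\Lie(T_c)$ are the positive-definite real forms of ${\mathfrak t}\supseteq{\mathfrak s}$ that Proposition \ref{prop:formpos} (extended to non-maximal tori, as in the remark preceding the lemma) identifies with $X^*(T)_{\mathbb R}\supseteq X^*(S)_{\mathbb R}$ via $\langle,\rangle$. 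Under these identifications the character restriction $X^*(T)_{\mathbb R}\to X^*(S)_{\mathbb R}$ becomes the $\langle,\rangle$-orthogonal projection of ${\mathfrak t}$ onto ${\mathfrak s}$ — restricting the functional $\xi_X$ to ${\mathfrak s}$ yields $\xi_{X'}$ with $X'$ the orthogonal projection of $X$ — and this projection respects the real structures because $S$ and $T$ are $\sigma_0$-stable. An orthogonal projection for a positive-definite inner product never increases length, so $\|\chi|_S\|\le\|\chi\|$ for every $\chi\in X^*(T)$.

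To finish I would chain the inequalities. Let $\Lambda\in X^*(T)$ be the highest weight of $\sigma^0$ (length $\|\sigma\|$) and $\lambda\in X^*(S)$ that of $\tau^0$ (length $\|\tau\|$). Because $\tau^0$ is a subrepresentation of $\sigma^0|_{E^0}$, the weight $\lambda$ of $S$ occurs in $\sigma^0$, so $\lambda=\chi|_S$ for some weight $\chi$ of $T$ on $\sigma^0$. The classical fact that $\Lambda$ is the longest weight of $\sigma^0$ — reduce to a dominant weight $\mu$, write $\Lambda-\mu=\sum n_i\alpha_i$ with $n_i\ge0$ and $\alpha_i$ simple, and observe $\|\Lambda\|^2-\|\mu\|^2=\langle\Lambda-\mu,\Lambda+\mu\rangle\ge0$ since $\Lambda+\mu$ is dominant — gives $\|\chi\|\le\|\Lambda\|$. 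Hence $\|\tau\|=\|\lambda\|=\|\chi|_S\|\le\|\chi\|\le\|\Lambda\|=\|\sigma\|$. The main obstacle is the middle step: arranging the nested maximal tori $S\subseteq T$ inside a common compact form and checking that character restriction really is an orthogonal projection on the positive-definite real weight spaces (in particular that it preserves the real structures); the rest is Clifford-theoretic bookkeeping and a standard computation.
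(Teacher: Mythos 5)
Your argument is correct and is essentially the paper's proof written out in full: the paper reduces to reductive $E,F$ (irreducibles are trivial on unipotent radicals) and then "immediately to the case when $E$ and $F$ are tori," where the claim is the orthogonal-projection fact you prove; your Clifford-theoretic handling of disconnectedness and the "highest weight is the longest weight" computation are exactly the details being suppressed. The only step you leave tacit is the initial passage to the reductive quotients, but your use of maximal compact subgroups and their complexified maximal tori makes this harmless.
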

Because an irreducible algebraic representation must be trivial on the
unipotent radical, we may assume that $E$ and $F$ are reductive, so
that the notion of ``highest weight'' makes sense. The lemma reduces
immediately to the case when $E$ and $F$ are tori, and in that case is
obvious.

If now $\mathcal E$ is a geometric parameter corresponding to an
irreducible representation $\tau$ of $G_1^\xi$, we define
\begin{equation}\label{eq:geomnorm}
  \|{\mathcal E}\| = \|\tau\|.
\end{equation}
If $\gamma$ is a dominant weight thought of as a Langlands parameter,
we define
\begin{equation}\label{eq:KLnorm}
  \|\gamma\| = \text{length of $\gamma$ as a weight}.
\end{equation}

\begin{proposition} \label{prop:LBnorm}
Suppose we are in the setting of Corollary \ref{cor:JMres}.
  \begin{enumerate}
    \item Any formula for an extension of $(Y,{\mathcal E})$ must
      include a term $\gamma$ for which the restriction of the
      $G_1$-representation of extremal weight $\gamma$ contains the
      $G_1^{X_1}$ representation $\tau$ defining ${\mathcal E}$; and
      therefore
      $$\|\gamma\| \ge \|\tau\|.$$
      \item There is a constant $C$, depending only on $G$, so that
        there is a virtual extension of $\tau$  to $L_1$ in which
        every $L_1$ highest weight $\gamma_1$ appearing satisfies
           $$\| \gamma_1\| \le \|\tau\| + C.$$
      \item There is a constant $C$ depending only on $G$ so that, in
        the formula for the virtual extension of ${\mathcal E}$ given
        in Corollary \ref{cor:JMres}, every $K$-Langlands parameter
        $\gamma$ appearing satisfies
        $$\|\gamma\| \le \|\tau\| + 2C.$$
  \end{enumerate}
\end{proposition}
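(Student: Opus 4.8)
\emph{Overall.} The plan is to get (3) as a formal consequence of (2) plus the finiteness of the root datum of $G$, to prove the lower bound (1) by a cohomological computation on the open orbit, and to treat (2) --- the substantive statement --- by reducing to finitely many cases and then controlling a coherent extension in equivariant $\K$-theory. So assume (2) and choose a virtual extension $\sigma\in\R(L_1)$ of $\tau$ (i.e.\ $\sigma|_{L_1^{X_1}}=\tau$) with every $L_1$-highest weight $\phi$ of $\sigma$ satisfying $\|\phi\|\le\|\tau\|+C$. By Corollary \ref{cor:JMres}(3) the class $[\mu_*({\mathcal S}(\sigma))]$ is a virtual extension of $(Y,{\mathcal E})$, and Corollary \ref{cor:JMbdle}(5) writes it as $\sum_\gamma m_\sigma(\gamma)[\gamma]_\theta$ with every occurring $\gamma$ the dominant $W$-conjugate of a weight $\phi-2\rho(A)+2\rho(B)$, $\phi$ a highest weight of $\sigma$, $A\subseteq\Delta({\mathfrak g}_1[1],{\mathfrak t}_1)$, $B\subseteq\Delta^+({\mathfrak l}_1,{\mathfrak t}_1)$. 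Since $\langle,\rangle$ of \eqref{eq:form} is $W$-invariant (so passing to the dominant conjugate preserves lengths) and $\|2\rho(A)\|,\|2\rho(B)\|$ are bounded by a constant $C'$ depending only on $G$, we get $\|\gamma\|\le\|\phi\|+2C'\le\|\tau\|+C+2C'$, which is (3) after enlarging $C$ to $\max(C,2C')$.

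\emph{Proof of (2): reduction to finitely many cases.} There are finitely many $G_1$-orbits on ${\mathcal N}_1$, and the Jacobson--Morozov data attached to $X_1$ --- in particular $D_1$, the Levi $L_1=G_1^{D_1}$, and the reductive stabilizer $L_1^{X_1}$ (reductive by Theorem \ref{thm:JM}(5)) --- are determined by the orbit up to conjugacy (Corollary \ref{cor:JM}, Theorem \ref{thm:JM}(2)). So it suffices to produce, for each of the finitely many pairs of reductive groups $(L,H)=(L_1,L_1^{X_1})$, a constant $c$ as in (2), and take $C=\max c$. Fix such a pair. By Corollary \ref{cor:JMres}(2), restriction $\R(L)\twoheadrightarrow\R(H)$ is surjective; geometrically this is the $\K$-theory restriction $\K^{L}({\mathfrak g}_1[2])\twoheadrightarrow\K^{L}(L\cdot X_1)$ to the open dense orbit of Theorem \ref{thm:JM}(6) (surjective by the right-exactness of \eqref{e:Kright}), under $\K^{L}({\mathfrak g}_1[2])\simeq\R(L)$ of \eqref{e:Kvec} (via $\sigma\mapsto\sigma\otimes\Poly({\mathfrak g}_1[2])$) and $\K^{L}(L\cdot X_1)\simeq\R(H)$ of \eqref{eq:Khomog}. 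Thus lifting $[\tau]$ amounts to extending the $L$-equivariant coherent sheaf ${\mathcal F}_\tau=L\times_H E_\tau$ from $L\cdot X_1$ to an $L$-equivariant coherent sheaf $\widetilde{{\mathcal F}}_\tau$ on the vector space ${\mathfrak g}_1[2]$ (possible, as in the proof of Theorem \ref{thm:finorb}(2)), resolving it finitely and $L$-equivariantly by sheaves $E_j\otimes\Poly({\mathfrak g}_1[2])$ (here $\Poly({\mathfrak g}_1[2])$ is a polynomial ring and $L$ is reductive), and setting $\sigma=\sum_j(-1)^jE_j$, whose highest weights occur among those of the $E_j$.

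\emph{Proof of (2): the main obstacle.} The remaining point --- and the crux of the whole proposition --- is that $\widetilde{{\mathcal F}}_\tau$, hence its minimal $L$-equivariant free resolution, can be chosen so that the $E_j$ have highest weights of length $\le\|\tau\|+c$, with $c$ depending only on $(L,H)$ (equivalently, $\widetilde{{\mathcal F}}_\tau$ has Castelnuovo--Mumford regularity bounded \emph{additively} in $\|\tau\|$). I would attack this by choosing generators of $\widetilde{{\mathcal F}}_\tau$ inside the quasi-coherent pushforward $j_*{\mathcal F}_\tau$, whose global sections decompose as $\bigoplus_{V\in\widehat{L}}V\otimes\Hom_H(V|_H,E_\tau)$: Lemma \ref{lemma:branchnorm} shows the $L$-types occurring here all have length $\ge\|\tau\|$, and the content to extract is that a \emph{generating} subspace can be taken among those of length $\le\|\tau\|+c$, which should follow from the geometry of $L\cdot X_1\subseteq{\mathfrak g}_1[2]$ (an open subset of a single vector space, with $X_1$ in the open dense orbit) bounding how far one must reach. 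This uniform additive bound, uniform also over the finitely many pairs, is where I expect the real work to be.

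\emph{Proof of (1).} Restrict any extension $[\widetilde{{\mathcal E}}]=\sum_\gamma m(\gamma)[\gamma]_\theta$ of $(Y,{\mathcal E})$ to the open orbit $Y\simeq G_1/G_1^{X_1}$, giving $[\tau]=\sum_\gamma m(\gamma)\,([\gamma]_\theta|_Y)$ in $\R(G_1^{X_1})=\K^{G_1}(Y)$. From the description of $[\gamma]_\theta$ in Definition \ref{def:cplxbasis}(2) as the alternating sum of higher direct images of $\pi_1^*{\mathcal L}_\gamma$ along the moment map $\mu_1\colon T^*(G_1/B_1)\to{\mathcal N}_1^*$, together with flat base change along the open immersion $G_1\cdot X_1\hookrightarrow{\mathcal N}_1^*$, $[\gamma]_\theta|_Y$ is the $G_1^{X_1}$-equivariant Euler characteristic of $\pi_1^*{\mathcal L}_\gamma$ on the Springer fibre $\mu_1^{-1}(X_1)$; a Borel--Weil--Bott and projection-formula argument on $G_1/B_1$ then shows that its irreducible $G_1^{X_1}$-constituents all occur in $V_\gamma|_{G_1^{X_1}}$, where $V_\gamma$ is the irreducible $G_1$-module of extremal weight $\gamma$. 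Since $\tau$ occurs in $[\tau]$ on the left, some $\gamma$ with $m(\gamma)\ne0$ must contribute it on the right, so $\tau\subseteq V_\gamma|_{G_1^{X_1}}$ and hence $\|\gamma\|\ge\|\tau\|$ by Lemma \ref{lemma:branchnorm}. The delicate step here is cohomology-and-base-change for $\mu_1$; one can either stay inside the Grothendieck group (using the identities of Corollary \ref{cor:JMbdle}) or, alternatively, first replace $\widetilde{{\mathcal E}}$ by an honest coherent sheaf on the affine variety $\overline Y$ and run a generating-subspace argument as in the proof of (2).
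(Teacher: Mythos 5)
Your handling of part (3) matches the paper: the paper's one-line proof says the constant is ``a bound for the sizes of the various root sums appearing'' in Corollary \ref{cor:JMbdle}(4), and you derive exactly that after enlarging $C$; the only cosmetic difference is that the paper reuses the same symbol $C$ for the constants in (2) and (3) rather than renaming. For part (2) the paper openly says ``elementary but tedious; we omit the argument,'' so it gives no proof at all; your reduction to finitely many pairs $(L_1,L_1^{X_1})$ (using that the Jacobson--Morozov data are conjugation-invariant) and the reformulation as an additive Castelnuovo--Mumford-type bound are sensible and honest about exactly where the unproven content sits — you have not proved (2), but neither has the paper.

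The place where your route genuinely diverges and contains a real gap is part (1). The paper calls the main assertion ``elementary'' and then just invokes Lemma \ref{lemma:branchnorm}, which suggests an argument far shorter than yours. Your Springer-fiber computation is fine up to the point where you use flat base change along the open immersion $Y\hookrightarrow\overline Y$ (this is not the delicate step: $\mu_1^{-1}(Y)\to Y$ is a $G_1$-equivariant fiber bundle, hence flat, so cohomology and base change is automatic). The actual unjustified step is the sentence ``a Borel--Weil--Bott and projection-formula argument on $G_1/B_1$ then shows that its irreducible $G_1^{X_1}$-constituents all occur in $V_\gamma|_{G_1^{X_1}}$.'' The Springer fiber ${\mathcal B}_{X_1}$ is not $T_1$-stable in general and is cut out by a non-regular section of the tangent bundle, so there is no clean Koszul resolution; the Euler characteristic $\chi({\mathcal B}_{X_1},{\mathcal L}_\gamma)$ as a $G_1^{X_1}$-module is not obviously a summand of $V_\gamma|_{G_1^{X_1}}$, and you do not prove it. What is actually needed is weaker — only that the highest-weight lengths of its constituents are bounded by $\|\gamma\|$ — but even that requires an argument you don't supply. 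Either spell out that bound, or find a shorter argument restricting $[\widetilde{\mathcal E}]$ and $[\gamma]_\theta$ to the open orbit that avoids the Springer fiber entirely; the paper's phrasing strongly suggests the latter is what the authors had in mind.
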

\begin{proof} The main assertion in Part (1) is elementary, and then
  the last inequality is Lemma \ref{lemma:branchnorm}.  Part (2) is
  elementary but tedious; we omit the argument. Part (3) is clear by
  inspection of Corollary \ref{cor:JMbdle}(4); the constant $C$ in this
  case is a bound for the sizes of the various root sums appearing.
\end{proof}

Here now is how to make Algorithm \ref{alg:acharOrbit} into a finite
calculation. We fix some bound $N$, and at every stage consider
only the (finitely many) irreducible representations of $L_1$ of
highest weights bounded in size by $N + C$. When this is done, all the
linear algebra mentioned in the algorithm will take place in the
finite-rank ${\mathbb Z}$ module spanned by $K$-Langlands parameters
of size bounded by $N+2C$. Instead of surjectivity for the restriction
from $\R(L_1)$ to $\R(L_1^X)$, what we will know is that
\begin{equation}\label{e:finiteCalcA}
  \text{\parbox{.65\textwidth}{the image contains all irreducible
      representations of $L_1^X$ of highest weight size bounded by $N$.}}
\end{equation}
The conclusion about the algorithm is that
\begin{equation}\label{e:finiteCalcB}
  \text{\parbox{.82\textwidth}{proposed basis vectors with
$K$-Langlands parameters of size bounded by $N$ are linearly
      independent in $\K^K(\overline Y)/\K^K(\partial \overline Y)$.}}
\end{equation}
The proposed basis vectors involving parameters of size between $N$
and $N+2C$ will indeed live in $\K^K(\overline Y)/\K^K(\partial
\overline Y)$, but they may not be linearly independent.

\section{Associated varieties for complex
  groups}\label{sec:compassvar}
\setcounter{equation}{0}
In the setting of \eqref{se:cplx}, suppose $X$ is a $({\mathfrak
  g},K)$-module for the complex group $G_1$ (regarded as a real
group).
\begin{subequations}\label{se:assvar}
Kazhdan-Lusztig theory allows us (if $X$ is specified as a sum
of irreducibles in the Langlands classification) to find an explicit
formula (in the Grothendieck group of finite length Harish-Chandra modules)
\begin{equation}\label{e:cplxKL}
X = \sum_{(\lambda_L,\lambda_R)\in {\mathcal P}_{\LL}(G,K)}
m_X(\lambda_L,\lambda_R) I(\lambda_L,\lambda_R).
\end{equation}
Fix a $K$-invariant good filtration of the Harish-Chandra module $X$,
so that $\gr X$ is a finitely generated $S({\mathfrak g}/{\mathfrak
  k})$-module supported on ${\mathcal N}^*_\theta$. The class in
equivariant $\K$-theory
\begin{equation}
[\gr X] \in \K^K({\mathcal N}^*_\theta)
\end{equation}
is independent of the choice of good filtration. Because of the
characterizations in Definition \ref{def:cplxbasis} of the basis
$\{[\gamma]_\theta\}$ of this $\K$-theory, we find
\begin{equation}\label{e:cplxKformula}\begin{aligned}
\ [\gr X] &= \sum_{\gamma \in {\mathcal P}_{K\dashLL}(G)}
  \left(\sum_{\substack{(\lambda_L,\lambda_R)\in {\mathcal
        P}_{\LL}(G,K)\\ (\lambda_L,\lambda_R) \sim_K \gamma}}
  m_X(\lambda_L,\lambda_R)\right)[\gamma]_\theta\\
&= \sum_{\gamma \in {\mathcal P}_{K\dashLL}(G)} m_X(\gamma)[\gamma]_\theta.
\end{aligned}\end{equation}
Here the equivalence $\sim_K$ in the first inner sum is that of Definition
\ref{def:Klangcplx}. If we think of $\gamma$ as a dominant weight, then
\begin{equation}
(\lambda_L,\lambda_R)\sim_K \gamma \iff \gamma \in W(K,T)\cdot(\lambda_L
  +\lambda_R).
\end{equation}
Recall now the classes $[{\mathcal E}_k^{\orbalg}(Z)]$
constructed in Achar's Algorithm \ref{alg:acharOrbit}. Comparing their known
formulas with \eqref{e:cplxKformula}, we can do an (upper
triangular) change of basis calculation, and get an explicit formula
\begin{equation}\label{e:cplxgeomKform}
[\gr X] = \sum_{{\mathcal E}_k^{\orbalg}(Z)} n_X({\mathcal
  E}_k^{\orbalg}(Z))[{\mathcal E}_k^{\orbalg}(Z)],
\end{equation}
with computable integers $n_X({\mathcal E}_k^{\orbalg}(Z))$.

Here is how to make this calculation finite. After using
Kazhdan-Lusztig theory to calculate the formula
\eqref{e:cplxKformula}, write $N$ for the size of the largest
highest weight appearing. (If $X$ is irreducible, then $N$ is just the
length of the highest weight of the lowest $K$-type of $X$; no
Kazhdan-Lusztig theory arises.) Then run the algorithm as described in
\eqref{e:finiteCalcA} and \eqref{e:finiteCalcB}, using always
representations of $L_1$ of highest weights bounded by $N+C$.
\end{subequations} %{se:assvar}

\begin{theorem}\label{thm:cplxassvar} Suppose $X$ is a $({\mathfrak
  g},K)$-module for the complex group $G_1$ (regarded as a real
group). Use the notation of \eqref{se:assvar}.
\begin{enumerate}
\item The associated variety of $X$ (Definition \ref{def:assvar}) is
  the union of the closures of
  the maximal $K$-orbits $Z\subset {\mathcal N}^*_\theta$ with some
  $n_X({\mathcal E}_k^{\orbalg}(Z)) \ne 0$.
\item The multiplicity of a maximal orbit $Z$ in the associated
cycle of $X$ is
$$\sum_{{\mathcal E}_k^{\orbalg}(Z)} n_X({\mathcal
    E}_k^{\orbalg}(Z))\rank({\mathcal E}_k^{\orbalg}(Z)).$$
\end{enumerate}
\end{theorem}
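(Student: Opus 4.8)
The plan is to deduce both statements directly from the definition of the associated variety and associated cycle (Definition \ref{def:assvar}) together with the structural description of $\K^K({\mathcal N}^*_\theta)$ in Theorem \ref{thm:finorb}, taking as the only substantive input the already-established expansion \eqref{e:cplxgeomKform},
$$[\gr X] = \sum_{{\mathcal E}_k^{\orbalg}(Z)} n_X({\mathcal E}_k^{\orbalg}(Z))\,[{\mathcal E}_k^{\orbalg}(Z)],$$
in which, as $Z$ runs over all $K$-orbits on ${\mathcal N}^*_\theta$ and $k$ over an index set, the classes ${\mathcal E}_k^{\orbalg}(Z)$ form a $\mathbb Z$-basis of $\K^K({\mathcal N}^*_\theta)$ adapted to the support filtration: each ${\mathcal E}_k^{\orbalg}(Z)$ is supported on $\overline Z$, and its restriction to the open orbit $Z$ is a virtual $K$-equivariant vector bundle of the computable rank $\rank({\mathcal E}_k^{\orbalg}(Z))$ of \eqref{e:acharRankB}. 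Thus the argument is pure bookkeeping with this basis; no new geometry enters.

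For part (1), I would first recall from Theorem \ref{thm:finorb}(4) that for any closed $K$-invariant $W\subset{\mathcal N}^*_\theta$ the natural map $\K^K(W)\to\K^K({\mathcal N}^*_\theta)$ is injective with image exactly the classes supported on $W$, and that $\{{\mathcal E}_k^{\orbalg}(Z)\mid Z\subset W\}$ is a basis of $\K^K(W)$ (obtained by the same induction as in Algorithm \ref{alg:acharOrbit}, restricted to orbits in $W$). Consequently a class is supported on $W$ exactly when every orbit $Z$ occurring with nonzero coefficient in its expansion satisfies $Z\subset W$. Applying this to $[\gr X]$ and using that $\supp(\gr X)$ is the smallest such $W$ gives $\AV(X)=\supp(\gr X)=\bigcup\{\overline Z\mid\text{some }n_X({\mathcal E}_k^{\orbalg}(Z))\neq 0\}$; since this union is unchanged upon discarding non-maximal orbits (their closures lie inside the closures of maximal ones occurring), it equals the union of $\overline Z$ over the maximal $Z$ with some $n_X({\mathcal E}_k^{\orbalg}(Z))\neq 0$.

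For part (2), let $Z$ be a maximal orbit in $\AV(X)$, so $Z$ is one of the open orbits $Y_i$ of Definition \ref{def:assvar} and the multiplicity of $Z$ in the associated cycle is $\dim\mu_Z(X)$, which is exactly the rank of $\gr X|_Z$ computed in $\K^K(Z)\simeq\R(K^Z)$. Restricting \eqref{e:cplxgeomKform} to $Z$, a term ${\mathcal E}_k^{\orbalg}(Z')$ with $Z\not\subset\overline{Z'}$ restricts to $0$ on $Z$ (its support misses $Z$), while a term with $Z\subsetneq\overline{Z'}$ would mean $Z'$ strictly dominates $Z$ with $n_X({\mathcal E}_k^{\orbalg}(Z'))\neq 0$, contradicting maximality of $Z$ in $\AV(X)$; hence only the $Z'=Z$ terms survive. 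Since rank is additive in the Grothendieck group,
$$\dim\mu_Z(X)=\rank\big(\gr X|_Z\big)=\sum_k n_X({\mathcal E}_k^{\orbalg}(Z))\,\rank\big({\mathcal E}_k^{\orbalg}(Z)\big),$$
the asserted formula, with the $n_X$ coming from Kazhdan--Lusztig theory and the upper-triangular change of basis of \eqref{e:cplxgeomKform}, and the ranks from \eqref{e:acharRankB}.

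The points needing care — the closest thing to an obstacle — are formal rather than deep: one must verify that the ${\mathcal E}_k^{\orbalg}(Z)$ genuinely assemble into a single basis of $\K^K({\mathcal N}^*_\theta)$ compatible with the support stratification (so that the coefficients $n_X$ are unambiguous and the support criterion used in part (1) is valid), and that $\rank({\mathcal E}_k^{\orbalg}(Z)|_Z)$ is well-defined despite ${\mathcal E}_k^{\orbalg}(Z)$ being pinned down only modulo $\K^K(\partial\overline Z)$ — which holds because restriction to the open orbit $Z$ annihilates $\K^K(\partial\overline Z)$. All the genuinely hard work (the effective computation of the integers $n_X$) was already carried out in Section \ref{sec:cplxalgweights} and \eqref{se:assvar}; Theorem \ref{thm:cplxassvar} is merely the translation of those computations into statements about $\AV$ and the associated cycle.
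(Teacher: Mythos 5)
Your proof is correct and follows essentially the same route as the paper: both rest on the expansion \eqref{e:cplxgeomKform} of $[\gr X]$ in the basis $\{{\mathcal E}_k^{\orbalg}(Z)\}$, the support-adapted structure of $\K^K({\mathcal N}^*_\theta)$ from Theorem \ref{thm:finorb}, and additivity of rank on restriction to the open orbit. The paper's version is terser—it phrases the argument via the weakly upper-triangular change of basis between the abstract basis $\{\widetilde{\mathcal F}\}$ of Theorem \ref{thm:finorb}(3) and the computed basis $\{{\mathcal E}_k^{\orbalg}\}$ rather than working with the latter directly—but the mathematical content is identical, including the (unspoken in both) reliance on positivity of $\gr X|_Z$ to pass from the sheaf-theoretic support to the $\K$-theoretic one in part (1).
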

\begin{proof} Theorem \ref{thm:finorb}(3) provides a formula in
  equivariant $\K$-theory
\begin{equation}\label{e:cplxgeomvarform}
[\gr X] = \sum_{\substack{(Z,{\mathcal F}) \in {\mathcal P}_g(G,K)\\
Z\subset \supp(\gr X)}} m_X(Z,{\mathcal F})[\widetilde{\mathcal
F}^{\alg}],
\end{equation}
and implies that the coefficients of the terms on maximal orbits are
independent of choices. The definition \eqref{eq:AC}, and the
definition of the basis vectors ${\mathcal E}_k^{\orbalg}(Z)$, relate
the coefficients in \eqref{e:cplxgeomvarform}  to the
multiplicities in the associated variety.  We know that the basis
$\{\widetilde{\mathcal F}\}$ can be expressed in terms of the basis
$\{{\mathcal E}_k^{\orbalg}\}$, and that the change of basis is weakly
upper triangular with respect to the ordering of
orbits by closure.  From  these facts the theorem follows.
\end{proof}

\section{Representation basis for $\K$-theory: ${\mathbb R}$
  case}\label{sec:realalg}
\setcounter{equation}{0}
Everything has been said so as to carry over to real (linear
algebraic) groups with minimal changes. We return therefore to the
general setting of \eqref{se:realgrps}. Just as in the complex case,
Theorem \ref{thm:finorb} in the present setting gives

\begin{corollary}\label{cor:Kspangeomreal}
Write $\{Y_1,\ldots,Y_r\}$ for the orbits of $K$
on ${\mathcal N}^*_\theta$ (\eqref{e:Knilcone}). For each irreducible
$K$-equivariant vector bundle ${\mathcal E}$ on some $Y_i$, fix a
$K$-equivariant (virtual) coherent sheaf $\widetilde{\mathcal E}$
supported on the closure of $Y_i$, and restricting to ${\mathcal E}$
on $Y_i$.
Then the classes $[\widetilde{\mathcal E}]$ are a basis of
$\K^K({\mathcal N}^*_\theta)$. A little more precisely, the classes
supported on any $K$-invariant closed subset $Z \subset {\mathcal
  N}_\theta$ are a basis of $\K^K(Z)$.
\end{corollary}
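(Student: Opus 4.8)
The plan is to obtain this as the direct specialization of Theorem \ref{thm:finorb} in which the group there is our (possibly disconnected) reductive group $K$ and the affine variety there is the $K$-nilpotent cone ${\mathcal N}^*_\theta$. First I would verify the two hypotheses of that theorem. By \eqref{e:Knilcone}, ${\mathcal N}^*_\theta$ is Zariski-closed in $({\mathfrak g}/{\mathfrak k})^* \simeq {\mathfrak s}^*$, hence is an affine algebraic variety; and $K$ acts on it with finitely many orbits $Y_1,\dots,Y_r$ by the theorem of Kostant and Rallis recalled in \eqref{se:nilcone}. So Theorem \ref{thm:finorb} applies with no change.

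Next I would translate the data. An irreducible $K$-equivariant vector bundle ${\mathcal E}$ on $Y_i \simeq K/K_i$ is precisely an irreducible algebraic representation $(\tau,V_\tau)$ of the isotropy group $K_i$, with ${\mathcal E} = K\times_{K_i} V_\tau$ (Definition \ref{def:geom}). Theorem \ref{thm:finorb}(2) then supplies a $K$-equivariant coherent sheaf on $\overline{Y_i}$ restricting to ${\mathcal E}$ on $Y_i$, which serves as one choice of the extension $\widetilde{\mathcal E}$; any two choices (honest or virtual) differ by a class supported on $\partial\overline{Y_i} = \overline{Y_i}\setminus Y_i$, by Theorem \ref{thm:finorb}(1) together with the right-exact sequence \eqref{e:Kright}. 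Theorem \ref{thm:finorb}(3) is then exactly the assertion that the collection of classes $[\widetilde{\mathcal E}]$, as $Y_i$ runs over all orbits and $\tau$ over the irreducibles of $K_i$, is a ${\mathbb Z}$-basis of $\K^K({\mathcal N}^*_\theta)$, each basis vector being well defined modulo the span of those supported on strictly smaller orbits.

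For the sharper statement, I would note that any $K$-invariant Zariski-closed subset $Z \subset {\mathcal N}^*_\theta$ is itself an affine variety on which $K$ acts with finitely many orbits --- namely the $Y_i$ contained in $Z$ --- so Theorem \ref{thm:finorb}(3) applies to $Z$ directly and the classes $[\widetilde{\mathcal E}]$ with $\supp\widetilde{\mathcal E}\subset Z$ form a basis of $\K^K(Z)$. It then remains only to identify this with ``classes supported on $Z$'' inside $\K^K({\mathcal N}^*_\theta)$: the pushforward $\K^K(Z)\to\K^K({\mathcal N}^*_\theta)$ along the closed embedding carries the $Z$-basis onto the sub-${\mathbb Z}$-module spanned by the basis vectors attached to orbits inside $Z$, which is the required statement.

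There is no real obstacle here; the statement is a formal consequence of Theorem \ref{thm:finorb}, whose one genuinely nontrivial ingredient --- vanishing of the connecting map out of $\K_1$ of a homogeneous space, using the divisibility of ${\mathbb C}^\times$ --- has already been dealt with. The only things to watch are the routine bookkeeping between the $(\xi,(\tau,E))$ and $(Y,{\mathcal E})$ descriptions of a geometric parameter and between honest and virtual coherent-sheaf extensions, neither of which is substantive.
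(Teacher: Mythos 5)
Your proposal is correct and is essentially the argument the paper has in mind: the corollary is stated as a direct specialization of Theorem~\ref{thm:finorb} to $K$ acting on the affine variety ${\mathcal N}^*_\theta$ (with finitely many orbits by Kostant--Rallis), and the paper gives no further proof. Your spelling-out of the bookkeeping (irreducible equivariant bundles on $K/K_i$ versus irreducibles of $K_i$, honest versus virtual extensions, and the use of the injectivity from Theorem~\ref{thm:finorb}(4) to identify $\K^K(Z)$ with the span of the basis vectors attached to orbits inside $Z$) is accurate and fills in exactly what the paper leaves implicit.
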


As stated at the beginning of Section \ref{sec:cplx}, the
distinguishing complication in the general real case is the
formulation of the Langlands classification. We now begin to explain
the details we need. The main point is that Harish-Chandra
parametrized the discrete series of $G({\mathbb R})$ using characters
of a compact maximal torus; but this ``family'' of representations
changes drastically as the character moves from one Weyl chamber to
another. The discrete series characters constitute a nice family only
if we restrict the Harish-Chandra parameter to vary just  over
characters within a single Weyl chamber.

Once that is done, we have the problem that each nice family of
representations is inconveniently small: it is indexed not by {\em
  all} characters of a maximal torus, but only by appropriately {\em
  dominant} characters. We address this (following a fundamental idea of
Hecht and Schmid from the 1970s) by enlarging the family to depend on
arbitrary (not necessarily dominant) characters.

The resulting families of (virtual) representations are convenient for our
calculations, but too large to index irreducible representations. The
positivity notion of {\em weak} in Definition \ref{def:langreal}
singles out those representations that have some chance to be part of
the classification.

There remain two smaller issues. First, when the character is dominant
but singular, it may happen that the corresponding representation of
$G({\mathbb R})$ is zero. This possibility is ruled out by the
condition {\em nonzero} in Definition \ref{def:langreal}. Second
(again for singular characters) it may happen that the same
representation is attached to characters on two nonconjugate maximal
tori. In this case it turns out that (among these various
realizations) there is a unique one on a most compact torus; this is
the one identified by the condition {\em final}.

\begin{definition}\label{def:langreal} (See for example
  \cite{ALTV}*{Section 6} for details.) A {\em continued Langlands
    parameter} for
  $(G,K)$ is a triple $\Gamma = (H,\gamma,\Psi)$ such that
\begin{enumerate}
\item $H$ is a $\theta$-stable maximal torus in $G$;
\item $\gamma$ is  a one-dimensional $({\mathfrak
    h},[H^{\theta}]^{\rho_{\abs}})$-module in which the kernel of the
  two to one covering map $[H^\theta]^{\rho_{\abs}} \rightarrow
  H^\theta$ acts nontrivially; and
\item $\Psi$ is a system of positive imaginary (that is,
  $\theta$-fixed) roots for $H$ in $G$.
\end{enumerate}
Two continued Langlands parameters are {\em equivalent} if they are
conjugate by $K$.  A continued Langlands parameter is called {\em
  weak} if in addition
\begin{enumerate}[resume]
\item $d\gamma\in {\mathfrak h}^*$ is weakly dominant with respect to
  $\Psi$.
\end{enumerate}

The weak Langlands parameter is called {\em nonzero} if in addition
\begin{enumerate}[resume]
\item whenever $\alpha\in \Psi$ is simple and compact, $\langle
  d\gamma,\alpha^\vee\rangle \ne 0$.
\end{enumerate}
Here $d\gamma\in {\mathfrak h}^*$ means the weight by which the Lie
algebra ${\mathfrak h}$ acts in $\gamma$.
The nonzero weak Langlands parameter is called {\em final} if in
addition
\begin{enumerate}[resume]
\item whenever $\beta$ is a real root of $H$ such that
$\langle d\gamma,\beta^\vee\rangle =0$, then $\gamma_{\mathfrak
  q}(m_\beta) = 1$.
\end{enumerate}
(Here $\gamma_{\mathfrak q}$ is a $\rho$-shift of
$\gamma$ defined in \cite{ALTV}*{(9.3g)}.)

We write
$${\mathcal P}_{\LL}(G,K) = \{\text{equivalence classes of final
  Langlands parameters}\}.$$

The Langlands classification attaches 
to the equivalence class of a continued parameter $\Gamma$ a {\em
  continued standard representation} $[I(\Gamma)]$ (more precisely, a virtual
$({\mathfrak g},K)$-module in the Grothendieck group $\K({\mathfrak
  g},K)$ defined in \eqref{se:introgK}), with the following
properties.
\begin{enumerate}
\item The infinitesimal character of $[I(\Gamma)]$ is
  the $W(G,H)$ orbit of $d\gamma$.
\item The restriction of $[I(\Gamma)]$ to $K$ depends only on
$$\Gamma_K = (H,\gamma|_{H^\theta},\Psi).$$
More precisely,
\item the class in equivariant $\K$-theory
$$[\gr I(\Gamma)] \eqdef [\Gamma]_\theta \in \K^K({\mathcal N}^*_\theta)$$
is independent of $\gamma|_{{\mathfrak h}^{-\theta}}$.
\item If $\Gamma$ is weak, then $[I(\Gamma)]$ is represented by a
  $({\mathfrak g},K)$-module $I(\Gamma)$, a {\em weak standard
    representation}.
\item The weak standard representation $I(\Gamma)$ is nonzero if and
  only if the parameter $\Gamma$ is nonzero (as defined above).
\item If $\Gamma$ is weak and nonzero, then $I(\Gamma)= \oplus_{i=1}^r
  I(\Gamma_i)$; here $\{\Gamma_i\}$ is
  a computable finite set of final parameters attached to a
  single (more compact) $\theta$-stable maximal torus $H'$.
\item If $\Gamma$ is final, there is a unique irreducible quotient
  $J(\Gamma)$  of $I(\Gamma)$. 
\item Any irreducible $({\mathfrak g},K)$-module is equivalent to some
  $J(\Gamma)$ with $\Gamma$ final.
\item Two final standard representations are isomorphic
  (equivalently, their Langlands quotients are isomorphic) if and only
  if their parameters  are conjugate by $K$.
\end{enumerate}

Missing from these properties is an explicit description of the
equivariant $\K$-theory class $[\Gamma]_\theta$ like Definition
\ref{def:langcplx}(5) in the complex case. We will return to this
point in Section \ref{sec:stdK}.

Just as for complex groups, the final standard representation
$I(\Gamma)$ is tempered if and only if the character $\gamma$ of
$H({\mathbb R})^{\rho_{\abs}}$ is unitary; equivalently, if and only if
  $d\gamma|_{{\mathfrak  h}^{-\theta}} \in i{\mathfrak h}^*$.  In this
case $I(\Gamma) = J(\Gamma)$.

In the general (possibly nontempered) case, the real part of
$d\gamma|_{{\mathfrak h}^{-\theta}}$ controls the growth of matrix
  coefficients of $J(\Gamma)$. When $\RE d\gamma|_{{\mathfrak
      h}^{-\theta}}$ is larger, the matrix coefficients grow faster.

Partly because of the notion of tempered, it is useful to define the
{\em $K$-norm} of a continued parameter $\Gamma$:
$$\|\Gamma\|^2_K \eqdef \langle d\gamma|_{{\mathfrak
    h}^\theta}, d\gamma|_{{\mathfrak h}^\theta} \rangle.$$
In the setting of property (6) above, $\|\Gamma\|_K = \|\Gamma_i\|_K$.

The $K$-norm is evidently bounded by the canonical real part of the
infinitesimal character:
$$\|\Gamma\|^2_K = \langle \RE d\gamma,\RE d\gamma\rangle - \langle
\RE d\gamma|_{{\mathfrak h}^{-\theta}}, \RE d\gamma|_{{\mathfrak
      h}^{-\theta}} \rangle \le \langle \RE d\gamma,\RE d\gamma\rangle,$$
with equality if and only if $\gamma$ is unitary.
\end{definition}

We pause here to mention the real groups formulation of the Langlands
classification, to which we alluded in the introduction.
\begin{subequations}\label{se:realLangReal}
  As usual we use the notation of \eqref{se:realgrps}. There are
  natural bijections
  \begin{equation}\label{e:tori}\begin{aligned}
      &\left\{\text{$\theta$-stable maximal tori
        $H_1 \subset G$}\right\}/\text{$K$-conjugacy}\\
      \longleftrightarrow \ & \left\{\text{$\theta$-stable real maximal
        tori $H_2 \subset G$}\right\}/\text{$K({\mathbb
          R})$-conjugacy}\\
            \longleftrightarrow\  &\left\{\text{real maximal tori $H_3 \subset
              G$}\right\}/\text{$G({\mathbb R})$-conjugacy}
  \end{aligned}\end{equation}
    A $\theta$-stable torus $H_i$ contains an algebraic subgroup
    \begin{equation}\label{e:tortheta}
      H_i^\theta \qquad (i = 1, 2).
    \end{equation}
    A real torus $H_j$ has a real form
    \begin{equation}\label{e:torreal}
      H_j({\mathbb R}) \qquad (j = 2, 3)
    \end{equation}
    which in turn has a natural maximal split subtorus
    \begin{equation}\label{e:torrealsplit}
      H_j({\mathbb R}) \supset A_j({\mathbb R}) \simeq ({\mathbb
        R}^\times)^d\qquad (j = 2, 3)
    \end{equation}
     It is the (topological) {\em identity component}
   \begin{equation}\label{e:torrealA}
      A_j \eqdef A_j({\mathbb R})_0 \simeq ({\mathbb
        R}_+^\times)^d\qquad (j = 2, 3)
    \end{equation}
that typically appears in discussions of structure theory for real
reductive groups. Just as for a general reductive group, the (unique)
maximal compact subgroup of $H_j({\mathbb R})$ is the group of real
points of a (unique) algebraic subgroup $T_j \subset H_j$:
\begin{equation}\label{e:torrealcpt}
  H_j({\mathbb R}) \supset T_j({\mathbb R}) = \text{maximal compact
    subgroup.}
\end{equation}
The Cartan decomposition is the direct product decomposition
\begin{equation}
  H_j({\mathbb R}) = T_j({\mathbb R}) \times A_j \qquad (j=2,3).
\end{equation}
Consequently the continuous characters of $H_j({\mathbb R})$ may be
described as
\begin{equation}\label{e:torrealchar}\begin{aligned}
  \widehat{H_j({\mathbb R})} &\simeq \widehat{T_j({\mathbb R})} \times
  \widehat A_j\\
  &\simeq \widehat T_j \times {\mathfrak a}_j^*;\end{aligned}
  \end{equation}
the last equality is because $T_j({\mathbb R})$ is a compact form of
the algebraic group $T_j$, and $A_j$ is an abelian vector group.

Since $H_2$ is both real and $\theta$-stable, we find
\begin{equation}\label{eq:realtortheta}
  T_2 = H_2^\theta,\qquad \widehat{T_2({\mathbb R})} \simeq \widehat
  H_2^\theta.
\end{equation}
\end{subequations} %{se:realLangReal}

\begin{definition}\label{def:langrealReal} (See for example
  \cite{ALTV}*{Section 6} for details.) A {\em continued Langlands
    parameter} for $G({\mathbb R})$ is a triple $\Gamma = (H({\mathbb
    R}),\gamma,\Psi)$ such that
\begin{enumerate}
\item $H({\mathbb R})$ is a maximal torus in $G({\mathbb R})$;
\item $\gamma$ is a level one character of the $\rho_{\abs}$ double
 cover of $H({\mathbb R})$, and
\item $\Psi$ is a system of positive imaginary (that is,
  $\sigma_{\mathbb R}(\alpha) = -\alpha$) roots for $H$ in $G$.
\end{enumerate}
Two continued Langlands parameters are {\em equivalent} if they are
conjugate by $G({\mathbb R})$.
\end{definition}
This definition can be continued in a way precisely parallel to
Definition \ref{def:langreal}, defining in the end the set of
Langlands parameters
\begin{equation}\label{e:realparams}
  {\mathcal P}_{\LL}(G({\mathbb R})) \simeq {\mathcal P}_{\LL}(G,K).
\end{equation}
The bijection with Langlands parameters for $(G,K)$ is an easy
consequence of \eqref{se:realLangReal}. This entire digression is just
another instance of Harish-Chandra's idea that analytic questions
about representations of $G({\mathbb R})$ can often be phrased
precisely as algebraic questions about $({\mathfrak g},K)$-modules.

We now return to that algebraic setting.

\begin{definition}\label{def:Klangreal} A {\em $K$-Langlands continued
    parameter} for
  $(G,K)$ is a triple $\Gamma_K = (H,\gamma_K,\Psi)$ such that
\begin{enumerate}
\item $H$ is a $\theta$-stable maximal torus in $G$;
\item $\gamma_K$ is a level one character of the  $\rho_{\abs}$ double
  cover of $H^\theta$; and
\item $\Psi$ is a system of positive imaginary (that is,
  $\theta$-fixed) roots for $H$ in $G$.
\end{enumerate}
Two continued $K$-Langlands parameters are {\em equivalent} if they are
conjugate by $K$.  A continued $K$-Langlands parameter is called {\em
  weak} if in addition
\begin{enumerate}[resume]
\item $d\gamma_K\in ({\mathfrak h}^\theta)^*$ is weakly dominant with
  respect to $\Psi$.
\end{enumerate}
The weak $K$-Langlands parameter is called {\em nonzero} if in addition
\begin{enumerate}[resume]
\item whenever $\alpha\in \Psi$ is simple and compact, $\langle
  d\gamma_K,\alpha^\vee\rangle \ne 0$.
\end{enumerate}
The nonzero $K$-Langlands parameter is called {\em final} if in addition
\begin{enumerate}[resume]
\item for any real root $\beta$, $\gamma_{K,{\mathfrak q}}(m_\beta) = 1$.
\end{enumerate}
The set of equivalence classes of final $K$-Langlands parameters is
written ${\mathcal P}_{K\dashLL}(G,K)$.

In the complex case, we get a $K$-Langlands parameter from a Langlands
parameter just by discarding a bit of information (the restriction to
${\mathfrak h}^{-\theta}$). In the general real case, matters are
more subtle. The difference between final $K$-Langlands parameters and final
Langlands parameters is first, that there is no character on (the
split torus) ${\mathfrak h}^{-\theta}$ (or, equivalently, that
$d\gamma_K$ is assumed to be zero there); and second, that the finality
condition is assumed for {\em all} the real roots, rather than just
those on which $d\gamma$ vanishes.

We will make use of the {\em $K$-norm} of a $K$-Langlands parameter,
defined exactly as for Langlands parameters by
$$\|\Gamma_K\|^2_K \eqdef \langle d\gamma_K, d\gamma_K
\rangle;$$
the weight whose length we are taking belongs to $({\mathfrak
  h}^\theta)^*$.
\end{definition}

\begin{theorem}[\cite{Vres}*{Theorem 11.9}]\label{thm:Klangreal} The
    set ${\mathcal P}_{K\dashLL}(G,K)$ of equivalence classes
of final $K$-Langlands parameters (Definition
\ref{def:Klangreal}) is in one-to-one correspondence with
\begin{enumerate}
\item (final Langlands parameters for) tempered
representations of real infinitesimal character (extending
$\Gamma_K$ by zero on ${\mathfrak h}^{-\theta}$); or
\item irreducible representations of $K$ (by taking lowest $K$-type).
\end{enumerate}
\end{theorem}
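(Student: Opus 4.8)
The plan is to deduce both bijections from the single operation of "extending a character by zero on the split part of the torus," reducing the statement to the Langlands classification of Definition \ref{def:langreal} together with Vogan's lowest-$K$-type classification of tempered representations.

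First I would treat the correspondence with tempered parameters. Given a final $K$-Langlands parameter $\Gamma_K = (H,\gamma_K,\Psi)$ as in Definition \ref{def:Klangreal}, produce a continued Langlands parameter $\Gamma = (H,\gamma,\Psi)$ in the sense of Definition \ref{def:langreal} by letting $\gamma$ be the level-one character of the $\rho_{\abs}$-cover of $H$ whose differential is $d\gamma_K$ on $\mathfrak h^\theta$ and $0$ on $\mathfrak h^{-\theta}$; this $\gamma$ exists and is unique because the cover is trivial over the vector group $A_j$ of \eqref{e:torrealA} and a character of $A_j$ is determined by its (here trivial) differential. The key point is that a real root $\beta$ has $\beta^\vee \in \mathfrak h^{-\theta}$, so $\langle d\gamma,\beta^\vee\rangle = 0$ for \emph{every} real root; hence the finality condition Definition \ref{def:langreal}(7), which is imposed only on real roots where $d\gamma$ vanishes, becomes precisely the finality condition Definition \ref{def:Klangreal}(6) on all real roots. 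Since imaginary roots and $\Psi$ live in $(\mathfrak h^\theta)^*$, where $d\gamma$ and $d\gamma_K$ agree, the weak and nonzero conditions transfer verbatim, and both equivalence relations are $K$-conjugacy. Restriction of $\gamma$ back to the $\rho_{\abs}$-cover of $H^\theta$ is the inverse. So $\Gamma_K \mapsto \Gamma$ is a bijection of ${\mathcal P}_{K\dashLL}(G,K)$ onto the set of final Langlands parameters with $d\gamma|_{\mathfrak h^{-\theta}} = 0$; by the temperedness discussion following Definition \ref{def:langreal} (tempered $\iff d\gamma|_{\mathfrak h^{-\theta}}\in i\mathfrak h^*$, together with real infinitesimal character forcing the real part of that restriction to vanish), these are exactly the final parameters of tempered representations of real infinitesimal character. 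This gives (1).

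For (2), I would compose the bijection of (1) with the lowest-$K$-type classification: attach to $\Gamma_K$ the irreducible $K$-representation $\mu(\Gamma)$ occurring as the unique lowest $K$-type of the irreducible tempered module $J(\Gamma)=I(\Gamma)$ of the corresponding tempered parameter $\Gamma$. That this depends only on $\Gamma_K = (H,\gamma|_{H^\theta},\Psi)$ is immediate from property (2) of Definition \ref{def:langreal}. Surjectivity follows from Vogan's construction producing, from each $\mu\in\widehat K$, a canonical real infinitesimal character and a tempered module with lowest $K$-type $\mu$; injectivity follows from the fact that a tempered irreducible of real infinitesimal character is determined by its lowest $K$-type, together with property (9) of Definition \ref{def:langreal} that distinct final parameters give non-isomorphic modules. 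The translation in (1) is essentially bookkeeping with the real/imaginary/complex root structure; the hard part will be (2), which rests on the full strength of Vogan's minimal-$K$-type theory — existence and uniqueness of a lowest $K$-type, the passage from a $K$-type to an infinitesimal character, and the matching of fibers — and this is precisely the content packaged by \cite{Vres}*{Theorem 11.9}.
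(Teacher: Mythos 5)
Your proposal is correct and matches the paper's intent: the paper offers no proof beyond the citation of \cite{Vres}*{Theorem 11.9}, and your part (1) is exactly the definitional bookkeeping the paper itself sketches in the prose around Definition \ref{def:Klangreal} (extension of $\gamma_K$ by zero on ${\mathfrak h}^{-\theta}$, with the observation that $\beta^\vee\in{\mathfrak h}^{-\theta}$ for real $\beta$ turns the conditional finality condition into the unconditional one). Your part (2) correctly identifies the genuine content --- the lowest-$K$-type bijection between tempered representations of real infinitesimal character and $\widehat K$ --- and defers it to the same cited theorem, which is precisely what the paper does.
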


Once we have in hand the $K$-Langlands parameters, there is an obvious
extension of Lusztig's conjecture (what is proven by Bezrukavnikov in
\cite{Bezr}) to real groups. But this extension is not true
for $SL(2,{\mathbb R})$ (see Example \ref{ex:SL2R} below).

We can now begin to extend to real groups the ideas in Section
\ref{sec:cplxalg}.

\begin{theorem}\label{thm:realgeomexpand}
Suppose $(Y,{\mathcal E}) \in {\mathcal P}_{g}(G,K)$ is a
geometric parameter (Definition \ref{def:geom}); fix an extension
$\widetilde{{\mathcal E}}$ as in Corollary
\ref{cor:Kspangeomreal}. Then there is a formula in 
$\K^K({\mathcal N}_\theta)$
$$[\widetilde{\mathcal E}] = \sum_{\Gamma_K\in {\mathcal
    P}_{K\dashLL}(G,K)} m_{\widetilde{\mathcal
    E}}(\Gamma_K)[\Gamma_K]_\theta.$$

Here $m_{\widetilde{\mathcal E}}(\Gamma_K) \in {\mathbb Z}$, and the
sum is finite.

Suppose $\widetilde{\mathcal E}'$ is another extension of ${\mathcal
  E}$ to $\overline Y$. Then
$$[\widetilde{\mathcal E}'] - [\widetilde{\mathcal E}] =
\sum_{\substack{(Z,{\mathcal F}) \in {\mathcal P_{g}(G,K)}\\[.2ex]
    Z \subset \partial Y}} n_{\mathcal F} [{\widetilde {\mathcal F}}].$$
Here $n_{\mathcal F} \in {\mathbb Z}$, and the sum is finite.
\end{theorem}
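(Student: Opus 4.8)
The plan is to prove this exactly as the complex case was handled in Corollary~\ref{cor:JMres}, replacing the ordinary Jacobson--Morozov resolution by the $\theta$-stable one furnished by Theorem~\ref{thm:KR}. Using the identification ${\mathcal N}^*_\theta\simeq{\mathcal N}_\theta\subset{\mathfrak s}$ of \eqref{se:gg*}, fix a nilpotent $E_\theta$ with $K\cdot E_\theta=Y$ and an $SL(2)$-triple $(E_\theta,D_\theta,F_\theta)$ as in Theorem~\ref{thm:KR}(5), so that $D_\theta\in{\mathfrak k}$, the grading ${\mathfrak s}=\sum_r{\mathfrak s}_r$ (the restriction to ${\mathfrak s}$ of the $\ad(D_\theta)$-grading of ${\mathfrak g}$) is $\theta$-stable, and the Jacobson--Morozov parabolic $Q_\theta=L_\theta U_\theta$ is $\theta$-stable. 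First I would form the equivariant resolution
$$
{\mathcal R}_\theta\ \eqdef\ K\times_{Q_\theta\cap K}\Big(\sum_{r\ge2}{\mathfrak s}_r\Big)\ \xrightarrow{\ \pi\ }\ K/(Q_\theta\cap K),\qquad \mu\colon{\mathcal R}_\theta\to{\mathcal N}_\theta,\quad(k,Z)\mapsto\Ad(k)Z.
$$
By Theorem~\ref{thm:KR}(6),(7) the map $\mu$ is proper (the base is projective), has image the closed $K$-invariant set $\overline{K\cdot E_\theta}=\overline Y$, and restricts to an isomorphism over $Y$ (the fibre over $E_\theta$ is $K^{E_\theta}/(K^{E_\theta}\cap Q_\theta)$, a single point since $K^{E_\theta}\subset Q_\theta\cap K$ by Theorem~\ref{thm:JM}(4)); so I identify $U\eqdef\mu^{-1}(Y)\simeq K/K^{E_\theta}$.

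Next, exactly as in Corollaries~\ref{cor:JMbdle} and~\ref{cor:JMres}: $\K^K({\mathcal R}_\theta)\simeq\R(Q_\theta\cap K)\simeq\R(L_\theta\cap K)$ by \eqref{e:Kfiber} and \eqref{e:Kvec}; for a (virtual) representation $\sigma$ of $L_\theta\cap K$ let ${\mathcal S}(\sigma)=K\times_{Q_\theta\cap K}\big((\sum_{r\ge2}{\mathfrak s}_r)\times\sigma\big)$, so that $[\mu_*{\mathcal S}(\sigma)]\eqdef\sum_i(-1)^i[R^i\mu_*{\mathcal S}(\sigma)]\in\K^K({\mathcal N}_\theta)$ is a well-defined virtual coherent sheaf whose restriction to $U\simeq Y$ is the bundle $K\times_{K^{E_\theta}}(\sigma|_{K^{E_\theta}})$. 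Right-exactness of \eqref{e:Kright} makes the restriction $\R(L_\theta\cap K)\twoheadrightarrow\R(K^{E_\theta})$ surjective, so if ${\mathcal E}$ corresponds to $\tau\in\widehat{K^{E_\theta}}$ I pick a virtual $\sigma$ with $\sigma|_{K^{E_\theta}}=\tau$; then $[\mu_*{\mathcal S}(\sigma)]$ is one admissible choice of extension $[\widetilde{\mathcal E}]$ in Corollary~\ref{cor:Kspangeomreal}, and since any two extensions agree on $Y$, the difference $[\widetilde{\mathcal E}']-[\widetilde{\mathcal E}]$ lies in the image of $\K^K(\partial Y)\to\K^K(\overline Y)$ by exactness of \eqref{e:Kright} for $\partial Y=\overline Y\setminus Y$. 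By Theorem~\ref{thm:finorb}(3) applied to the closed $K$-variety $\partial Y$ (finitely many orbits, by Kostant--Rallis) that image has ${\mathbb Z}$-basis the classes $[\widetilde{\mathcal F}]$ of geometric parameters $(Z,{\mathcal F})$ with $Z\subset\partial Y$, only finitely many of which occur; this gives the second assertion.

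To expand $[\mu_*{\mathcal S}(\sigma)]$ in the basis $\{[\Gamma_K]_\theta\}$ I would run the Leray spectral sequences for $\mu$ and $\pi$ exactly as in \eqref{e:Leray} (using that $\overline Y$ is affine, so higher cohomology vanishes, and that $\pi$ is affine, so higher direct images vanish), rewriting $[\mu_*{\mathcal S}(\sigma)]$ as an alternating sum of cohomologically induced $K$-modules $\Ind_{L_\theta\cap K}^K\!\big(\sigma\otimes\bigwedge^\bullet{\mathfrak s}_1^*\big)$. Each such class is, by the description of $[\Gamma_K]_\theta$ as a cohomologically induced standard class obtained in Section~\ref{sec:stdK} (the real analogue of Definition~\ref{def:langcplx}(5) and Definition~\ref{def:cplxbasis}(2),(3)), a finite ${\mathbb Z}$-combination of the $[\Gamma_K]_\theta$; collecting terms and replacing each weight by the associated final $K$-Langlands parameter yields the asserted finite formula $[\widetilde{\mathcal E}]=\sum_{\Gamma_K}m_{\widetilde{\mathcal E}}(\Gamma_K)[\Gamma_K]_\theta$ with integer coefficients.

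The hard part will be this last step. In the complex case it rested on the transparent identity $[\gamma]_\theta=\Ind_{T_1}^{K_1}\gamma$ (Definition~\ref{def:cplxbasis}(3)) together with Bott--Kostant, whereas in the general real case the standard modules $I(\Gamma)$ are built by Zuckerman cohomological induction from $\theta$-stable parabolics, the parameter bookkeeping (\emph{weak}/\emph{nonzero}/\emph{final}, Definition~\ref{def:langreal}) is considerably more delicate, and $K$ may be disconnected while ${\mathfrak s}$ is not a subalgebra --- so one must check that the $D_\theta$-grading, the resolution ${\mathcal R}_\theta$, and the Leray/Koszul computation all survive these complications and land in the span of the $[\Gamma_K]_\theta$. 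That is exactly the geometry of cohomological induction that Section~\ref{sec:stdK} is set up to supply, and the proof of the theorem would be completed there.
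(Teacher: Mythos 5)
Your proposal follows the paper's own proof (given in Corollaries \ref{cor:KRbdle} and \ref{cor:KRres}, with the last step supplied by Section \ref{sec:stdK} and Theorem \ref{thm:normalize}) essentially step for step: Kostant--Rallis $\theta$-stable triple, the resolution ${\mathcal R}_\theta = K\times_{Q\cap K}{\mathfrak s}[\scriptstyle\ge 2]$ with proper birational $\mu$, surjectivity of $\R(L\cap K)\twoheadrightarrow\R(K^{E_\theta})$ via right-exactness of \eqref{e:Kright}, the Leray/Koszul computation re-expressing $[\mu_*{\mathcal S}(\sigma)]$ as cohomologically induced classes, and conversion to the $[\Gamma_K]_\theta$ basis via Theorem \ref{thm:normalize}. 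The argument is correct and the approach is the same as the paper's.
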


We will prove this in Corollary \ref{cor:KRres} below.

In the complex case, Bezrukavnikov's proof of the
Lusztig-Bezrukavnikov conjecture guarantees the existence of an
extension $\widetilde{\mathcal E}$ with a {\em single} leading term,
and in this way finds a {\em bijection} between geometric parameters
and $K$-Langlands parameters. In the real case there will sometimes be
no reasonable way to arrange a {\em single} leading term, and
accordingly no such bijection. Fortunately computers are better able
than humans to do linear algebra with matrices that are not upper
triangular.

\begin{corollary}\label{cor:KKrealbasis}
  In the setting of \eqref{se:realgrps}, the classes
  $$\left\{[\Lambda_K]_\theta \in \K^K({\mathcal N}^*_\theta)\mid \Lambda_K
  \in {\mathcal P}_{K\dashLL}(G,K)\right\}$$
  are a ${\mathbb Z}$-basis of $\K^K({\mathcal N}^*_\theta)$. The
  restriction to $K$ map
  $$\res_K\colon \K^K({\mathcal N}^*_\theta) \rightarrow \K(K)$$
 of \eqref{eq:resKK} is injective.
\end{corollary}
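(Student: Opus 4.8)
The plan is to imitate the proof of the complex case, Corollary~\ref{cor:KKbasis}, with Theorem~\ref{thm:realgeomexpand} and the lowest $K$-type input of Theorem~\ref{thm:Klangreal} playing the roles of Theorem~\ref{thm:cplxgeomexpand} and Theorem~\ref{thm:cplxresK}(3). For the spanning statement: by Corollary~\ref{cor:Kspangeomreal} the classes $[\widetilde{\mathcal E}]$ attached to geometric parameters form a ${\mathbb Z}$-basis of $\K^K({\mathcal N}^*_\theta)$, and Theorem~\ref{thm:realgeomexpand} expresses each such $[\widetilde{\mathcal E}]$ as a \emph{finite} ${\mathbb Z}$-linear combination of the classes $[\Gamma_K]_\theta$; hence $\{[\Gamma_K]_\theta\mid\Gamma_K\in{\mathcal P}_{K\dashLL}(G,K)\}$ already spans $\K^K({\mathcal N}^*_\theta)$ over ${\mathbb Z}$. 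Given this, it is enough to prove that the images $\res_K([\Gamma_K]_\theta)\in\K(K)$ are ${\mathbb Z}$-linearly independent: a spanning family whose image under a group homomorphism is linearly independent is itself linearly independent, hence a basis, and the homomorphism is then injective on the whole module. So both assertions of the corollary follow from linear independence of $\{\res_K([\Gamma_K]_\theta)\}$.

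Next I would identify these images. Chasing the commutative diagram~\eqref{eq:resCD} and using property~(3) of the Langlands classification in Definition~\ref{def:langreal} (that $[\gr I(\Gamma)]$ depends only on the $K$-part $\Gamma_K$), one gets $\res_K([\Gamma_K]_\theta)=[I(\Gamma)|_K]$ in $\K(K)$ for any continued parameter $\Gamma$ lying over $\Gamma_K$; by Theorem~\ref{thm:Klangreal}(1) we may take $\Gamma$ final, so that $I(\Gamma)$ is an honest standard module. The task is then exactly the real analogue of Theorem~\ref{thm:cplxresK}(3): the restrictions to $K$ of the final standard modules $I(\Gamma)$ are linearly independent in $\K(K)$.

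For that I would invoke the theory of lowest $K$-types, which is also what underlies the bijection of Theorem~\ref{thm:Klangreal}(2): for $\Gamma$ final, $I(\Gamma)|_K$ contains a distinguished lowest $K$-type $\mu(\Gamma_K)$ with multiplicity one; the map $\Gamma_K\mapsto\mu(\Gamma_K)$ is a bijection from ${\mathcal P}_{K\dashLL}(G,K)$ onto $\widehat K$; and every other $K$-type $\rho$ occurring in $I(\Gamma)|_K$ is strictly larger than $\mu(\Gamma_K)$ in the relevant norm (comparable to the $K$-norm of Definition~\ref{def:langreal}). Given a finite relation $\sum_j c_j[I(\Gamma_{K,j})|_K]=0$ with $c_j\in{\mathbb Z}$ not all zero, choose $j_0$ with $c_{j_0}\ne0$ minimizing $\|\mu(\Gamma_{K,j_0})\|$. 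In the coefficient of the $K$-type $\mu(\Gamma_{K,j_0})$, the term $j_0$ contributes $c_{j_0}$; any other $j$ with $c_j\ne0$ contributing to this coefficient would force $\mu(\Gamma_{K,j_0})$ to occur in $I(\Gamma_{K,j})|_K$, hence either $\mu(\Gamma_{K,j_0})=\mu(\Gamma_{K,j})$ (impossible for $j\ne j_0$, by the bijection) or $\|\mu(\Gamma_{K,j_0})\|>\|\mu(\Gamma_{K,j})\|$ (contradicting the choice of $j_0$). So $c_{j_0}=0$, a contradiction, and the classes are independent.

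The genuinely nontrivial ingredient is this last one: multiplicity one of $\mu(\Gamma_K)$ in the \emph{standard} module $I(\Gamma)$ (not merely in its Langlands quotient $J(\Gamma)$), and the strict norm separation of the remaining $K$-types. These are classical facts of Vogan's lowest $K$-type theory, but one must be careful about the precise norm used, about descending from $J(\Gamma)$ to $I(\Gamma)$, and about the bookkeeping for singular and limit parameters. All of this is packaged in Theorem~\ref{thm:Klangreal} together with the references cited there, so I would assemble those statements rather than reprove them; that assembly is the main obstacle.
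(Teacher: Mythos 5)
Your proof follows the paper's argument exactly, and moreover fills in the triangularity argument that the paper leaves implicit when it says linear independence ``is a consequence of Theorem~\ref{thm:Klangreal}.'' The spanning step (Corollary~\ref{cor:Kspangeomreal} plus Theorem~\ref{thm:realgeomexpand}), the reduction to linear independence of the restrictions $\res_K([\Lambda_K]_\theta)$ via the commutative diagram~\eqref{eq:resCD} and Definition~\ref{def:langreal}(3), and the appeal to the lowest-$K$-type bijection (Theorem~\ref{thm:Klangreal}, with the multiplicity-one and norm-separation properties from \cite{Vres} underlying it, also reflected in Theorem~\ref{thm:normalize}(5)) are precisely what the paper intends; the latter plays the role that Theorem~\ref{thm:cplxresK}(3) plays in the complex case, just as you say.
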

\begin{proof}
That these classes span is a consequence of Theorem
\ref{thm:finorb} and Theorem \ref{thm:realgeomexpand}. That
they are linearly independent is a consequence of Theorem
\ref{thm:Klangreal}; the argument proves injectivity of the
restriction at the same time.
\end{proof}

\begin{example}\label{ex:SL2R}
\addtocounter{equation}{-1}
\begin{subequations}
Let us take $G=SL(2,{\mathbb C})$,
$$D=\begin{pmatrix} 1&0 \\
  0&-1\end{pmatrix} \qquad \theta(g) = DgD^{-1},$$
so that $K=H_c$ is the diagonal torus, and $G$ is the complexification
of $SU(1,1)$. We have naturally
\begin{equation}
  K\simeq {\mathbb C}^\times, \qquad \widehat K \simeq {\mathbb Z};
\end{equation}
we will write an irreducible representation of $K$ just as an
unadorned integer. The $K$-nilpotent cone is
$${\mathcal N}^*_\theta \simeq \left\{\begin{pmatrix} 0&a\\
    b&0\end{pmatrix} \mid ab=0 \right\}.$$
There are two nonzero orbits of $K$ on ${\mathcal N}_\theta^*$:
$$Y^+ = \left\{\begin{pmatrix} 0&a\\   0&0\end{pmatrix} \mid a\ne 0
\right\}, \quad  Y^- = \left\{\begin{pmatrix} 0&0\\   b&0\end{pmatrix}
  \mid b\ne 0 \right\},$$
each isomorphic to $K/\{\pm I\}$; and the zero orbit $Y^0 \simeq K/K$. The
geometric parameters are therefore
\begin{equation}\label{e:SL2geom}
{\mathcal P}_g(G,K) = \{(Y^\pm,{\mathcal E}^\pm_{\triv}),\ (Y^\pm,{\mathcal
  E}^{\pm}_{\sgn}),\ (Y^0,{\mathcal E}^0_n)\mid n\in {\mathbb Z} \}.
\end{equation}
(In each case the superscript $0$ or $\pm$ on the vector bundle
identifies the underlying orbit.)

On the $\theta$-stable maximal torus $H_c$, the Cartan involution
$\theta$ acts trivially. Consequently every root is imaginary, and
there are two systems of positive imaginary roots: $\Psi^+$
(corresponding to upper triangular matrices), and $\Psi^-$. Attached
to each non-negative integer $n$ there are two final $K$-Langlands
parameters $\Gamma_K^+(n)$ (corresponding to $\Psi^+$, with the
differential of the character identified with $n$) and
$\Gamma_K^-(n)$. These are discrete series and limits of discrete
series:
\begin{equation}\begin{aligned}
  \ [\Gamma_K^+(n)]|_K &= n+1,\ n+3,\ n+5,\ldots\\
  \  [\Gamma_K^-(n)]|_K &=-n-1,\ -n-3,\ -n-5,\ldots.
\end{aligned}\end{equation}

A representative of the other $K$-conjugacy class of $\theta$-stable maximal
tori is
$$H_s = \left\{ \pm\begin{pmatrix} \cosh z & \sinh z \\ \sinh z & \cosh
    z\end{pmatrix}\right\},$$
with Lie algebra
$${\mathfrak h}_s = \left\{\begin{pmatrix} 0&z\\ z&0\end{pmatrix} \mid
z\in {\mathbb C}\right\}.$$
Here $\theta$ acts by inversion, so there are no
imaginary roots. There is exactly one final $K$-Langlands parameter
$\Gamma_K^0$, corresponding to the spherical principal series:
\begin{equation}
  [\Gamma_K^0]|_K =0, \pm 2, \pm 4,\ldots
  \end{equation}
and so on. A reasonable partial order on these parameters is
\begin{equation}\begin{aligned}
\Gamma_K^0 &\prec \Gamma_K^+(1) \prec \Gamma_K^+(3) \prec
\Gamma_K^+(5) \prec \cdots \\
\Gamma_K^0 &\prec \Gamma_K^-(1) \prec \Gamma_K^-(3) \prec
\Gamma_K^-(5) \prec \cdots \\
\Gamma_K^+(0) &\prec \Gamma_K^+(2) \prec \Gamma_K^+(4) \prec
\Gamma_K^+(6) \prec \cdots \\
\Gamma_K^-(0) &\prec \Gamma_K^-(2) \prec \Gamma_K^-(4) \prec
\Gamma_K^-(6) \prec \cdots
\end{aligned}
\end{equation}
Here are some reasonable choices of extensions:
\begin{equation}\label{e:muSL(2)a}
\widetilde{{\mathcal E}_n^0} = [\Gamma_K^{\sgn(n)}(|n|-1)]_\theta -
          [\Gamma_K^{\sgn(n)}(|n|+1)]_\theta\quad (n\ne 0),
\end{equation}
\begin{equation}\label{e:muSL(2)b}
\widetilde{{\mathcal E}_0^0} = [\Gamma_K^0]_\theta - [\Gamma_K^+(1)]_\theta -
          [\Gamma_K^-(1)]_\theta,
  \end{equation}
\begin{equation}\label{e:muSL(2)c}
\widetilde{{\mathcal E}_{\sgn}^\pm} = [\Gamma_K^\pm(0)]_\theta;
\end{equation}
and
\begin{equation}\label{e:muSL(2)d}
\widetilde{{\mathcal E}_{\triv}^\pm} = [\Gamma_K^\pm(1)]_\theta.
\end{equation}

But in the last case, there is another reasonable choice of extension:
\begin{equation}\label{e:muSL(2)dprime}
\widetilde{{\mathcal E}_{\triv}^\pm}' = [\Gamma_K^0]_\theta -
[\Gamma_K^\mp(1)]_\theta.
\end{equation}

(Here we make the natural choice of extending the structure sheaf on
the open orbit $Y^\pm$ to the structure sheaf on its closure.)
So here is what we have in the direction of a Lusztig-Bezrukavnikov
bijection for $SL(2,{\mathbb R})$:
\begin{equation}\label{e:LBSL2R}\begin{aligned}
{\mathcal E}_n^0 &\longleftrightarrow \Gamma_K^{\sgn(n)}(|n|+1) \qquad
(n\ne 0)\\
{\mathcal E}_{\sgn}^\pm &\longleftrightarrow \Gamma_K^\pm(0)\\
{\mathcal E}_0^0,\ {\mathcal E}_{\triv}^+,\ {\mathcal E}_{\triv}^-
&\longleftrightarrow \Gamma_K^+(1),\ \Gamma_K^-(1);
\end{aligned}
\end{equation}
the map from left to right is taking some kind of ``leading terms'' of
some natural extension. One might like to include on the right in the
last case the $K$-Langlands parameter $\Gamma_K^0$; it is not a
leading term, but the result is that there is something like an
``almost bijection,'' with the last three ``smallest'' geometric
parameters corresponding (as a set) to the three ``smallest''
$K$-Langlands parameters.
\end{subequations}
\end{example}

We conclude this section by recording the (known) information we will
need about continued standard parameters.

\begin{theorem}\label{thm:normalize} Use the notation of Definitions
  \ref{def:langreal} and \ref{def:Klangreal}.
  \begin{enumerate}
  \item The equivalence classes (that is, orbits of $K$)
    $$\{[I(\Gamma) \mid \Gamma \in {\mathcal P}_{\LL}\}$$
 of final Langlands parameters are a ${\mathbb Z}$-basis of the
 Grothendieck group ${\mathcal M}_f({\mathfrak g},K)$ of finite length
 Harish-Chandra modules.
    \item For any final parameter $\Gamma$, Kazhdan-Lusztig theory
      computes
      $$[J(\Gamma)] = \sum_{\Lambda \in {\mathcal P}_{\LL}} m_\Gamma(\Lambda)
      [I(\Lambda)];$$
      here $m_\Gamma(\Lambda) \in {\mathbb Z}$, and the sum is
      finite. We have $m_\Gamma(\Gamma) = 1$, and the other nonzero
      terms all satisfy
      $$\|\Lambda\|_K  > \|\Gamma\|_K.$$
    \item For any continued parameter $\Gamma'$, the unique formula
      $$[I(\Gamma')] = \sum_{\Lambda \in {\mathcal P}_{\LL}}
      p_{\Gamma'}(\Lambda)[I(\Lambda]$$
      can be computed using classical results of Hecht and Schmid.
  \item The equivalence classes (that is, orbits of $K$)
    $$\{[I(\Gamma_K)] \mid \Gamma_K\in
    {\mathcal P}_{K\dashLL}\}$$
 are a ${\mathbb Z}$-basis of the Grothendieck group of finite-length
 Harish-Chandra modules restricted to $K$.
    \item For any final parameter $\Gamma\in {\mathcal P}_{\LL}$,
      there is an elementary computation of the unique formula
      $$[I(\Gamma)|_K] = \sum_{\Lambda_K\in {\mathcal P}_{K\dashLL}}
      m_\Gamma(\Lambda_K) I(\Lambda_K).$$
      All the parameters $\Lambda_K$ appearing live on the same (more
      compact) maximal torus, satisfy
$$\|\Lambda_K\|_K = \|\Gamma\|_K,$$
      and have  $m_\Gamma I(\Lambda_K) = 1$; they correspond to the
      lowest $K$-types of $I(\Gamma)$ or $J(\Gamma)$.
    \item For any continued parameter $\Gamma'$, the unique formula
      $$[I(\Gamma')|_K] = \sum_{\Lambda_K \in {\mathcal P}_{K\dashLL}}
      q_{\Gamma'}(\Lambda_K) [I(\Lambda_K)]$$
can be computed explicitly.
      \item For any final parameter $\Gamma \in {\mathcal P}_{\LL}$,
        there is computable formula
        $$J(\Gamma)|_K = \sum_{\Lambda_K\in {\mathcal P}_{K\dashLL}}
        n_{J(\Gamma)} I(\Lambda_K)|_K,$$
        or equivalently
        $$[\gr J(\Gamma)] = \sum_{\Lambda_K\in {\mathcal P}_{K\dashLL}}
        n_{J(\Gamma)} [\Lambda_K]_\theta \in K^K({\mathcal N}^*_\theta).$$
  \end{enumerate}
\end{theorem}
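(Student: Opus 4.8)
The seven assertions are, at bottom, a repackaging of standard results, and I would prove the theorem by assembling the ingredients in the order below. Parts (1) and (4) are the two classification statements. Part (1) is immediate from the properties of the Langlands classification recorded in Definition \ref{def:langreal}: the irreducibles $\{[J(\Gamma)]\mid\Gamma\in{\mathcal P}_{\LL}\}$ are a $\mathbb Z$-basis of $\K({\mathfrak g},K)$ by properties (7)--(9) there, and since $[I(\Gamma)]=[J(\Gamma)]+(\text{lower terms})$ for any ordering refining ``$J(\Gamma)$ occurs as a subquotient of $I(\Lambda)$'', the standard modules $\{[I(\Gamma)]\}$ are an equivalent $\mathbb Z$-basis. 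Part (4) is exactly Theorem \ref{thm:Klangreal} (= \cite{Vres}*{Theorem 11.9}): passage to $K$-types annihilates precisely the continuous part $d\gamma|_{{\mathfrak h}^{-\theta}}$ of a parameter, and the classes $[I(\Gamma_K)|_K]$, indexed by final $K$-Langlands parameters, are then a $\mathbb Z$-basis of the Grothendieck group of $K$-modules occurring as restrictions of finite-length Harish-Chandra modules.

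Parts (2) and (5) carry the representation-theoretic weight. For Part (2) I would invoke the Kazhdan--Lusztig--Vogan algorithm \cite{LV}, which computes the multiplicity matrix expressing the $[I(\Lambda)]$ in terms of the $[J(\Gamma)]$; it is unitriangular with respect to the block (Bruhat-type) partial order $\preceq$, so its inverse is again unitriangular with respect to $\preceq$, giving $m_\Gamma(\Gamma)=1$ and $\supp(m_\Gamma)\subset\{\Lambda:\Lambda\preceq\Gamma\}$. The inequality $\|\Lambda\|_K>\|\Gamma\|_K$ for $\Lambda\prec\Gamma$ then follows from the (strict) reverse-monotonicity of the $K$-norm along $\preceq$: within a block all parameters share an infinitesimal character, so $\|\Lambda\|^2_K=\langle\RE d\gamma,\RE d\gamma\rangle-\langle\RE d\gamma_\Lambda|_{{\mathfrak h}^{-\theta}},\RE d\gamma_\Lambda|_{{\mathfrak h}^{-\theta}}\rangle$ differs from $\|\Gamma\|^2_K$ only through the split part of the continuous parameter, and moving down in the KLV order strictly shrinks that split part --- the composition factors of a standard module other than its Langlands quotient are strictly more tempered, with the $SL(2,{\mathbb R})$ computation of Example \ref{ex:SL2R} as prototype. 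Part (5) is the lowest-$K$-type theory for standard modules: $I(\Gamma)|_K$ is the multiplicity-free sum of the standard $K$-modules $I(\Lambda_K)$ attached to the finitely many lowest $K$-types of $I(\Gamma)$; these lowest $K$-types are governed by a single datum on a single more compact $\theta$-stable torus, whence all the $\Lambda_K$ lie on that torus, each coefficient is $1$, and each $\|\Lambda_K\|_K$ equals $\|\Gamma\|_K$ since $\|\cdot\|_K$ reads off only $d\gamma|_{{\mathfrak h}^\theta}$, fixed by that datum.

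Parts (3) and (6) are the Hecht--Schmid coherent-continuation identities: a continued standard module $[I(\Gamma')]$ is by construction produced from genuine standard modules by translation, and Hecht--Schmid wall-crossing gives the explicit finite expansion $[I(\Gamma')]=\sum_\Lambda p_{\Gamma'}(\Lambda)[I(\Lambda)]$ (Part (3)); applying $\res_K$ term by term, i.e. composing with the explicit finite matrix of Part (5), yields $[I(\Gamma')|_K]=\sum_{\Lambda_K} q_{\Gamma'}(\Lambda_K)[I(\Lambda_K)]$ (Part (6)). Finally Part (7) is the composite of (2) and (5): write $[J(\Gamma)]=\sum_\Lambda m_\Gamma(\Lambda)[I(\Lambda)]$ by (2), apply the homomorphism $\gr$ of \eqref{eq:grgK} to get $[\gr J(\Gamma)]=\sum_\Lambda m_\Gamma(\Lambda)[\Lambda]_\theta$, and expand each $[\Lambda]_\theta$ in the $[\Lambda_K]_\theta$ using (5) together with the commutative diagram \eqref{eq:resCD} and the injectivity of $\res_K$ on $\K^K({\mathcal N}^*_\theta)$ (Corollary \ref{cor:KKrealbasis}); since $[\gr I(\Lambda_K)]=[\Lambda_K]_\theta$, the resulting integers $n_{J(\Gamma)}(\Lambda_K)=\sum_\Lambda m_\Gamma(\Lambda)m_\Lambda(\Lambda_K)$ give both $J(\Gamma)|_K=\sum_{\Lambda_K} n_{J(\Gamma)}(\Lambda_K)\,I(\Lambda_K)|_K$ and $[\gr J(\Gamma)]=\sum_{\Lambda_K} n_{J(\Gamma)}(\Lambda_K)[\Lambda_K]_\theta$.

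The main obstacle is the strict $K$-norm inequality in Part (2). Everything else is a direct citation or a finite linear-algebra assembly, but the step from ``unitriangular with respect to the KLV order'' to ``strictly $K$-norm-decreasing along that order'' uses the genuine fact that the composition factors of a standard module, other than its Langlands quotient, are strictly more tempered --- equivalently, that the split part of the continuous parameter strictly drops. Pinning this down as a strict (not merely weak) inequality is the one place where an input from the structure theory of standard modules beyond the formal classification is needed.
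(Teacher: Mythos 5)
The paper treats Theorem \ref{thm:normalize} as a catalogue of known facts and offers no proof: the sentence preceding it (``We conclude this section by recording the (known) information we will need about continued standard parameters'') and the single sentence following it (referring only to the parallel complex-case preparations in \eqref{se:assvar}) make this explicit. Your proposal therefore supplies the missing justification rather than departing from an argument in the paper, and it assembles exactly the right ingredients: the Langlands classification plus a unitriangular change of basis for (1), Theorem \ref{thm:Klangreal} (= \cite{Vres}, Theorem 11.9) for (4), the Kazhdan--Lusztig--Vogan algorithm \cite{LV} for (2), Hecht--Schmid coherent continuation for (3) and (6), Vogan's lowest-$K$-type theory for (5), and the composite of (2) and (5) for (7). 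You also correctly isolate the strict $K$-norm inequality in (2) as the one input beyond formal triangularity. To close that step cleanly, observe that it suffices to prove the inequality for the \emph{forward} matrix $[I(\Gamma)]=\sum_\Lambda M(\Lambda,\Gamma)[J(\Lambda)]$: the assertion that $M(\Lambda,\Gamma)\ne 0$ with $\Lambda\ne\Gamma$ forces $\|\Lambda\|_K>\|\Gamma\|_K$ is precisely the lowest-$K$-type theorem of \cite{Vgreen} (composition factors of $I(\Gamma)$ other than $J(\Gamma)$ have strictly larger lowest-$K$-type length, equivalently strictly smaller split part of the continuous parameter, since all lie at a fixed infinitesimal character). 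Because that makes $M$ (block) lower unitriangular when parameters are filtered by $\|\cdot\|_K$, and each row is finitely supported, the inverse $M^{-1}$ inherits the identical strict filtration, which is the inequality asserted in part (2). With that supplied, your argument is complete and is the one the authors have in mind.
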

This theorem corresponds to the preparations made in
\eqref{se:assvar} in the complex case.

\section{Standard representations restricted to $K$}\label{sec:stdK}
\setcounter{equation}{0}
In this section we will recall how to compute the restrictions to $K$
of the (continued) standard $({\mathfrak g},K)$-modules described in
Definition \ref{def:langreal}. This will be critical for the
description in Section \ref{sec:realalgweights} of how explicitly to
write the formulas of Theorem \ref{thm:realgeomexpand}.

\begin{subequations}\label{se:cohind}
Always we work in the setting \eqref{se:realgrps}. Suppose to begin that
we have also a $\theta$-stable parabolic subgroup with $\theta$-stable
Levi decomposition
\begin{equation}
Q = LU, \qquad \theta Q = Q,\qquad \theta L = L.
\end{equation}
It is not difficult to show that
\begin{equation}
K/(Q\cap K) \hookrightarrow G/Q
\end{equation}
is a closed embedding, so that $K/(Q\cap K)$ is projective, and
therefore $Q\cap K$ is parabolic in $K$. (Since $K$ may be
disconnected, there is a question about the meaning of ``parabolic
subgroup.'' We will say that $P\subset K$ is {\em parabolic} if $K/P$ is
projective; equivalently, if $P\cap K_0 = P_0$ is parabolic in $K_0$,
or if $P$ contains a (connected) Borel subgroup of $K_0$.) We may in
particular fix a torus
\begin{equation}
T \subset L\cap K
\end{equation}
that is a maximal torus in $K_0$.
\end{subequations} %{se:cohind}

In the next proposition the disconnectedness of $K$ complicates
matters slightly, and is the reason we need not get {\em irreducible}
representations of $K$ from irreducibles of $L\cap K$.

\begin{theorem}[Bott-Borel-Weil]\label{thm:BW} In the setting
  \eqref{se:cohind}, suppose $(\sigma,S)$ is an algebraic
  representation of $L\cap K$ (or even of $Q\cap K$). Then we get an
  equivariant algebraic vector bundle
$$
{\mathcal S} = K\times_{Q\cap K} S \rightarrow K/(Q\cap K).
$$
\begin{enumerate}
\item Each cohomology space $H^i(K/(Q\cap K),{\mathcal S})$ is a
  finite-dimensional algebraic representation of $K$.
\item The virtual representation
$$\sum_i (-1)^i [H^i(K/(Q\cap
  K),{\mathcal S})] \in \R(K)$$
depends only on the class $[(\sigma,S)] \in \R(L\cap K)$.
\item Suppose $(\sigma,S)$ is irreducible, and that its infinitesimal
  character is represented by
$$\xi_{L\cap K} \in X^*(T) - \rho_{L\cap K} \subset {\mathfrak t}^*.$$
Write
$$\xi_K = \xi_{L\cap K} - \rho({\mathfrak u}\cap {\mathfrak k}) \in
X^*(T) - \rho_K.$$
Then {\em either}
$$H^i(K/(Q\cap K),{\mathcal S}) = 0, \qquad (\text{all $i$})$$
(if $\xi_K$ vanishes on some coroot of $T$ in $K$); {\em or}
$$H^i(K/(Q\cap K),{\mathcal S}) = \begin{cases}
  \text{\parbox{.339\textwidth}{nonzero representation of infinitesimal
      character $\xi_K$}} & (i = i(\xi_K))\\[.5cm]
0 & (i\ne i(\xi_K)).
\end{cases}$$
\end{enumerate}
\end{theorem}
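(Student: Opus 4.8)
The plan is to treat (1) and (2) as formal consequences of general facts and to prove (3) by reducing, through a Leray spectral sequence, to the classical Bott--Borel--Weil theorem for the connected reductive group $K_0$. For (1), I would note that $K/(Q\cap K)$ is projective --- it is a closed subvariety of $G/Q$, as recalled in \eqref{se:cohind} --- so Serre's finiteness theorem makes each $H^i(K/(Q\cap K),{\mathcal S})$ finite-dimensional, and since all the maps in sight are algebraic and $K$-equivariant the cohomology carries an algebraic representation of $K$. For (2), I would filter $S$ by the powers $\mathfrak n^j S$ of the Lie algebra $\mathfrak n=\mathfrak u\cap\mathfrak k$ of the unipotent radical $U\cap K$ of the parabolic $Q\cap K$ of $K$: the successive quotients are $Q\cap K$-subquotients of $S$ on which $U\cap K$ acts trivially, hence are inflated from $L\cap K$, which is precisely the isomorphism $\R(Q\cap K)\simeq\R(L\cap K)$ of \eqref{e:RHirr}. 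This filtration of $S$ induces one of ${\mathcal S}$ by $K$-equivariant subbundles with associated graded $K\times_{Q\cap K}(\gr_j S)$, and the long exact cohomology sequences of the corresponding short exact sequences of bundles show that $\sum_i(-1)^i[H^i(K/(Q\cap K),{\mathcal S})]$ depends only on $[S]\in\R(Q\cap K)\simeq\R(L\cap K)$.

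For (3) the first step I would take is to reduce to a line bundle on a full flag variety. Fix a minimal parabolic (``Borel'') $B_K$ of $K$ with $T\subset B_K\subset Q\cap K$. Since a parabolic subgroup contains the unipotent radical of every larger parabolic, $U\cap K\subset B_K$, whence $(Q\cap K)/B_K\cong (L\cap K)/(B_K\cap L\cap K)$, the flag variety of $L\cap K$; so $p\colon K/B_K\to K/(Q\cap K)$ is a fiber bundle with that flag variety as fiber. Because $(\sigma,S)$ is \emph{irreducible}, Bott--Borel--Weil for $L\cap K$ applied fiberwise provides a $K$-equivariant line bundle ${\mathcal L}$ on $K/B_K$ (roughly, the line bundle of a highest weight of $\sigma$) with $p_*{\mathcal L}\cong{\mathcal S}$ and $R^jp_*{\mathcal L}=0$ for $j>0$. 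The Leray spectral sequence then collapses to a $K$-equivariant isomorphism
$$H^i(K/(Q\cap K),{\mathcal S})\;\cong\;H^i(K/B_K,{\mathcal L})\qquad(\text{all }i),$$
and keeping track of the $\rho$-shifts --- the weight of ${\mathcal L}$ corresponds to the infinitesimal character $\xi_{L\cap K}$, whereas Bott's theorem is phrased through $\rho_K$, the discrepancy being accounted for by $\rho({\mathfrak u}\cap{\mathfrak k})$ --- identifies the relevant weight datum with $\xi_K=\xi_{L\cap K}-\rho({\mathfrak u}\cap{\mathfrak k})$.

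The second step is the classical Bott--Borel--Weil theorem, applied first to the connected group $K_0$. The variety $K/B_K$ is a finite disjoint union of flag varieties for $K_0$ which $K$ permutes, and ${\mathcal L}$ restricts on each of these to a $K_0$-equivariant line bundle. Since the coroots of $T$ in $K$ are exactly those of $T$ in $K_0$, and both the regularity of $\xi_K$ and the cohomological degree $i(\xi_K)=\ell(w)$ --- where $w\in W(K,T)$ is the unique element making $\xi_K$ dominant --- depend only on the $W(K,T)$-orbit of $\xi_K$, the dichotomy holds uniformly on all the pieces: either $\xi_K$ vanishes on some coroot, in which case every $H^i(K/B_K,{\mathcal L})$ vanishes, or $\xi_K$ is regular, in which case the cohomology is concentrated in the single degree $i(\xi_K)$ and on each $K_0$-piece equals the irreducible $K_0$-representation of infinitesimal character $\xi_K$. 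Reassembling these along the $K/K_0$-action produces a nonzero algebraic $K$-representation of infinitesimal character $\xi_K$, concentrated in degree $i(\xi_K)$ --- not necessarily irreducible, precisely because of $\pi_0(K)$, as the remark preceding the theorem anticipates.

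The standard inputs here --- Serre finiteness, additivity of Euler characteristics, the Leray collapse for $p$, and Bott--Borel--Weil for the connected group $K_0$ --- carry over essentially verbatim; the hard part will be the disconnectedness of $K$ and of $L\cap K$. Concretely, I expect the main obstacle to be producing a suitable Borel $B_K$ inside $Q\cap K$, checking that the fiber of $p$ really is the flag variety of $L\cap K$ so that the fiberwise Bott--Borel--Weil step and the vanishing $R^jp_*{\mathcal L}=0$ go through, and verifying that the $K_0$-cohomologies of the several components of $K/B_K$ glue $K$-equivariantly into a single representation with the asserted infinitesimal character and concentration degree --- all combined with the tedious but routine $\rho$-shift bookkeeping relating $\xi_{L\cap K}$, $\rho_{L\cap K}$, $\rho_K$, $\rho({\mathfrak u}\cap{\mathfrak k})$, and $\xi_K$.
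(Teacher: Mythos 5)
The paper states Theorem \ref{thm:BW} \emph{without proof}: it is labeled ``Bott--Borel--Weil,'' treated as a classical fact, and no argument or citation is supplied in the text. So there is no in-paper proof to compare your attempt against; what follows is an assessment of the sketch on its own merits.

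Parts (1) and (2) are handled correctly and by standard means: projectivity of $K/(Q\cap K)$ plus Serre finiteness for (1), and for (2) the filtration of $S$ by powers of $\mathfrak{u}\cap\mathfrak{k}$ together with additivity of Euler characteristics along the resulting long exact sequences. The one place the sketch genuinely overreaches is the line-bundle reduction in part (3). You assert that, since $\sigma$ is irreducible, there is a $K$-equivariant \emph{line} bundle $\mathcal{L}$ on $K/B_K$ with $p_*\mathcal{L}\cong\mathcal{S}$ and $R^{>0}p_*\mathcal{L}=0$. For disconnected $L\cap K$ this can fail: the restriction $\sigma|_{(L\cap K)_0}$ may be a direct sum of several irreducibles whose highest weights form a nontrivial orbit under $\pi_0(L\cap K)$, and then none of these weights extends to a character of the ``Borel'' $B_K\cap(L\cap K)$, so $\sigma$ is not $H^0$ of an $(L\cap K)$-equivariant line bundle on any flag variety of $L\cap K$. (Even when the restriction stays irreducible, one still needs an extension of the highest-weight character to the disconnected Borel, which is a condition and not automatic.) You do flag disconnectedness as ``the hard part,'' and that is fair, but the cure is a \emph{different} reduction rather than tighter $\rho$-shift bookkeeping: decompose $K/(Q\cap K)$ into $K_0$-orbits, compute cohomology on one $K_0$-orbit by Bott's theorem for the connected group $K_0$ applied to $\mathcal{S}$ restricted there, and reassemble the answer as an induced representation from the stabilizer in $K$ of that orbit. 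This also explains at once why part (3) promises only a nonzero (not irreducible) $K$-representation. With that substitution the vanishing/concentration dichotomy follows as you describe, since regularity of $\xi_K$ and the degree $i(\xi_K)$ depend only on the pairing of $\xi_K$ against coroots of $T$ in $K_0$, which are insensitive to $\pi_0(K)$.
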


Our first tool for computing cohomological induction is the operation
\begin{equation}
\begin{aligned}\label{e:Kcohind}
\Ind_{Q\cap K}^K &\colon \R(L\cap K) \rightarrow \R(K), \\
[(\sigma,S)] &\mapsto \sum_i (-1)^i [H^i(K/(Q\cap K),{\mathcal S})] \in \R(K).
\end{aligned}
\end{equation}
(Recall that $\R(K)$ is the representation ring of virtual
representations of $K$. In the language of $\K$-theory,
\begin{equation}\label{e:KcohindK}
  \Ind_{Q\cap K}^K \colon \K^{L\cap K}(\point) \rightarrow
  \K^K(\point).
\end{equation}
One should think of the case when $(\sigma,S)$ is a $K$-dominant
irreducible representation of $L\cap K$; then $\Ind_{Q\cap
  K}^K(\sigma)$ lives only in the highest degree $\dim K/(Q\cap K)$,
and there is essentially an irreducible representation of $K$ of
highest weight $\sigma - 2\rho({\mathfrak u}\cap {\mathfrak k})$. (For
disconnected $K$ it may happen that there are several irreducible
representations of $K$ of highest weight $\sigma - 2\rho({\mathfrak
  u}\cap {\mathfrak k})$; this is how reducible representations of $K$
arise in Theorem \ref{thm:BW}.)

\begin{proposition}[Zuckerman] In the setting \eqref{se:cohind}, there
  are cohomological induction functors
$${\mathcal R}^i\colon {\mathcal M}_f({\mathfrak l},L\cap K)
\rightarrow {\mathcal M}_f({\mathfrak g}, K) \qquad (0\le i \le
  \dim{\mathfrak u}\cap {\mathfrak k})$$
(notation \eqref{e:MfgK}) with the following properties.
\begin{enumerate}
\item The class
$$[{\mathcal R}(Z)] \eqdef \sum_i (-1)^i [{\mathcal R}^i(Z)] \in
\K({\mathfrak g}, K)$$
is well-defined, depending only on $[Z] \in \K({\mathfrak l}, L\cap K)$.
\item The class
$$[\gr {\mathcal R}(Z)] \eqdef \sum_i (-1)^i [\gr {\mathcal R}^i(Z)]
\in \K^K({\mathcal N}^*_\theta)$$
is well-defined, depending only on $[\gr Z] \in \K^{L\cap K}({\mathcal
  N}^*_{L,\theta})$.
\item Suppose $H\subset L$ is a $\theta$-stable maximal torus, and
  $\Gamma_L = (H,\gamma_L,\Psi_L)$ is a continued Langlands parameter
  for $(L,L\cap K)$. Define
$$\gamma_G = \gamma_L \otimes \rho({\mathfrak u})^*$$
$$\Psi_G = \Psi_L \cup \{\text{imaginary roots of $H$ in ${\mathfrak
    u}$}\},$$
so that $\Gamma_G = (H,\gamma_G,\Psi_G)$ is a continued Langlands
parameter for $(G,K)$. Then
$$[{\mathcal R}I(\Gamma_L))] = [I(\Gamma_G)]. $$
\end{enumerate}
\end{proposition}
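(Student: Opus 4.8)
The plan is to recall the construction of the Zuckerman cohomological induction functors and then verify the three assertions, each of which is essentially standard. First I would fix the functor. Given $Z \in {\mathcal M}_f({\mathfrak l}, L\cap K)$, regard $Z$ as a $({\mathfrak q}, L\cap K)$-module with ${\mathfrak u}$ acting by zero, form the produced module $\mathrm{pro}(Z) = \Hom_{U({\mathfrak q})}(U({\mathfrak g}), Z)_{L\cap K\text{-finite}}$, a $({\mathfrak g}, L\cap K)$-module, and let ${\mathcal R}^i(Z)$ be the $i$-th derived functor, evaluated at $\mathrm{pro}(Z)$, of the Zuckerman functor (the right adjoint to the forgetful functor ${\mathcal M}({\mathfrak g}, K) \to {\mathcal M}({\mathfrak g}, L\cap K)$). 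Here $\mathrm{pro}$ is an exact functor of $Z$, since $U({\mathfrak g})$ is free as a right $U({\mathfrak q})$-module by the Poincar\'e-Birkhoff-Witt theorem and passage to $L\cap K$-finite vectors is exact because $L\cap K$ is reductive. The vanishing ${\mathcal R}^i = 0$ for $i > \dim({\mathfrak u}\cap{\mathfrak k}) = \dim K/(Q\cap K)$, and the fact that ${\mathcal R}^i$ preserves finite length (using that $\mathrm{pro}$ shifts the infinitesimal character by $\rho({\mathfrak u})$ and that admissibility is inherited through $K$-finiteness), are standard; I would cite \cite{Vgreen} and \cite{ALTV} for all of this.

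Assertion (1) is then formal: $\mathrm{pro}$ is exact and the derived Zuckerman functors form a cohomological $\delta$-functor, so a short exact sequence $0 \to Z' \to Z \to Z'' \to 0$ in ${\mathcal M}_f({\mathfrak l}, L\cap K)$ produces a long exact sequence of the ${\mathcal R}^i$, whence $\sum_i(-1)^i[{\mathcal R}^i(Z)]$ is additive and descends to a homomorphism $\K({\mathfrak l}, L\cap K) \to \K({\mathfrak g}, K)$. Assertion (2) is the one genuinely nontrivial point, because $\gr\colon \K({\mathfrak l}, L\cap K) \to \K^{L\cap K}({\mathcal N}^*_{L,\theta})$ need not be injective and additivity alone does not suffice. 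Here I would choose a good $(L\cap K)$-invariant filtration of $Z$; it induces filtrations on $\mathrm{pro}(Z)$ and on each ${\mathcal R}^i(Z)$ whose associated graded sheaves are computed by the corresponding geometric operation — pullback along a smooth map and direct image along a proper one — for the correspondence relating ${\mathcal N}^*_\theta$ and ${\mathcal N}^*_{L,\theta}$ built from the cotangent bundle of $K/(Q\cap K)$. This is exactly the machinery of \cite{Vunip} (the associated-variety and associated-cycle computations for cohomologically induced modules, cf. \cite{Vunip}*{Corollary 5.13} and the surrounding discussion); its output is that $[\gr{\mathcal R}(Z)]$ is the image of $[\gr Z]$ under an explicit homomorphism $\K^{L\cap K}({\mathcal N}^*_{L,\theta}) \to \K^K({\mathcal N}^*_\theta)$, which is precisely (2). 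I expect this filtered/geometric bookkeeping — tracking the moment-map geometry compatibly with the $\delta$-functor structure — to be the main obstacle.

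For (3), I would first treat a final $\Gamma_L$ in the weakly good range, where the identity $[{\mathcal R}I(\Gamma_L)] = [I(\Gamma_G)]$ with $\gamma_G = \gamma_L \otimes \rho({\mathfrak u})^*$ and $\Psi_G = \Psi_L \cup \{\text{imaginary roots of }H\text{ in }{\mathfrak u}\}$ is the inductive construction of standard modules underlying the Langlands classification of Definition \ref{def:langreal}; this is classical (see \cite{ALTV}*{Section 6} and \cite{Vgreen}), and the only remaining work is the $\rho$-shift and positive-system bookkeeping, which I would not grind through. For an arbitrary continued parameter $\Gamma_L$ I would then reduce to this case: by Theorem \ref{thm:normalize}(3) (Hecht-Schmid), $[I(\Gamma_L)]$ is an explicit ${\mathbb Z}$-combination of classes $[I(\Lambda_L)]$ with $\Lambda_L$ final, cohomological induction is compatible with this coherent-continuation expansion (both sides obey the same Hecht-Schmid recursion on ${\mathfrak h}^*$, translated by $\rho({\mathfrak u})$), and applying (1) together with the final-case identity term by term yields $[{\mathcal R}I(\Gamma_L)] = [I(\Gamma_G)]$ in general.
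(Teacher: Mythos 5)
The paper does not actually give a proof of this proposition: it is attributed to Zuckerman and stated as known, with the surrounding text (and the ``Zuckerman's Blattner formula'' proposition that follows) serving only to record the facts and set up notation. Your reconstruction is essentially the standard one and is correct in outline. The one thing worth flagging is that you correctly isolate part (2) as the genuine content --- additivity gives (1) immediately, but (2) requires building a geometric map $\K^{L\cap K}({\mathcal N}^*_{{\mathfrak l},\theta})\rightarrow \K^K({\mathcal N}^*_\theta)$ and showing $\gr$ intertwines it with $\mathcal R$ --- and this is precisely what the paper's subsequent ``Zuckerman's Blattner formula'' statement (with the $\pi_{{\mathcal Q},\theta}$/$\mu_{{\mathcal Q},\theta}$ correspondence of \eqref{se:indKnil}) encodes; your appeal to the filtered/spectral-sequence machinery of \cite{Vunip} is the right reference, though a careful write-up would phrase the argument as a spectral sequence collapsing in the Euler characteristic rather than as a filtration literally ``induced'' on each ${\mathcal R}^i(Z)$. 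For (3), your reduction from general continued parameters to final ones via Theorem~\ref{thm:normalize}(3) needs to be supplemented by the remark that the base case should be taken at a regular dominant (hence weakly good range) infinitesimal character and then extended by the coherent-family argument --- simply expanding $[I(\Gamma_L)]$ into final $[I(\Lambda_L)]$'s at a fixed infinitesimal character does not by itself put you in the weakly good range --- but your parenthetical about both sides obeying the same translated Hecht--Schmid recursion is exactly the coherent-family argument, so this is a matter of phrasing rather than a gap.
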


This proposition describes (or at least says that it is possible to
describe) how to construct standard modules using the
\begin{subequations}\label{se:indKnil}
geometry of \eqref{se:cohind}. We are interested in computing $\gr$
(an image in the equivariant $\K$-theory of the nilpotent cone) of
standard modules; so we need to relate that geometry to
\eqref{se:cohind}. Often the best way to think of $G/Q$ is as a
variety of parabolic subgroups:
\begin{equation}
G/Q \simeq {\mathcal Q} \eqdef \text{variety of parabolic
  subalgebras conjugate to ${\mathfrak q}$}.
\end{equation}
To think about nilpotent elements in ${\mathfrak g}^*$, it may be
helpful to recall that in any identification ${\mathfrak g} \simeq
{\mathfrak g}^*$ from an invariant bilinear form, we have
$${\mathfrak q} \simeq ({\mathfrak g}/{\mathfrak u})^*.$$
The natural projection ${\mathfrak q} \rightarrow {\mathfrak l}$
corresponds to restriction of linear functionals
\begin{equation}
\pi_{\mathfrak q}\colon ({\mathfrak g}/{\mathfrak u})^* \rightarrow ({\mathfrak
  q}/{\mathfrak u})^*.
\end{equation}
An element of ${\mathfrak q}$ is nilpotent if and only if its image in
${\mathfrak l}$ is nilpotent. We therefore write
\begin{equation}\begin{aligned}
{\mathcal N}^*_{\mathfrak l} &= \text{nilpotent cone in ${\mathfrak
    l}^*$}, \\
{\mathcal N}^*_{\mathfrak q} &=  \pi_{\mathfrak q}^{-1}({\mathcal
  N}^*_{\mathfrak l});
\end{aligned}
\end{equation}
this is an ``affine space bundle'' over the nilpotent cone for $L$
(roughly, a vector bundle without chosen zero section) corresponding
to the vector space $({\mathfrak g}/{\mathfrak q})^* \simeq {\mathfrak
  u}$.  If we use the identification ${\mathfrak g}^*\simeq {\mathfrak
  g}$,
$${\mathcal N}^*_{\mathfrak q} \simeq {\mathcal N}_{\mathfrak l} +
{\mathfrak u}.$$

For the $K$-nilpotent cone,
\begin{equation}
\pi_{{\mathfrak q},\theta} \colon ({\mathfrak g}/({\mathfrak
  u}+{\mathfrak k}))^* \rightarrow ({\mathfrak
  q}/({\mathfrak u}+({\mathfrak q}\cap{\mathfrak k})))^* \simeq
({\mathfrak l}/({\mathfrak l}\cap{\mathfrak k}))^*.
\end{equation}
\begin{equation}\begin{aligned}
{\mathcal N}^*_{{\mathfrak l},\theta} &= \text{$(L\cap K)$-nilpotent
  cone in ${\mathfrak l}^*$}\\
&= {\mathcal N}^*_{\mathfrak l} \cap
({\mathfrak l}/({\mathfrak l}\cap {\mathfrak k}))^* \\
{\mathcal N}^*_{{\mathfrak q},\theta} &=  \pi_{{\mathfrak
    q},\theta}^{-1}({\mathcal N}^*_{{\mathfrak l},\theta})\\
{\mathcal N}^*_{{\mathfrak q},\theta} &\simeq {\mathcal
  N}_{{\mathfrak l},\theta} + ({\mathfrak u} \cap {\mathfrak s}).
\end{aligned}
\end{equation}

The basic Grothendieck-Springer method to study nilpotent elements is
to consider
\begin{equation}\begin{aligned}
{\mathcal N}^*_{\mathcal Q} &\eqdef \{(\xi',{\mathfrak q}')
\mid {\mathfrak  q}' \in {\mathcal Q},\  \xi' \in {\mathcal N}^*_{{\mathfrak
  q}'} \}\\
&\simeq G\times_Q {\mathcal N}^*_{\mathfrak q}.
\end{aligned}\end{equation}
Points here are nilpotent linear functionals $\xi'$ on ${\mathfrak g}$ with
the extra information of a chosen parabolic ${\mathfrak q}'$
(conjugate to ${\mathfrak q}$) so that $\xi'$ vanishes on the nil
radical ${\mathfrak u}'$ of ${\mathfrak q}'$.  Such parabolics exist
for any $\xi'$, so the {\em moment map}
\begin{equation}
\mu_{\mathcal Q}\colon {\mathcal N}^*_{\mathcal Q} \rightarrow
{\mathcal N}^*, \qquad (\xi',{\mathfrak q}') \mapsto \xi'
\end{equation}
is (projective and) surjective.  In the same way, the projection
\begin{equation}
\pi_{\mathcal Q}\colon {\mathcal N}^*_{\mathcal Q} \rightarrow
{\mathcal Q}, \qquad (\xi',{\mathfrak q}') \mapsto {\mathfrak q}'
\end{equation}
is an affine morphism (even a bundle) with fiber ${\mathcal
  N}^*_{\mathfrak q}$.

The subvariety $K/(Q\cap K)$ is
\begin{equation}\begin{aligned}
K/(Q\cap K) \simeq {\mathcal Q}_K \eqdef\ &\text{variety of
  $\theta$-stable parabolic}\\
&\text{subalgebras conjugate by $K$ to ${\mathfrak q}$},
\end{aligned}
\end{equation}
a single closed orbit of $K$ on ${\mathcal Q}$. Over this orbit we are
interested in a subbundle of ${\mathcal N}^*_{\mathcal Q}$
\begin{equation}\begin{aligned}
{\mathcal N}^*_{{\mathcal Q},\theta} &\eqdef \{(\xi',{\mathfrak q}')
\mid {\mathfrak  q}' \in {\mathcal Q}_K,\  \xi' \in {\mathcal N}^*_{{\mathfrak
  q}',\theta} \}\\
&\simeq K\times_{Q\cap K} {\mathcal N}^*_{{\mathfrak q},\theta}.
\end{aligned}\end{equation}
Points here are nilpotent linear functionals $\xi'$ on ${\mathfrak
  g}/{\mathfrak k}$ with
the extra information of a chosen $\theta$-stable parabolic ${\mathfrak q}'$
(conjugate by $K$ to ${\mathfrak q}$) so that $\xi'$ vanishes on the nil
radical ${\mathfrak u}'$ of ${\mathfrak q}'$.  Such parabolics may
{\em not} exist
for some $\xi'$, so the {\em moment map}
\begin{equation}
\mu_{{\mathcal Q},\theta}\colon {\mathcal N}^*_{{\mathcal Q},\theta}
\rightarrow
{\mathcal N}^*_\theta, \qquad (\xi',{\mathfrak q}') \mapsto \xi'
\end{equation}
is projective but not necessarily surjective.  In the same way, the projection
\begin{equation}
\pi_{{\mathcal Q},\theta}\colon {\mathcal N}^*_{{\mathcal Q},\theta}
\rightarrow
{\mathcal Q}_K, \qquad (\xi',{\mathfrak q}') \mapsto {\mathfrak q}'
\end{equation}
is an affine morphism (even a bundle) with fiber ${\mathcal
  N}^*_{{\mathfrak q},\theta}$.

Suppose now that
\begin{equation}
{\mathcal E}_L \in \Coh^{L\cap K}({\mathcal N}^*_{{\mathfrak l},\theta});
\end{equation}
that is, that ${\mathcal E}_L$ is a finitely generated module for
$S({\mathfrak l})$, with ${\mathfrak l}\cap {\mathfrak k}$ and the
$L$-invariants of positive degree acting by zero, and endowed with a
compatible action of $L\cap K$. The pullback
\begin{equation}
{\mathcal E}_Q \eqdef \pi^*_{{\mathfrak q},\theta}({\mathcal E}_L)
\in \Coh^{Q\cap K}({\mathcal N}^*_{{\mathfrak q},\theta})
\end{equation}
is obtained by first regarding ${\mathcal E}_L$ as an $S({\mathfrak
  q})$ module (in which ${\mathfrak u}$ acts by zero), and then
tensoring over $S({\mathfrak q})$ with $S({\mathfrak g})$ to extend
scalars. We can define
\begin{equation}
{\mathcal E}_G \eqdef K\times_{Q\cap K} {\mathcal E}_Q \in
\Coh^K({\mathcal N}^*_{{\mathcal Q},\theta})
\end{equation}
as in \eqref{e:Kfiber}. Because $\pi_{{\mathfrak q},\theta}$ is
proper, the higher direct images
\begin{equation}
R^i\mu_*({\mathcal E}_G) \in \Coh^K({\mathcal N}^*_\theta)
\end{equation}
are all coherent sheaves on the nilpotent cone.
\end{subequations} %{se:Knil}

\begin{proposition}[Zuckerman's Blattner formula] In the setting
  \eqref{se:cohind},
  suppose $Z\in {\mathcal M}_f({\mathfrak l},L\cap K)$, with
$$[\gr Z] \in \K^{L\cap K}({\mathcal N}^*_{{\mathfrak l},\theta})$$
the corresponding class in equivariant $\K$-theory. Define
$${\mathcal R}[\gr Z] = \sum_i (-1)^i [R^i\mu_*([\gr Z]_L)] \in
\K^K({\mathcal N}^*_\theta)$$
(notation as in \eqref{se:indKnil}). Then
$$[\gr{\mathcal R}(Z)] = {\mathcal R}[\gr Z].$$
\end{proposition}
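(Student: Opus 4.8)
The plan is to deduce the formula from a good-filtration computation, matching the two standard stages of cohomological induction with the two geometric maps $\pi_{{\mathcal Q},\theta}$ and $\mu_{{\mathcal Q},\theta}$ of \eqref{se:indKnil}. First I would observe that both sides are already known to be additive and to depend only on the class $[\gr Z]\in \K^{L\cap K}({\mathcal N}^*_{{\mathfrak l},\theta})$: the left side by part (2) of the Zuckerman functor proposition above, and the right side by construction, since $\mu_{{\mathcal Q},\theta}$ is proper (so the $R^i\mu_*$ are coherent and the alternating sum is well defined in $\K$-theory) while $\pi^*_{{\mathfrak q},\theta}$ and $K\times_{Q\cap K}(-)$ are exact. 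It therefore suffices to verify the identity for $Z$ ranging over modules whose classes $[\gr Z]$ span $\K^{L\cap K}({\mathcal N}^*_{{\mathfrak l},\theta})$; by Theorem \ref{thm:normalize}(1) applied to $(L,L\cap K)$ we may take $Z = I(\Gamma_L)$ a continued standard module, in which case ${\mathcal R}(Z) = I(\Gamma_G)$ by part (3) of that proposition, and using transitivity of cohomological induction and of the bundle construction of \eqref{se:indKnil} under iterated parabolics one may even reduce to $L = H$ a $\theta$-stable maximal torus with $Z$ one-dimensional, the classical Blattner setting. This reduction is a convenience for pinning down normalizations; the argument below does not really need it.

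Next I would fix a good filtration on $Z$ and carry it through the two stages of ${\mathcal R}^{\bullet}$. The change-of-rings stage — inflating $Z$ to ${\mathfrak q}$ with the $\rho({\mathfrak u})$-twist, then applying the algebraic change-of-rings functor up to ${\mathfrak g}$ — is exact, and by the PBW theorem (equivalently flat base change) its associated graded is the pullback $\pi^*_{{\mathfrak q},\theta}$ applied to $\gr Z$; restricting supports to nilpotent elements and then applying $K\times_{Q\cap K}(-)$ produces exactly the sheaf ${\mathcal E}_G = K\times_{Q\cap K}\pi^*_{{\mathfrak q},\theta}(\gr Z) \in \Coh^K({\mathcal N}^*_{{\mathcal Q},\theta})$ of \eqref{se:indKnil}. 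So the first geometric stage, pullback along the affine bundle $\pi_{{\mathcal Q},\theta}$, matches the first algebraic stage, compatibly with filtrations.

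The crux is the second stage: that the derived Zuckerman functor $R^{\bullet}\Gamma$ carrying $({\mathfrak g},L\cap K)$-modules to $({\mathfrak g},K)$-modules satisfies, for any module $W$ with a good filtration, the Euler-characteristic identity $\sum_i(-1)^i[\gr R^i\Gamma(W)] = \sum_i(-1)^i[R^i\mu_{{\mathcal Q},\theta *}(\gr W)]$ in $\K^K({\mathcal N}^*_\theta)$. For this I would realize $\Gamma$ through the complex that computes cohomology of equivariant sheaves on $K/(Q\cap K)$ — the setting of Theorem \ref{thm:BW} — so that a good filtration on $W$ induces a filtration on that complex whose associated graded complex computes the higher direct images of $\gr W$ along the proper map $\mu_{{\mathcal Q},\theta}$, i.e. the composite of the affine projection $\pi_{{\mathcal Q},\theta}$ (which contributes nothing to cohomology) with the moment map. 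The resulting spectral sequence, together with additivity of Euler characteristics in $\K$-theory and the finiteness coming from reductivity of $K$ and coherence of all the sheaves involved, forces the two alternating sums to agree after passing to $\gr$. Chaining the two stages then gives $[\gr{\mathcal R}(Z)] = \sum_i(-1)^i[R^i\mu_{{\mathcal Q},\theta *}({\mathcal E}_G)] = {\mathcal R}[\gr Z]$, which is the definition of the right-hand side in \eqref{se:indKnil}.

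I expect the main obstacle to be exactly this last compatibility: writing down the spectral sequence relating $\gr$ of a derived Zuckerman functor to the derived pushforward along $\mu_{{\mathcal Q},\theta}$, and checking that the $\rho({\mathfrak u})$ and $\rho({\mathfrak u}\cap{\mathfrak k})$ shifts and the identification of $K/(Q\cap K)$-cohomology with $R\mu_{{\mathcal Q},\theta *}$ line up so that the Euler characteristics match exactly. Over all of ${\mathfrak g}^*$, forgetting the Cartan decomposition, this is classical Grothendieck--Springer theory; the only new content is tracking it through the $\theta$-stable picture, which is the bookkeeping already assembled in \eqref{se:cohind} and \eqref{se:indKnil}.
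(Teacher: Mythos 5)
The paper does not in fact prove this proposition; it simply records the statement, and the sentence after it explicitly attributes it to Zuckerman (``This is Zuckerman's proof of the Blattner formula''). So there is no internal proof for you to reproduce, and your task was really to reconstruct Zuckerman's argument. What you have written is a plausible and essentially correct outline of that argument, and your identification of the critical step — matching $\gr$ of a derived Zuckerman (or Bernstein) functor with the derived pushforward along $\mu_{\mathcal{Q},\theta}$ — is the right thing to single out. But the outline still leaves two real gaps.

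First, the change-of-rings stage is less automatic than ``PBW, hence flat base change.'' If one realizes $\mathcal{R}^\bullet$ via the Zuckerman functor $\Gamma$ applied to $\mathrm{pro}(Z) = \Hom_{U(\mathfrak{q})}(U(\mathfrak{g}),Z)_{L\cap K\text{-fin}}$, then the associated graded of the first stage is a $\Hom$ rather than the tensor product $S(\mathfrak{g})\otimes_{S(\mathfrak{q})}\gr Z$ appearing in the paper's geometric construction \eqref{se:indKnil}; it also need not be supported inside $\mathcal{N}^*_{\mathfrak{q},\theta}\subset (\mathfrak{g}/(\mathfrak{u}+\mathfrak{k}))^*$ before the $K$-finite part is taken, since $W=\mathrm{pro}(Z)$ is only a $(\mathfrak{g},L\cap K)$-module. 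You can sidestep this by using the Bernstein (ind-then-left-derived) realization, where the first stage does literally become $S(\mathfrak{g})\otimes_{S(\mathfrak{q})}\gr Z$ — but then you owe the reader a remark that the proposition is insensitive to the ind/pro choice (which is true after the $\rho(\mathfrak{u})$ twist, but is itself a nontrivial duality statement), and you should fix one realization consistently rather than treating the matching as ``compatibly with filtrations'' for free.

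Second, the spectral-sequence step needs one more ingredient than you state: it is not enough that the associated graded complex computes $R^\bullet\mu_{\mathcal{Q},\theta\,*}(\gr W)$ and that Euler characteristics are additive. You must also check that the filtration induced on each $\mathcal{R}^i(Z)$ by this construction is a \emph{good} filtration (i.e. finitely generated over $\gr U(\mathfrak{g})$), because $[\gr \mathcal{R}^i(Z)]$ is only well defined for good filtrations, and the spectral sequence alone does not guarantee this. The standard fix is that the filtration comes from a finitely generated complex of $(\mathfrak{g},K)$-modules whose filtered pieces are finitely generated, so it is good by inheritance; but this observation belongs in the proof, not the expectations paragraph. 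With those two items supplied, your outline does become a proof, and it is the one the paper is implicitly citing.
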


This is Zuckerman's proof of the Blattner formula; the representations
of $K$ appearing on the right are computable from the $L\cap K$-types
of $Z$ (by Theorem \ref{thm:BW}). Those on the left are the
$K$-types of ${\mathcal R}(Z)$.

\section{Geometric basis for $\K$-theory: ${\mathbb R}$
  case}\label{sec:realalgweights}
\setcounter{equation}{0}

In this section we will explain how to compute one extension of a
geometric parameter (Theorem \ref{thm:realgeomexpand}).
As in the complex case, we will proceed in the aesthetically
distasteful way of using the Jacobson-Morozov theorem (and so discussing not the
nilpotent elements in ${\mathfrak g}^*$ that we care about, but rather
the nilpotent elements in ${\mathfrak g}$).

\begin{subequations}\label{se:KR}
We begin therefore with an arbitrary $K$-nilpotent element $E_\theta\in
{\mathcal N}_\theta$ (see \eqref{e:Kadnilcone}). The Kostant-Rallis
result Theorem \ref{thm:KR} finds elements $F_\theta \in
{\mathcal N}_\theta$ and $D_\theta$ in ${\mathfrak k}$ so that
\begin{equation}\label{e:tthetatds}
[D_\theta,E_\theta] = 2E_\theta,\quad [D_\theta,F_\theta] =
-2F_\theta,\quad [E_\theta,F_\theta] = D_\theta.
\end{equation}
We use the eigenspaces of $\ad(D_\theta)$ to define a $\theta$-stable
parabolic subgroup
$Q=L U$ of $G$, with Levi factor
$L=G^{D_\theta}$. We will be concerned with the equivariant vector bundle
\begin{equation}
 {\mathcal R}_\theta =_{\text{\textnormal{def}}} K\times_{Q\cap
 K} {\mathfrak s}[{\scriptstyle\ge}\,2]
  \buildrel{\pi}\over{\longrightarrow} K/Q\cap K.
\end{equation}
(The reason ${\mathcal R}_\theta$ is of interest is that Corollary
\ref{cor:KRbdle} below says that it is a $K$-equivariant resolution
of singularities of the nilpotent orbit closure $\overline{K\cdot
  E_\theta}$. The ${\mathcal R}$ is meant to stand for {\em resolution}.)
According to \eqref{e:Kfiber} and \eqref{e:Kvec},
\begin{equation}
  \K^{K}({\mathcal R}_\theta) \simeq
  \R(Q\cap K) \simeq \R(L\cap K);
\end{equation}
If $(\sigma,S)$ is an irreducible representation of $L\cap K \simeq
Q\cap K/U\cap K$, we write
\begin{equation}
{\mathcal S}_0(\sigma) = K\times_{Q\cap K} S
\end{equation}
for the induced vector bundle on $K/Q\cap K$. The corresponding
basis element of the equivariant
$\K$-theory is represented by the equivariant vector bundle
\begin{equation}\label{e:KRthetarep}
  {\mathcal S}(\sigma) = \pi^*({\mathcal S}_0(\sigma)) =
  K\times_{Q\cap K} \left({\mathfrak
    s}[{\scriptstyle \ge}\,2] \times S \right) \rightarrow {\mathcal
    R}_\theta.
\end{equation}

\end{subequations}

We are in the setting of Proposition \ref{thm:KR}. As
a consequence of that Proposition, we have

\begin{corollary}\label{cor:KRbdle} Suppose we are in the setting of
\eqref{se:KR}.
\begin{enumerate}
\item The natural map
  $$\mu\colon {\mathcal R}_\theta \rightarrow {\mathcal N}_\theta,
  \qquad (k,Z)\mapsto \Ad(k)Z$$
is a proper birational map onto $\overline{K\cdot E_\theta}$. We
may therefore {\em identify} $K\cdot E_\theta$ with its preimage $U$:
$$K/K^{E_\theta} \simeq K\cdot E_\theta\simeq U \subset {\mathcal
  R}_\theta.$$
Because $K\cdot E_\theta$ is open in $\overline{K\cdot E_\theta}$,
$U=\mu^{-1}(K\cdot E_\theta)$ is open in ${\mathcal R}_\theta$.
\item The classes
  $$\{ [{\mathcal S}(\sigma)] \mid \sigma \in \widehat{L\cap K} =
  \widehat{Q\cap K}\}$$
  of \eqref{e:KRthetarep} are a basis of the equivariant $\K$-theory
 $\K^K({\mathcal R}_\theta)$.
\item Since $\mu$ is proper, higher direct images of coherent sheaves
  are always coherent. Therefore
  $$[\mu^*({\mathcal S}(\sigma)] \eqdef \sum_i (-1)^i
[R^i\mu^*{\mathcal S}(\sigma)] \in \K^{K}({\mathcal N}_\theta)$$
is a well-defined virtual coherent sheaf. This gives a map in
equivariant $\K$-theory 
$$\mu_*\colon \K^K({\mathcal R}_\theta) \rightarrow \K^K({\mathcal
  N}_\theta).$$
Restriction to the open set $U\simeq K\cdot E_\theta$ commutes with $\mu_*$.
\end{enumerate}
Suppose $\sigma$ is a (virtual) algebraic representation of $L\cap
K$. Write
$$[\sigma] = \sum_i m_i(\sigma)[I(\Gamma^i_{L\cap K})]_\theta \in \K^{L\cap
  K}({\mathcal N}^*_{{\mathfrak l},\theta})$$
(computably, as explained in \cite{Vres}); here the $H^i$ are $\theta$-stable
maximal tori in $L$, and
$$\Gamma^i_{L\cap K} = (H^i,\gamma_{L\cap K}^i,\Psi^i_{L}) \in
{\mathcal P}_{L\cap K\dashLL}(L,L\cap K).$$
The left side $[\sigma]$ is a finite-dimensional virtual
representation of $L\cap K$, regarded as a class in the equivariant
$\K$-theory of the nilpotent cone supported at $\{0\}$.
\begin{enumerate}[resume]
\item As a representation of $K$,
\begin{align*}
[\mu^*({\mathcal S}(\sigma))] &= \sum_j (-1)^j {\mathcal R}\left(
  [\textstyle{\bigwedge^j}
{\mathfrak s}[1]^*\otimes \sigma] \right)\\
&= \sum_j (-1)^j {\mathcal R}\left( \sum_i m_i[I(\Gamma^i_{L\cap K})]\otimes
\textstyle{\bigwedge^j} {\mathfrak s}[1]^*\right) \\
&= \sum_i m_i \sum_{A\subset \Delta({\mathfrak s}[1],H^i\cap K)}
(-1)^{|A|} {\mathcal R}[\gr I(\Gamma^i_{L\cap K} - 2\rho(A))]\\
&= \sum_i m_i \sum_{A\subset \Delta({\mathfrak s}[1],H^i \cap K))}
(-1)^{|A|}  [\gr I(\Gamma^i_K - 2\rho(A))].\end{align*}
Here $\Delta({\mathfrak s}[1],H^i \cap K))$ is the set of weights of
$H^i\cap K$ on ${\mathfrak s}[1]$.
\item If every continued standard representation $[I(\Gamma^i_K -
  2\rho(A)]|_K$ in (6) is replaced by an integer linear combination of
  $K$-Langlands parameters in accordance with Theorem
  \ref{thm:normalize}, we get a computable formula
  $$[\mu^*({\mathcal S}(\sigma))] = \sum_{\Lambda_K \in {\mathcal
      P}_{K\dashLL}(G,K)} m_\sigma(\Lambda_K)[\Lambda_K]_\theta.$$
\end{enumerate}
\end{corollary}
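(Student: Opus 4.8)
The plan is to transcribe, nearly line for line, the complex‑case arguments of Corollaries~\ref{cor:JMbdle} and~\ref{cor:JMres}, under the dictionary $G_1\rightsquigarrow K$ acting on $\mathfrak{s}$ through the Kostant-Rallis/Kostant-Sekiguchi picture of Theorem~\ref{thm:KR} (in place of Jacobson-Morozov), $Q_1\rightsquigarrow Q\cap K$, $L_1\rightsquigarrow L\cap K$, $\mathfrak{g}_1\rightsquigarrow\mathfrak{s}$, and---the essential new ingredient---ordinary parabolic induction $\Ind_{L_1}^{G_1}$ replaced throughout by Zuckerman cohomological induction $\mathcal{R}$, together with its realization on equivariant $\K$-theory of nilpotent cones supplied by Zuckerman's two propositions in Section~\ref{sec:stdK} (the functors $\mathcal{R}^i$, and the Blattner formula $[\gr\mathcal{R}(Z)]=\mathcal{R}[\gr Z]$).

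Parts (1)--(3) go exactly as in the complex case. Since $K/(Q\cap K)\hookrightarrow G/Q$ is a closed embedding (Section~\ref{sec:stdK}), $K/(Q\cap K)$ is projective; as $\mathcal{R}_\theta\to K/(Q\cap K)$ is a vector bundle, $\mu$ factors as a closed embedding $\mathcal{R}_\theta\hookrightarrow K/(Q\cap K)\times\mathfrak{s}$ followed by the proper second projection, hence is proper; its image is then closed, irreducible, contains the orbit $K\cdot E_\theta$ (of full dimension by Theorem~\ref{thm:KR}(7)), and therefore equals $\overline{K\cdot E_\theta}$; and $\mu$ is birational because its fibre over $E_\theta$ is a single point, which follows from $G^{E_\theta}\subset Q$ (the $\theta$-stable version of Theorem~\ref{thm:JM}(4), cf.\ Theorem~\ref{thm:KR}(6))---this is Corollary~\ref{cor:JMbdle}(1) verbatim. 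Part (2) is \eqref{e:Kfiber} and \eqref{e:Kvec} applied to the vector bundle $\mathcal{R}_\theta\to K/(Q\cap K)$; part (3) is standard (properness gives coherence of the $R^i\mu_*$, and restriction to the open set $U$ is exact and commutes with $\mu_*$). The expansion $[\sigma]=\sum_i m_i(\sigma)[I(\Gamma^i_{L\cap K})]_\theta$ is quoted verbatim from \cite{Vres}.

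The computation of $[\mu_*\mathcal{S}(\sigma)]$ as a virtual $K$-representation is the substance, and I would establish its four successive equalities in turn. For the \emph{first}, apply $\res_K$ (global sections; by Corollary~\ref{cor:KKrealbasis} this is injective on $\K^K(\mathcal{N}^*_\theta)$, so nothing is lost). The Leray spectral sequences for $\mu$ and for the affine bundle $\pi$ give, exactly as in \eqref{e:Leray}: since $\overline{K\cdot E_\theta}\subset\mathfrak{s}$ is affine, $\sum_i(-1)^i[R^i\mu_*\mathcal{S}(\sigma)]=\sum_i(-1)^i[H^i(\mathcal{R}_\theta,\mathcal{S}(\sigma))]$, which in turn equals the Euler characteristic of the cohomology over $K/(Q\cap K)$ of $K\times_{Q\cap K}\bigl(\sigma\otimes S^\bullet(\mathfrak{s}[{\scriptstyle\ge}\,2]^{*})\bigr)$; the de Rham identity $[\mathbb{C}]=\sum_j(-1)^j[S^\bullet(\mathfrak{s}[1]^{*})\otimes{\textstyle\bigwedge^j}\mathfrak{s}[1]^{*}]$, valid in the Grothendieck group of admissible $(L\cap K)$-modules, converts $S^\bullet(\mathfrak{s}[{\scriptstyle\ge}\,2]^{*})$ into $\sum_j(-1)^j S^\bullet(\mathfrak{s}[{\scriptstyle\ge}\,1]^{*})\otimes{\textstyle\bigwedge^j}\mathfrak{s}[1]^{*}$, and since $S^\bullet(\mathfrak{s}[{\scriptstyle\ge}\,1]^{*})=S^\bullet((\mathfrak{u}\cap\mathfrak{s})^{*})$ is precisely the fibre ring of the affine‑space bundle $\mathcal{N}^*_{\mathfrak{q},\theta}\to\mathcal{N}^*_{\mathfrak{l},\theta}$, the corresponding cohomology is $\mathcal{R}\bigl[\,\gr\bigl({\textstyle\bigwedge^j}\mathfrak{s}[1]^{*}\otimes\sigma\bigr)\bigr]$ by Zuckerman's Blattner‑formula proposition. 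The \emph{second} equality substitutes $[\sigma]=\sum_i m_i[I(\Gamma^i_{L\cap K})]_\theta$. The \emph{third} decomposes each $[I(\Gamma^i_{L\cap K})]_\theta\otimes{\textstyle\bigwedge^j}\mathfrak{s}[1]^{*}$ into a signed sum over subsets $A\subset\Delta(\mathfrak{s}[1],H^i\cap K)$ with parameter shifted by $-2\rho(A)$, using the $\gr$-level projection formula (``tensoring an induced class by a finite‑dimensional representation $=$ summing over its torus weights'', the analogue of $\Ind_{T_1}^{L_1}(\psi)\otimes E=\sum_{\gamma}\Ind_{T_1}^{L_1}(\psi+\gamma)$ from the proof of Corollary~\ref{cor:JMbdle}(4)) together with the sum over $j$. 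The \emph{fourth} uses transitivity of cohomological induction, $\mathcal{R}[\gr I(\Gamma^i_{L\cap K}-2\rho(A))]=[\gr I(\Gamma^i_K-2\rho(A))]$ with $\Gamma^i_K$ the $K$-Langlands parameter cohomologically induced from $\Gamma^i_{L\cap K}$: this is Zuckerman's cohomological induction proposition ($[\mathcal{R}I(\Gamma_L)]=[I(\Gamma_G)]$) combined once more with $[\gr\mathcal{R}(Z)]=\mathcal{R}[\gr Z]$.

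For the \emph{computable} formula, substitute into the last line, for each continued $K$-Langlands parameter $\Gamma^i_K-2\rho(A)$ (in general neither weakly dominant nor final), the explicit expansion of $I(\Gamma^i_K-2\rho(A))|_K$ in final $K$-Langlands parameters furnished by Theorem~\ref{thm:normalize}(6) (ultimately Bott-Borel-Weil, Theorem~\ref{thm:BW}); effectivity is then immediate, since the Kostant-Rallis triple $(E_\theta,D_\theta,F_\theta)$, the parabolic $Q$ and bundle $\mathcal{R}_\theta$, the weight sets $\Delta(\mathfrak{s}[1],H^i\cap K)$, the \cite{Vres}-expansion of $\sigma$, and Theorem~\ref{thm:normalize} are all algorithmic. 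The step I expect to be the main obstacle is the first equality: unlike the complex case, where $\Ind_{L_1}^{G_1}$ is exact and one manipulates honest representations, here $\mathcal{R}$ is a derived functor, so one must verify that the Leray bookkeeping, the de Rham peeling, and the tensoring identities all take place consistently in the Grothendieck group---that the alternating sums of higher direct images and higher cohomology groups genuinely assemble into the cohomological‑induction class $\mathcal{R}[\gr(-)]$. This is precisely what the well‑definedness of $[\gr\mathcal{R}(Z)]$ and Zuckerman's Blattner‑formula proposition in Section~\ref{sec:stdK} are there to supply, so the remaining work is organizational rather than new.
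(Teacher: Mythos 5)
Your proposal is correct; the paper gives no explicit proof of this corollary, treating it (as its placement after Theorem~\ref{thm:KR} suggests) as the evident transplant of Corollaries~\ref{cor:JMbdle}--\ref{cor:JMres} via Kostant--Rallis/Sekiguchi structure theory and the cohomological-induction machinery assembled in Section~\ref{sec:stdK}, and your argument supplies exactly that intended transcription. You have also correctly located the one genuinely new ingredient, the replacement of $\Ind_{L_1}^{G_1}$ by Zuckerman's $\mathcal{R}$ in the Leray/de Rham bookkeeping, and correctly observed that the needed well-definedness of the Euler characteristics and the identity $[\gr\mathcal{R}(Z)]=\mathcal{R}[\gr Z]$ are precisely what the two Zuckerman propositions in that section are placed there to provide.
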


\begin{corollary}\label{cor:KRres} We continue in the setting \eqref{se:KR}.
  \begin{enumerate}
  \item The restriction map in equivariant $K$-theory
    $$\R(Q\cap K) \simeq \K^K({\mathcal R}_\theta)
    \rightarrow \K^K(U) \simeq \R(K^{E_\theta}) = \R((Q\cap K)^{E_\theta})$$
    sends a (virtual) representation $[\sigma]$ of $Q\cap K$ to
    $[\sigma|_{(Q\cap K)^{E_\theta}}]$.
\item Any virtual (algebraic) representation $\tau$ of
  $K^{E_\theta}=(Q\cap K)^{E_\theta}$ can be
  extended to a virtual (algebraic) representation $\sigma$ of
  $Q\cap K$. That is, the restriction map of representation rings
$$\R(L\cap K) \simeq \R(Q\cap K) \twoheadrightarrow
  \R((Q\cap K)^{E_\theta}) \simeq \R((L\cap K)^{E_\theta})$$
is surjective.
\item Suppose $[\tau]$ is a virtual algebraic representation of
  $K^{E_\theta}$, corresponding to a virtual coherent sheaf ${\mathcal
  T}$ on $K\cdot E_\theta$. Choose a virtual algebraic representation
  $\sigma$ of $Q\cap K$ extending $\tau$. Then the virtual coherent
  sheaf
  $$[\mu_*({\mathcal S}(\sigma))] \eqdef [\widetilde{\mathcal T}]$$
  is a virtual extension of ${\mathcal T}$. We have a formula
  $$[\widetilde{\mathcal T}] = \sum_{\Lambda_K \in {\mathcal
      P}_{K\dashLL}(G,K)} m_{\widetilde{\mathcal
      T}}(\Lambda_K)[\Lambda]_\theta.$$
  \end{enumerate}
  {\em Computability} of $\sigma$ in (3) is a problem in finite-dimensional
  representation theory of reductive algebraic groups, for which we do
  {\em not} offer a general solution. Except for this issue, the
  formula in (3) is computable.
\end{corollary}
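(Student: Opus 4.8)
The plan is to transcribe the complex-case argument of Corollary~\ref{cor:JMres} into the Kostant--Rallis setting of~\eqref{se:KR}, using Corollary~\ref{cor:KRbdle} in place of Corollary~\ref{cor:JMbdle}: no new geometry is needed, only careful bookkeeping with the identifications of equivariant $\K$-theory with representation rings. For part~(1), I would recall from~\eqref{e:KRthetarep} that the basis class of $\K^K({\mathcal R}_\theta)\simeq\R(Q\cap K)\simeq\R(L\cap K)$ attached to an irreducible $(\sigma,S)$ of $L\cap K$ is represented by the vector bundle ${\mathcal S}(\sigma)=\pi^*{\mathcal S}_0(\sigma)=K\times_{Q\cap K}({\mathfrak s}[{\ge}2]\times S)$. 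By Corollary~\ref{cor:KRbdle}(1) the map $\mu$ identifies the open set $U$ with $K\cdot E_\theta\simeq K/K^{E_\theta}$ and $K^{E_\theta}=(Q\cap K)^{E_\theta}$; the fibre of ${\mathcal S}(\sigma)$ over $E_\theta$ is then simply $S$ regarded as a $(Q\cap K)^{E_\theta}$-module, so ${\mathcal S}(\sigma)|_U\simeq K\times_{(Q\cap K)^{E_\theta}}(\sigma|_{(Q\cap K)^{E_\theta}})$, which under the identification $\K^K(K/K^{E_\theta})\simeq\R(K^{E_\theta})$ of~\eqref{eq:Khomog} is exactly $[\sigma|_{(Q\cap K)^{E_\theta}}]$. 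That is part~(1).

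For part~(2), restriction to an open subvariety is always surjective in equivariant $\K$-theory: this is the right exactness of the localization sequence~\eqref{e:Kright} applied to the closed complement $Z={\mathcal R}_\theta-U$, giving $\K^K({\mathcal R}_\theta)\twoheadrightarrow\K^K(U)$. Composing with the isomorphisms $\K^K({\mathcal R}_\theta)\simeq\R(Q\cap K)\simeq\R(L\cap K)$ (from~\eqref{e:Kfiber} and~\eqref{e:Kvec}) and $\K^K(U)\simeq\R(K^{E_\theta})\simeq\R((L\cap K)^{E_\theta})$, and identifying the resulting map with restriction of representations via part~(1), yields the surjectivity asserted in~(2). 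For part~(3), given $[\tau]\in\R(K^{E_\theta})$, equivalently a virtual coherent sheaf ${\mathcal T}$ on $K\cdot E_\theta$, choose by~(2) a $\sigma$ on $Q\cap K$ with $\sigma|_{(Q\cap K)^{E_\theta}}=\tau$. Then $[\mu_*({\mathcal S}(\sigma))]\in\K^K({\mathcal N}_\theta)$ is supported on $\overline{K\cdot E_\theta}$ (the image of the proper map $\mu$, by Corollary~\ref{cor:KRbdle}(1)), and by Corollary~\ref{cor:KRbdle}(3) restriction to $U$ commutes with $\mu_*$; since $\mu|_U$ is an isomorphism onto $K\cdot E_\theta$ this gives $[\mu_*({\mathcal S}(\sigma))]|_{K\cdot E_\theta}=[{\mathcal S}(\sigma)|_U]=[{\mathcal T}]$ using part~(1) and the choice of $\sigma$. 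Hence $[\widetilde{\mathcal T}]\eqdef[\mu_*({\mathcal S}(\sigma))]$ is a virtual extension of ${\mathcal T}$ in the sense of Corollary~\ref{cor:Kspangeomreal}, and the displayed formula $[\widetilde{\mathcal T}]=\sum_{\Lambda_K}m_{\widetilde{\mathcal T}}(\Lambda_K)[\Lambda_K]_\theta$ is precisely the conclusion of Corollary~\ref{cor:KRbdle}(6), with $m_{\widetilde{\mathcal T}}(\Lambda_K)=m_\sigma(\Lambda_K)$.

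It remains to address the computability claim. Every ingredient feeding Corollary~\ref{cor:KRbdle}(6) is effective once $\sigma$ is specified: the decomposition of $\sigma\otimes\bigwedge^j{\mathfrak s}[1]^*$ into continued standard $(L\cap K)$-modules (computable by \cite{Vres}), the cohomological induction functor ${\mathcal R}$ evaluated via the Bott--Borel--Weil Theorem~\ref{thm:BW} (including the subtleties for disconnected $K$), and the rewriting of continued standard representations in the $K$-Langlands basis via Theorem~\ref{thm:normalize}. The one step I do \emph{not} expect to make effective, and the genuine obstacle here, is the choice of $\sigma$ itself: part~(2) is merely an \emph{existence} statement, and producing a computable virtual $\sigma\in\R(L\cap K)$ restricting to a prescribed $\tau\in\R((L\cap K)^{E_\theta})$ is an unsolved branching problem for reductive algebraic groups --- exactly the difficulty already flagged after Corollary~\ref{cor:JMres} in the complex case. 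This is why the next section does not compute the individual extensions $\widetilde{\mathcal T}$, but only the span of all of them as $\tau$ varies.
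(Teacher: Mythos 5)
Your proof is correct and follows exactly the route the paper intends: the paper leaves Corollary \ref{cor:KRres} without a written proof, signalling that it is the verbatim transcription of the proof of Corollary \ref{cor:JMres} to the Kostant--Rallis setting, with Corollary \ref{cor:KRbdle} replacing Corollary \ref{cor:JMbdle} — which is precisely what you do. The only slip is a reference to ``Corollary \ref{cor:KRbdle}(6)'': that corollary has five numbered items, and the expansion in the $K$-Langlands basis you want is item (5) (the paper itself contains a stray ``(6)'' inside the text of that item, which is a typo for ``(4)'' and is probably the source of the confusion).
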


The last formula in Corollary \ref{cor:KRres} relates the geometric
basis of Theorem \ref{thm:finorb} to the representation-theoretic
basis of Corollary \ref{cor:KKrealbasis}.

\begin{algorithm}[A geometric basis for equivariant
    $\K$-theory]\label{alg:acharReal}

\addtocounter{equation}{-1}
\begin{subequations}\label{se:acharRealOrbit}

We begin in the setting \eqref{se:KR} with a nilpotent orbit
 \begin{equation}\label{e:acharRealA}
    Y = K\cdot E_\theta \subset {\mathcal N}_\theta \simeq {\mathcal N}_\theta^*.
  \end{equation}
 The goal is to produce a collection of explicit elements
  \begin{equation}\label{e:acharRealB}
    {\mathcal E}^{\orbalg}_j(Y) = \sum_{\Lambda_K\in {\mathcal
    P}_{K\dashLL}(G,K)} m_{{\mathcal E}^{\orbalg}_j(Y)}(\Lambda_K)[\Lambda_K]_\theta
      \in \K^K(\overline Y)
  \end{equation}
  which are a {\em basis} of $\K^K(\overline Y)/\K^K(\partial\overline Y)$.
 (The superscript ``$\orbalg$'' stands for ``orbital algorithm.'' The
  subscript $j$ is just an indexing parameter for the basis vectors,
  running over either $\{0,1,\ldots,M-1\}$ or ${\mathbb N}$.)
 The algorithm proceeds by induction on $\dim Y$; so we assume that
 such a basis is available for every boundary orbit $Y' \subset
 \partial\overline Y$.

 Given an arbitrary (say irreducible) representation
 $\sigma$ of $L\cap K$, Corollary
 \ref{cor:KRbdle}(5) provides a formula
 \begin{equation}\label{e:acharRealSpan}
   [\mu_*{\mathcal S}(\sigma)] = \sum_{\Lambda_K \in {\mathcal P}_{K\dashLL}(G,K)}
   m_{\sigma}(\Lambda_K)[\Lambda_K]_\theta.
 \end{equation}
 The ``sheaf''  $\mu_*{\mathcal S}(\sigma)$ (actually it is a formal
 alternating sum of higher direct image sheaves, but the higher terms
 are supported on the boundary) restricts to a vector bundle over
 $Y$, of rank
 \begin{equation}\label{e:acharRealRankA}
\rank([\mu_*{\mathcal S}(\sigma)]|_{K\cdot E_\theta}) = \dim(\sigma),
 \end{equation}
This dimension (of an irreducible of $L\cap K$) is easy to compute.

 According to Corollary \ref{cor:KRbdle}, the classes $\{[\mu_*{\mathcal
   S}(\sigma)] \mid \sigma \in \widehat{L_1}\}$, after restriction to
 $\K^{K}(Y)$, are a spanning set. Furthermore the kernel of the
 restriction map has as basis the (already computed) set
   \begin{equation}\label{e:kerResReal}
     \bigcup_{Z\subset {\partial Y}} \{{\mathcal E}_k^{\orbalg}(Z)\}.
   \end{equation}
   Now extracting a subset
   \begin{equation}
{\mathcal E}_j^{\orbalg}(Y) = \sum_{\sigma\in \widehat{L_1}}
n_j(\sigma) [\mu_*{\mathcal S}(\sigma)]
   \end{equation}
of the span of the $[\mu_*{\mathcal S}(\sigma)]$ restricting to a basis of
the image of the restriction is a linear algebra problem. Because the
rank (the virtual dimension of fibers over $K\cdot E_\theta = Y$) is additive
in the Grothendieck group, we can compute each integer
 \begin{equation}\label{e:acharRealRankB}
\rank([{\mathcal E}_j^{\orbalg}] = \sum_{\sigma\in \widehat{L_1}}
n_j(\sigma)\dim(\sigma).
 \end{equation}
   \end{subequations} %{se:acharRealOrbit}
\end{algorithm}

Just as in the complex case, we have swept under the rug the issue of doing
finite calculations. It can be addressed along the same lines as in
the complex case; we omit the details.

\section{Associated varieties for real
  groups}\label{sec:realassvar}
In the setting of Section \ref{sec:realalg}, suppose that $X$ is a finite
length $({\mathfrak g},K)$-module.
\begin{subequations}\label{se:realassvar}
Kazhdan-Lusztig theory allows us (if $X$ is specified as a sum
of irreducibles in the Langlands classification) to find an explicit
formula (in the Grothendieck group of finite length Harish-Chandra modules)
\begin{equation}\label{e:realKL}
X = \sum_{\Lambda\in {\mathcal P}_{\LL}(G,K)}
m_X(\Lambda) I(\Lambda).
\end{equation}
Fix a $K$-invariant good filtration of the Harish-Chandra module $X$,
so that $\gr X$ is a finitely generated $S({\mathfrak g}/{\mathfrak
  k})$-module supported on ${\mathcal N}^*_\theta$. The class in
equivariant $\K$-theory
\begin{equation}
[\gr X] \in \K^K({\mathcal N}^*_\theta)
\end{equation}
is independent of the choice of good filtration. If we rewrite each
$I(\Lambda)|_K$ in terms of $K$-Langlands parameters using Theorem
\ref{thm:normalize}(6), we find a computable formula
\begin{equation}\label{e:realKformula}
\ [\gr X] = \sum_{\Lambda_K \in {\mathcal P}_{K\dashLL}(G,K)}
m_X(\Lambda_K)[\Lambda_K]_\theta.
\end{equation}
Recall now the classes $[{\mathcal E}_k^{\orbalg}(Z)]$
constructed in Achar's Algorithm \ref{alg:acharReal}. Comparing their known
formulas with \eqref{e:realKformula}, we can do a change of basis
calculation, and get an explicit formula
\begin{equation}\label{e:realgeomKform}
[\gr X] = \sum_{{\mathcal E}_k^{\orbalg}(Z)} n_X({\mathcal
  E}_k^{\orbalg}(Z))[{\mathcal E}_k^{\orbalg}(Z)],
\end{equation}
with computable integers $n_X({\mathcal E}_k^{\orbalg}(Z))$.

\end{subequations} %{se:realassvar}

\begin{theorem}\label{thm:realassvar} Suppose $X$ is a $({\mathfrak
  g},K)$-module. Use the notation of \eqref{se:realassvar}.
\begin{enumerate}
\item The associated variety of $X$ (Definition \ref{def:assvar} is
  the union of the closures of the maximal $K$-orbits $Z\subset
  {\mathcal N}^*_\theta$ with some $n_X({\mathcal E}_k^{\orbalg}(Z))
  \ne 0$.
\item The multiplicity of a maximal orbit $Z$ in the associated
  cycle of $X$ is
$$\sum_{{\mathcal E}_k^{\orbalg}(Z)} n_X({\mathcal
    E}_k^{\orbalg}(Z))\rank({\mathcal E}_k^{\orbalg}(Z)).$$
\end{enumerate}
\end{theorem}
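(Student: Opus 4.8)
The plan is to run the proof of Theorem~\ref{thm:cplxassvar} essentially verbatim, since every ingredient now has a real-group analogue established in Sections~\ref{sec:realalg} and~\ref{sec:realalgweights}: Corollary~\ref{cor:KKrealbasis} (the representation basis $\{[\Lambda_K]_\theta\}$ of $\K^K({\mathcal N}^*_\theta)$, together with injectivity of $\res_K$), Algorithm~\ref{alg:acharReal} (the geometric basis vectors ${\mathcal E}_k^{\orbalg}(Z)$), Corollary~\ref{cor:KRres} (their expansion in the representation basis), and Theorem~\ref{thm:normalize} combined with Kazhdan--Lusztig theory, which together produce the computable expansion \eqref{e:realgeomKform}. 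So the theorem is a bookkeeping statement about two known bases of one $\K$-group.

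First I would apply Theorem~\ref{thm:finorb}(3) to $K$ acting on the affine variety ${\mathcal N}^*_\theta$ to expand $[\gr X]$ in the geometric basis of Corollary~\ref{cor:Kspangeomreal}, writing $[\gr X] = \sum_{(Z,{\mathcal F}),\ Z\subset\AV(X)} m_X(Z,{\mathcal F})\,[\widetilde{\mathcal F}]$, where $\AV(X)=\supp(\gr X)$ and, for a \emph{maximal} orbit $Z$ of $\AV(X)$, the coefficients $m_X(Z,{\mathcal F})$ are independent of the choices of extensions. By Definition~\ref{def:assvar}, for such a maximal orbit $Z\simeq K/K_Z$ these coefficients package into $\mu_Z(X)\in\R(K_Z)$, and $\dim\mu_Z(X)$ is exactly the integer multiplicity of $Z$ in the (weak) associated cycle \eqref{eq:WAC}; equivalently $\dim\mu_Z(X)=\rank\!\bigl([\gr X]|_Z\bigr)$, the virtual rank of the fibre of $\gr X$ over the open orbit $Z$.

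Next I would change basis to $\{[{\mathcal E}_k^{\orbalg}(Z)]\}$. Since each ${\mathcal E}_k^{\orbalg}(Z)$ is supported on $\overline Z$ and the classes attached to $Z$ give a basis of $\K^K(\overline Z)/\K^K(\partial\overline Z)$ (this is the inductive content of Algorithm~\ref{alg:acharReal}, using the short exact sequence of Theorem~\ref{thm:finorb}(4)), this change of basis is weakly upper triangular for the ordering of orbits by closure. In particular $[\gr X]$, being supported on $\AV(X)$, has $n_X({\mathcal E}_k^{\orbalg}(Z))=0$ for $Z\not\subset\AV(X)$, and on a maximal orbit $Z$ of $\AV(X)$ no boundary term can cancel the leading coefficient. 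Hence the maximal orbits of $\AV(X)$ are precisely the $Z$ with some $n_X({\mathcal E}_k^{\orbalg}(Z))\ne0$; taking the union of their closures, and recalling that $\AV(X)$ is a union of orbit closures, gives part~(1). For part~(2), restrict \eqref{e:realgeomKform} to a maximal orbit $Z$ of $\AV(X)$: upper triangularity leaves only the terms ${\mathcal E}_k^{\orbalg}(Z)$, each restricting over $Z$ to a virtual $K$-bundle of rank $\rank({\mathcal E}_k^{\orbalg}(Z))$ as in \eqref{e:acharRealRankB}; since virtual rank over the open orbit is additive in the Grothendieck group, $\dim\mu_Z(X)=\rank([\gr X]|_Z)=\sum_k n_X({\mathcal E}_k^{\orbalg}(Z))\,\rank({\mathcal E}_k^{\orbalg}(Z))$, which is the asserted formula.

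The one step needing genuine verification --- and the only place this is not a line-by-line copy of the complex proof --- is the weak upper triangularity relating the two geometric bases: that in Algorithm~\ref{alg:acharReal} the classes ${\mathcal E}_k^{\orbalg}$ on orbits $Z\subset\overline Y$ span $\K^K(\overline Y)$ orbit-by-orbit modulo boundary, with $[\mu_*{\mathcal S}(\sigma)]$ restricting over the open orbit $Y$ to a genuine bundle of rank $\dim\sigma$. This rests on Corollary~\ref{cor:KRbdle}(1) (the resolution ${\mathcal R}_\theta\to\overline{K\cdot E_\theta}$ is proper and birational, so the higher direct images of ${\mathcal S}(\sigma)$ are supported on the boundary) together with the surjectivity of restriction in equivariant $\K$-theory used in Corollary~\ref{cor:KRres}(2). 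These are exactly the real-group analogues of the facts that power the proof of Theorem~\ref{thm:cplxassvar}, so once they are invoked the argument closes as there; I do not expect any new difficulty beyond stating them carefully.
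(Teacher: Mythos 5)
Your proposal is correct and takes essentially the same approach as the paper, which disposes of this theorem with the single sentence ``The proof is identical to that of Theorem~\ref{thm:cplxassvar} above.'' You have in effect unpacked that remark, spelling out the change-of-basis argument and correctly identifying the two load-bearing inputs in the real case: the weak upper triangularity of the transition between the geometric basis of Theorem~\ref{thm:finorb}(3) and the $\mathcal{E}_k^{\orbalg}(Z)$ basis (coming from Corollary~\ref{cor:KRbdle}(1) and Corollary~\ref{cor:KRres}), and the additivity of virtual rank over the open orbit, which matches $\dim\mu_Z(X)$ with $\sum_k n_X(\mathcal{E}_k^{\orbalg}(Z))\rank(\mathcal{E}_k^{\orbalg}(Z))$.
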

The proof is identical to that of Theorem \ref{thm:cplxassvar} above.

\begin{bibdiv}
\begin{biblist}[\normalsize]

\bib{acharTH}{article}{
   author={Achar, Pramod N.},
   title={On the equivariant $K$-theory of the nilpotent cone in the general
   linear group},
   journal={Represent. Theory},
   volume={8},
   date={2004},
   pages={180--211},
   issn={1088-4165},
}

\bib{acharPC}{article}{
   author={Achar, Pramod N.},
   title={On exotic and perverse-coherent sheaves},
   conference={
      title={Representations of reductive groups},
   },
   book={
      series={Progr. Math.},
      volume={312},
      publisher={Birkh\"{a}user/Springer, Cham},
   },
   date={2015},
   pages={11--49},
}

\bib{ALTV}{article}{
   author={Adams, Jeffrey D.},
   author={van Leeuwen, Marc A. A.},
   author={Trapa, Peter E.},
   author={Vogan, David A., Jr.},
   title={Unitary representations of real reductive groups},
   language={English, with English and French summaries},
   journal={Ast\'{e}risque},
   number={417},
   date={2020},
   pages={viii + 188},
   issn={0303-1179},
   isbn={978-2-85629-918-0},
}

\bib{atlas}{article}{
title={Atlas of Lie Groups and Representations software},
year={2019},
eprint={http://www.liegroups.org},
}

\bib{BS}{article}{
   author={Barbasch, Dan},
   author={Sepanski, Mark R.},
   title={Closure ordering and the Kostant-Sekiguchi correspondence},
   journal={Proc. Amer. Math. Soc.},
   volume={126},
   date={1998},
   number={1},
   pages={311--317},
}

\bib{BV}{article}{
author={Barbasch, Dan M.},
author={Vogan, David A., Jr.},
title={The local structure of characters},
journal={J.\ Funct.\ Anal.},
volume={37},
date={1980},
pages={27--55},
}

\bib{Bezr}{article}{
   author={Bezrukavnikov, Roman},
   title={Quasi-exceptional sets and equivariant coherent sheaves on the
   nilpotent cone},
   journal={Represent. Theory},
   volume={7},
   date={2003},
   pages={1--18 (electronic)},
}

\bib{CG}{book}{
   author={Chriss, Neil},
   author={Ginzburg, Victor},
   title={Representation theory and complex geometry},
   publisher={Birkh\"{a}user Boston, Inc., Boston, MA},
   date={1997},
   pages={x+495},
}

\bib{CM}{book}{
   author={Collingwood, David H.},
   author={McGovern, William M.},
   title={Nilpotent orbits in semisimple Lie algebras},
   series={Van Nostrand Reinhold Mathematics Series},
   publisher={Van Nostrand Reinhold Co., New York},
   date={1993},
   pages={xiv+186},
}

\bib{DufloC}{article}{
author={M.~Duflo},
title={Repr\'esentations unitaires irr\'eductibles des groupes semi-simples complexes de rang deux},
journal={Bull.\ Soc.\ Math.\ France},
volume={107},
date={1979},
pages={55--96},
}

\bib{HCI}{article}{
author={Harish-Chandra},
title={Representations of a semisimple Lie group on a Banach space. I},
journal={Trans.\ Amer.\ Math.\ Soc.},
volume={75},
date={1953},
pages={185--243},
}

\bib{HCp}{article}{
   author={Harish-Chandra},
   title={The characters of reductive $p$-adic groups},
   conference={
      title={Contributions to algebra (collection of papers dedicated to
      Ellis Kolchin)},
   },
   book={
      publisher={Academic Press, New York},
   },
   date={1977},
   pages={175--182},
   review={\MR{0492086}},
}

\bib{Harris}{book}{
   author={Harris, Joe},
   title={Algebraic geometry. A first course},
   series={Graduate Texts in Mathematics},
   volume={133},
   note={A first course;
   Corrected reprint of the 1992 original},
   publisher={Springer-Verlag, New York},
   date={1995},
   pages={xx+328},
}

\bib{Har}{book}{
   author={Hartshorne, Robin},
   title={Algebraic geometry},
   note={Graduate Texts in Mathematics, No. 52},
   publisher={Springer-Verlag, New York-Heidelberg},
   date={1977},
   pages={xvi+496},
}

\bib{HoweWF}{collection.article}{
author={R.~Howe},
title={Wave front sets of representations of Lie groups},
booktitle={Automorphic Forms, Representation Theory, and Arithmetic},
series={Tata Inst.\ Fund.\ Res.\ Studies in Math.},
volume={10},
publisher={Tata Institute for Fundamental Research},
address={Bombay},
date={1981},
}

\bib{TDS}{article}{
author={Kostant, Bertram},
title={The principal three-dimensional subgroup and the Betti numbers of a complex simple Lie group},
journal={Amer.\ J.~Math.},
volume={81},
date={1959},
pages={973--1032},
}

\bib{KR}{article}{
author={Kostant, Bertram},
author={S.~Rallis},
title={Orbits and  representations associated with symmetric spaces},
journal={Amer.\ J.~Math.},
volume={93},
date={1971},
pages={753--809},
}

\bib{LV}{article}{
   author={Lusztig, George},
   author={Vogan, David A., Jr.},
   title={Singularities of closures of $K$-orbits on flag manifolds},
   journal={Invent. Math.},
   volume={71},
   date={1983},
   number={2},
   pages={365--379},
}

\bib{Mal}{article}{
author={A.~I.~Mal'cev,},
title={On semi-simple subgroups of Lie groups},
journal={Amer.\ Math.\ Soc.\ Transl., Ser.\ 1},
date={1950},
volume={11},
pages={172--213},
}

\bib{SV}{article}{
   author={Schmid, Wilfried},
   author={Vilonen, Kari},
   title={Characteristic cycles and wave front cycles of representations of
   reductive Lie groups},
   journal={Ann. of Math. (2)},
   volume={151},
   date={2000},
   number={3},
   pages={1071--1118},
}

\bib{Sek}{article}{
   author={Sekiguchi, Jir{\=o}},
   title={Remarks on real nilpotent orbits of a symmetric pair},
   journal={J. Math. Soc. Japan},
   volume={39},
   date={1987},
   number={1},
   pages={127--138},
}

\bib{Tho}{article}{
   author={Thomason, R. W.},
   title={Algebraic $K$-theory of group scheme actions},
   conference={
      title={Algebraic topology and algebraic $K$-theory (Princeton, N.J.,
      1983)},
   },
   book={
      series={Ann. of Math. Stud.},
      volume={113},
      publisher={Princeton Univ. Press, Princeton, NJ},
   },
   date={1987},
   pages={539--563},
}

\bib{Vergne}{article}{
   author={Vergne, Mich\`ele},
   title={Instantons et correspondance de Kostant-Sekiguchi},
   journal={C. R. Acad. Sci. Paris S\'{e}r. I Math.},
   volume={320},
   date={1995},
   number={8},
   pages={901--906},
}

\bib{Vgreen}{book}{
   author={Vogan, David A., Jr.},
   title={Representations of real reductive Lie groups},
   series={Progress in Mathematics},
   volume={15},
   publisher={Birkh\"{a}user, Boston, Mass.},
   date={1981},
   pages={xvii+754},
}

\bib{Vunip}{article}{
   author={Vogan, David A., Jr.},
   title={Associated varieties and unipotent representations},
   conference={
      title={Harmonic analysis on reductive groups},
      address={Brunswick, ME},
      date={1989},
   },
   book={
      series={Progr. Math.},
      volume={101},
      publisher={Birkh\"auser Boston, Boston, MA},
   },
   date={1991},
   pages={315--388},
}

\bib{Vorb}{article}{
   author={Vogan, David A., Jr.},
   title={The method of coadjoint orbits for real reductive groups},
   conference={
      title={Representation theory of Lie groups},
      address={Park City, UT},
      date={1998},
   },
   book={
      series={IAS/Park City Math. Ser.},
      volume={8},
      publisher={Amer. Math. Soc., Providence, RI},
   },
   date={2000},
   pages={179--238},
}

\bib{Vres}{article}{
author={Vogan, David A., Jr.},
title={Branching to a maximal compact subgroup},
booktitle={Harmonic Analysis, Group Representations, Automorphic Forms, and Invariant Theory: In Honour of Roger E.~Howe},
series={Lecture Note Series, Institute for Mathematical Sciences, National University of Singapore},
volume={12},
date={2007},
}

\bib{size}{article}{
   author={Vogan, David A., Jr.},
   title={The size of infinite-dimensional representations},
   journal={Jpn. J. Math.},
   volume={12},
   date={2017},
   number={2},
   pages={175--210},
}

\bib{WallachII}{book}{
   author={Wallach, Nolan R.},
   title={Real reductive groups. II},
   series={Pure and Applied Mathematics},
   volume={132},
   publisher={Academic Press, Inc., Boston, MA},
   date={1992},
   pages={xiv+454},
}

\end{biblist}
\end{bibdiv}

%%  The bibliography

%%  If your bibliography is in BibTeX format, use the following setup:
%%  Style BST file for numbered citation:
%\bibliographystyle{imsart-number}
%%  or name-year citation
%\bibliographystyle{imsart-nameyear}
%%  Bibliography file (usually `*.bib')
%\bibliography{sii-sample}          
%%
%%  or include bibliography directly:
%\begin{thebibliography}{9}
%%  Use \bibitem{r1} or \bibitem[Surname(2010)]{r1} (for authoryear case)
%%  Put author names in \textsc{} command in order to use small caps font
%
%\bibitem{}
%\textsc{}
%
%\end{thebibliography}

\end{document}